\tikzset{
  big arrow/.style={
    decoration={markings,mark=at position 1 with {\arrow[scale=2,#1]{>}}}, postaction={decorate}, shorten >=0.4pt}, big arrow/.default=blue}\usepackage{pdflscape}
\definecolor{purple}{RGB}{138,43,226}
\definecolor{pink}{RGB}{255,182,193}
\newtheorem{thm}{Theorem}
\newtheorem{prop}{Proposition}
\newtheorem{defi}{Definition}
\newtheorem{lemma}{Lemma}
\newcommand{\rh}[1]{\rho_{#1}}
\newcommand{\ts}{\otimes}
\newcommand{\bts}{\boxtimes}
\newcommand{\xii}{$\xi_i$}
\newcommand{\xiip}{$\xi_i'$}
\newcommand{\cf}{C^{flip}}
\newcommand{\fl}[1]{{#1}^{flip}}
\title{Elliptic Involution on Knot Complements}
\author{Yang (Chris) Xiu}
\begin{document}

\maketitle
\begin{abstract}
We show that Bordered Heegaard Floer invariant $\widehat{CFD}$ of a knot complement in $S^3$ is invariant under the elliptic involution on its boundary.
\end{abstract}

\section{Introduction}

Heegaard Floer homology is a 3-manifold invariant defined by Peter S. Ozsv\'{a}th and Zolt\'{a}n Szab\'{o} in \cite{HF}, which has proved to be powerful. A knot version is later defined independently by Jacob Rasmussen in \cite{JRKnot} and by Ozsv\'{a}th and Szab\'{o} in \cite{knotfloer}.

Bordered Floer homology developed by Lipshitz, Ozsv\'{a}th, and Thurston in \cite{bord} is a great tool to compute Heegaard Floer homology by decomposing a 3-manifold into smaller pieces with parametrized boundaries, computing the ``Bordered'' invariant on each piece, and finally taking an appropriate tensor product to recombine them. Specifically, the following theorem is proved in \cite{bord}.
\begin{thm}
Suppose that $F$ is a closed oriented surface and $Y_1$ and $Y_2$ are two 3-manifolds with parametrized boundary $F$ and $-F$, then 
\[
\widehat{CF}(Y_1\cup_FY_2) \cong \widehat{CFA}(Y_1)\tilde{\otimes}_{\mathcal{A}(F)} \widehat{CFD}(Y_2) \cong \widehat{CFA}(Y_1)\bts_{\mathcal{A}(F)} \widehat{CFD}(Y_2)
\]
\end{thm}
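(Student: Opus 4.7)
The plan is to establish the pairing theorem by comparing a Heegaard diagram for the closed manifold $Y_1\cup_F Y_2$ with its two halves, then using a neck-stretching degeneration to match holomorphic disk counts on the glued diagram with the algebraic operations that define $\widehat{CFA}(Y_1)\,\tilde{\ts}\,\widehat{CFD}(Y_2)$. First I would choose provincially admissible bordered Heegaard diagrams $\mathcal{H}_1$ for $Y_1$ and $\mathcal{H}_2$ for $Y_2$ whose boundary pointed matched circles are compatibly parametrized models of $F$ and $-F$. Gluing along the boundary produces an ordinary, admissible Heegaard diagram $\mathcal{H}=\mathcal{H}_1\cup\mathcal{H}_2$ for $Y_1\cup_F Y_2$, and $\widehat{CF}(\mathcal{H})$ can be computed directly from it.

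Next I would identify generators. A generator of $\widehat{CF}(\mathcal{H})$ is a tuple of $\alpha\cap\beta$ points which splits canonically into a pair $(\mathbf{x},\mathbf{y})$ with $\mathbf{x}\in\widehat{CFA}(\mathcal{H}_1)$ and $\mathbf{y}\in\widehat{CFD}(\mathcal{H}_2)$. The occupancy condition on strands meeting the parametrized boundary translates exactly into the complementary idempotent matching that defines a nonzero simple tensor $\mathbf{x}\ts\mathbf{y}$ in $\widehat{CFA}(Y_1)\bts_{\mathcal{A}(F)}\widehat{CFD}(Y_2)$, yielding a bijection on generators.

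The substantive step is matching differentials. I would stretch the neck along the cut surface $F$ and apply an SFT-style compactness argument, so that holomorphic disks in $\mathcal{H}$ counted by $\partial$ degenerate into pairs: a curve in $\mathcal{H}_1$ with an ordered sequence of Reeb chord punctures $(\rh{i_1},\ldots,\rh{i_n})$ on its east boundary, together with a curve in $\mathcal{H}_2$ with the matching Reeb chord punctures on its west boundary. The former contributes to the type A operation $m_{n+1}(\mathbf{x};\rh{i_1},\ldots,\rh{i_n})$ and the latter produces the corresponding output of the type D structure on $\mathbf{y}$. Summing over all such broken configurations reproduces exactly the $A_\infty$ tensor product differential, giving the first quasi-isomorphism. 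The box-tensor formulation then follows by passing one of the bordered modules to a bounded homotopy equivalent model, where the iterated sum defining $\tilde{\ts}$ collapses to the finite sum defining $\bts$.

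The hard part will be the analytic neck-stretching argument: one needs a compactness theorem that identifies the limiting broken curves, a gluing theorem supplying, for every matched pair, an honest embedded holomorphic disk in $\mathcal{H}$ for sufficiently stretched neck, and a transversality / orientation scheme under which the signed counts agree on the nose. Carefully distinguishing provincial from non-provincial curves, and controlling degenerations that drop height in the symplectization of $\partial F$, is what ultimately produces the $A_\infty$–$D$ pairing rather than just a chain-level statement; this is where the bulk of the technical work would lie.
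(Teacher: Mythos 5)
This theorem is quoted in the paper as background from Lipshitz--Ozsv\'{a}th--Thurston \cite{bord} and is not proved here, so there is no in-paper argument to compare against. Your outline --- gluing bordered Heegaard diagrams, matching generators via complementary idempotents, degenerating holomorphic curves along the neck into matched $A$-side/$D$-side pairs with Reeb chord punctures, and passing to a bounded model to replace $\tilde{\otimes}$ by $\boxtimes$ --- is essentially the strategy of the cited proof (with the caveat that LOT implement the curve-matching via a ``time dilation'' deformation in addition to neck-stretching, which is where the higher $A_\infty$ operations are organized).
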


One natural question to ask is what happens to $\widehat{CFD}$ modules when the boundary parametrization of a bordered manifold changes. We prove the following result:
\begin{thm}\label{main}
Given a knot $K \subset S^3$, let $X$ be the knot complement with boundary parametrization $\phi: T^2 \to \partial X$. Let $h : T^2\to T^2$ be the elliptic involution on the torus. We have
\[
    \widehat{CFD} (X, \phi) \simeq \widehat{CFD} (X, \phi\circ h).
\]
\end{thm}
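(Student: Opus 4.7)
The elliptic involution $h$ is not isotopic to the identity on $T^2$, so it induces a nontrivial type-DA bimodule $\widehat{CFDA}(h)$ over the torus algebra $\mathcal{A}(T^2)$, and by the pairing theorem
$$
\widehat{CFD}(X, \phi \circ h) \simeq \widehat{CFDA}(h) \boxtimes \widehat{CFD}(X, \phi).
$$
The plan is to make the right-hand side sufficiently explicit and then verify that it is homotopy equivalent to $\widehat{CFD}(X, \phi)$ using structural features of $\widehat{CFD}$ that are special to knot complements in $S^3$.

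First, I would compute $\widehat{CFDA}(h)$ by factoring $h$ as a product of Dehn twists, for instance using $-I = (ST)^3$ in $SL_2(\mathbb{Z})$, and applying the DA bimodules for the generators $S$ and $T$ given in \cite{bord}. After simplification, the resulting bimodule should reflect the underlying symmetry $(x,y) \mapsto (-x,-y)$ on $T^2$: at the algebraic level, I expect it to exchange $\rho_1$ and $\rho_3$ and act in a controlled way on idempotents and on $\rho_2$.

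Next, I would invoke the structural form of $\widehat{CFD}$ for knot complements from \cite{bord}: as an $\mathcal{A}(T^2)$-module, $\widehat{CFD}(X, \phi)$ has a preferred basis indexed by generators of $\widehat{CFK}(K)$, with ``horizontal'' $\rho_1$-chains and ``vertical'' $\rho_3$-chains extracted from the filtered structure of knot Floer homology, together with an unstable chain whose length and shape are controlled by the framing implicit in $\phi$. Tensoring with $\widehat{CFDA}(h)$ should produce a module of the same form but with the horizontal and vertical chains swapped and the unstable chain suitably reflected. The conjugation symmetry of knot Floer homology for $K \subset S^3$ (which identifies $\widehat{CFK}(K)$ with its filtration-swapped dual) then provides the explicit homotopy equivalence between the two models.

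The main obstacle will be the unstable chain: its form changes qualitatively depending on the sign and magnitude of the framing $n$, and the interaction of the $h$-twist with the conjugation symmetry on that chain is not immediately transparent. I anticipate that the argument requires a case analysis by framing, together with explicit chain homotopies constructed separately in each case. This step is also where the hypothesis $K \subset S^3$ becomes essential---for knot complements in general 3-manifolds, no direct analogue of the $\widehat{CFK}$ conjugation symmetry is available, so the statement of the theorem would not be expected to hold in that generality.
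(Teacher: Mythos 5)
Your proposal follows essentially the same route as the paper: apply the pairing theorem, compute $\widehat{CFDA}(h)$ from its Dehn-twist factorization, observe that box-tensoring swaps the horizontal and vertical chains in the $CFK^-$-to-$\widehat{CFD}$ algorithm, and close the loop with the symmetry of $CFK^-$ under exchanging the two filtrations. The one divergence is the unstable chain: instead of a case analysis over framings, the paper proves the equivalence for a single sufficiently negative framing and then propagates it to all framings via $h\circ\tau_{\mu}=\tau_{\mu}\circ h$, conjugating the established equivalence past meridional twists.
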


One application of this result is to study 3-manifold mutations. It trivially follows that
\begin{prop}
A mutation using an elliptic involution is not detected by $\widehat{HF}$ when either one of the two manifold with boundary is a knot complement.
\end{prop}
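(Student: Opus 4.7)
The plan is to derive the proposition as a direct consequence of the bordered pairing theorem (Theorem~1) combined with the main theorem (Theorem~\ref{main}). I would start by setting up the mutation in bordered terms: write the closed ambient $3$-manifold as $Y = Y_1 \cup_F Y_2$, cut along the mutation torus $F$, with parametrizations $\phi_i \colon T^2 \to \partial Y_i$; and assume without loss of generality that $Y_2$ is a knot complement in $S^3$. The mutant $Y^h$ is then obtained by regluing after precomposing $\phi_2$ with the elliptic involution $h$ (equivalently, precomposing $\phi_1$ with $h$, since $h$ is an involution).

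Next, I would apply Theorem~1 to both gluings:
\[
\widehat{CF}(Y) \simeq \widehat{CFA}(Y_1,\phi_1) \,\bts\, \widehat{CFD}(Y_2,\phi_2), \qquad \widehat{CF}(Y^h) \simeq \widehat{CFA}(Y_1,\phi_1) \,\bts\, \widehat{CFD}(Y_2,\phi_2 \circ h).
\]
Theorem~\ref{main} identifies the two type~D modules up to homotopy equivalence, and box-tensoring on the left with the same $\widehat{CFA}(Y_1,\phi_1)$ preserves this equivalence. Passing to homology then gives $\widehat{HF}(Y) \cong \widehat{HF}(Y^h)$, which is exactly the claim.

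I do not expect any substantive obstacle, in line with the author's description of the proposition as ``trivially'' following from the main theorem; essentially all the content has already been absorbed into Theorem~\ref{main}. The one minor bookkeeping point is the ``either one'' clause: if the knot complement happens to be $Y_1$ rather than $Y_2$, I would interchange the type~A and type~D roles in the pairing theorem so that the knot complement again sits on the type~D side, and then apply Theorem~\ref{main} to that side. With this relabeling the argument is symmetric and the proposition follows in both cases.
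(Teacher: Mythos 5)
Your proposal is correct and is exactly the argument the paper intends: the paper offers no explicit proof, stating only that the proposition ``trivially follows'' from \cref{main} via the pairing theorem, which is precisely the pairing-plus-homotopy-equivalence argument you give. Your handling of the ``either one'' clause by placing the knot complement on the type $D$ side is the right bookkeeping and matches the paper's setup.
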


It is interesting to compare with the result in \cite{Clarkson}, where mutations using genus-2 hyperelliptic involution are studied. It is shown there that mutating by the genus-2 hyperelliptic involution can change the rank of the non-torsion summands of $\widehat{HF}$.

In this paper, we first introduce the necessary background in \cref{sec:background}, then we prove \cref{main} in \cref{sec:proof_total}, where two proofs are given. The first one in \cref{sec:proof} is simpler and more intuitive, but requires the knot to satisfy an extra mild condition. Although we are not aware of any knots that do not satisfy this condition, a general proof is given in \cref{sec:proofg}.


\section{Background}\label{sec:background}
\subsection{Knot Floer Homology}
We first review the setup of Knot Floer Homology, especially the aspects important to the purpose of this paper, following an overview in \cite{bord}. Suppose that a knot $K$ is specified by a doubly-pointed Heegaard diagram $\mathcal{H}=(\Sigma, \alpha, \beta, w, z)$, where the base points are $w$ and $z$. $CFK^-$ is a $\mathbb{Z}-$filtered chain complex generated over $\mathbb{F}_2[U]$ by $\mathfrak{S}_K$, which is the usual generators of Heegaard Floer homology from diagram $\mathcal{H}$, with the differential
\[
    \partial^-(x):= \sum_{y\in \mathfrak{S}_K} \sum_{\substack{B\in\tilde{\pi}_2(x,y)\\ \text{ind}(B)=1}}\#(\mathcal{M}^B(x,y))U^{n_w(B)}\cdot y.
\]

We summarize how Maslov grading and Alexander filtration interact with the differential.
If $B\in\tilde{\pi}(x,y)$, then
\begin{align}\label{masalex}
\begin{split}
A(x)-A(y)&=n_z(B)-n_w(B)\\
A(U\cdot x) &= A(x)-1\\
M(x)-M(y)&=\text{ind}(B)-2n_w(B)\\
M(U\cdot x) &= M(x)-2
\end{split}
\end{align}

We denote $\widehat{gCFK}(\mathcal{H},w,z)$ by $C$ and its summand in Alexander grading r by $C(r)$. Let $\partial^i:C(r) \to C(r+i)$ be the map counting holomorphic disks $\phi$ with $n_z(\phi)=-i$ and $n_w(\phi)=0$ if $i\leq 0$, and holomorphic disks $\phi$ with $n_z(\phi)=0$ and $n_w(\phi)=i$ if $i\geq 0$. Note $\partial^i$ drops (or raises) the Alexander grading by $r$. Also define $\partial_w := \sum_{i\geq 0}\partial^i$ and $\partial_z := \sum_{i\leq 0} \partial^i$.

$C$ has two filtrations, $C(\geq s):=\bigoplus_{r\geq s} C(r) $ and $C(\leq s) :=\bigoplus_{r\leq s} C(r)$, preserved by $\partial_w$ and $\partial_z$ respectively. We primarily use $(C(\leq s), \partial_w)$, defined to be the quotient of $(C, \partial_w)$ by subcomplex $(C(\geq s+1), \partial_w)$. Similarly, $(C(\geq s), \partial_z):= (C, \partial_z)/(C(\leq s-1), \partial_z)$.

\subsection{Bordered Theory}
The usage of bordered theory in this paper follows \cite{bord}, primarily Chapter 11.



\section{Proof of \cref{main} }\label{sec:proof_total}

The structure of this section is as follows. In \cref{sec:proof_idea} and \cref{sec:himodule}, we lay the ground work for proving \cref{main}, where we argue that \cref{flip} implies \cref{main}. In \cref{sec:proof}, we first prove \cref{flip} with an extra mild assumption, and then prove it in general in \cref{sec:proofg}.

\subsection{Main idea of the proof}\label{sec:proof_idea}

The proof of \cref{main} relies on the fact that the $\widehat{CFD}$ invariant of a knot complement can be extracted from $CFK^-$ of the knot in a purely algorithmic fashion. We include two theorems describing this process from \cite{bord}. Specifically, \cref{alg2} is the the general procedure, while \cref{alg} gives us a simpler algorithm by specifying bases of $CFK^-$. \cref{alg} will help us arrive at a simpler proof of \cref{main} at the expense of one extra assumption in this section. In \cref{sec:proofg}, we prove \cref{main} without the assumption using \cref{alg2}.

We interpret the result in terms of arrows for the convenience of our later proof.

\begin{thm}[11.35, A.11 \cite{bord}]\label{alg2}
Let $K\subset S^3$ be a knot with meridian $\mu$ and 0-framed longitude $\lambda$. Given a large enough positive integer $n$, $\widehat{CFD}(S^3\setminus nbd(K))$, with framing $-n$, denoted by $\widehat{CFD}$ for convenience, can be described by the following.
\[
\iota_0\widehat{CFD} := \bigoplus_{s\in \mathbb{Z}} V^0_s ~~~~~~~~ \text{and} ~~~~~~~~\iota_1\widehat{CFD} := \bigoplus_{s\in\mathbb{Z}+\frac{n+1}{2}} V^1_s,
\]
where
\[
V^0_s:=C(s)
\]
\[
V^1_s:=
  \begin{cases}
   C(\leq s+\frac{n-1}{2}) & \text{if } s \leq -\frac{n}{4} \\
   \mathbb{F}_2           & \text{if } |s| < \frac{n}{4}\\
   C(\geq s-\frac{n-1}{2}) & \text{if } s \geq \frac{n}{4}
  \end{cases}
\]
The differentials are described below:
\begin{itemize}
\item Arrows with only idempotents:
  \begin{itemize}
  \item within $V^0_s=C(s)$, they are $\partial^0$ from the knot complex; 
  \item within $V^1_s=C(\leq s+\frac{n-1}{2})$, when $s \leq -\frac{n}{4}$, they are the same as $\partial_w$;
  \item within $V^1_s=C(\geq s-\frac{n-1}{2})$, when $s \geq \frac{n}{4}$, they are the same as $\partial_z$;
  \item within $V^1_s=\mathbb{F}_2 $, when $|s| < \frac{n}{4}$, there are none.
  \end{itemize}
  
\item Arrows with $\rh{1}$:
  \begin{itemize}
  \item from $V^0_s=C(s)$ to $V^1_{s+\frac{n-1}{2}} = C(\geq s)$, they are the same as the inclusion of the subcomplex.
  \end{itemize}

\item Arrows with $\rh{2}$:
  \begin{itemize}
  \item from $V^1_s=C(\leq s+\frac{n-1}{2})$ to $V^0_{s+\frac{n+1}{2}} = C( s+\frac{n+1}{2})$, they are the same as the composition of maps: 
  \[
      \pi\circ\partial_w:C(\leq s+\frac{n-1}{2}) \to C(s+\frac{n+1}{2}),
  \]
  where $\pi:C\to C(s+\frac{n+1}{2})$ is the projection.
  \end{itemize}
  
  \item Arrows with $\rh{3}$:
  \begin{itemize}
  \item from $V^0_s=C(s)$ to $V^1_{s-\frac{n-1}{2}} = C(\leq s)$, they are the same as the inclusion of the subcomplex.
  \end{itemize}
  
  \item There are no arrows with $\rh{12}$.
  
  \item Arrows with $\rh{23}$:
  \begin{itemize}
  \item for $s< -\frac{n}{4}$, from $V^1_s=C(\leq s+\frac{n-1}{2})$ to $V^1_{s+1} = C(\leq s+\frac{n+1}{2})$, they are the same as the inclusion of the subcomplex;
  \item for $s\leq -\frac{n}{4}<s+1$, from $V^1_s=(C,\partial_w)$ to $V^1_{s+1}=\mathbb{F}_2$, they are the same as a chain map inducing an isomorphism in homology;
  \item for $|s|< \frac{n-2}{4}$, from $V^1_s=\mathbb{F}_2$ to $V^1_{s+1}=\mathbb{F}_2$, it is the unique isomorphism;
  \item for $s< \frac{n}{4}\leq s+1$, from $V^1_s=\mathbb{F}_2$ to $V^1_{s+1}=(C,\partial_z)$, they are the same as a chain map inducing an isomorphism in homology;
  \item for $s>\frac{n}{4}$, from $V^1_s=C(\geq s-\frac{n-1}{2})$ to $V^1_{s+1} = C(\geq s-\frac{n+1}{2})$, they are the same as the projection map.
  \end{itemize}
  
  \item Arrows with $\rh{123}$:
  \begin{itemize}
  \item from $V^0_s=C(s)$ to $V^1_{s+\frac{n+1}{2}}=$, they are the same as the composition of maps: 
  \[
     i \circ \partial_w^1:C(s) \to (C(\geq s+1),\partial_z),
  \]
  where $i:C(s+1)\to (C(\geq s+1),\partial_z)$ is the inclusion.
  \end{itemize}
\end{itemize}
\end{thm}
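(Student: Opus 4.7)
The plan is to build an explicit bordered Heegaard diagram $\mathcal{H}_n$ for the $-n$-framed knot complement starting from a doubly-pointed diagram $\mathcal{H}=(\Sigma,\alpha,\beta,w,z)$ for $(S^3,K)$. The construction removes small neighborhoods of $w$ and $z$ and joins them through an annular ``winding region'' carrying two $\alpha$-arcs representing $\mu$ and $\lambda$; to realize framing $-n$ one lets the longitude arc wind $n$ times before closing up. This confines the entire $n$-dependence of the invariant to a small local region that can be analyzed combinatorially, while the original $\alpha,\beta$ curves contribute the familiar knot Floer data.

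Next I would enumerate generators. A generator of $\widehat{CFD}(\mathcal{H}_n)$ is a matching $(x,p)$ where $x\in\mathfrak{S}_K$ records the original intersections and $p$ is an intersection in the winding region. For large $n$, the $\iota_0$ piece admits exactly one valid companion $p$ for each $x$, partitioned by Alexander grading; this gives $V^0_s=C(s)$. In $\iota_1$ the count of companions depends on $s$: for $|s|<n/4$ only a single ``short'' winding strand is available, yielding $V^1_s=\mathbb{F}_2$, whereas for $s\leq -n/4$ (resp.\ $s\geq n/4$) the companion sweeps across the full subcomplex $C(\leq s+\tfrac{n-1}{2})$ (resp.\ $C(\geq s-\tfrac{n-1}{2})$), matching the stated formulas.

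The differentials then follow by classifying holomorphic disks according to which Reeb chords on $\partial\Sigma$ they cover. Disks that remain in the interior reproduce $\partial^0$, $\partial_w$, or $\partial_z$ depending on the region, giving the idempotent-only arrows. Disks covering a single chord $\rho_i$ correspond to strips that advance by one winding step and, by matching local models, are identified with inclusions, projections, or $\pi\circ\partial_w$ as claimed. Disks covering $\rho_{23}$ traverse one full winding and produce subcomplex inclusions or projections in the outer ranges, while the $\rho_{123}$ arrows come from disks that combine a $\partial_w^1$ contribution in the knot region with a full winding in the boundary region, landing in $(C(\geq s+1),\partial_z)$ via inclusion.

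The hard part will be the transition at $|s|=n/4$, where $V^1_s$ changes from a subcomplex of $C$ to $\mathbb{F}_2$. To justify that the $\rho_{23}$ arrow at the boundary of this middle range is a chain map inducing an isomorphism in homology, I would combine the large-framing surgery exact triangle with the observation that the winding strands in the middle range collapse onto a single homology class once high-Alexander contributions are killed. A secondary difficulty is ruling out $\rho_{12}$ arrows: this requires checking that the cyclic order of Reeb chords on $\partial\Sigma$ and the placement of $w,z$ make it geometrically impossible for a single disk to cover $\rho_1$ followed by $\rho_2$, which is a local combinatorial check on the winding region rather than a Floer-theoretic computation.
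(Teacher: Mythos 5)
You should first note that the paper does not prove this statement at all: it is imported verbatim (with notational repackaging) from Theorems 11.35 and A.11 of \cite{bord}, so there is no in-paper proof to compare against, and you are in effect volunteering to reprove a substantial theorem of Lipshitz--Ozsv\'ath--Thurston. Your strategy --- build a bordered diagram from the doubly-pointed diagram by opening up neighborhoods of $w$ and $z$ into a winding region, enumerate generators as pairs $(x,p)$ stratified by Alexander grading, and classify rigid holomorphic curves by their Reeb chord asymptotics --- is exactly the strategy of \cite{bord}, Chapter 11 and Appendix A, so there is no methodological novelty to weigh; the question is only whether your sketch actually carries the load. It does not: the identification of the coefficient maps $D_1, D_2, D_3, D_{23}, D_{123}$ with the specific inclusions, projections, and compositions such as $\pi\circ\partial_w$ and $i\circ\partial_w^1$ \emph{is} the content of the theorem, and in your write-up it appears as ``by matching local models, are identified with\dots as claimed.'' That matching --- showing that the only rigid curves crossing the winding region are the expected ones, with the expected multiplicities, and that the Alexander-grading bookkeeping places their endpoints in the stated summands $V^1_{s\pm\frac{n\pm1}{2}}$ --- is the several-pages-long computation in the source, and nothing in your proposal substitutes for it.

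Two specific points are wrong or underpowered as stated. First, the quasi-isomorphism claims for the $\rho_{23}$ arrows entering and leaving the middle range $|s|<\frac{n}{4}$ do not come from ``the large-framing surgery exact triangle''; they come from the fact that $(C,\partial_w)$ and $(C,\partial_z)$ each compute $\widehat{HF}(S^3)\cong\mathbb{F}_2$, so that replacing the middle copies by $\mathbb{F}_2$ is a homotopy equivalence of type $D$ structures, with the boundary maps necessarily inducing isomorphisms on homology; invoking a surgery triangle here is circular, since the large-surgery formula is itself derived from this computation. Second, the absence of $\rho_{12}$ arrows is not a purely local geometric impossibility in the winding region (domains with $\rho_{12}$ asymptotics do exist); it is ruled out by the grading/Alexander-filtration constraints once the rest of the structure is pinned down, together with the structure equation for type $D$ modules. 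As a review: your outline is the correct high-level road map, but every step where the theorem could fail is asserted rather than argued, so this is a citation of \cite{bord} rewritten as a proof sketch, not an independent proof.
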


The following is the same procedure described in terms of bases of $CFK^-(K)$, which turns out to be more intuitive.

\begin{thm}[11.27, A.11 \cite{bord}]\label{alg}
Given a knot $K \subset S^3$, let $X$ be the knot complement with boundary parametrization $\phi: T^2 \to \partial X$ corresponding to an integral framing $n$. Let $CFK^-(K)$ be a reduced model for the knot Floer complex of $K$ admitting a basis $\{\xi_i\}$ which is simultaneously vertically and horizontally simplified. Suppose $\xi_v$ (respectively $\xi_h$) is the generator of $CFK^-(K)$ which has no in-coming or out-going vertical (respectively horizontal) arrows. Apply the following procedure to get $\widehat{CFD}(X, \phi)$:

$\iota_0\widehat{CFD}(X, \phi)$ is identified with $CFK^-(K)$ as an $\mathbb{F}_2$-module. Denote the corresponding generators of $\iota_0\widehat{CFD}(X, \phi)$ by $\xi_1', \xi_2', \dots$.

For each vertical arrow of length $\ell$ from $\xi_i$ to $\xi_{i+1}$, we associate a string of basis elements $\kappa_{1}^{i}, \kappa_{2}^{i}, \dots, \kappa_{\ell}^{i} $ for $\iota_1\widehat{CFD}(X, \phi)$ and differentials among them: 
\[
\xi_i'\xrightarrow{\rho_1}\kappa^{i}_{1}\xleftarrow{\rho_{23}}\cdots\xleftarrow{\rho_{23}}\kappa^{i}_{k}\xleftarrow{\rho_{23}}\kappa^{i}_{k+1}\xleftarrow{\rho_{23}}\cdots\xleftarrow{\rho_{23}}\kappa^{i}_{\ell}\xleftarrow{\rho_{123}}\xi_{i+1}'
\] 

For each horizontal arrow of length $\ell$ from $\xi_j$ to $\xi_{j+1}$, we associate a string of basis elements $\lambda_{1}^{j}, \lambda_{2}^{j}, \dots, \lambda_{\ell}^{j} $ for $\iota_1\widehat{CFD}(X, \phi)$ and differentials among them: 
\[
\xi_j'\xrightarrow{\rho_3}\lambda^{j}_{1}\xrightarrow{\rho_{23}}\cdots\xrightarrow{\rho_{23}}\lambda^{j}_{k}\xrightarrow{\rho_{23}}\lambda^{j}_{k+1}\xrightarrow{\rho_{23}}\cdots\xrightarrow{\rho_{23}}\lambda^{j}_{\ell}\xrightarrow{\rho_{2}}\xi_{j+1}'
\] 

We include one more string of generators and differentials called the unstable chain, depending on framing $n$. When $n<2\tau(K)$, it takes form:
\[
\xi_v'\xrightarrow{\rho_1}\mu_{1}\xleftarrow{\rho_{23}}\cdots\xleftarrow{\rho_{23}}\mu_{k}\xleftarrow{\rho_{23}}\mu_{k+1}\xleftarrow{\rho_{23}}\cdots\xleftarrow{\rho_{23}}\mu_{m}\xleftarrow{\rho_{3}}\xi_{h}',
\] 
where $m=2\tau(K)-n$. When $n=2\tau(K)$, it takes form: 
\[
\xi_v'\xrightarrow{\rho_{12}}\xi_{h}'.
\] 
When $n>2\tau(K)$, it takes form, 
\[
\xi_v'\xrightarrow{\rho_{123}}\mu_{1}\xrightarrow{\rho_{23}}\cdots\xrightarrow{\rho_{23}}\mu_{k}\xrightarrow{\rho_{23}}\mu_{k+1}\xrightarrow{\rho_{23}}\cdots\xrightarrow{\rho_{23}}\mu_{m}\xrightarrow{\rho_{2}}\xi_{h}',
\] 
where $m=n-2\tau(K)$.
\end{thm}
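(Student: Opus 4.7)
The plan is to derive \cref{alg} from the more general \cref{alg2} by specializing to a simultaneously vertically and horizontally simplified basis $\{\xi_i\}$ of $CFK^-(K)$. In such a basis each generator either has a unique outgoing vertical arrow of some length or is the vertical cycle $\xi_v$, and similarly for horizontal arrows; consequently $\partial^0$ vanishes on the basis and $\partial_w$, $\partial_z$ decompose into disjoint chains, one for each vertical or horizontal arrow. This replaces the abstract chain-level data of \cref{alg2} with concrete combinatorial data matching the form asserted in \cref{alg}.

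The bulk of the proof is a dictionary translation. On the $\iota_0$ side, $\bigoplus_s V^0_s=\bigoplus_s C(s)$ is identified with $\bigoplus_i \mathbb{F}_2\,\xi_i'$, and the intra-$V^0_s$ differentials predicted by \cref{alg2} vanish because $\partial^0=0$ on the simplified basis. On the $\iota_1$ side, each $V^1_s$ of the form $C(\leq s+\tfrac{n-1}{2})$ or $C(\geq s-\tfrac{n-1}{2})$ is decomposed along the simplified basis, with the intra-$V^1_s$ arrows ($\partial_w$ or $\partial_z$) contributing exactly the length-one pieces of the vertical or horizontal chains. For a vertical arrow of length $\ell$ from $\xi_i$ to $\xi_{i+1}$, the $\rho_1$-inclusion at one end, the iterated $\rho_{23}$-inclusions of successive $C(\leq\ast)$'s, and the $\rho_{123}$-arrow (encoding $\partial^1_w$) at the other end assemble into the string $\xi_i'\xrightarrow{\rho_1}\kappa_1^i\xleftarrow{\rho_{23}}\cdots\xleftarrow{\rho_{23}}\kappa_\ell^i\xleftarrow{\rho_{123}}\xi_{i+1}'$. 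The horizontal strings arise symmetrically from $\rho_3$, $\rho_{23}$, and $\rho_2$ together with the $C(\geq\ast)$-projections.

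The main obstacle is the unstable chain arising in the middle region $|s|<n/4$, where each $V^1_s\cong\mathbb{F}_2$. Here \cref{alg2} specifies the boundary arrows only up to quasi-isomorphism, so one must pick explicit chain maps from $(C,\partial_w)$ and into $(C,\partial_z)$ inducing the required isomorphisms on homology. The simplified basis supplies canonical choices, namely the maps sending the basis to the vertical cycle $\xi_v$ (the unique generator of $H_*(C,\partial_w)$) and the horizontal cycle $\xi_h$ (the unique generator of $H_*(C,\partial_z)$). The number of intermediate $\rho_{23}$-arrows then equals $m=|n-2\tau(K)|$, and the orientation of the chain---determining whether $\rho_1,\rho_{123}$ appear on the left and $\rho_3,\rho_2$ on the right, or vice versa---follows from the sign of $n-2\tau(K)$; the borderline case $n=2\tau(K)$ forces $m=0$ and collapses the chain to the single $\rho_{12}$-arrow $\xi_v'\to\xi_h'$.
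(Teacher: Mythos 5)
The paper does not prove this statement at all: it is imported verbatim as Theorem 11.27 / A.11 of \cite{bord} (just as \cref{alg2} is imported as Theorem 11.35), so there is no in-paper proof to compare your argument against. Your strategy --- deriving the basis-dependent algorithm from the coefficient-map description by specializing to a simultaneously simplified basis --- is the standard route and is sound in outline, but as written it has three gaps worth naming.

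First, the passage from the columns $V^1_s$ of \cref{alg2} to the strings $\kappa^i_1,\dots,\kappa^i_\ell$ is not a ``dictionary translation'': each $V^1_s$ is an entire truncated complex $C(\le s+\tfrac{n-1}{2})$ or $C(\ge s-\tfrac{n-1}{2})$ containing many generators, and one must cancel the internal $\partial_w$- or $\partial_z$-differentials. The content is that in a simplified basis each two-step subcomplex $\xi_i\to\xi_{i+1}$ appears in a given column either fully (and cancels) or with only one of its two generators (and survives), and the number of columns in which it survives is exactly the length $\ell=|A(\xi_i)-A(\xi_{i+1})|$; that count is where the strings of length $\ell$ come from, and you assert it rather than prove it. Second, the formula $m=|n-2\tau(K)|$ for the unstable chain is stated without tying $\tau(K)$ to anything: one needs that the surviving generator of $H_*(C,\partial_w)$ (resp.\ $H_*(C,\partial_z)$) sits in Alexander grading determined by $\tau$, which is essentially the definition of $\tau$, and then a count of the $\mathbb{F}_2$-columns in the region $|s|<n/4$ together with the positions of $\xi_v$ and $\xi_h$ in the two adjacent full copies. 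Third, \cref{alg2} as stated in this paper applies only to framing $-n$ with $n$ a large positive integer, i.e.\ to framings far below $2\tau(K)$, so only the first case of the unstable chain is directly reachable; the cases $n=2\tau(K)$ and $n>2\tau(K)$ require an additional change-of-framing step (e.g.\ tensoring with $\widehat{CFDA}(\tau_\mu)$, as the paper does elsewhere for other purposes), which your sketch does not mention.
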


We then layout some basic definitions for our proof.

\begin{defi}
Denote the type $D$ module resulted from applying the procedures in \cref{alg2} or \cref{alg} to a complex $C$ by $KtD(C)$, standing for $CF\textbf{K}^-$ \textbf{t}o $\widehat{CF\textbf{D}}$. 
\end{defi}

\begin{defi}\label{flipdef}
Given a model $C$ for $CFK^-$ of a knot $K \subset S^3$ endowed with Alexander filtration $A$ and Maslov grading $M$, we defined the flipped complex $C^{flip}$ resulted from switching horizontal and vertical arrows of $C$ as follows:
\begin{itemize}
\item[] We take $\cf \cong C$ as a $\mathbb{F}_2[U]$-module, i.e. there is an isomorphism $C \to \cf$ taking a generator $x$ of $C$ to a generator $\fl{x}$ of $\cf$.
\item[] Define a Maslov grading on $\cf$ by $M(\fl{x}) = M(x)-2A(x)$
\item[] Define an Alexander filtration on $\cf$ by $A(\fl{x}) = -A(x)$.
\item[] For each arrow in $C$ of the form $x\to U^r y$ with $A(x)-A(y)=s$, we assign an arrow in $\cf$ of the form $\fl{x}\to U^{r+s}\fl{y}$.
\end{itemize}
\end{defi}

\begin{lemma}\label{fliplemma}
$\cf$ constructed in \cref{flipdef} is indeed a chain complex and all horizontal and vertical arrows are switched. Furthermore, if $C$ is computed directly from a Heegaard diagram $(\Sigma, \alpha, \beta, w, z)$, then $\cf$ is isomorphic to the knot Floer complex $C'$ computed from $(\Sigma, \alpha, \beta, z, w)$ up to overall Maslov grading and Alexander filtration shifts.
\end{lemma}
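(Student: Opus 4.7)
The plan is rooted in the observation that, by the identity $A(x)-A(y) = n_z(B)-n_w(B)$ from \cref{masalex}, the flipped differential satisfies
\[
\fl{\partial}(\fl{x}) \;=\; \sum_{B} \#(\mathcal{M}^{B})\cdot U^{n_z(B)} \fl{y},
\]
since the exponent $r+s$ in \cref{flipdef} equals $n_w(B)+(n_z(B)-n_w(B)) = n_z(B)$. This rewriting is the engine of all three assertions: it exhibits $\cf$ as structurally identical to the knot Floer complex one would compute directly from $(\Sigma,\alpha,\beta,z,w)$, where the roles of $w$ and $z$ are exchanged.

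\medskip

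First I would verify $(\fl{\partial})^2 = 0$ by iterating this formula. For fixed generators $x,z$, the coefficient of $U^{R'}\fl{z}$ in $(\fl{\partial})^2(\fl{x})$ is the mod-$2$ count of composable pairs $(B_1,B_2)$ from $x$ through some $y$ to $z$ with $n_z(B_1)+n_z(B_2)=R'$. Since
\[
n_z(B_1)+n_z(B_2) \;=\; \bigl(n_w(B_1)+n_w(B_2)\bigr)+\bigl(A(x)-A(z)\bigr),
\]
and the shift $A(x)-A(z)$ depends only on the endpoints, this count equals the coefficient of $U^{R'-(A(x)-A(z))}\,z$ in $\partial^2(x)$, which is zero. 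For the arrow-switching statement, the key point is that under the rewriting, the $U$-exponent of each arrow in $\cf$ records the original $n_z(B)$: so vertical arrows in $C$ (those with $n_w = 0$, whose $U$-exponent in $\cf$ then equals the new $A^{flip}$-drop up to sign) become horizontal in $\cf$, while horizontal arrows in $C$ (those with $n_z = 0$, hence with $U^0$-coefficient in $\cf$) become vertical.

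\medskip

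Finally, to identify $\cf$ with the complex $C'$ coming from $(\Sigma,\alpha,\beta,z,w)$, I would use the identification $\fl{x}\mapsto x$ on generators; the rewritten formula for $\fl{\partial}$ is literally the definition of $\partial'$ from the swapped diagram, so the differentials match on the nose. Applying \cref{masalex} to the swapped diagram gives $M'(x)-M'(y)=\mathrm{ind}(B)-2n_z(B)$ and $A'(x)-A'(y)=n_w(B)-n_z(B)$, and a direct check shows these agree with $M(\fl x)-M(\fl y)$ and $A(\fl x)-A(\fl y)$ respectively; hence $M'-\fl{M}$ and $A'-\fl{A}$ are each constant on the complex, yielding the claimed overall grading and filtration shifts. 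I do not foresee any real obstacle: the entire lemma reduces to careful bookkeeping of $n_w$, $n_z$, and Alexander data, and the main task is simply to arrange conventions so that the identity $r+s = n_z$ is visible from the outset.
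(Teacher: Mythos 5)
Your proposal is correct and follows essentially the same route as the paper's proof: the key identity $r+s=n_z(B)$, the matching of two-step-path counts (shifted by the endpoint-dependent constant $A(x)-A(z)$) to get $(\fl{\partial})^2=0$, the swap of the two filtration drops for the horizontal/vertical statement, and the fact that relative Maslov/Alexander data determine the absolute gradings up to overall translation. The only cosmetic point is that the first claim of the lemma concerns an arbitrary reduced model $C$, so the $\partial^2=0$ and arrow-switching arguments should be phrased in terms of the abstract data $(r,s)$ of each arrow rather than $n_w(B),n_z(B)$ of a holomorphic disk; the bookkeeping is identical.
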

\begin{proof}
We begin by observing that arrow $x\to U^r y$ with $A(x)-A(y)=s$ drops Alexander filtration by $A(x)-A(U^r y) = r+s$ and $U$-filtration by $r$, by \cref{masalex}. The new arrow in $\cf$ of form $\fl{x}\to U^{r+s}\fl{y}$ drops Alexander filtration by $A(\fl{x})-A(U^{r+s}\fl{y})=-(A(x)-A(y))+r+s=r$, and the $U$-filtration by $r+s$. Graphically, the arrow in $C$ represented by a vector $(-r-s,-r)$ is now assigned an arrow in $\cf$ represented by $(-r,-r-s)$. For the Maslov grading, the new arrow drops it by \begin{align*}
\begin{split}
M(\fl{x})-M(U^{r+s}\fl{y})&=M(\fl{x})-M(\fl{y})+2r+2s\\
    &=(M(x)-M(y))-2(A(x)-A(y))+2r+2s\\
    &=M(x)-M(y)+2r=M(x)-M(U^ry)=1,
\end{split}
\end{align*}
which means the Maslov grading on $\cf$ is respected by the differential. $\partial^2 =0 $ is obvious as it is equivalent to there being even number of two-step paths from any given generator to any other generator in $\cf$. This is true for $C$ and graphically arrows in $\cf$ are just those in $C$ flipped across the diagonal.

Now suppose $C$ is computed directly from a Heegaard diagram $(\Sigma, \alpha, \beta, w, z)$. We denote the knot Floer complex computed from $(\Sigma, \alpha, \beta, z, w)$ by $C'$. Generators of $C'$, which we denote by $x', y',$ etc., are in one-to-one correspondence with generators $x,y,$ etc. of $C$, hence matching those of $\cf$. 

For each arrow in $C$ of form $x\to U^r y$ with $A(x)-A(y)=s$, represented by $B\in \tilde{\pi}_2(x,y)$, it follows from \cref{masalex} that $s=A(x)-A(y)=n_z(B)-n_w(B)$ and $M(x)-M(y)=\text{ind}(B)-2n_w(B)$. As $C'$ is computed from the same Heegaard diagram except with interchanged basepoints, the same holomorphic disk also contributes in $C'$, denoted by $B'\in \tilde{\pi}_2(x',y')$ satisfying $n_w(B')=n_z(B)$ and $n_z(B')=n_w(B)$. Now we have 
\begin{align}\label{tempeq1}
\begin{split}
A(x')-A(y')&=n_z(B')-n_w(B')=n_w(B)-n_z(B)=-(A(x)-A(y))\\
M(x')-M(y')&=\text{ind}(B')-2n_w(B')=\text{ind}(B)-2n_z(B)\\
           &=\text{ind}(B)-2n_w(B)-2(n_z(B)-n_w(B))\\
           &=M(x)-M(y)-2(A(x)-A(y))
\end{split}
\end{align}
Note $n_w(B')=n_z(B)=n_w(B)+s=r+s$, so there exists an arrow in $C'$ of form $x'\to U^{r+s}y'$ with $A(x')-A(y')=-s$, which agrees with our construction of $\cf$.

It is known that the values of $A(x')-A(y')$ and $M(x')-M(y')$ for all $x', y'$ and $B'\in \tilde{\pi}_2(x',y')$ completely determine the Maslov grading and Alexander filtration of $C'$ up to overall translations. So the fact that our construction of $A$ and $M$ on $\cf$ satisfies \cref{tempeq1} let us conclude that $\cf$ and $C'$ are indeed isomorphic.
\end{proof}

By Theorem 11 in \cite{bimodule}, $\widehat{CFD} (X, \phi\circ h) =\widehat{CFDA}(h) \boxtimes \widehat{CFD} (X, \phi)$. In order to prove \cref{main}, we need to compute $\widehat{CFDA}(h) \boxtimes \widehat{CFD} (X, \phi)$. To do this, we prove the following proposition, which implies \cref{main} immediately:
\begin{prop}\label{flip}
Given a knot $K \subset S^3$, let $X$ be the knot complement with boundary parametrization $\phi: T^2 \to \partial X$ corresponding to a integral framing $n\leq2\tau(K)-3$. Let $C$ be a reduced and horizontally simplified model for the knot Floer complex $CFK^-(K)$. Then $\widehat{CFDA}(h) \bts \widehat{CFD} (X, \phi)$ is isomorphic to the type $D$ module $KtD(C^{flip})$ resulted from applying the procedure in \cref{alg} to $C^{flip}$, i.e.
\[
\widehat{CFDA}(h) \bts \widehat{CFD} (X, \phi) \simeq KtD(C^{flip}).
\]
\end{prop}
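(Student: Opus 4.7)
The plan is to compute both sides of the proposed isomorphism as explicit type $D$ structures and match them generator by generator. I would begin by applying \cref{alg} to $C$ with its horizontally simplified basis $\{\xi_i\}$, reading off $\widehat{CFD}(X,\phi)$ as: $\iota_0$-generators $\xi_i'$; a $\kappa$-chain of length $\ell$ from each vertical arrow of length $\ell$; a $\lambda$-chain of length $\ell$ from each horizontal arrow of length $\ell$; and the first form of the unstable chain $\xi_v' \xrightarrow{\rho_1} \mu_1 \xleftarrow{\rho_{23}} \cdots \xleftarrow{\rho_3} \xi_h'$, with $m = 2\tau(K) - n \geq 3$ intermediate generators, guaranteed by the framing assumption $n \leq 2\tau(K) - 3$.

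Next I would use the explicit description of $\widehat{CFDA}(h)$ from \cite{bimodule} to compute $\widehat{CFDA}(h) \bts \widehat{CFD}(X,\phi)$. The elliptic involution reverses the orientation of $H_1(T^2)$, so its induced action on $\mathcal{A}(T^2)$ essentially swaps the Reeb chords $\rho_1 \leftrightarrow \rho_3$ (and adjusts $\rho_{23}$, $\rho_{12}$, $\rho_{123}$ accordingly). Box-tensoring therefore transforms each arrow of $\widehat{CFD}(X,\phi)$ into an arrow with swapped labels, turning $\kappa$-chains into $\lambda$-chains (and vice versa) and flipping the unstable chain. Independently I would apply \cref{alg} to $C^{flip}$ to obtain $KtD(C^{flip})$. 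By \cref{fliplemma}, flipping interchanges vertical and horizontal arrows, so each length-$\ell$ vertical arrow of $C$ contributes a length-$\ell$ $\lambda$-chain to $KtD(C^{flip})$ and each horizontal arrow contributes a $\kappa$-chain; moreover the vertically unstable generator of $C^{flip}$ is $\fl{\xi_h}$ and the horizontally unstable one is $\fl{\xi_v}$, so the two unstable chains are built on corresponding pairs of generators.

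The final step is to exhibit the explicit bijection of generators and verify that every arrow matches. For the $\kappa$- and $\lambda$-chains this is essentially formal from the above identification; the main obstacle will be the unstable chain. One must track how the elliptic involution transforms the framing $n$ and how $\tau(C^{flip})$ relates to $\tau(K)$, and then check that the unstable chain produced by the tensor product agrees in length, direction, and Reeb-chord labeling with the one produced by \cref{alg} on $C^{flip}$. The bound $n \leq 2\tau(K) - 3$ is chosen to keep both computations inside the first unstable-chain case of \cref{alg} with at least three intermediate $\mu_k$, avoiding the $n = 2\tau(K)$ and near-$2\tau(K)$ degenerate forms and making the chain-by-chain comparison uniform.
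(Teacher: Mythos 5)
There is a genuine gap at the heart of your argument: you treat $\widehat{CFDA}(h)$ as if it were the graph bimodule of an algebra automorphism swapping $\rho_1 \leftrightarrow \rho_3$, so that box-tensoring ``transforms each arrow of $\widehat{CFD}(X,\phi)$ into an arrow with swapped labels.'' That is not what this bimodule is. The elliptic involution is not isotopic to the identity, and $\widehat{CFDA}(h)$ must be computed as the six-fold box tensor product $(\widehat{CFDA}(\tau_\mu)\bts\widehat{CFDA}(\tau_\lambda))^{\bts 3}$; after reduction it has eight generators (three in idempotent $\iota_0$, five in $\iota_1$) together with internal differentials and higher $A_\infty$ actions. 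Consequently $\widehat{CFDA}(h)\bts\widehat{CFD}(X,\phi)$ has three or five times as many generators as $\widehat{CFD}(X,\phi)$, and the entire content of the proof is the cancellation argument that reduces this large complex to $KtD(\cf)$: one must check locally (on strings of $\rho_{23}$'s, on each of the four types of joints where a vertical and a horizontal chain meet at some $\xi_i'$, and on the unstable chain) which arrows can be cancelled, and then verify that these local cancellations are mutually compatible when joints overlap. Your proposal skips all of this. A symptom that the ``relabeling'' picture cannot be right: in $KtD(C)$ the $\rho_{23}$-strings attached to vertical arrows point \emph{toward} $\xi_i'$ while those attached to horizontal arrows point \emph{away} from it, so matching $\kappa$-chains of $KtD(C)$ with $\lambda$-chains of $KtD(\cf)$ requires \emph{reversing the direction} of every $\rho_{23}$-string, not just relabeling its endpoints; this reversal only emerges after cancelling the idempotent-labeled arrows in the tensor product. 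Moreover, if tensoring with $\widehat{CFDA}(h)$ really were a relabeling, the proposition would hold verbatim for every bordered manifold with torus boundary and would say nothing specific about knot complements, which is at odds with the structure of the paper (and with the contrast drawn to the genus-$2$ case, where the analogous mutation does change $\widehat{HF}$).

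A secondary issue: the proposition only assumes $C$ is reduced and horizontally simplified, so you cannot in general apply \cref{alg} to $C$ (which requires a simultaneously vertically and horizontally simplified basis); the paper's first proof makes this an extra hypothesis and the general proof works instead with the basis-free \cref{alg2}, choosing a large framing $n=4t+3$ and comparing the resulting $V^0_s$, $V^1_s$ summands of $KtD(C)$ and $KtD(\cf)$ directly. Your sketch also defers the unstable chain as ``the main obstacle'' without an argument; in the paper this piece actually reduces to the joint analysis (the unstable chain is a splice of the ends of a vertical chain and a horizontal chain), so the genuinely hard part is precisely the part your proposal declares ``essentially formal.''
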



\cref{flip} implies \cref{main} as follows.
\begin{proof}[Proof of \cref{main} assuming \cref{flip}]
We fix a doubly pointed Heegaard diagram $(\Sigma, \alpha, \beta, w, z)$ for knot $K\subset S^3$ and denote by $CFK^-(K)$ the knot Floer complex computed from the diagram. $(\Sigma, \alpha, \beta, z, w)$ is a doubly pointed Heegaard diagram for knot $-K\subset S^3$. By \cref{fliplemma}, switching basepoints has exactly the same effect as switching the horizontal and vertical arrows, i.e. $CFK^-(-K)=CFK^-(K)^{flip}$. It is known that $CFK^-(K)\simeq CFK^-(-K)$ from \cite{HF}, so $CFK^-(K)\simeq CFK^-(K)^{flip}$. Next, we find a reduced and horizontally simplified model $C$ of $CFK^-(K)$. The process of reducing and horizontally simplifying $CFK^-(K)$ can be flipped to reduce and vertically simplify $CFK^-(K)^{flip}$ to $C^{flip}$. As a result, $CFK^-(K) \simeq CFK^-(K)^{flip} \simeq C^{flip}$, which means $C^{flip}$ is another model for the knot Floer complex $CFK^-(K)$ of knot $K$, so \cref{alg} implies that the type $D$ module $KtD(C^{flip})$ that results from applying the procedure (with framing $n$) to $C^{flip}$ is homotopy equivalent to $\widehat{CFD}(X,\phi)$, i.e. $KtD(C^{flip})\simeq \widehat{CFD}(X,\phi)$. Hence for this small enough framing $n$, \cref{flip} implies \cref{main}. For a general framing, we obtain $\widehat{CFD}(X, \phi')$ for the desired framing by tensoring $\widehat{CFD}(X,\phi)$ with $\widehat{CFDA}(\tau_{\mu})$ repeatedly. \cref{main} is now proved by the fact $\widehat{CFDA}(h)\bts\widehat{CFDA}(\tau_{\mu})\cong\widehat{CFDA}(\tau_{\mu})\bts\widehat{CFDA}(h)$, as $h\circ\tau_{\mu} = \tau_{\mu}\circ h$ as topological maps.


\end{proof}

Now all there is left to do is to prove \cref{flip}.

\subsection{Type $DA$ Module $H$ for the elliptic involution}\label{sec:himodule}

The elliptic involution on a torus can be decomposed into $(\tau_{\mu}\circ \tau_{\lambda})^3$, where $\tau_{\mu}$ and $\tau_{\lambda}$ are Dehn twists along a meridian and a longitude, respectively \cite{MCG}. $\widehat{CFDA}(\tau_{\mu})$ is generated by $p, q, r$ and its non-trivial algebra actions are given as follows \cite{bord}:

\begin{equation*}
\begin{aligned}[c]
m_{0,1,1}(p,\rho_1)&=\rho_1\otimes q \\
m_{0,1,1}(p,\rho_{123})&=\rho_{123}\otimes q \\
m_{0,1,2}(p,\rho_{3},\rho_{23})&=\rho_{3}\otimes q\\
m_{0,1,1}(q,\rho_{23})&=\rho_{23}\otimes q \\
m_{0,1,1}(r,\rho_{3})&= q
\end{aligned}
\qquad
\begin{aligned}[c]
m_{0,1,1}(p,\rho_{12})&=\rho_{123}\otimes r \\
m_{0,1,2}(p,\rho_{3},\rho_{2})&=\rho_{3}\otimes r\\
m_{0,1,1}(q,\rho_{2})&=\rho_{23}\otimes r \\
m_{0,1,0}(r)&=\rho_{2}\otimes p
\end{aligned}
\end{equation*}

$\widehat{CFDA}(\tau_{\lambda})$ is generated by $p, q, s$ and its non-trivial algebra actions are given as follows \cite{bord}:

\begin{equation*}
\begin{aligned}[c]
m_{0,1,2}(q,\rh{2}, \rh{1})&=\rh{2}\otimes s \\
m_{0,1,2}(q,\rho_2, \rho_{123})&=\rho_{23}\otimes q \\
m_{0,1,1}(p,\rho_{12})&=\rho_{12}\otimes p\\
m_{0,1,1}(p,\rho_{3})&=\rho_{3}\otimes q \\
m_{0,1,1}(s,\rho_{2})&= p
\end{aligned}
\qquad
\begin{aligned}[c]
m_{0,1,2}(q,\rho_2,\rho_{12})&=\rho_{2}\otimes p \\
m_{0,1,1}(p,\rho_{1},\rho_{2})&=\rho_{12}\otimes s\\
m_{0,1,1}(p,\rho_{123})&=\rho_{123}\otimes q \\
m_{0,1,0}(s)&=\rho_{1}\otimes q\\
m_{0,1,1}(s,\rho_{23})&=\rho_{3}\otimes q 
\end{aligned}
\end{equation*}


By Theorem 12 in \cite{bimodule}, $H:=\widehat{CFDA}(h)$ can be computed as $\widehat{CFDA}(\tau_{\mu})\boxtimes \widehat{CFDA}(\tau_{\lambda}) \boxtimes \widehat{CFDA}(\tau_{\mu})\boxtimes \widehat{CFDA}(\tau_{\lambda}) \boxtimes \widehat{CFDA}(\tau_{\mu})\boxtimes \widehat{CFDA}(\tau_{\lambda})$.

We omit the computation of the tensor product, but instead describe the type $DA$ bimodule $H$ that results from canceling the following arrows in the following order:

$$p\otimes p\otimes p\otimes s\otimes r\otimes p\to p\otimes p\otimes p\otimes p\otimes p\otimes p, $$
$$p\otimes p\otimes p\otimes s\otimes r\otimes s\to p\otimes p\otimes p\otimes p\otimes p\otimes s, $$
$$p\otimes s\otimes r\otimes s\otimes q\otimes q\to p\otimes p\otimes p\otimes s\otimes q\otimes q, $$
$$p\otimes s\otimes r\otimes s\otimes r\otimes p\to p\otimes s\otimes r\otimes p\otimes p\otimes p, $$
$$p\otimes s\otimes r\otimes s\otimes r\otimes s\to p\otimes s\otimes r\otimes p\otimes p\otimes s, $$
$$q\otimes q\otimes r\otimes s\otimes r\otimes p\to q\otimes q\otimes r\otimes p\otimes p\otimes p, $$
$$q\otimes q\otimes r\otimes s\otimes r\otimes s\to q\otimes q\otimes r\otimes p\otimes p\otimes s, $$
$$r\otimes p\otimes p\otimes s\otimes r\otimes p\to r\otimes p\otimes p\otimes p\otimes p\otimes p, $$
$$r\otimes p\otimes p\otimes s\otimes r\otimes s\to r\otimes p\otimes p\otimes p\otimes p\otimes s, $$
$$r\otimes s\otimes q\otimes q\otimes r\otimes s\to q\otimes q\otimes r\otimes s\otimes q\otimes q, $$
$$r\otimes s\otimes r\otimes s\otimes q\otimes q\to r\otimes p\otimes p\otimes s\otimes q\otimes q, $$
$$r\otimes s\otimes r\otimes s\otimes r\otimes p\to r\otimes s\otimes r\otimes p\otimes p\otimes p, $$
$$r\otimes s\otimes r\otimes s\otimes r\otimes s\to r\otimes s\otimes r\otimes p\otimes p\otimes s. $$

For simplicity, we rename the remaining generators
$r\otimes s\otimes q\otimes q\otimes r\otimes p,
q\otimes q\otimes q\otimes q\otimes r\otimes p,
p\otimes s\otimes q\otimes q\otimes r\otimes p,
q\otimes q\otimes q\otimes q\otimes q\otimes q,
p\otimes s\otimes q\otimes q\otimes r\otimes s,
p\otimes s\otimes q\otimes q\otimes q\otimes q,
r\otimes s\otimes q\otimes q\otimes q\otimes q,
q\otimes q\otimes q\otimes q\otimes r\otimes s$ with $x_3, x_1, x_2, v, u, y_3, y_2, y_1$, respectively. Generators $x_1, x_2, x_3$ have idempotent $\iota_0$ and generators $u, v, y_1, y_2, y_3$ have idempotent $\iota_1$. Now $H$ can be described as in \cref{fig:he}.

Throughout this paper, we adapt the following color code for arrows: We use color blue, green, blue, red, and pink for $D-$side arrows 
with $\rh{1}, \rh{2}, \rh{3}, \rh{23},$ and $\rh{123}$ respectively, and for $A-side$ arrows with $A^\infty$ action of $\rh{1}, \rh{2}, \rh{3}, \rh{23},$ and $\rh{123}$ respectively. Finally, when tensoring arrows with the same color from both sides, we generally keep the color for the resulted arrow in the tensor product.

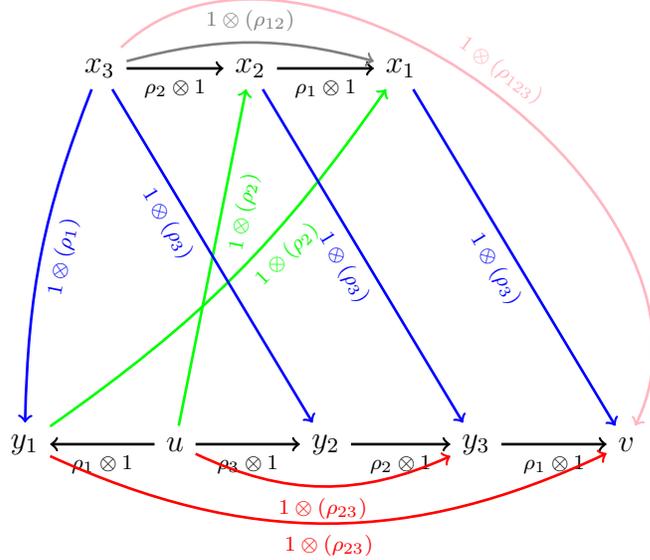
\begin{figure}[t]
\centering
\begin{tikzpicture}[node distance=2cm, line width=1pt, auto] 
  \node (x3) {$x_3$};
  \node (x2) [right of=x3] {$x_2$};
  \node (x1) [right of=x2] {$x_1$};
  \node (y1) [node distance=5cm, left of=x3, below of=x1] {$y_1$};
  \node (u) [right of=y1] {$u$};
  \node (y2) [right of=u] {$y_2$};
  \node (y3) [right of=y2] {$y_3$};
  \node (v) [right of=y3] {$v$};
  \draw[->, font=\scriptsize] (x3) to node [below] {$\rho_2\otimes 1$} (x2);  
  \draw[->, font=\scriptsize] (x2) to node [below] {$\rho_1\otimes 1$} (x1);  
  \draw[->, bend left=15, gray, font=\scriptsize] (x3) to node {$1\otimes (\rho_{12})$} (x1);
  \draw[->, font=\scriptsize] (u) to node [below] {$\rho_1\otimes 1$} (y1);
  \draw[->, font=\scriptsize] (u) to node [below] {$\rho_3\otimes 1$} (y2);
  \draw[->, font=\scriptsize] (y2) to node [below] {$\rho_2\otimes 1$} (y3);
  \draw[->, font=\scriptsize] (y3) to node [below] {$\rho_1\otimes 1$} (v);
  \draw[->, bend right=25, red, font=\scriptsize] (y1) to node [below] {$1\otimes (\rho_{23})$} (v);
  \draw[->, bend right=25, red, font=\scriptsize] (u) to node [below] {$1\otimes (\rho_{23})$} (y3);
  \draw[->, bend right=10, blue, font=\scriptsize] (x3) to node [below, sloped] {$1\otimes (\rho_{1})$} (y1);
  \draw[->, bend right=10, green, font=\scriptsize] (y1) to node [below right, sloped] {$1\otimes (\rho_{2})$} (x1);
  \draw[->, bend right=0, green, font=\scriptsize] (u) to node [below right, sloped] {$1\otimes (\rho_{2})$} (x2);
  \draw[->, bend right=0, blue, font=\scriptsize] (x3) to node [below left, sloped] {$1\otimes (\rho_{3})$} (y2);
  \draw[->, bend right=0, blue, font=\scriptsize] (x2) to node [below, sloped] {$1\otimes (\rho_{3})$} (y3);
  \draw[->, bend right=0, blue, font=\scriptsize] (x1) to node [below, sloped] {$1\otimes (\rho_{3})$} (v);
  \draw[->, bend left=80, pink, font=\scriptsize] (x3) to node [above, sloped] {$1\otimes (\rho_{123})$} (v);
\end{tikzpicture}
\caption{Arrows are color coded according to their $A^{\infty}$ actions.} \label{fig:he}
\end{figure}

\subsection{Proof of the Case of simplified $CFK^-$}\label{sec:proof}

We first prove \cref{flip} with an extra assumption. See \cref{sec:proofg} for the general proof. In this section, we assume that we have a model $C$ of $CFK^-(K)$ which is simultaneously horizontally and vertically simplified. We remark that even though there exist examples of filtered chain complexes that can not be simultaneously horizontally and vertically simplified, we are not aware of any examples that can be realized as $CFK^-$ of a knot.

Recall our notation, $H = \widehat{CFDA}(h)$, where $h$ is the elliptic involution and $KtD(C)$ (respectively, $KtD(C^{flip})$) is the type $D$ module resulted from applying the procedure in \cref{alg} to $C$(respectively, $C^{flip}$). 

\FloatBarrier
\subsubsection{A string of $\rh{23}$'s}\label{sec:string}

According to \cref{alg}, the middle part of each chain of arrows in $KtD(C)$ is a (possibly empty) string of $\rho_{23}$'s. We first investigate how $H$ acts on a string of $\rho_{23}$'s,
\[
\dots\xrightarrow{\rh{23}}\ast\xrightarrow{\rh{23}}\ast\xrightarrow{\rh{23}}\ast\xrightarrow{\rh{23}}\ast\dots.
\] 
The result of the tensor product is shown in \cref{fig:tensor23}, and after we cancel all the red arrows, the result is a string of $\rh{23}$'s going the opposite direction, shown in \cref{fig:string23cancel}. 

We observe that for each vertical arrow $\xi_i \to \xi_j$, the corresponding chain in $KtD(C)$ has a string of $\rh{23}$'s in the \emph{opposite} direction as the original arrow, where by \emph{opposite} we mean pointing toward $\xi_i'$ and away from $\xi_j'$. For each horizontal arrow $\xi_i \to \xi_j$ in $C^{flip}$, the corresponding chain in $KtD(C^{flip})$ has a string of $\rh{23}$'s in the \emph{same} direction as the original arrow, where by \emph{same} we mean pointing away from $\xi_i'$ and toward from $\xi_j'$. The box tensor product of $H$ with a string of $\rh{23}$'s gives exactly the correspondence between the ``middle'' parts of the two corresponding chains.

\begin{figure}[hbt]
\begin{subfigure}{.18\textwidth}
  \centering
  \begin{tikzpicture}[node distance=1.8cm, line width=1pt, auto] 
  \node (a) {};
  \node (b) [node distance=2cm, below of=a] {$\ast$};
  \node (c) [node distance=2cm, below of=b] {$\ast$};
  \node (d) [node distance=2cm, below of=c] {$\ast$};
  \node (e) [node distance=2cm, below of=d] {};
  \draw[->, font=\scriptsize] (a) to node [left] {$\rh{23}$} (b);
  \draw[->, font=\scriptsize] (b) to node [left] {$\rh{23}$} (c);
  \draw[->, font=\scriptsize] (c) to node [left] {$\rh{23}$} (d);
  \draw[->, font=\scriptsize] (d) to node [left] {$\rh{23}$} (e);
  \end{tikzpicture}
  \caption{A string of $\rho_{23}$'s}
  \label{fig:string23}
\end{subfigure}%
\begin{subfigure}{.6\textwidth}
  \centering
  \begin{tikzpicture}[node distance=1.6cm, line width=1pt, auto] 
  \node (y1) {};
  \node (u) [right of=y1] {};
  \node (y2) [right of=u] {};
  \node (y3) [right of=y2] {};
  \node (v) [right of=y3] {};
  \node (ay1) [node distance=2cm, below of=y1] {$y_1\ts\ast$};
  \node (au) [right of=ay1] {$u\ts\ast$};
  \node (ay2) [right of=au] {$y_2\ts\ast$};
  \node (ay3) [right of=ay2] {$y_3\ts\ast$};
  \node (av) [right of=ay3] {$v\ts\ast$};
  \draw[->, font=\scriptsize] (au) to node [below] {$\rho_1$} (ay1);
  \draw[->, font=\scriptsize] (au) to node [below] {$\rho_3$} (ay2);
  \draw[->, font=\scriptsize] (ay2) to node [below] {$\rho_2$} (ay3);
  \draw[->, font=\scriptsize] (ay3) to node [below] {$\rho_2$} (av);
  \node (by1) [node distance=2cm, below of=ay1] {$y_1\ts\ast$};
  \node (bu) [right of=by1] {$u\ts\ast$};
  \node (by2) [right of=bu] {$y_2\ts\ast$};
  \node (by3) [right of=by2] {$y_3\ts\ast$};
  \node (bv) [right of=by3] {$v\ts\ast$};
  \draw[->, font=\scriptsize] (bu) to node [below] {$\rho_1$} (by1);
  \draw[->, font=\scriptsize] (bu) to node [below] {$\rho_3$} (by2);
  \draw[->, font=\scriptsize] (by2) to node [below] {$\rho_2$} (by3);
  \draw[->, font=\scriptsize] (by3) to node [below] {$\rho_2$} (bv);
  \node (cy1) [node distance=2cm, below of=by1] {$y_1\ts\ast$};
  \node (cu) [right of=cy1] {$u\ts\ast$};
  \node (cy2) [right of=cu] {$y_2\ts\ast$};
  \node (cy3) [right of=cy2] {$y_3\ts\ast$};
  \node (cv) [right of=cy3] {$v\ts\ast$};
  \draw[->, font=\scriptsize] (cu) to node [below] {$\rho_1$} (cy1);
  \draw[->, font=\scriptsize] (cu) to node [below] {$\rho_3$} (cy2);
  \draw[->, font=\scriptsize] (cy2) to node [below] {$\rho_2$} (cy3);
  \draw[->, font=\scriptsize] (cy3) to node [below] {$\rho_2$} (cv);
   \node (dy1) [node distance=2cm, below of=cy1] {};
  \node (du) [right of=dy1] {};
  \node (dy2) [right of=du] {};
  \node (dy3) [right of=dy2] {};
  \node (dv) [right of=dy3] {};
  \draw[->, bend right=-5, red, font=\scriptsize] (av) to node [below, left] {$1$} (y1);
  \draw[->, bend right=5, red, font=\scriptsize] (ay3) to node [below, right] {$1$} (u);
    \draw[->, bend right=-5, red, font=\scriptsize] (bv) to node [below, left] {$1$} (ay1);
  \draw[->, bend right=5, red, font=\scriptsize] (by3) to node [below, right] {$1$} (au);
    \draw[->, bend right=-5, red, font=\scriptsize] (cv) to node [below, left] {$1$} (by1);
  \draw[->, bend right=5, red, font=\scriptsize] (cy3) to node [below, right] {$1$} (bu);
    \draw[->, bend right=-5, red, font=\scriptsize] (dv) to node [below, left] {$1$} (cy1);
  \draw[->, bend right=5, red, font=\scriptsize] (dy3) to node [below, right] {$1$} (cu);
  \end{tikzpicture}
  \caption{The tensor product of $H$ with a string of $\rh{23}$ arrows.} \label{fig:tensor23}
\end{subfigure}
\begin{subfigure}{.18\textwidth}
\centering
\begin{tikzpicture}[node distance=2cm, line width=1pt, auto] 
  \node (a) {};
  \node (b) [node distance=2cm, below of=a] {$y_2\ts\ast$};
  \node (c) [node distance=2cm, below of=b] {$y_2\ts\ast$};
  \node (d) [node distance=2cm, below of=c] {$y_2\ts\ast$};
  \node (e) [node distance=2cm, below of=d] {};
  \draw[->, font=\scriptsize, red] (b) to node [left] {$\rh{23}$} (a);
  \draw[->, font=\scriptsize, red] (c) to node [left] {$\rh{23}$} (b);
  \draw[->, font=\scriptsize, red] (d) to node [left] {$\rh{23}$} (c);
  \draw[->, font=\scriptsize, red] (e) to node [left] {$\rh{23}$} (d);
\end{tikzpicture}
\caption{After cancelling all arrows with 1.}
\label{fig:string23cancel}
\end{subfigure}
\caption{$H$ tensoring with a string of $\rh{23}$'s.}
\end{figure}
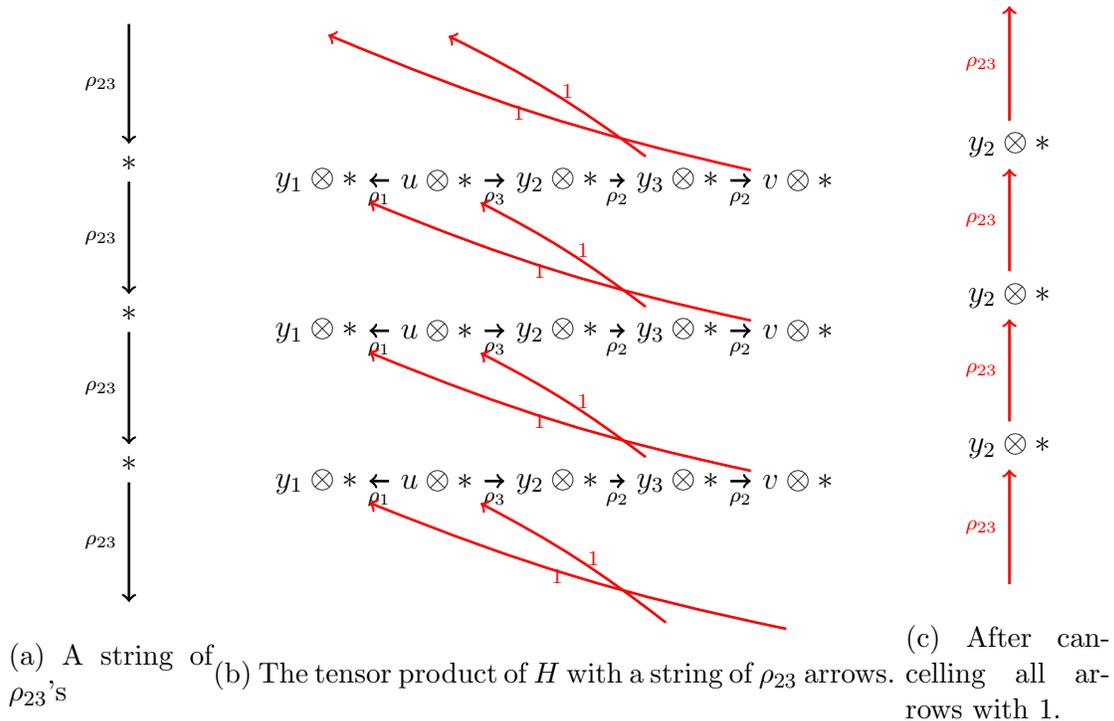

Now we investigate how the joints between these strings of $\rh{23}$'s are also ``flipped'' by tensoring with $H$.

Since $C$ is both horizontally and vertically simplified, at any generator $\xi_i\neq \xi_v \text{ or } \xi_h$, we have the following four kinds of joints in \cref{fig:joints}.

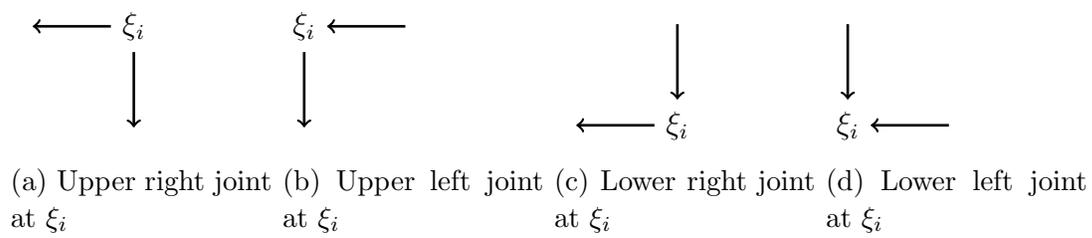
\begin{figure}[hb]
\begin{subfigure}{.23\textwidth}
\begin{tikzpicture}[node distance=1.5cm, line width=1pt, auto]
  \node (x) {\xii};
  \node (y) [below of =x] {};
  \node (z) [left of =x] {};
  \draw[->] (x) to node [above] {} (y);
  \draw[->] (x) to node [above] {} (z);
\end{tikzpicture}
\caption{Upper right joint at \xii}
\end{subfigure}
\begin{subfigure}{.23\textwidth}
\begin{tikzpicture}[node distance=1.5cm, line width=1pt, auto]
  \node (x) {\xii};
  \node (y) [below of =x] {};
  \node (z) [right of =x] {};
  \draw[->] (x) to node [above] {} (y);
  \draw[->] (z) to node [above] {} (x);
\end{tikzpicture}
\caption{Upper left joint at \xii}
\end{subfigure}
\begin{subfigure}{.23\textwidth}
\begin{tikzpicture}[node distance=1.5cm, line width=1pt, auto]
  \node (x) {\xii};
  \node (y) [above of =x] {};
  \node (z) [left of =x] {};
  \draw[->] (y) to node [above] {} (x);
  \draw[->] (x) to node [above] {} (z);
\end{tikzpicture}
\caption{Lower right joint at \xii}
\end{subfigure}
\begin{subfigure}{.23\textwidth}
\begin{tikzpicture}[node distance=1.5cm, line width=1pt, auto]
  \node (x) {\xii};
  \node (y) [above of =x] {};
  \node (z) [right of =x] {};
  \draw[->] (y) to node [above] {} (x);
  \draw[->] (z) to node [above] {} (x);
\end{tikzpicture}
\caption{Lower left joint at \xii}
\end{subfigure}
\caption{Four kinds of joints at $\xi_i \neq \xi_v$ or $\xi_h$}
\label{fig:joints}
\end{figure}

For each kind of joint at \xii, we argue the corresponding joint at \xiip~in $KtD(C)$ can be identified with a joint in $KtD(C^{flip})$ by tensoring with $H$. Our way of doing this is simply through cancellation in $H\bts KtD(C)$ and match the result to the corresponding part of $KtD(C^{flip})$. Then what is left to show $H\bts KtD(C) = KtD(C^{flip})$ is how these cancellation at different parts of $H\bts KtD(C)$ agree with each other.

We organize as follows the proof of that the ``local'' cancellations agree: if an arrow at \xii~in $C$ has length more than 1, then the chain of arrows at \xiip~in $KtD(C)$ is connected to a string of $\rh{23}$'s. We observe how the cancellation at this joint in $H\bts KtD(C)$ agree with the cancellation we perform for a string of $\rh{23}$'s, so the chain of $\rh{23}$'s along with the end point \xiip~is matched to a flipped chain in $KtD(C^{flip})$. If an arrow at \xii~in $C$ has length exactly 1, \xiip~in $KtD(C)$ is connected to another generator $\xi_j'$ through a short chain of two arrows. After we discuss the cancellation \emph{at both} of these joint, we show it agrees and gives the desired result when the two joint are put together. 

\FloatBarrier
\subsubsection{Upper right joint}\label{sec:urj}

For a generator \xii~with the upper right joint in \cref{fig:joints}, it appears in $KtD(C)$ as \cref{fig:j1a}. For either of the two arrows not immediately connecting to \xiip, we have two possibilities, corresponding to original arrows in $C$ having length 1 or having length more than 1. 

To be concise, the corresponding part of the box tensor product $H\bts KtD(C)$ for all four cases are shown in \cref{fig:j1b}, where all four cases share the same generators and arrows in blue. On the left side, the two red arrows (two green arrows, respectively) come from cases where the arrow out of $\lambda_1$ is $\rh{23}$ ($\rh{2}$, respectively). Similarly, on the bottom right, the two red arrows  (one pink arrow, respectively) come from cases where the arrow into $\kappa_1$ is $\rh{23}$ ($\rh{123}$, respectively).

Now cancel the three blue arrows that are pointing leftward and rearrange generators to arrive at \cref{fig:j1c}. Note the joint at $u\ts \kappa_1$ in $H\bts KtD(C)$, which is boxed in \cref{fig:j1c}, can now be identified with the corresponding part of $KtD(C^{flip})$. 

If either of the two possibilities is $\rh{23}$, we can continue to cancel the set of two red arrows in \cref{fig:j1c} in the same way we canceled arrows in the case of a string of $\rh{23}$'s, then arrive at \cref{fig:j1d}. In these cases, cancellation done here agree with those in the case of tensoring with a string of $\rh{23}$'s, so chains containing $\rh{23}$'s at $u\ts\kappa_1$ in $H\bts KtD(C)$ can be identified with chains in $KtD(C^{flip})$, compare with \cref{fig:j1a}. Arrows in cases where either arrow at \xii~have length 1 will be brought into shape after we deal with the joints \xii~is connected to.

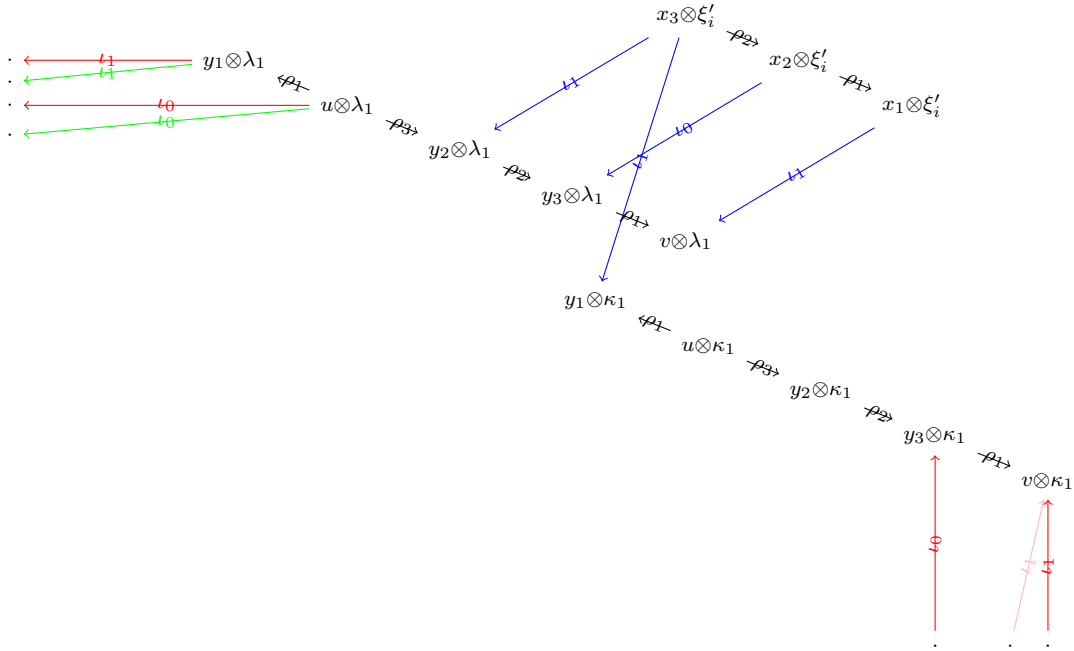
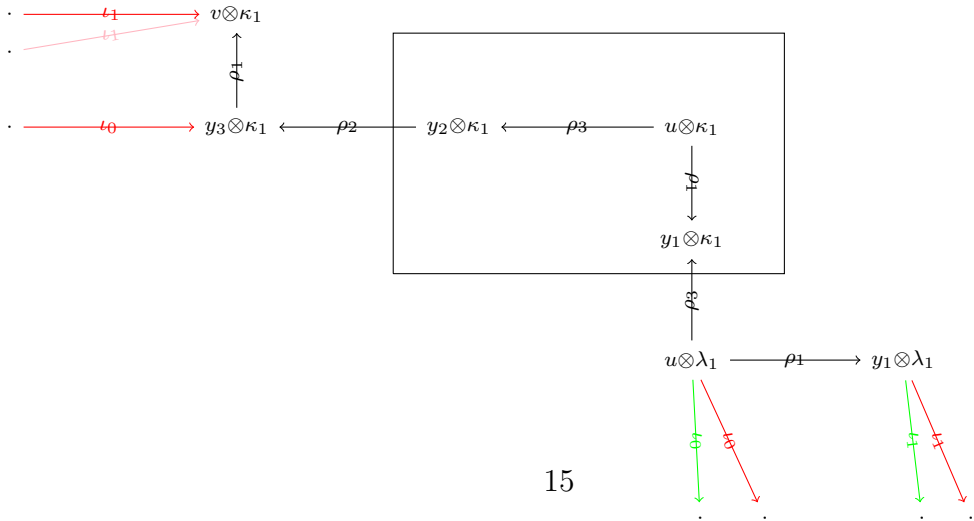
\begin{figure}[ht]
\centering
\begin{subfigure}{0.49\textwidth}
\centering
\begin{tikzpicture}[node distance=1.5cm, auto]
  \node (x) {\xiip};
  \node (k1) [below of =x] {$\kappa_1$};
  \node (l1) [left of =x] {$\lambda_1$};
  \node (k2) [below of =k1] {};
  \node (l2) [left of =l1] {};
  \draw[->] (x) to node [above] {$\rho_3$} (l1);
  \draw[->] (x) to node [right] {$\rho_1$} (k1);
  \draw[->] (l1) to node [above] {$\rho_{23}$} (l2);
  \draw[->] (k2) to node [right] {$\rho_{123}$} (k1);
  \path (l1) to node[label=below:$\rho_2$] {} (l2);
  \path (k2) to node[label=below:$\rho_{23}$] {} (k1);
\end{tikzpicture}
\caption{Upper right joint at \xii}\label{fig:j1a}
\end{subfigure}

\begin{subfigure}{0.99\textwidth}
\begin{tikzpicture}[node distance=2cm]
\path[font = \scriptsize]
(-7.74, 2.06) node(y3lam1l1) {$\cdot$}
(6.06, -2.94) node(vkap1) {$v$$\otimes$$\kappa_{1}$}
(0.05, -0.54) node(y1kap1) {$y_1$$\otimes$$\kappa_{1}$}
(3.05, -1.74) node(y2kap1) {$y_2$$\otimes$$\kappa_{1}$}
(4.56, -5.14) node(ukap1d1) {$\cdot$}
(1.55, -1.14) node(ukap1) {$u$$\otimes$$\kappa_{1}$}
(-0.25, 0.86) node(y3lam1) {$y_3$$\otimes$$\lambda_{1}$}
(-4.75, 2.66) node(y1lam1) {$y_1$$\otimes$$\lambda_{1}$}
(-3.25, 2.06) node(ulam1) {$u$$\otimes$$\lambda_{1}$}
(4.25, 2.06) node(x1xii) {$x_1$$\otimes$$\xi_{i}^\prime$}
(5.56, -5.14) node(x3kap1d2) {$\cdot$}
(4.56, -2.34) node(y3kap1) {$y_3$$\otimes$$\kappa_{1}$}
(-1.75, 1.47) node(y2lam1) {$y_2$$\otimes$$\lambda_{1}$}
(-7.74, 1.66) node(x2lam1l2) {$\cdot$}
(-7.74, 2.66) node(vlam1l1) {$\cdot$}
(2.75, 2.66) node(x2xii) {$x_2$$\otimes$$\xi_{i}^\prime$}
(-7.74, 2.37) node(x1lam1l2) {$\cdot$}
(6.06, -5.14) node(y1kap1d1) {$\cdot$}
(1.25, 0.26) node(vlam1) {$v$$\otimes$$\lambda_{1}$}
(1.25, 3.26) node(x3xii) {$x_3$$\otimes$$\xi_{i}^\prime$}
;
\draw[->, bend right = 0, black, font=\scriptsize] (ukap1) to node [sloped] {$\rho_{1}$} (y1kap1);
\draw[->, bend right = 0, black, font=\scriptsize] (ukap1) to node [sloped] {$\rho_{3}$} (y2kap1);
\draw[->, bend right = 0, red, font=\scriptsize] (ukap1d1) to node [sloped] {$\iota_0$} (y3kap1);
\draw[->, bend right = 0, black, font=\scriptsize] (ulam1) to node [sloped] {$\rho_{1}$} (y1lam1);
\draw[->, bend right = 0, black, font=\scriptsize] (ulam1) to node [sloped] {$\rho_{3}$} (y2lam1);
\draw[->, bend right = 0, red, font=\scriptsize] (ulam1) to node [sloped] {$\iota_0$} (y3lam1l1);
\draw[->, bend right = 0, green, font=\scriptsize] (ulam1) to node [sloped] {$\iota_0$} (x2lam1l2);
\draw[->, bend right = 0, blue, font=\scriptsize] (x1xii) to node [sloped] {$\iota_1$} (vlam1);
\draw[->, bend right = 0, black, font=\scriptsize] (x2xii) to node [sloped] {$\rho_{1}$} (x1xii);
\draw[->, bend right = 0, blue, font=\scriptsize] (x2xii) to node [sloped] {$\iota_0$} (y3lam1);
\draw[->, bend right = 0, pink, font=\scriptsize] (x3kap1d2) to node [sloped] {$\iota_1$} (vkap1);
\draw[->, bend right = 0, blue, font=\scriptsize] (x3xii) to node [sloped] {$\iota_1$} (y1kap1);
\draw[->, bend right = 0, black, font=\scriptsize] (x3xii) to node [sloped] {$\rho_{2}$} (x2xii);
\draw[->, bend right = 0, blue, font=\scriptsize] (x3xii) to node [sloped] {$\iota_1$} (y2lam1);
\draw[->, bend right = 0, red, font=\scriptsize] (y1kap1d1) to node [sloped] {$\iota_1$} (vkap1);
\draw[->, bend right = 0, red, font=\scriptsize] (y1lam1) to node [sloped] {$\iota_1$} (vlam1l1);
\draw[->, bend right = 0, green, font=\scriptsize] (y1lam1) to node [sloped] {$\iota_1$} (x1lam1l2);
\draw[->, bend right = 0, black, font=\scriptsize] (y2kap1) to node [sloped] {$\rho_{2}$} (y3kap1);
\draw[->, bend right = 0, black, font=\scriptsize] (y2lam1) to node [sloped] {$\rho_{2}$} (y3lam1);
\draw[->, bend right = 0, black, font=\scriptsize] (y3kap1) to node [sloped] {$\rho_{1}$} (vkap1);
\draw[->, bend right = 0, black, font=\scriptsize] (y3lam1) to node [sloped] {$\rho_{1}$} (vlam1);
\end{tikzpicture}
\caption{$H$ box tensor product with the joint.}\label{fig:j1b}
\end{subfigure}

\begin{subfigure}{0.99\textwidth}
\begin{tikzpicture}[node distance=2cm]
\path[font = \scriptsize]
(2.27, -1.35) node(ulam1) {$u$$\otimes$$\lambda_{1}$}
(-3.78, 3.25) node(vkap1) {$v$$\otimes$$\kappa_{1}$}
(3.24, -3.45) node(y3lam1l1) {$\cdot$}
(-6.8, 3.25) node(y1kap1d1) {$\cdot$}
(5.33, -3.45) node(x1lam1l2) {$\cdot$}
(-6.8, 1.75) node(ukap1d1) {$\cdot$}
(-0.83, 1.75) node(y2kap1) {$y_2$$\otimes$$\kappa_{1}$}
(2.27, 1.75) node(ukap1) {$u$$\otimes$$\kappa_{1}$}
(-6.8, 2.75) node(x3kap1d2) {$\cdot$}
(5.08, -1.35) node(y1lam1) {$y_1$$\otimes$$\lambda_{1}$}
(5.98, -3.45) node(vlam1l1) {$\cdot$}
(2.38, -3.45) node(x2lam1l2) {$\cdot$}
(2.27, 0.25) node(y1kap1) {$y_1$$\otimes$$\kappa_{1}$}
(-3.78, 1.75) node(y3kap1) {$y_3$$\otimes$$\kappa_{1}$}
;
\draw[->, bend right = 0, black, font=\scriptsize] (ukap1) to node [sloped] {$\rho_{1}$} (y1kap1);
\draw[->, bend right = 0, black, font=\scriptsize] (ukap1) to node [sloped] {$\rho_{3}$} (y2kap1);
\draw[->, bend right = 0, red, font=\scriptsize] (ukap1d1) to node [sloped] {$\iota_0$} (y3kap1);
\draw[->, bend right = 0, black, font=\scriptsize] (ulam1) to node [sloped] {$\rho_{1}$} (y1lam1);
\draw[->, bend right = 0, red, font=\scriptsize] (ulam1) to node [sloped] {$\iota_0$} (y3lam1l1);
\draw[->, bend right = 0, green, font=\scriptsize] (ulam1) to node [sloped] {$\iota_0$} (x2lam1l2);
\draw[->, bend right = 0, black, font=\scriptsize] (ulam1) to node [sloped] {$\rho_{3}$} (y1kap1);
\draw[->, bend right = 0, pink, font=\scriptsize] (x3kap1d2) to node [sloped] {$\iota_1$} (vkap1);
\draw[->, bend right = 0, red, font=\scriptsize] (y1kap1d1) to node [sloped] {$\iota_1$} (vkap1);
\draw[->, bend right = 0, red, font=\scriptsize] (y1lam1) to node [sloped] {$\iota_1$} (vlam1l1);
\draw[->, bend right = 0, green, font=\scriptsize] (y1lam1) to node [sloped] {$\iota_1$} (x1lam1l2);
\draw[->, bend right = 0, black, font=\scriptsize] (y2kap1) to node [sloped] {$\rho_{2}$} (y3kap1);
\draw[->, bend right = 0, black, font=\scriptsize] (y3kap1) to node [sloped] {$\rho_{1}$} (vkap1);
\draw (-1.7,-0.2) -- (3.5,-0.2) -- (3.5,3) -- (-1.7,3) -- (-1.7,-0.2);
\end{tikzpicture}
\caption{After canceling the three blue arrows pointing left}\label{fig:j1c}
\end{subfigure}

\caption{Upper right joint}
\end{figure}

\begin{figure}[h]
\centering
\begin{subfigure}{0.48\textwidth}
\begin{tikzpicture}[node distance=2cm]
\path[font = \scriptsize]
(2.27, -1.35) node(ulam1) {$\cdot$}
(-0.83, 1.75) node(y2kap1) {$y_2$$\otimes$$\kappa_{1}$}
(2.27, 1.75) node(ukap1) {$u$$\otimes$$\kappa_{1}$}
(2.27, 0.25) node(y1kap1) {$y_1$$\otimes$$\kappa_{1}$}
(-3.78, 1.75) node(y3kap1) {$\cdot$}
;
\draw[->, bend right = 0, black, font=\scriptsize] (ukap1) to node [sloped] {$\rho_{1}$} (y1kap1);
\draw[->, bend right = 0, black, font=\scriptsize] (ukap1) to node [sloped] {$\rho_{3}$} (y2kap1);
\draw[->, bend right = 0, red, font=\scriptsize] (ulam1) to node [sloped] {$\rho_{23}$} (y1kap1);
\draw[->, bend right = 0, red, font=\scriptsize] (y2kap1) to node [sloped] {$\rho_{23}$} (y3kap1);
\end{tikzpicture}
\caption{Cases where either arrow at \xiip~have length more than 1.}\label{fig:j1d}
\end{subfigure}
\caption{Upper right joint}
\end{figure}

\FloatBarrier
\subsubsection{Upper left joint}\label{sec:ulj}

For a generator \xii~with the upper left joint in \cref{fig:joints}, it appears in $KtD(C)$ as \cref{fig:j2a}. For either of the two arrows not immediately connecting to \xiip, we have two possibilities, corresponding to original arrows in $C$ having length 1 or having length more than 1. To be concise, the corresponding part of the box tensor product $H\bts KtD(C)$ for all four cases are shown in \cref{fig:j2b}, where all four cases share the same generators and arrows in the middle. On the right side, the two red arrows (three blue arrows, respectively) come from cases where in \cref{fig:j2a} the arrow going into $\lambda_\ell$ has a $\rh{23}$ ($\rh{3}$, respectively). Similarly, on bottom left, the two red arrows (one pink arrow, respectively) come from cases where the arrow going into $\kappa_1$ has a $\rh{23}$ ($\rh{123}$, respectively).

Then we cancel the blue arrow and the two green arrows in the middle and arrive at \cref{fig:j2c}. Note the joint at $u\ts \kappa_1$, which is boxed in \cref{fig:j2c}, can now be matched exactly to the corresponding part of $KtD(C^{flip})$, which would look like \cref{fig:j3a}. 

If either of the two possibilities is $\rh{23}$, we observe that cancellation of either set of two red arrows in \cref{fig:j2c} agree with the cancellations for a string of $\rh{23}$'s in \cref{sec:string}, which shows chains containing $\rh{23}$'s at $u\ts\kappa_1$ in $H\bts KtD(C)$ (\cref{fig:j2d}) exactly match corresponding chains in $KtD(C^{flip})$, again see \cref{fig:j3a}.

If the horizontal arrow at $\lambda_\ell$ has a $\rh{3}$ coming in, see \cref{fig:j2a}, meaning original arrow out of \xii~has length 1, the $\rh{3}$ must come from another $\xi_j'$. $\xi_j'$ belongs to either a upper right joint or a lower right joint. We defer the latter case to the lower right joint section, \cref{sec:lrj}. In the former case, we combine the tensor product here and the tensor product \cref{fig:j1b} in \cref{sec:urj}. Now $\lambda_\ell=\lambda_1$ and the green arrows in \cref{fig:j2b} and \cref{fig:j1b} are identified, so are the three horizontal blue arrows in \cref{fig:j2b} and \cref{fig:j1b}. Reviewing the cancellation in this section and the upper right joint section, we find they combine together without conflicting and result in \cref{fig:j2e}, which is what the corresponding part in $KtD(C^{flip})$ look like. 
 
\begin{figure}[ht]

\begin{subfigure}{0.99\textwidth}
\centering
\begin{tikzpicture}[node distance=1.5cm]
\path[font = \scriptsize]
(2.57, 1.43) node(lamlr1) {}
(-1.43, -2.57) node(kap1d2) {}
(0.57, 1.43) node(laml) {$\lambda_{l}$}
(-1.43, 1.43) node(xii) {$\xi_{i}^\prime$}
(-1.43, -2.57) node(kap1d1) {}
(-1.43, -0.57) node(kap1) {$\kappa_{1}$}
(2.57, 1.43) node(lamlr2) {}
;
\draw[->, bend right = 0, black, font=\scriptsize] (kap1d1) to node [left] {$\rho_{23}$} (kap1);
\draw[->, bend right = 0, black, font=\scriptsize] (kap1d2) to node [right] {$\rho_{123}$} (kap1);
\draw[->, bend right = 0, black, font=\scriptsize] (laml) to node [above] {$\rho_{2}$} (xii);
\draw[->, bend right = 0, black, font=\scriptsize] (lamlr1) to node [above] {$\rho_{23}$} (laml);
\draw[->, bend right = 0, black, font=\scriptsize] (lamlr2) to node [below] {$\rho_{3}$} (laml);
\draw[->, bend right = 0, black, font=\scriptsize] (xii) to node [left] {$\rho_{1}$} (kap1);
\end{tikzpicture}
\caption{Upper left joint at \xiip}\label{fig:j2a}
\end{subfigure}

\begin{subfigure}{0.99\textwidth}
\centerline{
\begin{tikzpicture}[node distance=2cm]
\path[font = \scriptsize]
(-4.38, 3.56) node(x1xii) {$x_1$$\otimes$$\xi_{i}^\prime$}
(9.42, 1.76) node(y2lamlr1) {$\cdot$}
(4.62, 2.96) node(vlaml) {$v$$\otimes$$\lambda_{l}$}
(10.92, 2.36) node(y3lamlr1) {$\cdot$}
(-5.88, -3.04) node(x2kap1d2) {$\cdot$}
(7.92, 2.36) node(x2lamlr2) {$\cdot$}
(-4.38, -3.64) node(y2kap1d1) {$\cdot$}
(-5.88, -4.24) node(ukap1d1) {$\cdot$}
(-7.38, -3.64) node(x3kap1d2) {$\cdot$}
(-5.88, -1.84) node(ukap1) {$u$$\otimes$$\kappa_{1}$}
(3.12, 2.36) node(y3laml) {$y_3$$\otimes$$\lambda_{l}$}
(7.92, 1.16) node(ulamlr1) {$\cdot$}
(6.42, 1.76) node(x3lamlr2) {$\cdot$}
(-4.38, -2.44) node(x1kap1d2) {$\cdot$}
(-1.38, -0.04) node(vkap1) {$v$$\otimes$$\kappa_{1}$}
(-5.88, 2.96) node(x2xii) {$x_2$$\otimes$$\xi_{i}^\prime$}
(-7.38, -4.84) node(y1kap1d1) {$\cdot$}
(-1.38, -2.44) node(vkap1d1) {$\cdot$}
(6.42, 0.56) node(y1lamlr1) {$\cdot$}
(12.42, 2.96) node(vlamlr1) {$\cdot$}
(-2.88, -0.64) node(y3kap1) {$y_3$$\otimes$$\kappa_{1}$}
(-4.38, -1.24) node(y2kap1) {$y_2$$\otimes$$\kappa_{1}$}
(-2.88, -3.04) node(y3kap1d1) {$\cdot$}
(-7.38, 2.36) node(x3xii) {$x_3$$\otimes$$\xi_{i}^\prime$}
(9.42, 2.96) node(x1lamlr2) {$\cdot$}
(-7.38, -2.44) node(y1kap1) {$y_1$$\otimes$$\kappa_{1}$}
(1.62, 1.76) node(y2laml) {$y_2$$\otimes$$\lambda_{l}$}
(-1.38, 0.56) node(y1laml) {$y_1$$\otimes$$\lambda_{l}$}
(0.12, 1.16) node(ulaml) {$u$$\otimes$$\lambda_{l}$}
;
\draw[->, bend right = 0, black, font=\scriptsize] (ukap1) to node [sloped] {$\rho_{1}$} (y1kap1);
\draw[->, bend right = 0, black, font=\scriptsize] (ukap1) to node [sloped] {$\rho_{3}$} (y2kap1);
\draw[->, bend right = 15.0, red, font=\scriptsize] (ukap1d1) to node [sloped] {$\iota_0$} (y3kap1);
\draw[->, bend right = 0, black, font=\scriptsize] (ulaml) to node [sloped] {$\rho_{1}$} (y1laml);
\draw[->, bend right = 0, black, font=\scriptsize] (ulaml) to node [sloped] {$\rho_{3}$} (y2laml);
\draw[->, bend right = 0, green, font=\scriptsize] (ulaml) to node [sloped] {$\iota_0$} (x2xii);
\draw[->, bend right = 16.76, red, font=\scriptsize] (ulamlr1) to node [sloped] {$\iota_0$} (y3laml);
\draw[->, bend right = 0, blue, font=\scriptsize] (x1lamlr2) to node [sloped] {$\iota_1$} (vlaml);
\draw[->, bend right = 0, blue, font=\scriptsize] (x2lamlr2) to node [sloped] {$\iota_0$} (y3laml);
\draw[->, bend right = 0, black, font=\scriptsize] (x2xii) to node [sloped] {$\rho_{1}$} (x1xii);
\draw[->, bend right = 16.47, pink, font=\scriptsize] (x3kap1d2) to node [sloped] {$\iota_1$} (vkap1);
\draw[->, bend right = 0, blue, font=\scriptsize] (x3lamlr2) to node [sloped] {$\iota_1$} (y2laml);
\draw[->, bend right = 0, blue, font=\scriptsize] (x3xii) to node [sloped] {$\iota_1$} (y1kap1);
\draw[->, bend right = 0, black, font=\scriptsize] (x3xii) to node [sloped] {$\rho_{2}$} (x2xii);
\draw[->, bend right = 15.0, red, font=\scriptsize] (y1kap1d1) to node [sloped] {$\iota_1$} (vkap1);
\draw[->, bend right = 0, green, font=\scriptsize] (y1laml) to node [sloped] {$\iota_1$} (x1xii);
\draw[->, bend right = 0, red, font=\scriptsize] (y1lamlr1) to node [sloped] {$\iota_1$} (vlaml);
\draw[->, bend right = 0, black, font=\scriptsize] (y2kap1) to node [sloped] {$\rho_{2}$} (y3kap1);
\draw[->, bend right = 0, black, font=\scriptsize] (y2laml) to node [sloped] {$\rho_{2}$} (y3laml);
\draw[->, bend right = 0, black, font=\scriptsize] (y3kap1) to node [sloped] {$\rho_{1}$} (vkap1);
\draw[->, bend right = 0, black, font=\scriptsize] (y3laml) to node [sloped] {$\rho_{1}$} (vlaml);
\end{tikzpicture}
}
\caption{Tensor product with $H$.}\label{fig:j2b}
\end{subfigure}

\caption{Upper left joint.}
\end{figure}
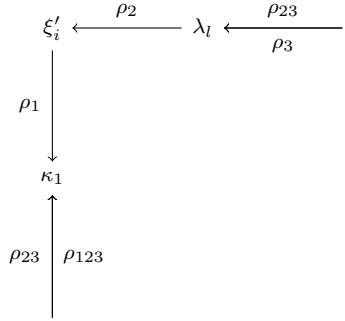
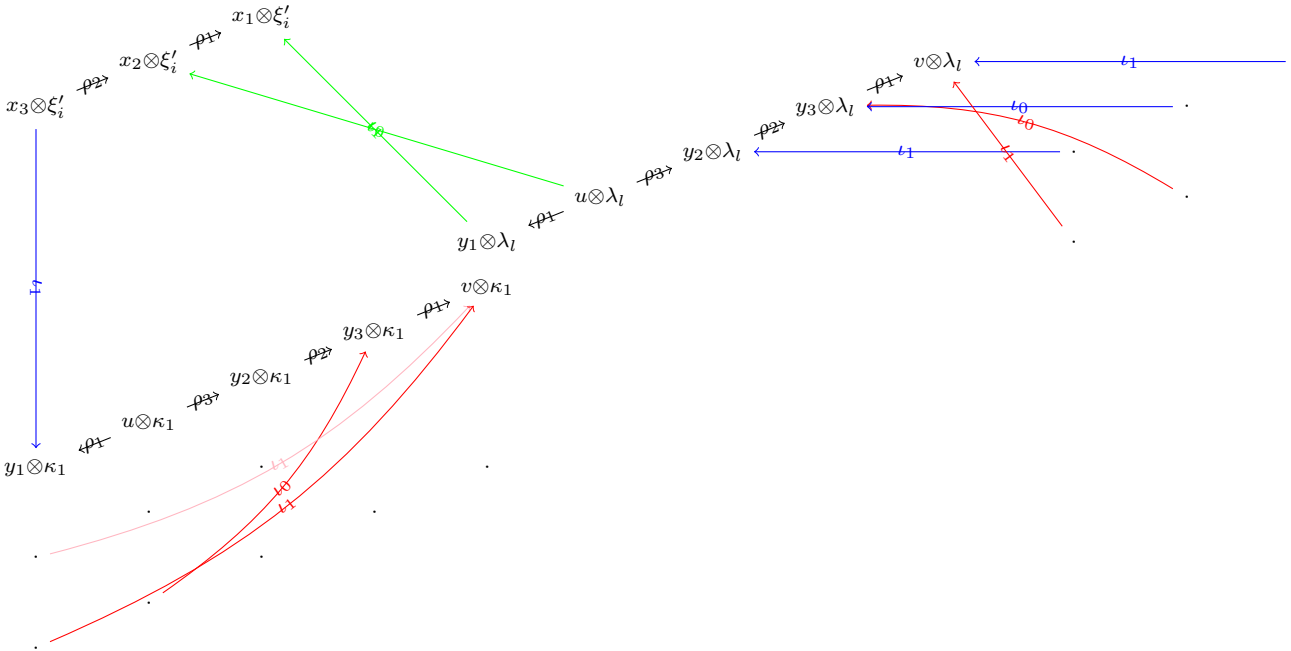

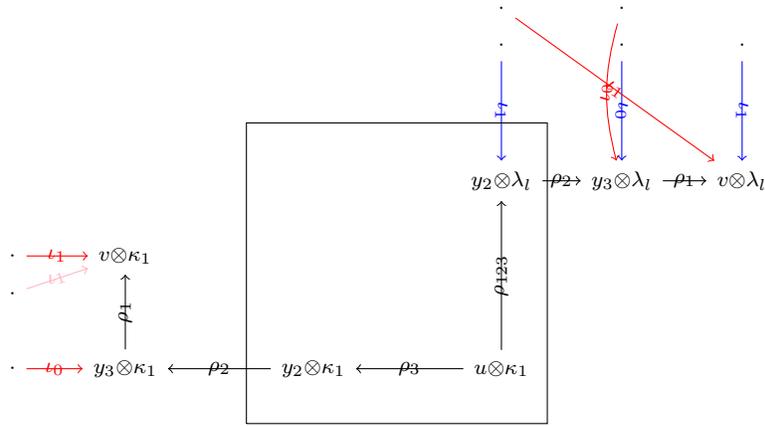
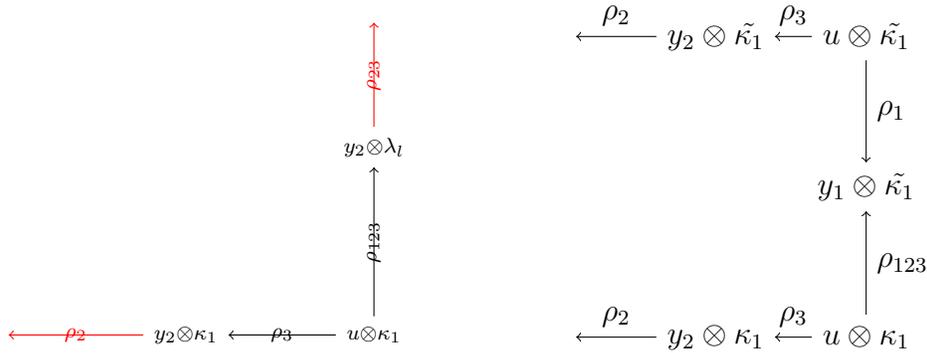
\begin{figure}

\begin{subfigure}{0.99\textwidth}
\centerline{
\begin{tikzpicture}[node distance=2cm]
\path[font = \scriptsize]
(-5.11, -2.27) node(ukap1d1) {$\cdot$}
(1.39, 2.03) node(x3lamlr2) {$\cdot$}
(2.99, 2.03) node(x2lamlr2) {$\cdot$}
(-5.11, -1.27) node(x3kap1d2) {$\cdot$}
(4.59, 2.03) node(x1lamlr2) {$\cdot$}
(2.99, 0.23) node(y3laml) {$y_3$$\otimes$$\lambda_{l}$}
(1.39, -2.27) node(ukap1) {$u$$\otimes$$\kappa_{1}$}
(1.39, 0.23) node(y2laml) {$y_2$$\otimes$$\lambda_{l}$}
(2.99, 2.53) node(ulamlr1) {$\cdot$}
(-1.11, -2.27) node(y2kap1) {$y_2$$\otimes$$\kappa_{1}$}
(1.39, 2.53) node(y1lamlr1) {$\cdot$}
(4.59, 0.23) node(vlaml) {$v$$\otimes$$\lambda_{l}$}
(-3.61, -2.27) node(y3kap1) {$y_3$$\otimes$$\kappa_{1}$}
(-5.11, -0.77) node(y1kap1d1) {$\cdot$}
(-3.61, -0.77) node(vkap1) {$v$$\otimes$$\kappa_{1}$}
;
\draw[->, bend right = 0, black, font=\scriptsize] (ukap1) to node [sloped] {$\rho_{3}$} (y2kap1);
\draw[->, bend right = 0, black, font=\scriptsize] (ukap1) to node [sloped] {$\rho_{123}$} (y2laml);
\draw[->, bend right = 0, red, font=\scriptsize] (ukap1d1) to node [sloped] {$\iota_0$} (y3kap1);
\draw[->, bend right = 15.0, red, font=\scriptsize] (ulamlr1) to node [sloped] {$\iota_0$} (y3laml);
\draw[->, bend right = 0, blue, font=\scriptsize] (x1lamlr2) to node [sloped] {$\iota_1$} (vlaml);
\draw[->, bend right = 0, blue, font=\scriptsize] (x2lamlr2) to node [sloped] {$\iota_0$} (y3laml);
\draw[->, bend right = 0, pink, font=\scriptsize] (x3kap1d2) to node [sloped] {$\iota_1$} (vkap1);
\draw[->, bend right = 0, blue, font=\scriptsize] (x3lamlr2) to node [sloped] {$\iota_1$} (y2laml);
\draw[->, bend right = 0, red, font=\scriptsize] (y1kap1d1) to node [sloped] {$\iota_1$} (vkap1);
\draw[->, bend right = 0, red, font=\scriptsize] (y1lamlr1) to node [sloped] {$\iota_1$} (vlaml);
\draw[->, bend right = 0, black, font=\scriptsize] (y2kap1) to node [sloped] {$\rho_{2}$} (y3kap1);
\draw[->, bend right = 0, black, font=\scriptsize] (y2laml) to node [sloped] {$\rho_{2}$} (y3laml);
\draw[->, bend right = 0, black, font=\scriptsize] (y3kap1) to node [sloped] {$\rho_{1}$} (vkap1);
\draw[->, bend right = 0, black, font=\scriptsize] (y3laml) to node [sloped] {$\rho_{1}$} (vlaml);
\draw (-2,-3) -- (2,-3) -- (2,1) -- (-2,1) -- (-2,-3);
\end{tikzpicture}
}
\caption{After cancellation.}\label{fig:j2c}
\end{subfigure}

\begin{subfigure}{0.49\textwidth}
\begin{tikzpicture}[node distance=2cm]
\path[font = \scriptsize]
(-3.5, -1.36) node(y3kap1) {  }
(1.5, -1.36) node(ukap1) {$u$$\otimes$$\kappa_{1}$}
(1.5, 2.94) node(x3lamlr2) {  }
(1.5, 1.14) node(y2laml) {$y_2$$\otimes$$\lambda_{l}$}
(-1.0, -1.36) node(y2kap1) {$y_2$$\otimes$$\kappa_{1}$}
;
\draw[->, bend right = 0, black, font=\scriptsize] (ukap1) to node [sloped] {$\rho_{3}$} (y2kap1);
\draw[->, bend right = 0, black, font=\scriptsize] (ukap1) to node [sloped] {$\rho_{123}$} (y2laml);
\draw[->, bend right = 0, red, font=\scriptsize] (y2kap1) to node [sloped] {$\rho_{2}$} (y3kap1);
\draw[->, bend right = 0, red, font=\scriptsize] (y2laml) to node [sloped] {$\rho_{23}$} (x3lamlr2);
\end{tikzpicture}
\caption{Cases where either arrow at \xiip~have length more than 1}\label{fig:j2d}
\end{subfigure}
\begin{subfigure}{0.49\textwidth}
\begin{tikzpicture}[node distance=2cm, auto]
  \node (uk1) {$u\ts\kappa_1$};
  \node (y2k1) [left of =uk1] {$y_2\ts\kappa_1$};
  \node (nonamed) [left of =y2k1] {};
  \node (y1k1) [above of =uk1] {$y_1\ts\tilde{\kappa_1}$};
  \node (uk1u) [above of =y1k1] {$u\ts\tilde{\kappa_1}$};
  \node (y2k1u) [left of =uk1u] {$y_2\ts\tilde{\kappa_1}$};
  \node (nonameu) [left of =y2k1u] {};
  \draw[->] (uk1) to node [right] {$\rho_{123}$} (y1k1);
  \draw[->] (uk1) to node [above] {$\rho_3$} (y2k1);
  \draw[->] (y2k1) to node [above] {$\rho_{2}$} (nonamed);
  \draw[->] (uk1u) to node [right] {$\rho_{1}$} (y1k1);
  \draw[->] (uk1u) to node [above] {$\rho_{3}$} (y2k1u);
  \draw[->] (y2k1u) to node [above] {$\rho_{2}$} (nonameu);
\end{tikzpicture}
\caption{Upper left joint connected to a upper right joint.}\label{fig:j2e}
\end{subfigure}

\caption{Upper left joint.}
\end{figure}

\FloatBarrier
\subsubsection{Lower right joint}\label{sec:lrj}

For a generator \xii~with the lower right joint in \cref{fig:joints}, it appears in $KtD(C)$ as \cref{fig:j3a}. Again, for either of the two arrows not immediately connecting to \xiip, we have two possibilities, corresponding to original arrows in $C$ having length 1 or having length more than 1, and the corresponding part of the box tensor product $H\bts KtD(C)$ for all four cases are shown in \cref{fig:j3b}, where all four cases share the same generators and arrows in the middle. On the top right, the two red arrows (one blue arrow, respectively) come from cases where in \cref{fig:j3a} the arrow going out of $\kappa_\ell$ (into $\kappa_\ell$, respectively) has a $\rh{23}$ ($\rh{1}$, respectively). Similarly, on the left side, the two red arrows (two green arrows, respectively) come from cases where the arrow going into $\lambda_1$ has a $\rh{23}$ ($\rh{2}$, respectively).

Then we cancel the three blue arrows and the pink arrow in the middle and arrive at \cref{fig:j3c}. Note the joint at $y_3\ts \kappa_\ell$, which is boxed in \cref{fig:j3c}, can now be identified with the corresponding part of $KtD(C^{flip})$, which would look like the joint in \cref{fig:j2a}. 

If either of the two possibilities in \cref{fig:j3a} is $\rh{23}$, as above, we observe that cancellation of either set of two red arrows in \cref{fig:j3c} agree with the cancellations we performed for a string of $\rh{23}$'s in \cref{sec:string}. This shows chains containing $\rh{23}$'s at $y_3\ts\kappa_\ell$ in $H\bts KtD(C)$ (\cref{fig:j3d}) exactly match corresponding chains in $KtD(C^{flip})$, again see \cref{fig:j2a}.

Now we deal with the possibilities other than $\rh{23}$ in \cref{fig:j3a}. Suppose arrows at \xii~takes form as in \cref{fig:jlr}.

\begin{figure}[h]
\centering
\begin{tikzpicture}[node distance=1.5cm, auto]
  \centering
  \node (x) {\xii};
  \node (y) [above of =x] {$\xi_j$};
  \node (z) [left of =x] {$\xi_k$};
  \draw[->] (y) to node [above] {} (x);
  \draw[->] (x) to node [above] {} (z);
\end{tikzpicture}
\caption{Lower right joint at \xii}\label{fig:jlr}
\end{figure}
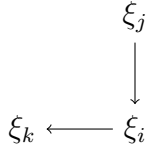

If the vertical arrow $\xi_j \to \xi_i$, has length 1 in $C$, the joint at $\xi_j$ could be either a upper right joint or a upper left joint. We argue that the latter case can not happen. If it did happen, denote the generator connecting to $\xi_j$ through a horizontal arrow by $\xi_r$, then $\partial^2 \xi_r$ has a term $U\xi_i$. Because the $C$ is simultaneously horizontally and vertically simplified, there can not be any arrow going into \xii~besides $\xi_j\to \xi_i$, leaving $\partial^2 \xi_r$ nonzero.

For the case where $\xi_j$ belongs to a upper right joint, we combine the corresponding parts of $H\bts KtD(C)$ in \cref{fig:j1b} and \cref{fig:j3b}. Now generators has $\kappa_\ell$ are identified those with $\kappa_1$. The downward point blue arrow in \cref{fig:j1b} is identified with the blue arrow on top of \cref{fig:j3b}, so are the pink arrow in each of the figures. The cancellation we performed for each type of joints can be both applied and result in \cref{fig:j3e}. It can be identified with the corresponding part in $KtD(C^{flip})$.   

If the horizontal arrow $\xi_i \to \xi_k$ has length 1 in $C$, the joint at $\xi_k$ could be either a upper left joint or a lower left joint. We postpone the latter case to the lower left joint section, \cref{sec:llj}. For the former case, we argue that it could not happen. $\partial^2 \xi_j$ has term $U\xi_k$, but there can not be any arrow going into $\xi_k$ besides $\xi_i \to \xi_k$, leaving $\partial^2 \xi_j$ nonzero.

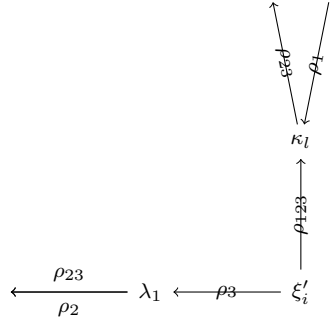
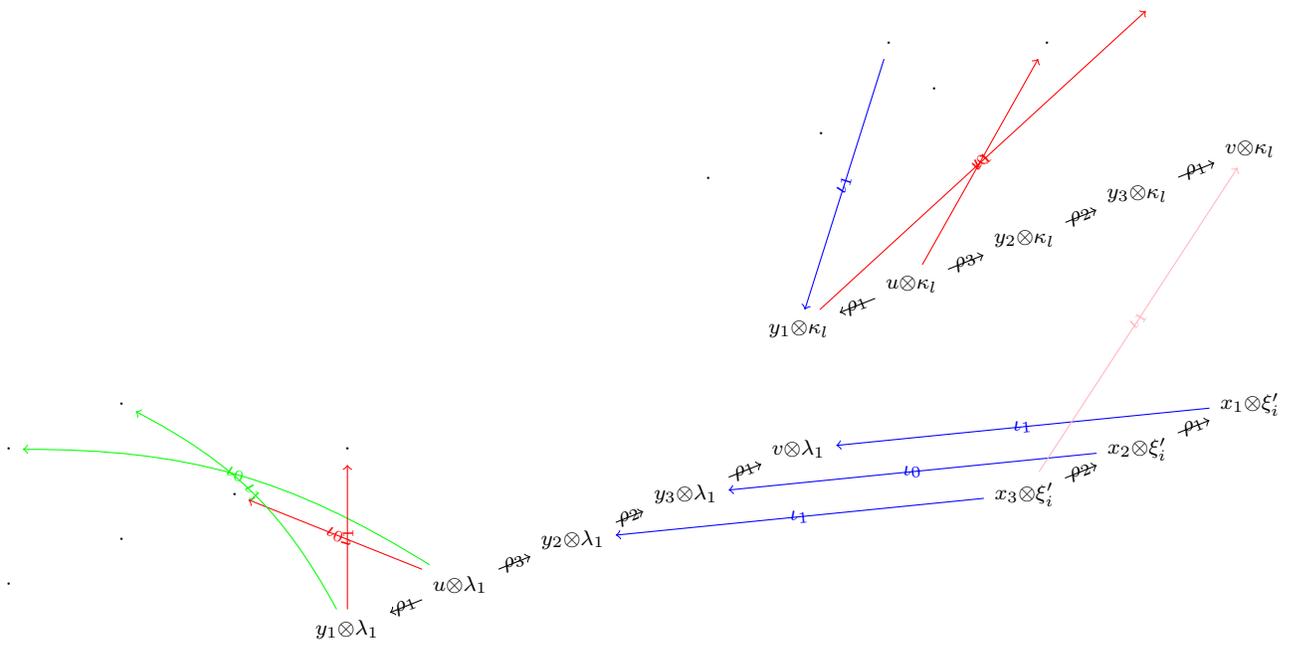
\begin{figure}[ht]

\begin{subfigure}{0.99\textwidth}
\centerline{
\begin{tikzpicture}[node distance=2cm]
\path[font = \scriptsize]
(-2.57, -1.43) node(lam1l1) {}
(1.83, 2.57) node(kaplu2) {}
(-0.57, -1.43) node(lam1) {$\lambda_{1}$}
(-2.57, -1.43) node(lam1l2) {}
(1.03, 2.57) node(kaplu1) {}
(1.43, 0.57) node(kapl) {$\kappa_{l}$}
(1.43, -1.43) node(xii) {$\xi_{i}^\prime$}
;
\draw[->, bend right = 0, black, font=\scriptsize] (kapl) to node [sloped] {$\rho_{23}$} (kaplu1);
\draw[->, bend right = 0, black, font=\scriptsize] (kaplu2) to node [sloped] {$\rho_{1}$} (kapl);
\draw[->, bend right = 0, black, font=\scriptsize] (lam1) to node [above] {$\rho_{23}$} (lam1l1);
\draw[->, bend right = 0, black, font=\scriptsize] (lam1) to node [below] {$\rho_{2}$} (lam1l2);
\draw[->, bend right = 0, black, font=\scriptsize] (xii) to node [sloped] {$\rho_{3}$} (lam1);
\draw[->, bend right = 0, black, font=\scriptsize] (xii) to node [sloped] {$\rho_{123}$} (kapl);
\end{tikzpicture}
}
\caption{Lower right joint at \xiip}\label{fig:j3a}
\end{subfigure}

\begin{subfigure}{0.99\textwidth}
\centerline{
\begin{tikzpicture}[node distance=2cm]
\path[font = \scriptsize]
(-7.42, -0.92) node(x1lam1l2) {$\cdot$}
(4.58, 1.28) node(y2kapl) {$y_2$$\otimes$$\kappa_{l}$}
(4.58, -2.12) node(x3xii) {$x_3$$\otimes$$\xi_{i}^\prime$}
(6.08, 1.88) node(y3kapl) {$y_3$$\otimes$$\kappa_{l}$}
(-8.92, -3.32) node(ulam1l1) {$\cdot$}
(2.78, 3.88) node(x3kaplu2) {$\cdot$}
(0.08, -2.12) node(y3lam1) {$y_3$$\otimes$$\lambda_{1}$}
(3.08, 0.68) node(ukapl) {$u$$\otimes$$\kappa_{l}$}
(4.28, 4.48) node(x2kaplu2) {$\cdot$}
(-5.92, -2.12) node(y3lam1l1) {$\cdot$}
(-7.42, -2.72) node(y2lam1l1) {$\cdot$}
(7.58, 2.48) node(vkapl) {$v$$\otimes$$\kappa_{l}$}
(4.88, 3.88) node(y3kaplu1) {$\cdot$}
(-10.42, -3.92) node(y1lam1l1) {$\cdot$}
(1.58, 0.08) node(y1kapl) {$y_1$$\otimes$$\kappa_{l}$}
(5.78, 5.08) node(x1kaplu2) {$\cdot$}
(6.08, -1.52) node(x2xii) {$x_2$$\otimes$$\xi_{i}^\prime$}
(-1.42, -2.72) node(y2lam1) {$y_2$$\otimes$$\lambda_{1}$}
(-2.92, -3.32) node(ulam1) {$u$$\otimes$$\lambda_{1}$}
(6.38, 4.48) node(vkaplu1) {$\cdot$}
(-4.42, -3.92) node(y1lam1) {$y_1$$\otimes$$\lambda_{1}$}
(3.38, 3.28) node(y2kaplu1) {$\cdot$}
(-4.42, -1.52) node(vlam1l1) {$\cdot$}
(-8.92, -1.52) node(x2lam1l2) {$\cdot$}
(-10.42, -2.12) node(x3lam1l2) {$\cdot$}
(0.38, 2.08) node(y1kaplu1) {$\cdot$}
(1.58, -1.52) node(vlam1) {$v$$\otimes$$\lambda_{1}$}
(1.88, 2.68) node(ukaplu1) {$\cdot$}
(7.58, -0.92) node(x1xii) {$x_1$$\otimes$$\xi_{i}^\prime$}
;
\draw[->, bend right = 0, black, font=\scriptsize] (ukapl) to node [sloped] {$\rho_{1}$} (y1kapl);
\draw[->, bend right = 0, black, font=\scriptsize] (ukapl) to node [sloped] {$\rho_{3}$} (y2kapl);
\draw[->, bend right = 0, red, font=\scriptsize] (ukapl) to node [sloped] {$\iota_0$} (y3kaplu1);
\draw[->, bend right = 0, black, font=\scriptsize] (ulam1) to node [sloped] {$\rho_{1}$} (y1lam1);
\draw[->, bend right = 0, black, font=\scriptsize] (ulam1) to node [sloped] {$\rho_{3}$} (y2lam1);
\draw[->, bend right = 0, red, font=\scriptsize] (ulam1) to node [sloped] {$\iota_0$} (y3lam1l1);
\draw[->, bend right = 16.83, green, font=\scriptsize] (ulam1) to node [sloped] {$\iota_0$} (x2lam1l2);
\draw[->, bend right = 0.97, blue, font=\scriptsize] (x1xii) to node [sloped] {$\iota_1$} (vlam1);
\draw[->, bend right = 0, black, font=\scriptsize] (x2xii) to node [sloped] {$\rho_{1}$} (x1xii);
\draw[->, bend right = 0.97, blue, font=\scriptsize] (x2xii) to node [sloped] {$\iota_0$} (y3lam1);
\draw[->, bend right = 0, blue, font=\scriptsize] (x3kaplu2) to node [sloped] {$\iota_1$} (y1kapl);
\draw[->, bend right = 0, pink, font=\scriptsize] (x3xii) to node [sloped] {$\iota_1$} (vkapl);
\draw[->, bend right = 0, black, font=\scriptsize] (x3xii) to node [sloped] {$\rho_{2}$} (x2xii);
\draw[->, bend right = 0.97, blue, font=\scriptsize] (x3xii) to node [sloped] {$\iota_1$} (y2lam1);
\draw[->, bend right = 0, red, font=\scriptsize] (y1kapl) to node [sloped] {$\iota_1$} (vkaplu1);
\draw[->, bend right = 0, red, font=\scriptsize] (y1lam1) to node [sloped] {$\iota_1$} (vlam1l1);
\draw[->, bend right = 17.0, green, font=\scriptsize] (y1lam1) to node [sloped] {$\iota_1$} (x1lam1l2);
\draw[->, bend right = 0, black, font=\scriptsize] (y2kapl) to node [sloped] {$\rho_{2}$} (y3kapl);
\draw[->, bend right = 0, black, font=\scriptsize] (y2lam1) to node [sloped] {$\rho_{2}$} (y3lam1);
\draw[->, bend right = 0, black, font=\scriptsize] (y3kapl) to node [sloped] {$\rho_{1}$} (vkapl);
\draw[->, bend right = 0, black, font=\scriptsize] (y3lam1) to node [sloped] {$\rho_{1}$} (vlam1);
\end{tikzpicture}
}
\caption{Tensor product with $H$.}\label{fig:j3b}
\end{subfigure}

\caption{Lower right joint}
\end{figure}

\begin{figure}
\begin{subfigure}{0.99\textwidth}
\centerline{
\begin{tikzpicture}[node distance=2cm]
\path[font = \scriptsize]
(3.64, 0.89) node(vkaplu1) {$\cdot$}
(-2.36, -3.11) node(y3lam1l1) {$\cdot$}
(-0.36, 2.89) node(y2kapl) {$y_2$$\otimes$$\kappa_{l}$}
(3.64, 0.39) node(x3kaplu2) {$\cdot$}
(-2.86, -3.11) node(x2lam1l2) {$\cdot$}
(-0.36, -1.11) node(y1lam1) {$y_1$$\otimes$$\lambda_{1}$}
(-2.36, 0.89) node(vkapl) {$v$$\otimes$$\kappa_{l}$}
(1.64, 0.89) node(y1kapl) {$y_1$$\otimes$$\kappa_{l}$}
(3.64, 2.89) node(y3kaplu1) {$\cdot$}
(-0.36, -3.11) node(vlam1l1) {$\cdot$}
(-2.36, 2.89) node(y3kapl) {$y_3$$\otimes$$\kappa_{l}$}
(-0.86, -3.11) node(x1lam1l2) {$\cdot$}
(-2.36, -1.11) node(ulam1) {$u$$\otimes$$\lambda_{1}$}
(1.64, 2.89) node(ukapl) {$u$$\otimes$$\kappa_{l}$}
;
\draw[->, bend right = 0, black, font=\scriptsize] (ukapl) to node [sloped] {$\rho_{1}$} (y1kapl);
\draw[->, bend right = 0, black, font=\scriptsize] (ukapl) to node [sloped] {$\rho_{3}$} (y2kapl);
\draw[->, bend right = 0, red, font=\scriptsize] (ukapl) to node [sloped] {$\iota_0$} (y3kaplu1);
\draw[->, bend right = 0, black, font=\scriptsize] (ulam1) to node [sloped] {$\rho_{1}$} (y1lam1);
\draw[->, bend right = 0, red, font=\scriptsize] (ulam1) to node [sloped] {$\iota_0$} (y3lam1l1);
\draw[->, bend right = 0, green, font=\scriptsize] (ulam1) to node [sloped] {$\iota_0$} (x2lam1l2);
\draw[->, bend right = 0, black, font=\scriptsize] (ulam1) to node [sloped] {$\rho_{3}$} (vkapl);
\draw[->, bend right = 0, blue, font=\scriptsize] (x3kaplu2) to node [sloped] {$\iota_1$} (y1kapl);
\draw[->, bend right = 0, red, font=\scriptsize] (y1kapl) to node [sloped] {$\iota_1$} (vkaplu1);
\draw[->, bend right = 0, red, font=\scriptsize] (y1lam1) to node [sloped] {$\iota_1$} (vlam1l1);
\draw[->, bend right = 0, green, font=\scriptsize] (y1lam1) to node [sloped] {$\iota_1$} (x1lam1l2);
\draw[->, bend right = 0, black, font=\scriptsize] (y2kapl) to node [sloped] {$\rho_{2}$} (y3kapl);
\draw[->, bend right = 0, black, font=\scriptsize] (y3kapl) to node [sloped] {$\rho_{1}$} (vkapl);
\draw (-3.5,0) -- (1,0) -- (1,4) -- (-3.5,4) -- (-3.5,0);
\end{tikzpicture}
}
\caption{After cancellation.}\label{fig:j3c}
\end{subfigure}

\begin{subfigure}{0.48\textwidth}
\begin{tikzpicture}[node distance=2cm, auto]
  \node (y3kl) {$y_3\ts\kappa_l$};
  \node (y2kl) [right of =y3kl] {$y_2\ts\kappa_l$};
  \node (nonamer) [right of =y2kl] {};
  \node (vkl) [below of =y3kl] {$v\ts\kappa_l$};
  \node (nonamed) [below of =vkl] {};
  \draw[->] (y3kl) to node [right] {$\rho_{1}$} (vkl);
  \draw[->, red] (nonamed) to node [right] {$\rho_{23}$} (vkl);
  \draw[->] (y2kl) to node [below] {$\rho_{2}$} (y3kl);
  \draw[->, red] (nonamer) to node [below] {$\rho_{23}$} (y2kl);
\end{tikzpicture}
\caption{Cases where either arrow at \xiip~have length more than 1}\label{fig:j3d}
\end{subfigure}
\begin{subfigure}{0.48\textwidth}
\begin{tikzpicture}[node distance=2cm, auto]
  \node (y3kl) {$y_3\ts\kappa_1$};
  \node (y2kl) [right of =y3kl] {$y_2\ts\kappa_1$};
  \node (uk1) [right of =y2kl] {$u\ts\kappa_1$};
  \node (vkl) [below of =y3kl] {$v\ts\kappa_1$};
  \node (y1k1) [below of =uk1] {$y_1\ts{\kappa_1}$};
  \node (nonamed2) [below of =y1k1] {};
  \node (nonamed) [below of =vkl] {};
  \draw[->] (y3kl) to node [right] {$\rho_{1}$} (vkl);
  \draw[->] (nonamed) to node [right] {$\rho_{3}$} (vkl);
  \draw[->] (y2kl) to node [above] {$\rho_{2}$} (y3kl);
  \draw[->] (uk1) to node [above] {$\rho_{3}$} (y2kl);
  \draw[->] (uk1) to node [right] {$\rho_{1}$} (y1k1);
  \draw[->] (nonamed2) to node [right] {$\rho_{3}$} (y1k1);
\end{tikzpicture}
\caption{A lower right joint connected to a upper right joint}\label{fig:j3e}
\end{subfigure}

\caption{Lower right joint}
\end{figure}
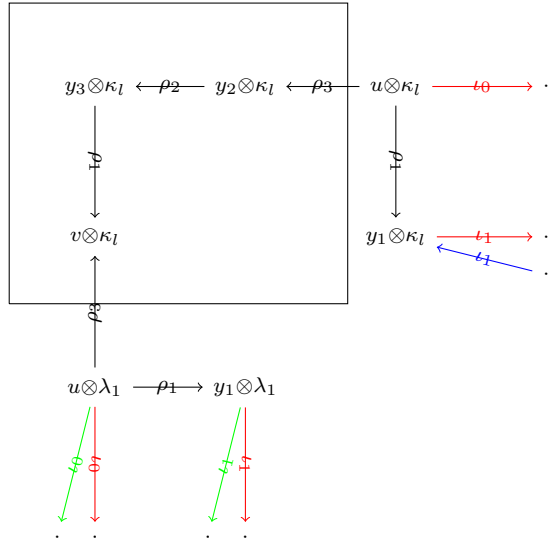
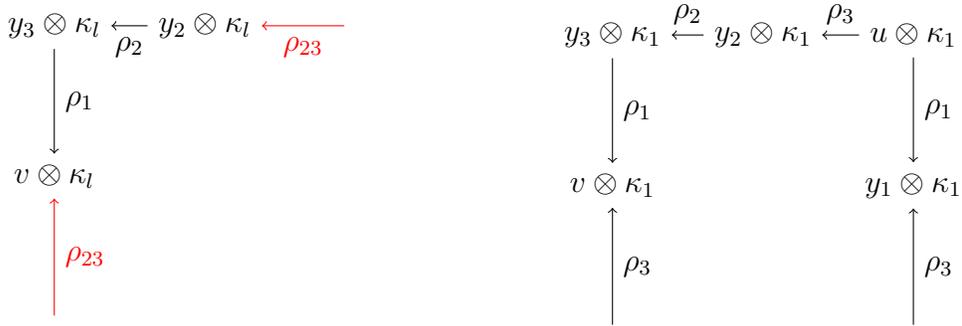

\FloatBarrier
We include a summary of all eight possible ways two joints could overlap, see \cref{fig:overlappingjoints}.

\begin{figure}[h]
\begin{subfigure}{.32\textwidth}
\centering
\begin{tikzpicture}[node distance=1.5cm, auto]
  \node (x) {};
  \node (y) [above of =x] {};
  \node (z) [right of =y] {};
  \node (w) [right of =z] {};
  \node (t) [below of =w] {};
  \draw[->] (y) to node [above] {} (x);
  \draw[->] (z) to node [above] {} (y);
  \draw[-] (w) to node [above] {} (z);
  \draw[->] (w) to node [above] {} (t);
\end{tikzpicture}
\caption{\cref{sec:ulj}}
\end{subfigure}
\begin{subfigure}{.32\textwidth}
\centering
\begin{tikzpicture}[node distance=1.5cm, auto]
  \node (x) {};
  \node (y) [right of =x] {};
  \node (z) [below of =y] {};
  \node (w) [below of =z] {};
  \node (t) [left of =w] {};
  \draw[->] (y) to node [above] {} (x);
  \draw[-] (y) to node [above] {} (z);
  \draw[->] (z) to node [above] {} (w);
  \draw[->] (w) to node [above] {} (t);
\end{tikzpicture}
\caption{\cref{sec:lrj}}
\end{subfigure}
\begin{subfigure}{.32\textwidth}
\centering
\begin{tikzpicture}[node distance=1.5cm, auto]
  \node (x) {};
  \node (y) [left of =x] {};
  \node (z) [below of =y] {};
  \node (w) [below of =z] {};
  \node (t) [left of =w] {};
  \draw[->] (x) to node [above] {} (y);
  \draw[-] (y) to node [above] {} (z);
  \draw[->] (z) to node [above] {} (w);
  \draw[->] (w) to node [above] {} (t);
\end{tikzpicture}
\caption{Ruled out in \cref{sec:lrj}}
\end{subfigure}

\begin{subfigure}{.32\textwidth}
\centering
\begin{tikzpicture}[node distance=1.5cm, auto]
  \node (x) {};
  \node (y) [above of =x] {};
  \node (z) [right of =y] {};
  \node (w) [right of =z] {};
  \node (t) [above of =w] {};
  \draw[->] (y) to node [above] {} (x);
  \draw[->] (z) to node [above] {} (y);
  \draw[-] (w) to node [above] {} (z);
  \draw[->] (t) to node [above] {} (w);
\end{tikzpicture}
\caption{Ruled out in \cref{sec:lrj}}
\end{subfigure}
\begin{subfigure}{.32\textwidth}
\centering
\begin{tikzpicture}[node distance=1.5cm, auto]
  \node (x) {};
  \node (y) [below of =x] {};
  \node (z) [right of =y] {};
  \node (w) [right of =z] {};
  \node (t) [below of =w] {};
  \draw[->] (x) to node [above] {} (y);
  \draw[->] (z) to node [above] {} (y);
  \draw[-] (w) to node [above] {} (z);
  \draw[->] (w) to node [above] {} (t);
\end{tikzpicture}
\caption{\cref{sec:llj}}\label{fig:overlapj5}
\end{subfigure}
\begin{subfigure}{.32\textwidth}
\centering
\begin{tikzpicture}[node distance=1.5cm, auto]
  \node (x) {};
  \node (y) [below of =x] {};
  \node (z) [right of =y] {};
  \node (w) [right of =z] {};
  \node (t) [above of =w] {};
  \draw[->] (x) to node [above] {} (y);
  \draw[->] (z) to node [above] {} (y);
  \draw[-] (w) to node [above] {} (z);
  \draw[->] (t) to node [above] {} (w);
\end{tikzpicture}
\caption{\cref{sec:llj}}\label{fig:overlapj6}
\end{subfigure}

\begin{subfigure}{.32\textwidth}
\centering
\begin{tikzpicture}[node distance=1.5cm, auto]
  \node (x) {};
  \node (y) [left of =x] {};
  \node (z) [below of =y] {};
  \node (w) [below of =z] {};
  \node (t) [right of =w] {};
  \draw[->] (x) to node [above] {} (y);
  \draw[-] (y) to node [above] {} (z);
  \draw[->] (z) to node [above] {} (w);
  \draw[->] (t) to node [above] {} (w);
\end{tikzpicture}
\caption{\cref{sec:llj}}\label{fig:overlapj7}
\end{subfigure}
\begin{subfigure}{.32\textwidth}
\centering
\begin{tikzpicture}[node distance=1.5cm, auto]
  \node (x) {};
  \node (y) [right of =x] {};
  \node (z) [below of =y] {};
  \node (w) [below of =z] {};
  \node (t) [right of =w] {};
  \draw[->] (y) to node [above] {} (x);
  \draw[-] (y) to node [above] {} (z);
  \draw[->] (z) to node [above] {} (w);
  \draw[->] (t) to node [above] {} (w);
\end{tikzpicture}
\caption{\cref{sec:llj}}\label{fig:overlapj8}
\end{subfigure}

\caption{A summary of eight possible overlapping joints}
\label{fig:overlappingjoints}
\end{figure}
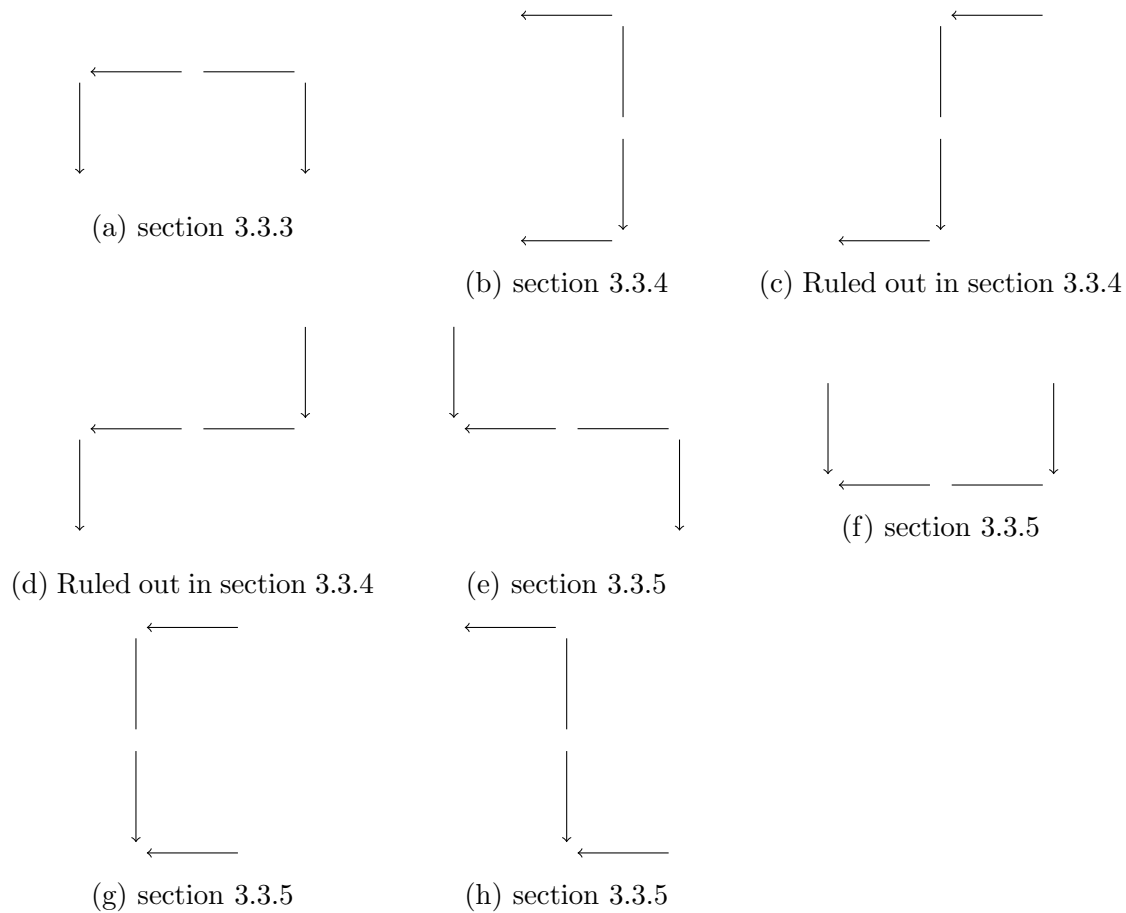

\FloatBarrier
\subsubsection{Lower left joint}\label{sec:llj}

For a generator \xii~with the lower left joint in \cref{fig:joints}, it appears in $KtD(C)$ as \cref{fig:j4a}. Again, for either of the two arrows not immediately connecting to \xiip, we have two possibilities, corresponding to original arrows in $C$ having length 1 or having length more than 1, and the corresponding part of the box tensor product $H\bts KtD(C)$ for all four cases are shown in \cref{fig:j4b}, where all four cases share the same generators and arrows in the middle. On the top, the two red arrows (one blue arrow, respectively) come from cases where in \cref{fig:j4a} the arrow going out of $\kappa_\ell$ (into $\kappa_\ell$, respectively) has a $\rh{23}$ ($\rh{1}$, respectively). Similarly, on the right side, the two red arrows (three blue arrows, respectively) come from cases where in \cref{fig:j4a} the arrow going into $\lambda_\ell$ has a $\rh{23}$ ($\rh{3}$, respectively). 

Then we cancel the two green arrows and the pink arrow in the middle and arrive at \cref{fig:j4c}. Note the joint at $y_3\ts \kappa_\ell$, which is boxed in \cref{fig:j4c}, can now be identified with the corresponding part of $KtD(C^{flip})$, which would look like the joint itself. 

If either of the two possibilities in \cref{fig:j4a} is $\rh{23}$, as above, we observe that cancellation of either set of two red arrows in \cref{fig:j4c} agree with the cancellations we performed for a string of $\rh{23}$'s in \cref{sec:string}. This shows chains containing $\rh{23}$'s at $y_3\ts\kappa_\ell$ in $H\bts KtD(C)$ (\cref{fig:j4d}) exactly match corresponding chains in $KtD(C^{flip})$, which would again look like the joint itself.

Now we deal with the four cases of overlapping joints involving a lower left joint, see the last four diagrams of \cref{fig:overlappingjoints}. First, for the case in \cref{fig:overlapj5}, we combine \cref{fig:j4b} with \cref{fig:j1b}. All generators involving $\lambda_\ell$ are identified with those with $\lambda_1$, and the two sets of two green arrows and two sets of three blue arrows are respectively identified. We perform the same two sets of cancellations and arrive at \cref{fig:j4e}. It can be identified with the corresponding part in $KtD(C^{flip})$.

The remaining three cases in \cref{fig:overlapj6}, \cref{fig:overlapj7}, and \cref{fig:overlapj8} can be proved in a similar fashion: we combine two tensor products of $H$ with corresponding joints shown in this and above sections, then identify overlapping generators and arrows, and finally observe that cancellations done separately for each joint can be combined without conflicting. We provide the results in \cref{fig:j4f}, \cref{fig:j4g}, and \cref{fig:j4h}, and omit further details. 

\begin{figure}[ht]

\begin{subfigure}{0.99\textwidth}
\centerline{
\begin{tikzpicture}[node distance=2cm]
\path[font = \scriptsize]
(-1.03, 2.57) node(kaplu2) {}
(0.57, -1.43) node(laml) {$\lambda_{l}$}
(2.57, -1.43) node(lamlr2) {}
(2.57, -1.43) node(lamlr1) {}
(-1.83, 2.57) node(kaplu1) {}
(-1.43, -1.43) node(xii) {$\xi_{i}^\prime$}
(-1.43, 0.57) node(kapl) {$\kappa_{l}$}
;
\draw[->, bend right = 0, black, font=\scriptsize] (kapl) to node [sloped] {$\rho_{23}$} (kaplu2);
\draw[->, bend right = 0, black, font=\scriptsize] (kaplu1) to node [sloped] {$\rho_{1}$} (kapl);
\draw[->, bend right = 0, black, font=\scriptsize] (laml) to node [sloped] {$\rho_{2}$} (xii);
\draw[->, bend right = 0, black, font=\scriptsize] (lamlr1) to node [above] {$\rho_{23}$} (laml);
\draw[->, bend right = 0, black, font=\scriptsize] (lamlr2) to node [below] {$\rho_{3}$} (laml);
\draw[->, bend right = 0, black, font=\scriptsize] (xii) to node [sloped] {$\rho_{123}$} (kapl);
\end{tikzpicture}
}
\caption{Lower left joint at \xiip}\label{fig:j4a}
\end{subfigure}

\begin{subfigure}{0.99\textwidth}
\centerline{
\begin{tikzpicture}[node distance=2cm]
\path[font = \scriptsize]
(-4.78, 1.34) node(y2kapl) [outer sep=-3pt]{$y_2$$\otimes$$\kappa_{l}$}
(0.97, -2.16) node(ulaml) [outer sep=-3pt]{$u$$\otimes$$\lambda_{l}$}
(9.22, -2.06) node(y2lamlr1) [outer sep=-3pt]{$\cdot$}
(-2.13, 2.84) node(y3kaplu2) [outer sep=-3pt]{$\cdot$}
(9.22, -3.16) node(x1lamlr2) [outer sep=-3pt]{$\cdot$}
(-0.88, 2.34) node(vkaplu2) [outer sep=-3pt]{$\cdot$}
(2.22, -2.66) node(y2laml) [outer sep=-3pt]{$y_2$$\otimes$$\lambda_{l}$}
(10.47, -2.56) node(y3lamlr1) [outer sep=-3pt]{$\cdot$}
(-3.53, 0.84) node(y3kapl) [outer sep=-3pt]{$y_3$$\otimes$$\kappa_{l}$}
(6.72, -1.06) node(y1lamlr1) [outer sep=-3pt]{$\cdot$}
(-4.78, -2.16) node(x1xii) [outer sep=-3pt]{$x_1$$\otimes$$\xi_{i}^\prime$}
(3.47, -3.16) node(y3laml) [outer sep=-3pt]{$y_3$$\otimes$$\lambda_{l}$}
(-2.28, 0.34) node(vkapl) [outer sep=-3pt]{$v$$\otimes$$\kappa_{l}$}
(-8.68, 4.84) node(x3kaplu1) [outer sep=-3pt]{$\cdot$}
(-7.43, 4.34) node(x2kaplu1) [outer sep=-3pt]{$\cdot$}
(-6.18, 3.84) node(x1kaplu1) [outer sep=-3pt]{$\cdot$}
(-6.03, -1.66) node(x2xii) [outer sep=-3pt]{$x_2$$\otimes$$\xi_{i}^\prime$}
(-4.63, 3.84) node(ukaplu2) [outer sep=-3pt]{$\cdot$}
(-7.28, -1.16) node(x3xii) [outer sep=-3pt]{$x_3$$\otimes$$\xi_{i}^\prime$}
(7.97, -1.56) node(ulamlr1) [outer sep=-3pt]{$\cdot$}
(7.97, -2.66) node(x2lamlr2) [outer sep=-3pt]{$\cdot$}
(-5.88, 4.34) node(y1kaplu2) [outer sep=-3pt]{$\cdot$}
(6.72, -2.16) node(x3lamlr2) [outer sep=-3pt]{$\cdot$}
(-7.28, 2.34) node(y1kapl) [outer sep=-3pt]{$y_1$$\otimes$$\kappa_{l}$}
(4.72, -3.66) node(vlaml) [outer sep=-3pt]{$v$$\otimes$$\lambda_{l}$}
(-3.38, 3.34) node(y2kaplu2) [outer sep=-3pt]{$\cdot$}
(-0.28, -1.66) node(y1laml) [outer sep=-3pt]{$y_1$$\otimes$$\lambda_{l}$}
(-6.03, 1.84) node(ukapl) [outer sep=-3pt]{$u$$\otimes$$\kappa_{l}$}
(11.72, -3.06) node(vlamlr1) [outer sep=-3pt]{$\cdot$}
;
\draw[->, bend right = 0, black, font=\scriptsize] (ukapl) to node [sloped] {$\rho_{1}$} (y1kapl);
\draw[->, bend right = 0, black, font=\scriptsize] (ukapl) to node [sloped] {$\rho_{3}$} (y2kapl);
\draw[->, bend right = 0, red, font=\scriptsize] (ukapl) to node [sloped] {$\iota_0$} (y3kaplu2);
\draw[->, bend right = 0, black, font=\scriptsize] (ulaml) to node [sloped] {$\rho_{1}$} (y1laml);
\draw[->, bend right = 0, black, font=\scriptsize] (ulaml) to node [sloped] {$\rho_{3}$} (y2laml);
\draw[->, bend right = 4.34, green, font=\scriptsize] (ulaml) to node [sloped] {$\iota_0$} (x2xii);
\draw[->, bend right = 16.23, red, font=\scriptsize] (ulamlr1) to node [sloped] {$\iota_0$} (y3laml);
\draw[->, bend right = 0, blue, font=\scriptsize] (x1lamlr2) to node [sloped] {$\iota_1$} (vlaml);
\draw[->, bend right = 0, blue, font=\scriptsize] (x2lamlr2) to node [sloped] {$\iota_0$} (y3laml);
\draw[->, bend right = 0, black, font=\scriptsize] (x2xii) to node [sloped] {$\rho_{1}$} (x1xii);
\draw[->, bend right = 0, blue, font=\scriptsize] (x3kaplu1) to node [sloped] {$\iota_1$} (y1kapl);
\draw[->, bend right = 0, blue, font=\scriptsize] (x3lamlr2) to node [sloped] {$\iota_1$} (y2laml);
\draw[->, bend right = 0, pink, font=\scriptsize] (x3xii) to node [sloped] {$\iota_1$} (vkapl);
\draw[->, bend right = 0, black, font=\scriptsize] (x3xii) to node [sloped] {$\rho_{2}$} (x2xii);
\draw[->, bend right = 0, red, font=\scriptsize] (y1kapl) to node [sloped] {$\iota_1$} (vkaplu2);
\draw[->, bend right = 0, green, font=\scriptsize] (y1laml) to node [sloped] {$\iota_1$} (x1xii);
\draw[->, bend right = 0, red, font=\scriptsize] (y1lamlr1) to node [sloped] {$\iota_1$} (vlaml);
\draw[->, bend right = 0, black, font=\scriptsize] (y2kapl) to node [sloped] {$\rho_{2}$} (y3kapl);
\draw[->, bend right = 0, black, font=\scriptsize] (y2laml) to node [sloped] {$\rho_{2}$} (y3laml);
\draw[->, bend right = 0, black, font=\scriptsize] (y3kapl) to node [sloped] {$\rho_{1}$} (vkapl);
\draw[->, bend right = 0, black, font=\scriptsize] (y3laml) to node [sloped] {$\rho_{1}$} (vlaml);
\end{tikzpicture}
}
\caption{Tensor product with $H$.}\label{fig:j4b}
\end{subfigure}

\begin{subfigure}{0.99\textwidth}
\centerline{
\begin{tikzpicture}[node distance=2cm]
\path[font = \scriptsize]
(4.6, -1.13) node(y3kaplu2) [outer sep=-3pt]{$\cdot$}
(3.6, -1.13) node(ukapl) [outer sep=-3pt]{$u$$\otimes$$\kappa_{l}$}
(4.6, -3.63) node(x3kaplu1) [outer sep=-3pt]{$\cdot$}
(-0.4, 0.87) node(y2laml) [outer sep=-3pt]{$y_2$$\otimes$$\lambda_{l}$}
(-0.4, 2.37) node(x3lamlr2) [outer sep=-3pt]{$\cdot$}
(1.6, -1.13) node(y2kapl) [outer sep=-3pt]{$y_2$$\otimes$$\kappa_{l}$}
(-4.9, 2.37) node(y1lamlr1) [outer sep=-3pt]{$\cdot$}
(3.6, -3.13) node(y1kapl) [outer sep=-3pt]{$y_1$$\otimes$$\kappa_{l}$}
(-2.4, 0.87) node(y3laml) [outer sep=-3pt]{$y_3$$\otimes$$\lambda_{l}$}
(-2.4, 2.37) node(x2lamlr2) [outer sep=-3pt]{$\cdot$}
(-4.4, 2.37) node(x1lamlr2) [outer sep=-3pt]{$\cdot$}
(4.6, -3.13) node(vkaplu2) [outer sep=-3pt]{$\cdot$}
(-2.9, 2.37) node(ulamlr1) [outer sep=-3pt]{$\cdot$}
(-0.4, -1.13) node(y3kapl) [outer sep=-3pt]{$y_3$$\otimes$$\kappa_{l}$}
(-4.4, 0.87) node(vlaml) [outer sep=-3pt]{$v$$\otimes$$\lambda_{l}$}
;
\draw[->, bend right = 0, black, font=\scriptsize] (ukapl) to node [sloped] {$\rho_{1}$} (y1kapl);
\draw[->, bend right = 0, black, font=\scriptsize] (ukapl) to node [sloped] {$\rho_{3}$} (y2kapl);
\draw[->, bend right = 0, red, font=\scriptsize] (ukapl) to node [sloped] {$\iota_0$} (y3kaplu2);
\draw[->, bend right = 0, red, font=\scriptsize] (ulamlr1) to node [sloped] {$\iota_0$} (y3laml);
\draw[->, bend right = 0, blue, font=\scriptsize] (x1lamlr2) to node [sloped] {$\iota_1$} (vlaml);
\draw[->, bend right = 0, blue, font=\scriptsize] (x2lamlr2) to node [sloped] {$\iota_0$} (y3laml);
\draw[->, bend right = 0, blue, font=\scriptsize] (x3kaplu1) to node [sloped] {$\iota_1$} (y1kapl);
\draw[->, bend right = 0, blue, font=\scriptsize] (x3lamlr2) to node [sloped] {$\iota_1$} (y2laml);
\draw[->, bend right = 0, red, font=\scriptsize] (y1kapl) to node [sloped] {$\iota_1$} (vkaplu2);
\draw[->, bend right = 0, red, font=\scriptsize] (y1lamlr1) to node [sloped] {$\iota_1$} (vlaml);
\draw[->, bend right = 0, black, font=\scriptsize] (y2kapl) to node [sloped] {$\rho_{2}$} (y3kapl);
\draw[->, bend right = 0, black, font=\scriptsize] (y2laml) to node [sloped] {$\rho_{2}$} (y3laml);
\draw[->, bend right = 0, black, font=\scriptsize] (y3kapl) to node [sloped] {$\rho_{123}$} (y2laml);
\draw[->, bend right = 0, black, font=\scriptsize] (y3laml) to node [sloped] {$\rho_{1}$} (vlaml);
\draw (-1.3,-1.8) -- (3,-1.8) -- (3,1.5) -- (-1.3,1.5) -- (-1.3,-1.8);
\end{tikzpicture}
}
\caption{After cancellation.}\label{fig:j4c}
\end{subfigure}

\caption{Lower left joint}
\end{figure}
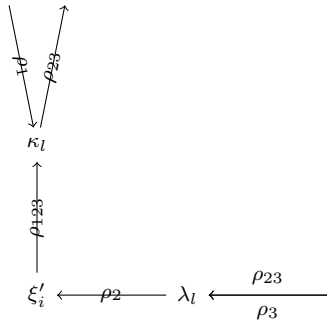
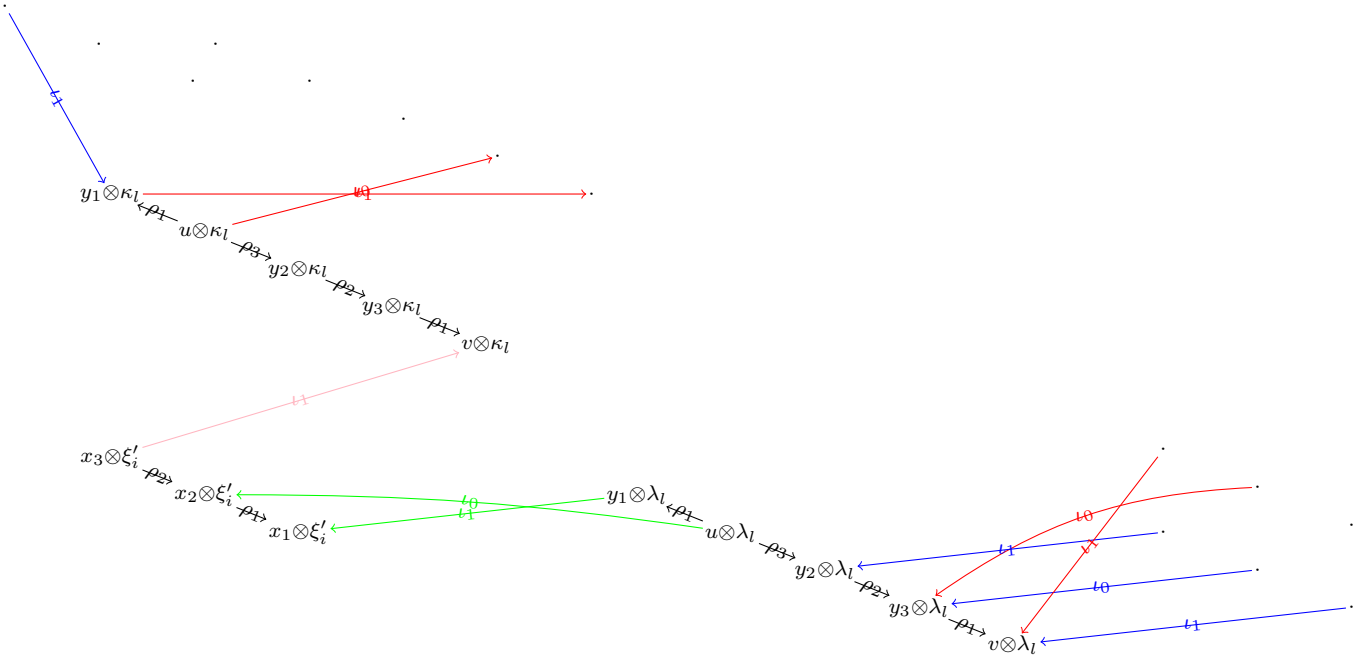
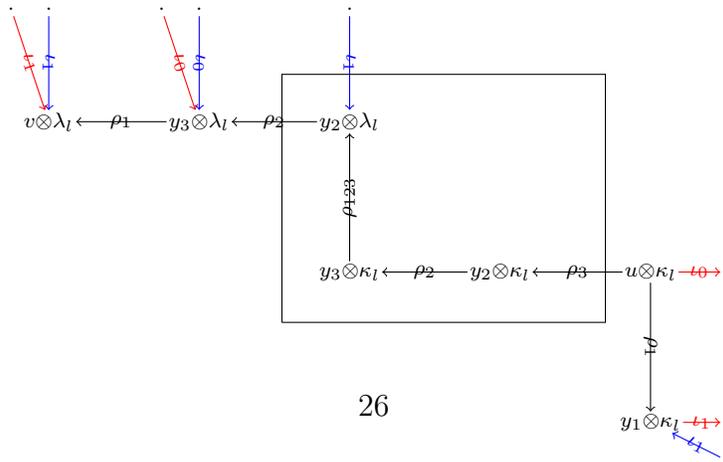

\begin{figure}

\begin{subfigure}{0.99\textwidth}
\centerline{
\begin{tikzpicture}[node distance=1.8cm, auto]
  \node (y3kl) {$y_3\ts\kappa_l$};
  \node (y2kl) [right of =y3kl] {$y_2\ts\kappa_l$};
  \node (nonamer) [right of =y2kl] {};
  \node (y2ll) [above of =y3kl] {$y_2\ts\lambda_l$};
  \node (nonameu) [above of =y2ll] {};
  \draw[->] (y3kl) to node [right] {$\rho_{123}$} (y2ll);
  \draw[->, red] (y2ll) to node [right] {$\rho_{23}$} (nonameu);
  \draw[->] (y2kl) to node [below] {$\rho_{2}$} (y3kl);
  \draw[->, red] (nonamer) to node [below] {$\rho_{23}$} (y2kl);
\end{tikzpicture}
}
\caption{Cases where either arrow at \xiip~have length more than 1}\label{fig:j4d}
\end{subfigure}

\begin{subfigure}{0.99\textwidth}
\centerline{
\begin{tikzpicture}[node distance=1.8cm, auto]
  \node (y3kl) {$y_3\ts\kappa_l$};
  \node (y2kl) [right of =y3kl] {$y_2\ts\kappa_l$};
  \node (nonamer) [right of =y2kl] {};
  \node (y1k1) [above of =y3kl] {$y_1\ts\kappa_1$};
  \node (uk1) [above of =y1k1] {$u\ts{\kappa_1}$};
  \node (y2k1) [left of =uk1] {$y_2\ts{\kappa_1}$};
  \node (nonamel) [left of =y2k1] {};
  \draw[->] (y3kl) to node [right] {$\rho_{123}$} (y1k1);
  \draw[->] (uk1) to node [right] {$\rho_{1}$} (y1k1);
  \draw[->] (y2kl) to node [below] {$\rho_{2}$} (y3kl);
  \draw[->] (nonamer) to node [below] {$\rho_{3}$} (y2kl);
  \draw[->] (uk1) to node [above] {$\rho_{3}$} (y2k1);
  \draw[->] (y2k1) to node [above] {$\rho_{2}$} (nonamel);
\end{tikzpicture}
}
\caption{A lower left joint connected to a upper right joint}\label{fig:j4e}
\end{subfigure}

\begin{subfigure}{0.99\textwidth}
\centerline{
\begin{tikzpicture}[node distance=1.8cm, auto]
  \node (y3kl) {$y_3\ts\kappa_l$};
  \node (y2kl) [right of =y3kl] {$y_2\ts\kappa_l$};
  \node (nonamer) [right of =y2kl] {};
  \node (vkl) [above of =y3kl] {$v\ts\tilde{\kappa_l}$};
  \node (y3klt) [above of =vkl] {$y_3\ts\tilde{\kappa_l}$};
  \node (y2klt) [right of =y3klt] {$y_2\ts\tilde{\kappa_l}$};
  \node (nonameu) [right of =y2klt] {};
  \draw[->] (y3kl) to node [right] {$\rho_{123}$} (vkl);
  \draw[->] (y3klt) to node [right] {$\rho_{1}$} (vkl);
  \draw[->] (y2kl) to node [below] {$\rho_{2}$} (y3kl);
  \draw[->] (nonamer) to node [below] {$\rho_{3}$} (y2kl);
  \draw[->] (nonameu) to node [above] {$\rho_{3}$} (y2klt);
  \draw[->] (y2klt) to node [above] {$\rho_{2}$} (y3klt);
\end{tikzpicture}
}
\caption{A lower left joint connected to a lower right joint. $\tilde{\kappa_\ell}$ refers to the $\kappa_\ell$ from the lower right joint.}\label{fig:j4f}
\end{subfigure}

\caption{Lower left joint}
\end{figure}

\begin{figure}

\begin{subfigure}{0.99\textwidth}
\centerline{
\begin{tikzpicture}[node distance=1.8cm, auto]
  \node (y3kl) {$y_3\ts\kappa_l$};
  \node (y2kl) [right of =y3kl] {$y_2\ts\kappa_l$};
  \node (ukl) [right of =y2kl] {$u\ts\kappa_l$};
  \node (y2ll) [above of =y3kl] {$y_2\ts\lambda_l$};
  \node (y2llt) [above of =ukl] {$y_2\ts\tilde{\lambda_l}$};
  \node (nonamer) [right of =y2llt] {};
  \node (nonamel) [left of =y2ll] {};
  \draw[->] (y3kl) to node [right] {$\rho_{123}$} (y2ll);
  \draw[->] (y2ll) to node [above] {$\rho_{2}$} (nonamel);
  \draw[->] (y2kl) to node [below] {$\rho_{2}$} (y3kl);
  \draw[->] (ukl) to node [below] {$\rho_{3}$} (y2kl);
  \draw[->] (ukl) to node [right] {$\rho_{123}$} (y2llt);
  \draw[->] (y2llt) to node [above] {$\rho_{2}$} (nonamer);
\end{tikzpicture}
}
\caption{A lower left joint connected to a upper left joint. $\tilde{\lambda_\ell}$ refers to the $\lambda_\ell$ from the upper left joint.}\label{fig:j4g}
\end{subfigure}

\begin{subfigure}{0.99\textwidth}
\centerline{
\begin{tikzpicture}[node distance=1.8cm, auto]
  \node (y3kl) {$y_3\ts\kappa_l$};
  \node (y2kl) [right of =y3kl] {$y_2\ts\kappa_l$};
  \node (ukl) [right of =y2kl] {$u\ts\kappa_l$};
  \node (y2ll) [above of =y3kl] {$y_2\ts\lambda_l$};
  \node (y1kl) [below of =ukl] {$y_1\ts{\kappa_l}$};
  \node (nonamer) [below of =y1kl] {};
  \node (nonamel) [left of =y2ll] {};
  \draw[->] (y3kl) to node [right] {$\rho_{123}$} (y2ll);
  \draw[->] (y2ll) to node [above] {$\rho_{2}$} (nonamel);
  \draw[->] (y2kl) to node [below] {$\rho_{2}$} (y3kl);
  \draw[->] (ukl) to node [below] {$\rho_{3}$} (y2kl);
  \draw[->] (ukl) to node [right] {$\rho_{1}$} (y1kl);
  \draw[->] (nonamer) to node [right] {$\rho_{3}$} (y1kl);
\end{tikzpicture}
}
\caption{A lower left joint connected to a upper right joint in a different way}\label{fig:j4h}
\end{subfigure}

\caption{Lower left joint}
\end{figure}

\FloatBarrier
\subsubsection{The unstable chain}

So far we have avoided $\xi_v$, $\xi_h$, and the unstable chain between them in our discussion. We now argue that the proof for the unstable chain can be reduced to the cases discussed above.

Recall when $n<2\tau(K)$, the unstable chain takes form:
\[
\xi_v'\xrightarrow{\rho_1}\mu_{1}\xleftarrow{\rho_{23}}\cdots\xleftarrow{\rho_{23}}\mu_{k}\xleftarrow{\rho_{23}}\mu_{k+1}\xleftarrow{\rho_{23}}\cdots\xleftarrow{\rho_{23}}\mu_{m}\xleftarrow{\rho_{3}}\xi_{h}',
\] 
where $m=2\tau(K)-n\geq 3$, by assumption. 

If $\xi_v = \xi_h$, the unstable chain appears as in \cref{fig:unstable1}. Now, $H$ acts on the string of $\rh{23}$'s by flipping them into the opposite direction. At generator $\xi_v'$, the arrows are exactly like a upper right joint in \cref{fig:j1a} in \cref{sec:urj}, so the arrows are also flipped. The proof there carries over to show that $H$ tensoring with the unstable chain in $KtD(C)$ matches the unstable chain in $KtD(C^{flip})$.

Now suppose $\xi_v \neq \xi_h$. We observe that the unstable chain is the end of chains for vertical arrows spliced with the end of chains for horizontal arrows. For this reason, we can interpret arrows at $\xi_v'$ and $\xi_h'$ as joints we have already discussed.

For example, arrows at $\xi_v'$ match exactly to a upper left joint as in \cref{fig:j2a} if $\xi_v$ has an incoming arrow, and a upper right joint as in \cref{fig:j1a} if $\xi_v$ has an outgoing arrow, see \cref{fig:unstable2}.

Similarly, arrows at $\xi_h'$ match exactly to a lower right joint as in \cref{fig:j3a} if $\xi_h$ has an incoming arrow, and a upper right joint as in \cref{fig:j1a} if $\xi_h$ has an outgoing arrow, see \cref{fig:unstable3}.

Now the proof is easy. In the case where $\xi_v$ has an incoming horizontal arrow, the corresponding generator $\xi_v^{flip}$ in $C^{flip}$ is a generator with no horizontal arrow and an incoming vertical arrow, so its corresponding generator $\xi_v^{flip\prime}$ in $KtD(C^{flip})$ has a lower right joint. Now $H$ acts by matching the upper left joint at $\xi_v'$ to the lower right joint at $\xi^{flip\prime}_v$ as discussed in \cref{sec:ulj}. The case where $\xi_h$ has an incoming vertical arrow works backwards using the $H$ action discussed in \cref{sec:lrj}.

Similarly, the correspondence between the case where $\xi_v$ has an outgoing horizontal arrow and the case where $\xi_h$ has an outgoing vertical arrow can be shown with the $H$ action discussed in \cref{sec:urj}.

Now we have finished the proof.

\begin{figure}[ht]

\begin{subfigure}{0.99\textwidth}
\centerline{
\begin{tikzpicture}[node distance=1.8cm, auto]
  \node (xii) {$\xi_v' = \xi_h'$};
  \node (l1) [left of =xii] {};
  \node (l2) [left of =l1] {};
  \node (l3) [below left of =l2] {};
  \node (d1) [below of =xii] {};
  \node (d2) [below left of =d1] {};
  \draw[->] (xii) to node [above] {$\rho_{3}$} (l1);
  \draw[->] (l1) to node [above] {$\rho_{23}$} (l2);
  \draw[->] (l2) to node [above] {$\rho_{23}$} (l3);
  \draw[->] (xii) to node [right] {$\rho_{1}$} (d1);
  \draw[->] (d2) to node [right] {$\rho_{23}$} (d1);
  \draw[->, dashed, bend right = 30] (l3) to node [right] {} (d2);
\end{tikzpicture}
}
\caption{Unstable chain in the case where $\xi_v = \xi_h$}\label{fig:unstable1}
\end{subfigure}

\begin{subfigure}{0.99\textwidth}
\centerline{
\begin{tikzpicture}[node distance=1.5cm, auto]
  \node (xii) {$\xi_v'$};
  \node (r1) [right of =xii] {};
  \node (r2) [right of =r1] {};
  \node (d1) [below of =xii] {};
  \node (d2) [below of =d1] {};
  \node (d3) [below right of =d2] {};
  \node (d35) [below right of =d3] {};
  \node (d4) [right of =d35] {};
  \draw[->] (r1) to node [above] {$\rho_{2}$} (xii);
  \draw[->] (r2) to node [above] {$\rho_{23}$} (r1);
  \draw[->] (r2) to node [below] {$\rho_{3}$} (r1);
  \draw[->] (xii) to node [right] {$\rho_{1}$} (d1);
  \draw[->] (d2) to node [right] {$\rho_{23}$} (d1);
  \draw[->] (d3) to node [right] {$\rho_{23}$} (d2);
  \draw[->, dashed, bend left = 30] (d4) to node [right] {} (d3);
  \node (middle) [right of =r2] {};
  \node (r2r) [right of =middle] {};
  \node (r1r) [right of =r2r] {};
  \node (xiir) [right of =r1r] {$\xi_v'$};
  \node (d1r) [below of =xiir] {};
  \node (d2r) [below of =d1r] {};
  \node (d3r) [below right of =d2r] {};
  \node (d35r) [below right of =d3r] {};
  \node (d4r) [right of =d35r] {};
  \draw[->] (xiir) to node [above] {$\rho_{3}$} (r1r);
  \draw[->] (r1r) to node [above] {$\rho_{23}$} (r2r);
  \draw[->] (r1r) to node [below] {$\rho_{2}$} (r2r);
  \draw[->] (xiir) to node [right] {$\rho_{1}$} (d1r);
  \draw[->] (d2r) to node [right] {$\rho_{23}$} (d1r);
  \draw[->] (d3r) to node [right] {$\rho_{23}$} (d2r);
  \draw[->, dashed, bend left = 30] (d4r) to node [right] {} (d3r);
\end{tikzpicture}
}
\caption{Joints at $\xi_v'$}\label{fig:unstable2}
\end{subfigure}

\begin{subfigure}{0.99\textwidth}
\centerline{
\begin{tikzpicture}[node distance=1.5cm, auto]
  \node (xii) at (0, 0) {$\xi_h'$};
  \node (u1) [above of =xii] {};
  \node (u2) at (-0.3, 3) {};
  \node (u22) at (0.3, 3) {};
  \node (l1) [left=of xii] {};
  \node (l2) [left=of l1] {};
  \node (l3) [left=of l2] {};
  \node (l4) [left=of l3] {};
  \draw[->] (xii) to node [above] {$\rho_{3}$} (l1);
  \draw[->] (l1) to node [above] {$\rho_{23}$} (l2);
  \draw[->] (l2) to node [above] {$\rho_{23}$} (l3);
  \draw[->, dashed] (l3) to node [above] {$\rho_{23}$} (l4);
  \draw[->] (xii) to node [right] {$\rho_{123}$} (u1);
  \draw[->] (u1) to node [right] {$\rho_{23}$} (u22);
  \draw[->] (u2) to node [left] {$\rho_{1}$} (u1);
  \node (middle) [right=of u22] {};
  \node (l4r) [right=of middle] {};
  \node (l3r) [right=of l4r] {};
  \node (l2r) [right=of l3r] {};
  \node (l1r) [right=of l2r] {};
  \node (xiir) [right=of l1r] {$\xi_h'$};
  \node (d1r) [below of =xiir] {};
  \node (d2r) [below of =d1r] {};
  \draw[->] (xiir) to node [above] {$\rho_{3}$} (l1r);
  \draw[->] (l1r) to node [above] {$\rho_{23}$} (l2r);
  \draw[->] (l2r) to node [above] {$\rho_{23}$} (l3r);
  \draw[->, dashed] (l3r) to node [above] {$\rho_{23}$} (l4r);
  \draw[->] (xiir) to node [right] {$\rho_{1}$} (d1r);
  \draw[->] (d2r) to node [right] {$\rho_{23}$} (d1r);
  \draw[->] (d2r) to node [left] {$\rho_{123}$} (d1r);
\end{tikzpicture}
}
\caption{Joints at $\xi_h'$}\label{fig:unstable3}
\end{subfigure}

\caption{}
\end{figure}

\FloatBarrier
\subsection{Proof of the General Case}\label{sec:proofg}
Now we prove \cref{flip} without the extra assumption of $C$ being simultaneously horizontally and vertically simplified. Instead we only assume it is reduced and horizontally simplified as in \cref{flip}. We use \cref{alg2}, the base-free version of the algorithm from $\widehat{CFD}$ to $CFK^-$. 

First, we give an example of \cref{alg2}. Suppose we have a knot Floer complex $C$ as in \cref{fig:alg2exCFK}.

\begin{figure}[ht]

\begin{subfigure}{0.49\textwidth}
\centering
\begin{tikzpicture}[node distance=2cm]
\path[font = \scriptsize]
(-1.4, -1.95) node(uc) [outer sep=-3pt]{$Uc$}
(-1.4, 0.05) node(ua) [outer sep=-3pt]{$Ua$}
(0.6, 0.05) node(e) [outer sep=-3pt]{$e$}
(1.0, 0.05) node(c) [outer sep=-3pt]{$c$}
(0.6, -1.95) node(d) [outer sep=-3pt]{$d$}
(0.6, 2.05) node(a) [outer sep=-3pt]{$a$}
(-1.0, -0.35) node(ub) [outer sep=-3pt]{$Ub$}
(1.0, 2.05) node(b) [outer sep=-3pt]{$b$}
;
\draw[->, bend right = 0, black, font=\scriptsize] (a) to node [sloped] {} (c);
\draw[->, bend right = 0, black, font=\scriptsize] (b) to node [sloped] {} (c);
\draw[->, bend right = 0, black, font=\scriptsize] (d) to node [sloped] {} (uc);
\draw[->, bend right = 0, black, font=\scriptsize] (e) to node [sloped] {} (d);
\draw[->, bend right = 0, black, font=\scriptsize] (e) to node [sloped] {} (ua);
\draw[->, bend right = 0, black, font=\scriptsize] (ua) to node [sloped] {} (uc);
\draw[->, bend right = 0, black, font=\scriptsize] (ub) to node [sloped] {} (uc);
\end{tikzpicture}
\caption{Generators $a$ and $b$ have Alexander filtration level 1. Alexander filtration level of generators $c$ and $e$ is 0, and that of $d$ is -1.}\label{fig:alg2exCFK}
\end{subfigure}
\begin{subfigure}{0.49\textwidth}
\centering
\begin{tikzpicture}[node distance=2cm]
\path[font = \scriptsize]
(0.51, 0.63) node(e) [outer sep=-3pt]{$e$}
(0.51, -1.77) node(b) [outer sep=-3pt]{$b$}
(0.51, -1.37) node(a) [outer sep=-3pt]{$a$}
(0.91, 0.63) node(c) [outer sep=-3pt]{$c$}
(-1.49, -1.37) node(uc) [outer sep=-3pt]{$Uc$}
(0.51, 2.63) node(d) [outer sep=-3pt]{$d$}
(-1.49, 0.63) node(ud) [outer sep=-3pt]{$Ud$}
;
\draw[->, bend right = 0, black, font=\scriptsize] (a) to node [sloped] {} (uc);
\draw[->, bend right = 0, black, font=\scriptsize] (b) to node [sloped] {} (uc);
\draw[->, bend right = 0, black, font=\scriptsize] (d) to node [sloped] {} (c);
\draw[->, bend right = 0, black, font=\scriptsize] (e) to node [sloped] {} (ud);
\draw[->, bend right = 0, black, font=\scriptsize] (e) to node [sloped] {} (a);
\draw[->, bend right = 0, black, font=\scriptsize] (ud) to node [sloped] {} (uc);
\end{tikzpicture}
\caption{The Alexander filtration levels for $a,b,c,d$ and $e$ are -1, -1, 0, 1, and 0, respectively. }\label{fig:alg2exCFKf}
\end{subfigure}

\caption{}
\end{figure}

Applying \cref{alg2} with $n=7$ yields $KtD(C)$ as in \cref{fig:alg2exCFD}.

\clearpage
\newgeometry{margin=2cm}
\begin{landscape}
\begin{figure}
\centering
\begin{tikzpicture}[node distance=2cm]
\path[font = \scriptsize]
(4.64, -5.04) node(Alex$V^1_{2}$) [outer sep=-1pt]{$V^1_{2}$}
(4.76, -4.14) node(d_n2) [outer sep=-1pt]{$d$}
(-5.99, 2.16) node(a_2) [outer sep=-1pt]{$a$}
(9.76, 1.86) node(b_n4) [outer sep=-1pt]{$b$}
(6.51, 2.16) node(a_n3) [outer sep=-1pt]{$a$}
(-0.24, 9.36) node(e_0) [outer sep=-1pt]{$e$}
(6.51, -0.84) node(c_n3) [outer sep=-1pt]{$c$}
(6.01, 9.36) node(b_n1) [outer sep=-1pt]{$b$}
(4.76, -1.14) node(e_n2) [outer sep=-1pt]{$e$}
(-7.74, -1.14) node(e_3) [outer sep=-1pt]{$e$}
(-0.99, 9.66) node(c_0) [outer sep=-1pt]{$c$}
(-10.24, -4.14) node(d_4) [outer sep=-1pt]{$d$}
(2.26, -1.14) node(dot_n1) [outer sep=-1pt]{}
(4.01, -0.84) node(c_n2) [outer sep=-1pt]{$c$}
(-6.49, 9.36) node(d_1) [outer sep=-1pt]{$d$}
(-8.49, -0.84) node(c_3) [outer sep=-1pt]{$c$}
(-5.36, -5.04) node(Alex$V^1_{-2}$) [outer sep=-1pt]{$V^1_{-2}$}
(7.26, 1.86) node(b_n3) [outer sep=-1pt]{$b$}
(-2.74, -1.14) node(dot_1) [outer sep=-1pt]{}
(-0.24, -1.14) node(dot_0) [outer sep=-1pt]{}
(5.26, 9.66) node(a_n1) [outer sep=-1pt]{$a$}
(4.76, 1.86) node(b_n2) [outer sep=-1pt]{$b$}
(-6.61, 10.86) node(Alex$V^0_{-1}$) [outer sep=-1pt]{$V^0_{-1}$}
(-5.99, -0.84) node(c_2) [outer sep=-1pt]{$c$}
(-0.36, 10.86) node(Alex$V^0_{0}$) [outer sep=-1pt]{$V^0_{0}$}
(7.14, -5.04) node(Alex$V^1_{3}$) [outer sep=-1pt]{$V^1_{3}$}
(9.01, 2.16) node(a_n4) [outer sep=-1pt]{$a$}
(4.01, 2.16) node(a_n2) [outer sep=-1pt]{$a$}
(-7.86, -5.04) node(Alex$V^1_{-3}$) [outer sep=-1pt]{$V^1_{-3}$}
(2.14, -5.04) node(Alex$V^1_{1}$) [outer sep=-1pt]{$V^1_{1}$}
(-10.36, -5.04) node(Alex$V^1_{-4}$) [outer sep=-1pt]{$V^1_{-4}$}
(5.89, 10.86) node(Alex$V^0_{1}$) [outer sep=-1pt]{$V^0_{1}$}
(-5.24, 1.86) node(b_2) [outer sep=-1pt]{$b$}
(-5.24, -1.14) node(e_2) [outer sep=-1pt]{$e$}
(9.64, -5.04) node(Alex$V^1_{4}$) [outer sep=-1pt]{$V^1_{4}$}
(-7.74, -4.14) node(d_3) [outer sep=-1pt]{$d$}
(-2.86, -5.04) node(Alex$V^1_{-1}$) [outer sep=-1pt]{$V^1_{-1}$}
(-0.36, -5.04) node(Alex$V^1_{0}$) [outer sep=-1pt]{$V^1_{0}$}
(7.26, -1.14) node(e_n3) [outer sep=-1pt]{$e$}
(-5.24, -4.14) node(d_2) [outer sep=-1pt]{$d$}
;
\draw[->, bend right = -5.06, blue, big arrow=blue, font=\scriptsize](a_n1) to node [sloped] {$\rho_{3}$} (a_2);
\draw[->, bend right = -3.88, blue, big arrow=blue, font=\scriptsize](a_n1) to node [sloped] {$\rho_{1}$} (a_n4);
\draw[->, bend right = 0, black, big arrow=black, font=\scriptsize](a_n2) to node [sloped] {$\iota_1$} (c_n2);
\draw[->, bend right = 0, red, big arrow=red, font=\scriptsize](a_n2) to node [sloped] {$\rho_{23}$} (a_n3);
\draw[->, bend right = 0, black, big arrow=black, font=\scriptsize](a_n3) to node [sloped] {$\iota_1$} (c_n3);
\draw[->, bend right = 0, red, big arrow=red, font=\scriptsize](a_n3) to node [sloped] {$\rho_{23}$} (a_n4);
\draw[->, bend right = 0, red, big arrow=red, font=\scriptsize](b_2) to node [sloped] {$\rho_{23}$} (dot_1);
\draw[->, bend right = -5.06, blue, big arrow=blue, font=\scriptsize](b_n1) to node [sloped] {$\rho_{3}$} (b_2);
\draw[->, bend right = -3.88, blue, big arrow=blue, font=\scriptsize](b_n1) to node [sloped] {$\rho_{1}$} (b_n4);
\draw[->, bend right = 0, black, big arrow=black, font=\scriptsize](b_n2) to node [sloped] {$\iota_1$} (c_n2);
\draw[->, bend right = 0, red, big arrow=red, font=\scriptsize](b_n2) to node [sloped] {$\rho_{23}$} (b_n3);
\draw[->, bend right = 0, black, big arrow=black, font=\scriptsize](b_n3) to node [sloped] {$\iota_1$} (c_n3);
\draw[->, bend right = 0, red, big arrow=red, font=\scriptsize](b_n3) to node [sloped] {$\rho_{23}$} (b_n4);
\draw[->, bend right = -7.2, blue, big arrow=blue, font=\scriptsize](c_0) to node [sloped] {$\rho_{3}$} (c_3);
\draw[->, bend right = -8.1, blue, big arrow=blue, font=\scriptsize](c_0) to node [sloped] {$\rho_{1}$} (c_n3);
\draw[->, bend right = 0, red, big arrow=red, font=\scriptsize](c_3) to node [sloped] {$\rho_{23}$} (c_2);
\draw[->, bend right = 0, red, big arrow=red, font=\scriptsize](c_n2) to node [sloped] {$\rho_{23}$} (c_n3);
\draw[->, bend right = -4.42, blue, big arrow=blue, font=\scriptsize](d_1) to node [sloped] {$\rho_{3}$} (d_4);
\draw[->, bend right = -9.0, blue, big arrow=blue, font=\scriptsize](d_1) to node [sloped] {$\rho_{1}$} (d_n2);
\draw[->, bend right = -5.04, pink, big arrow=pink, font=\scriptsize](d_1) to node [sloped] {$\rho_{123}$} (c_n3);
\draw[->, bend right = 0, black, big arrow=black, font=\scriptsize](d_2) to node [sloped] {$\iota_1$} (c_2);
\draw[->, bend right = 0, black, big arrow=black, font=\scriptsize](d_3) to node [sloped] {$\iota_1$} (c_3);
\draw[->, bend right = 0, red, big arrow=red, font=\scriptsize](d_3) to node [sloped] {$\rho_{23}$} (d_2);
\draw[->, bend right = 0, red, big arrow=red, font=\scriptsize](d_4) to node [sloped] {$\rho_{23}$} (d_3);
\draw[->, bend right = -8.89, green, big arrow=green, font=\scriptsize](d_4) to node [sloped] {$\rho_{2}$} (c_0);
\draw[->, bend right = 0, red, big arrow=red, font=\scriptsize](dot_0) to node [sloped] {$\rho_{23}$} (dot_n1);
\draw[->, bend right = 0, red, big arrow=red, font=\scriptsize](dot_1) to node [sloped] {$\rho_{23}$} (dot_0);
\draw[->, bend right = 0, red, big arrow=red, font=\scriptsize](dot_n1) to node [sloped] {$\rho_{23}$} (b_n2);
\draw[->, bend right = 0, red, big arrow=red, font=\scriptsize](dot_n1) to node [sloped] {$\rho_{23}$} (a_n2);
\draw[->, bend right = -7.2, blue, big arrow=blue, font=\scriptsize](e_0) to node [sloped] {$\rho_{3}$} (e_3);
\draw[->, bend right = -7.2, blue, big arrow=blue, font=\scriptsize](e_0) to node [sloped] {$\rho_{1}$} (e_n3);
\draw[->, bend right = 0, pink, big arrow=pink, font=\scriptsize](e_0) to node [sloped] {$\rho_{123}$} (a_n4);
\draw[->, bend right = 0, black, big arrow=black, font=\scriptsize](e_2) to node [sloped] {$\iota_1$} (a_2);
\draw[->, bend right = 0, red, big arrow=red, font=\scriptsize](e_3) to node [sloped] {$\rho_{23}$} (e_2);
\draw[->, bend right = -5.64, green, big arrow=green, font=\scriptsize](e_3) to node [sloped] {$\rho_{2}$} (a_n1);
\draw[->, bend right = 0, black, big arrow=black, font=\scriptsize](e_n2) to node [sloped] {$\iota_1$} (d_n2);
\draw[->, bend right = 0, red, big arrow=red, font=\scriptsize](e_n2) to node [sloped] {$\rho_{23}$} (e_n3);
\end{tikzpicture}
\caption{}\label{fig:alg2exCFD}
\end{figure}
\end{landscape}
\clearpage
\restoregeometry

If we apply the procedures in \cref{alg2} to $C^{flip}$, which is shown in \cref{fig:alg2exCFKf}, we get the complex $KtD(C^{flip})$ shown in \cref{fig:alg2exCFDf}. We now rearrange the generators by rotating both $V^0$ and $V^1$ $180$ degrees to arrive at \cref{fig:alg2exCFDff}.

\clearpage
\newgeometry{margin=2cm}
\begin{landscape}
\begin{figure}
\centering
\begin{tikzpicture}[node distance=2cm]
\path[font = \scriptsize]
(-7.26, -5.05) node(Alex$V^1_{-3}$) [outer sep=-1pt]{$V^1_{-3}$}
(0.24, 9.95) node(c_0) [outer sep=-1pt]{$c$}
(-9.01, -3.85) node(b_n4) [outer sep=-1pt]{$b$}
(8.49, -0.85) node(e_3) [outer sep=-1pt]{$e$}
(-7.26, -3.55) node(a_n3) [outer sep=-1pt]{$a$}
(0.99, 9.65) node(e_0) [outer sep=-1pt]{$e$}
(-7.26, -0.55) node(c_n3) [outer sep=-1pt]{$c$}
(-5.26, 9.65) node(b_n1) [outer sep=-1pt]{$b$}
(-4.01, -0.85) node(e_n2) [outer sep=-1pt]{$e$}
(5.99, -3.85) node(b_2) [outer sep=-1pt]{$b$}
(5.24, -3.55) node(a_2) [outer sep=-1pt]{$a$}
(10.24, 2.45) node(d_4) [outer sep=-1pt]{$d$}
(0.24, -5.05) node(Alex$V^1_{0}$) [outer sep=-1pt]{$V^1_{0}$}
(-4.76, -0.55) node(c_n2) [outer sep=-1pt]{$c$}
(-4.76, -5.05) node(Alex$V^1_{-2}$) [outer sep=-1pt]{$V^1_{-2}$}
(-6.51, -3.85) node(b_n3) [outer sep=-1pt]{$b$}
(2.74, -0.55) node(dot_1) [outer sep=-1pt]{}
(0.24, -0.55) node(dot_0) [outer sep=-1pt]{}
(-6.01, 9.95) node(a_n1) [outer sep=-1pt]{$a$}
(0.24, 11.45) node(Alex$V^0_{0}$) [outer sep=-1pt]{$V^0_{0}$}
(5.24, -5.05) node(Alex$V^1_{2}$) [outer sep=-1pt]{$V^1_{2}$}
(-6.01, 11.45) node(Alex$V^0_{-1}$) [outer sep=-1pt]{$V^0_{-1}$}
(5.24, -0.55) node(c_2) [outer sep=-1pt]{$c$}
(6.49, 9.95) node(d_1) [outer sep=-1pt]{$d$}
(7.74, -5.05) node(Alex$V^1_{3}$) [outer sep=-1pt]{$V^1_{3}$}
(-9.76, -3.55) node(a_n4) [outer sep=-1pt]{$a$}
(-4.76, -3.55) node(a_n2) [outer sep=-1pt]{$a$}
(-4.76, 2.45) node(d_n2) [outer sep=-1pt]{$d$}
(2.74, -5.05) node(Alex$V^1_{1}$) [outer sep=-1pt]{$V^1_{1}$}
(10.24, -5.05) node(Alex$V^1_{4}$) [outer sep=-1pt]{$V^1_{4}$}
(-9.76, -5.05) node(Alex$V^1_{-4}$) [outer sep=-1pt]{$V^1_{-4}$}
(6.49, 11.45) node(Alex$V^0_{1}$) [outer sep=-1pt]{$V^0_{1}$}
(5.99, -0.85) node(e_2) [outer sep=-1pt]{$e$}
(-6.51, -0.85) node(e_n3) [outer sep=-1pt]{$e$}
(5.24, 2.45) node(d_2) [outer sep=-1pt]{$d$}
(7.74, 2.45) node(d_3) [outer sep=-1pt]{$d$}
(-2.26, -5.05) node(Alex$V^1_{-1}$) [outer sep=-1pt]{$V^1_{-1}$}
(7.74, -0.55) node(c_3) [outer sep=-1pt]{$c$}
(-4.01, -3.85) node(b_n2) [outer sep=-1pt]{$b$}
(-2.26, -0.55) node(dot_n1) [outer sep=-1pt]{}
;
\draw[->, bend right = -9.0, blue, big arrow=blue, font=\scriptsize](a_n1) to node [sloped] {$\rho_{1}$} (a_2);
\draw[->, bend right = -4.42, blue, big arrow=blue, font=\scriptsize](a_n1) to node [sloped] {$\rho_{3}$} (a_n4);
\draw[->, bend right = -8.0, pink, big arrow=pink, font=\scriptsize](a_n1) to node [sloped] {$\rho_{123}$} (c_3);
\draw[->, bend right = 0, black, big arrow=black, font=\scriptsize](a_n2) to node [sloped] {$\iota_1$} (c_n2);
\draw[->, bend right = 0, black, big arrow=black, font=\scriptsize](a_n3) to node [sloped] {$\iota_1$} (c_n3);
\draw[->, bend right = 0.6, red, big arrow=red, font=\scriptsize](a_n3) to node [sloped] {$\rho_{23}$} (a_n2);
\draw[->, bend right = -7.94, green, big arrow=green, font=\scriptsize](a_n4) to node [sloped] {$\rho_{2}$} (c_0);
\draw[->, bend right = 0.6, red, big arrow=red, font=\scriptsize](a_n4) to node [sloped] {$\rho_{23}$} (a_n3);
\draw[->, bend right = -7.25, blue, big arrow=blue, font=\scriptsize](b_n1) to node [sloped] {$\rho_{1}$} (b_2);
\draw[->, bend right = -4.42, blue, big arrow=blue, font=\scriptsize](b_n1) to node [sloped] {$\rho_{3}$} (b_n4);
\draw[->, bend right = -5.04, pink, big arrow=pink, font=\scriptsize](b_n1) to node [sloped] {$\rho_{123}$} (c_3);
\draw[->, bend right = 0, black, big arrow=black, font=\scriptsize](b_n2) to node [sloped] {$\iota_1$} (c_n2);
\draw[->, bend right = 0, red, big arrow=red, font=\scriptsize](b_n2) to node [sloped] {$\rho_{23}$} (dot_n1);
\draw[->, bend right = 0, black, big arrow=black, font=\scriptsize](b_n3) to node [sloped] {$\iota_1$} (c_n3);
\draw[->, bend right = 0.6, red, big arrow=red, font=\scriptsize](b_n3) to node [sloped] {$\rho_{23}$} (b_n2);
\draw[->, bend right = -8.89, green, big arrow=green, font=\scriptsize](b_n4) to node [sloped] {$\rho_{2}$} (c_0);
\draw[->, bend right = 0.6, red, big arrow=red, font=\scriptsize](b_n4) to node [sloped] {$\rho_{23}$} (b_n3);
\draw[->, bend right = -7.2, blue, big arrow=blue, font=\scriptsize](c_0) to node [sloped] {$\rho_{1}$} (c_3);
\draw[->, bend right = -7.2, blue, big arrow=blue, font=\scriptsize](c_0) to node [sloped] {$\rho_{3}$} (c_n3);
\draw[->, bend right = 0.6, red, big arrow=red, font=\scriptsize](c_2) to node [sloped] {$\rho_{23}$} (c_3);
\draw[->, bend right = 0.6, red, big arrow=red, font=\scriptsize](c_n3) to node [sloped] {$\rho_{23}$} (c_n2);
\draw[->, bend right = 0, blue, big arrow=blue, font=\scriptsize](d_1) to node [sloped] {$\rho_{1}$} (d_4);
\draw[->, bend right = 0, blue, big arrow=blue, font=\scriptsize](d_1) to node [sloped] {$\rho_{3}$} (d_n2);
\draw[->, bend right = 0, black, big arrow=black, font=\scriptsize](d_2) to node [sloped] {$\iota_1$} (c_2);
\draw[->, bend right = 0, red, big arrow=red, font=\scriptsize](d_2) to node [sloped] {$\rho_{23}$} (d_3);
\draw[->, bend right = 0, black, big arrow=black, font=\scriptsize](d_3) to node [sloped] {$\iota_1$} (c_3);
\draw[->, bend right = 0, red, big arrow=red, font=\scriptsize](d_3) to node [sloped] {$\rho_{23}$} (d_4);
\draw[->, bend right = 0, red, big arrow=red, font=\scriptsize](dot_0) to node [sloped] {$\rho_{23}$} (dot_1);
\draw[->, bend right = -3.41, red, big arrow=red, font=\scriptsize](dot_1) to node [sloped] {$\rho_{23}$} (b_2);
\draw[->, bend right = 0, red, big arrow=red, font=\scriptsize](dot_n1) to node [sloped] {$\rho_{23}$} (dot_0);
\draw[->, bend right = -6.3, blue, big arrow=blue, font=\scriptsize](e_0) to node [sloped] {$\rho_{1}$} (e_3);
\draw[->, bend right = -5.94, blue, big arrow=blue, font=\scriptsize](e_0) to node [sloped] {$\rho_{3}$} (e_n3);
\draw[->, bend right = 1.48, pink, big arrow=pink, font=\scriptsize](e_0) to node [sloped] {$\rho_{123}$} (d_4);
\draw[->, bend right = 0, black, big arrow=black, font=\scriptsize](e_2) to node [sloped] {$\iota_1$} (a_2);
\draw[->, bend right = 0.6, red, big arrow=red, font=\scriptsize](e_2) to node [sloped] {$\rho_{23}$} (e_3);
\draw[->, bend right = 0, black, big arrow=black, font=\scriptsize](e_n2) to node [sloped] {$\iota_1$} (d_n2);
\draw[->, bend right = -4.8, green, big arrow=green, font=\scriptsize](e_n3) to node [sloped] {$\rho_{2}$} (d_1);
\draw[->, bend right = 0.6, red, big arrow=red, font=\scriptsize](e_n3) to node [sloped] {$\rho_{23}$} (e_n2);
\end{tikzpicture}
\caption{}\label{fig:alg2exCFDf}
\end{figure}
\end{landscape}
\clearpage
\restoregeometry

\clearpage
\newgeometry{margin=2cm}
\begin{landscape}
\begin{figure}
\centering
\begin{tikzpicture}[node distance=2cm]
\path[font = \scriptsize]
(7.26, -5.04) node(Alex$V^1_{-3}$) [outer sep=-1pt]{$V^1_{-3}$}
(-0.99, 9.66) node(c_0) [outer sep=-1pt]{$c$}
(9.76, 1.86) node(b_n4) [outer sep=-1pt]{$b$}
(-7.74, -1.14) node(e_3) [outer sep=-1pt]{$e$}
(6.51, 2.16) node(a_n3) [outer sep=-1pt]{$a$}
(-0.24, 9.36) node(e_0) [outer sep=-1pt]{$e$}
(6.51, -0.84) node(c_n3) [outer sep=-1pt]{$c$}
(6.01, 9.36) node(b_n1) [outer sep=-1pt]{$b$}
(4.76, -1.14) node(e_n2) [outer sep=-1pt]{$e$}
(-5.24, 1.86) node(b_2) [outer sep=-1pt]{$b$}
(-5.99, 2.16) node(a_2) [outer sep=-1pt]{$a$}
(-10.24, -4.14) node(d_4) [outer sep=-1pt]{$d$}
(-0.24, -5.04) node(Alex$V^1_{0}$) [outer sep=-1pt]{$V^1_{0}$}
(4.01, -0.84) node(c_n2) [outer sep=-1pt]{$c$}
(4.76, -5.04) node(Alex$V^1_{-2}$) [outer sep=-1pt]{$V^1_{-2}$}
(7.26, 1.86) node(b_n3) [outer sep=-1pt]{$b$}
(-2.74, -1.14) node(dot_1) [outer sep=-1pt]{}
(-0.24, -1.14) node(dot_0) [outer sep=-1pt]{}
(5.26, 9.66) node(a_n1) [outer sep=-1pt]{$a$}
(-0.24, 10.86) node(Alex$V^0_{0}$) [outer sep=-1pt]{$V^0_{0}$}
(-5.24, -5.04) node(Alex$V^1_{2}$) [outer sep=-1pt]{$V^1_{2}$}
(6.01, 10.86) node(Alex$V^0_{-1}$) [outer sep=-1pt]{$V^0_{-1}$}
(-5.99, -0.84) node(c_2) [outer sep=-1pt]{$c$}
(-6.49, 9.36) node(d_1) [outer sep=-1pt]{$d$}
(-7.74, -5.04) node(Alex$V^1_{3}$) [outer sep=-1pt]{$V^1_{3}$}
(9.01, 2.16) node(a_n4) [outer sep=-1pt]{$a$}
(4.01, 2.16) node(a_n2) [outer sep=-1pt]{$a$}
(4.76, -4.14) node(d_n2) [outer sep=-1pt]{$d$}
(-2.74, -5.04) node(Alex$V^1_{1}$) [outer sep=-1pt]{$V^1_{1}$}
(-10.24, -5.04) node(Alex$V^1_{4}$) [outer sep=-1pt]{$V^1_{4}$}
(9.76, -5.04) node(Alex$V^1_{-4}$) [outer sep=-1pt]{$V^1_{-4}$}
(-6.49, 10.86) node(Alex$V^0_{1}$) [outer sep=-1pt]{$V^0_{1}$}
(-5.24, -1.14) node(e_2) [outer sep=-1pt]{$e$}
(7.26, -1.14) node(e_n3) [outer sep=-1pt]{$e$}
(-5.24, -4.14) node(d_2) [outer sep=-1pt]{$d$}
(-7.74, -4.14) node(d_3) [outer sep=-1pt]{$d$}
(2.26, -5.04) node(Alex$V^1_{-1}$) [outer sep=-1pt]{$V^1_{-1}$}
(-8.49, -0.84) node(c_3) [outer sep=-1pt]{$c$}
(4.76, 1.86) node(b_n2) [outer sep=-1pt]{$b$}
(2.26, -1.14) node(dot_n1) [outer sep=-1pt]{}
;
\draw[->, bend right = -5.06, blue, big arrow=blue, font=\scriptsize](a_n1) to node [sloped] {$\rho_{1}$} (a_2);
\draw[->, bend right = -3.88, blue, big arrow=blue, font=\scriptsize](a_n1) to node [sloped] {$\rho_{3}$} (a_n4);
\draw[->, bend right = -8.2, pink, big arrow=pink, font=\scriptsize](a_n1) to node [sloped] {$\rho_{123}$} (c_3);
\draw[->, bend right = 0, black, big arrow=black, font=\scriptsize](a_n2) to node [sloped] {$\iota_1$} (c_n2);
\draw[->, bend right = 0, black, big arrow=black, font=\scriptsize](a_n3) to node [sloped] {$\iota_1$} (c_n3);
\draw[->, bend right = 0.6, red, big arrow=red, font=\scriptsize](a_n3) to node [sloped] {$\rho_{23}$} (a_n2);
\draw[->, bend right = -7.66, green, big arrow=green, font=\scriptsize](a_n4) to node [sloped] {$\rho_{2}$} (c_0);
\draw[->, bend right = 0.6, red, big arrow=red, font=\scriptsize](a_n4) to node [sloped] {$\rho_{23}$} (a_n3);
\draw[->, bend right = -5.06, blue, big arrow=blue, font=\scriptsize](b_n1) to node [sloped] {$\rho_{1}$} (b_2);
\draw[->, bend right = -3.88, blue, big arrow=blue, font=\scriptsize](b_n1) to node [sloped] {$\rho_{3}$} (b_n4);
\draw[->, bend right = -7.47, pink, big arrow=pink, font=\scriptsize](b_n1) to node [sloped] {$\rho_{123}$} (c_3);
\draw[->, bend right = 0, black, big arrow=black, font=\scriptsize](b_n2) to node [sloped] {$\iota_1$} (c_n2);
\draw[->, bend right = 0, red, big arrow=red, font=\scriptsize](b_n2) to node [sloped] {$\rho_{23}$} (dot_n1);
\draw[->, bend right = 0, black, big arrow=black, font=\scriptsize](b_n3) to node [sloped] {$\iota_1$} (c_n3);
\draw[->, bend right = 0.6, red, big arrow=red, font=\scriptsize](b_n3) to node [sloped] {$\rho_{23}$} (b_n2);
\draw[->, bend right = -7.81, green, big arrow=green, font=\scriptsize](b_n4) to node [sloped] {$\rho_{2}$} (c_0);
\draw[->, bend right = 0.6, red, big arrow=red, font=\scriptsize](b_n4) to node [sloped] {$\rho_{23}$} (b_n3);
\draw[->, bend right = -7.2, blue, big arrow=blue, font=\scriptsize](c_0) to node [sloped] {$\rho_{1}$} (c_3);
\draw[->, bend right = -8.1, blue, big arrow=blue, font=\scriptsize](c_0) to node [sloped] {$\rho_{3}$} (c_n3);
\draw[->, bend right = 0.6, red, big arrow=red, font=\scriptsize](c_2) to node [sloped] {$\rho_{23}$} (c_3);
\draw[->, bend right = 0.6, red, big arrow=red, font=\scriptsize](c_n3) to node [sloped] {$\rho_{23}$} (c_n2);
\draw[->, bend right = -4.42, blue, big arrow=blue, font=\scriptsize](d_1) to node [sloped] {$\rho_{1}$} (d_4);
\draw[->, bend right = -9.0, blue, big arrow=blue, font=\scriptsize](d_1) to node [sloped] {$\rho_{3}$} (d_n2);
\draw[->, bend right = 0, black, big arrow=black, font=\scriptsize](d_2) to node [sloped] {$\iota_1$} (c_2);
\draw[->, bend right = 0, red, big arrow=red, font=\scriptsize](d_2) to node [sloped] {$\rho_{23}$} (d_3);
\draw[->, bend right = 0, black, big arrow=black, font=\scriptsize](d_3) to node [sloped] {$\iota_1$} (c_3);
\draw[->, bend right = 0, red, big arrow=red, font=\scriptsize](d_3) to node [sloped] {$\rho_{23}$} (d_4);
\draw[->, bend right = 0, red, big arrow=red, font=\scriptsize](dot_0) to node [sloped] {$\rho_{23}$} (dot_1);
\draw[->, bend right = 0, red, big arrow=red, font=\scriptsize](dot_1) to node [sloped] {$\rho_{23}$} (b_2);
\draw[->, bend right = 0, red, big arrow=red, font=\scriptsize](dot_n1) to node [sloped] {$\rho_{23}$} (dot_0);
\draw[->, bend right = -7.2, blue, big arrow=blue, font=\scriptsize](e_0) to node [sloped] {$\rho_{1}$} (e_3);
\draw[->, bend right = -7.2, blue, big arrow=blue, font=\scriptsize](e_0) to node [sloped] {$\rho_{3}$} (e_n3);
\draw[->, bend right = -7.94, pink, big arrow=pink, font=\scriptsize](e_0) to node [sloped] {$\rho_{123}$} (d_4);
\draw[->, bend right = 0, black, big arrow=black, font=\scriptsize](e_2) to node [sloped] {$\iota_1$} (a_2);
\draw[->, bend right = 0.6, red, big arrow=red, font=\scriptsize](e_2) to node [sloped] {$\rho_{23}$} (e_3);
\draw[->, bend right = 0, black, big arrow=black, font=\scriptsize](e_n2) to node [sloped] {$\iota_1$} (d_n2);
\draw[->, bend right = -8.0, green, big arrow=green, font=\scriptsize](e_n3) to node [sloped] {$\rho_{2}$} (d_1);
\draw[->, bend right = 0.6, red, big arrow=red, font=\scriptsize](e_n3) to node [sloped] {$\rho_{23}$} (e_n2);
\end{tikzpicture}
\caption{}\label{fig:alg2exCFDff}
\end{figure}
\end{landscape}
\clearpage
\restoregeometry

Comparing the result with $KtD(C)$, we see the generators can be completely identified between them. So are the differentials without non-trivial algebra elements. All arrows with $\rh{23}$'s have their directions reversed, except those between the central string of $\rho_{23}$'s and the two full copies of $C$ immediately next to the string. $\rh{1}$'s and $\rh{3}$'s switch sides. With careful choices of $n$ and the Alexander filtrations on $C^{flip}$, we can proof this is in general true.

We first set the convention to display $KtD$'s as we did in the example above: $V^0$ on top of $V^1$, with smaller $s$ on the left. Within $V^1_{s}$, we arrange generators vertically according to Alexander level and we call them ``in the same column''. On the horizontal direction, put generators corresponding to the same generator in $C$ on the same horizontal line. They are denoted by the same letters in the diagrams. Hence they are connected by horizontal arrows with $\rho_{23}$'s. We say they are on the same ``row''. 

We state here the identifications between various sub/quotient complexes of $C$ and $\fl{C}$:

\[
(C(\leq s), \partial_w) \cong (\fl{C}(\geq -s), \partial_z)
\]
\[
(C(\geq s), \partial_z) \cong (\fl{C}(\leq -s), \partial_w)
\]

\begin{lemma}\label{localchange}
Let $h$ be the maximal Alexander filtration level and $\ell$ be the minimal Alexander filtration level of $C$. Let $t=\textit{max}\{h,-\ell\}$. We choose $n=4t+3$ for both $KtD(C)$ and $KtD(\fl{C})$ from now on. Note that with the choice of $n$, we simply have $V^1_{s}=\mathbb{F}_2, |s|\leq t$, $V^1_{t+1}=(C,\partial_z)$, and $V^1_{-t-1}=(C,\partial_w)$. We call the latter two (and other $V_s^1$ such that $V^1_{s\geq t+1}=(C,\partial_z)$ and $V^1_{s \leq -t-1}=(C,\partial_w)$) the ``full copies''.

Now $KtD(\fl{C})$ can be directly constructed from $KtD(C)$ as follows. We describe this construction graphically to facilitate the proof later.
\begin{itemize}
\item Keep all the generators in $KtD(C)$.
\item Relabel $V^0_s$ with $V^0_{-s}$ and $V^1_s$ with $V^1_{-s}$ for all $s$, but still arrange them graphically the same way, i.e. $s$ decreases now from left to right.
\item Reverse the direction of all $\rh{23}$'s, except those between the central sequence of $\rho_{23}$'s and the two full copies on either side of the sequence. 
\item Replace the rest of $\rho_{23}$'s (those between the sequence and the two full copies) with isomorphism-inducing chain maps in the opposite direction.
\item At every generator in $V^0$, switch the arrows $\rh{1}$ and $\rh{3}$ coming out.
\item Remove all existing $\rh{2}$ and $\rh{123}$.
\item For $s<-t$, $V^1_{s}=C(\geq -s-\frac{n-1}{2})$ is missing $C(-s-1-\frac{n-1}{2})$ if compared to the column on its left $V^1_{s+1}=C(\geq -s-1-\frac{n-1}{2})$. If there exists an downward arrow $x\to y$ in $V^1_{s+1}$, where $y\in C(-s-1-\frac{n-1}{2})$, then add an arrow $\rh{2}$ from $x\in V^1_{s}$ to $y \in V^0_{s+\frac{n+1}{2}}$.
\item For every generator $x \in V^0$, if there exists an (downward) outgoing arrow $x\to y$ of length 1 in $V^1_{-t}$, then add an arrow $\rh{123}$ from $x \in V^0$ to the unique generator $y$ at the leftmost position of its row.\end{itemize}
\end{lemma}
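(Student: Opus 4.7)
The plan is to apply \cref{alg2} directly to $\fl{C}$ and compare, generator by generator and arrow by arrow, with the graphical modification of $KtD(C)$ prescribed in the statement. The entire argument is bookkeeping organized around the two identifications
\[
(C(\leq s),\partial_w^{C})\;\cong\;(\fl{C}(\geq -s),\partial_z^{\fl{}}),\qquad (C(\geq s),\partial_z^{C})\;\cong\;(\fl{C}(\leq -s),\partial_w^{\fl{}}),
\]
which follow from $A(\fl{x})=-A(x)$ in \cref{flipdef} together with the fact (used in the proof of \cref{fliplemma}) that the flip exchanges the roles of $\partial_w$ and $\partial_z$. The symmetric choice $n=4t+3$ makes the middle range $|s|\leq t$ coincide exactly with the $\mathbb{F}_2$-range, puts the two full copies at $s=\pm(t+1)$, and ensures every Alexander level of $C$ sits in each of the outer regions, so no truncation artifacts occur.

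I would verify the items of the construction in order. The relabeling $V^0_s\leftrightarrow V^0_{-s}$, $V^1_s\leftrightarrow V^1_{-s}$ is justified by the identifications above applied column by column. The swap of $\rho_1$ and $\rho_3$ at a $V^0$ generator follows because the original inclusion $C(s)\hookrightarrow C(\geq s)$ (carrying a $\rho_1$ label) and the inclusion $\fl{C}(-s)\hookrightarrow\fl{C}(\leq -s)$ in $KtD(\fl{C})$ (carrying a $\rho_3$ label) are the same physical map under the identifications. The reversal of internal $\rho_{23}$'s is because an inclusion $C(\leq m)\hookrightarrow C(\leq m{+}1)$ in $KtD(C)$ sits at the same physical location as a projection $\fl{C}(\geq -m{-}1)\twoheadrightarrow\fl{C}(\geq -m)$ in $KtD(\fl{C})$, but oriented oppositely; the $\rho_{23}$'s bridging the central $\mathbb{F}_2$ sequence and the full copies are only specified up to chain homotopy equivalence, so one simply chooses new representatives in the reversed direction.

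The substantive checks are the last two bullets. The old $\rho_2$ and $\rho_{123}$ arrows of $KtD(C)$ live on the $(C(\leq \cdot),\partial_w)$ side, which under relabeling moves to the wrong (right) side of the diagram where \cref{alg2} creates no such arrows; hence they must be deleted. The new $\rho_2$ arrow in $KtD(\fl{C})$ is $\pi\circ\partial_w^{\fl{}}\colon \fl{C}(\leq s{+}\tfrac{n-1}{2})\to\fl{C}(s{+}\tfrac{n+1}{2})$, and the identification turns this into $\pi\circ\partial_z^{C}$ taking $C(\geq -s{-}\tfrac{n-1}{2})$ onto $C(-s{-}\tfrac{n+1}{2})$. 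Since $V^1_s$ differs from its left neighbor $V^1_{s+1}$ precisely by the Alexander level $C(-s{-}1{-}\tfrac{n-1}{2})=C(-s{-}\tfrac{n+1}{2})$, this map is exactly the $\partial_z^C$-component of $V^1_{s+1}$ ending in that lowest level, matching the lemma's prescription. The $\rho_{123}$ rule is verified analogously using $\partial_w^{\fl{},1}=\partial_z^{C,-1}$ and the observation that the leftmost column of $y$'s row in the modified diagram is exactly the rightmost $V^1_{t'}$ of $KtD(\fl{C})$ in which $y$ still survives. The main obstacle is purely organizational: keeping track of left/right and sub/quotient roles as one passes between $C$ and $\fl{C}$, and matching the non-canonical chain maps near the boundary between the central $\mathbb{F}_2$ strip and the full copies.
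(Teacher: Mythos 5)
Your proposal is correct and follows essentially the same route as the paper: the paper's proof is precisely ``apply \cref{alg2} to $\fl{C}$ and compare with the modified $KtD(C)$,'' carried out column by column and arrow by arrow using the same two identifications $(C(\leq s),\partial_w)\cong(\fl{C}(\geq -s),\partial_z)$ and $(C(\geq s),\partial_z)\cong(\fl{C}(\leq -s),\partial_w)$. Your verification of the $\rho_1/\rho_3$ swap, the $\rho_{23}$ reversal, and the new $\rho_2$ and $\rho_{123}$ arrows matches the paper's computations.
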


\begin{proof}
This proof is nothing more than applying the procedure in \cref{alg2} to $\fl{C}$ and comparing with $KtD(C)$.

We call the D-module constructed this way $X$ and argue $X$ can be identified with $KtD(\fl{C})$.

Per our construction, $\forall s$, ($V^0_s$ of $X$) $\cong$ ($V^0_{-s}$ of $KtD(C)$) $\cong$ $C(-s)$ $\cong$ $\fl{C}(s)$ $\cong$  ($V^0_{s}$ of $KtD(\fl{C}))$, as $\mathbb{Z}$-filtered chain complexes over $\mathbb{F}_2$. 

$\forall s\leq -\frac{n}{4}=-t-0.75$, ($V^1_s$ of $X$) $\cong$ ($V^1_{-s}$ of $KtD(C)$) $\cong$ $(C(\geq -s-\frac{n-1}{2}=-s-2t-1), \partial_z)$ $\cong$ $(\fl{C}(\leq s+2t+1), \partial_w)$ $\cong$  ($V^1_{s}$ of $KtD(\fl{C}))$, as $\mathbb{Z}$-filtered chain complexes over $\mathbb{F}_2$.

$\forall |s|\leq \frac{n}{4}=t+0.75$, ($V^1_s$ of $X$) $\cong$ $\mathbb{F}_2$ $\cong$  ($V^1_{s}$ of $KtD(\fl{C}))$

$\forall s\geq \frac{n}{4}=t+0.75$, ($V^1_s$ of $X$) $\cong$ ($V^1_{-s}$ of $KtD(C)$) $\cong$ $(C(\leq -s+\frac{n-1}{2}=-s+2t+1), \partial_w)$ $\cong$ $(\fl{C}(\geq s-2t-1), \partial_z)$ $\cong$  ($V^1_{s}$ of $KtD(\fl{C}))$, as $\mathbb{Z}$-filtered chain complexes over $\mathbb{F}_2$.

Now we look at arrows with nontrivial algebra elements on them. Fist, arrows with $\rh{1}$ and $\rh{3}.$ Per our construction, $D_1: V^0_s\to V^1_{s+\frac{n-1}{2}} of KtD(C)$ become $D_3: V^0_{-s}\to V^1_{-s-\frac{n-1}{2}} of X$ and $D_3: V^0_s\to V^1_{s-\frac{n-1}{2}} of KtD(C)$ become $D_1: V^0_{-s}\to V^1_{-s+\frac{n-1}{2}} of X$. So
\[
(D_1: V^0_{-s}\to V^1_{-s+\frac{n-1}{2}} of X) \cong (D_3: V^0_{s}=C(s)\to V^1_{s-\frac{n-1}{2}}=C(\leq s) of KtD(C)) 
\]
\[
\cong (i: C(s)\to C(\leq s)) \cong (i: \fl{C}(-s) \to \fl{C}(\geq -s))
\]
\[
\cong (D_1: V^0_{-s}=\fl{C}(-s)\to V^1_{-s+\frac{n-1}{2}}=\fl{C}(\geq -s) of KtC(\fl{C}))
\]
Similarly,
\[
(D_3: V^0_{-s}\to V^1_{-s-\frac{n-1}{2}} of X) \cong (D_1: V^0_s\to V^1_{s+\frac{n-1}{2}} of KtD(C))
\]
\[
(i: C(s)\to C(\geq s)) \cong (i: \fl{C}(-s)\to \fl{C}(\leq -s))
\]
\[
(D_3: V^0_{-s}=\fl{C}(-s)\to V^1_{-s-\frac{n-1}{2}}=\fl{C}(\leq -s))
\]
The newly added $\rh{2}$'s are exactly done to match those in $KtD(\fl{C}).$ They are:
\begin{align*}
(D_2: V^1_{-s} = C(\geq -s-\frac{n-1}{2}) \to^{\partial_z} C \to^{\pi}  V^0_{-s-\frac{n+1}{2}} = C(-s-\frac{n+1}{2}) in X) \\
\cong (\fl{C}(\leq s+\frac{n-1}{2})\to^{\partial_w} \fl{C} \to^{\pi} \fl{C}(s+\frac{n+1}{2})),
\end{align*}
which matches $D_2$ in $KtD(\fl{C})$.

Similar logic goes for the newly added $\rh{123}$'s. They are:
\begin{align*}
(D_{123}: V^0_{-s} = C(s) \to^{\partial_z} C(s-1) \to V^1_{-s+\frac{n+1}{2}}= C(\leq s-1) in X)\\
\cong (\fl{C}(-s) \to^{\partial_w} C(-s+1) \to C(\geq -s+1))
\cong (D_{123} in KtD(\fl{C}))
\end{align*}

$D_{23}$: The new $\rh{23}$'s are: for $s>\frac{n}{4},$
\begin{align*}
D_{23}: V^1_{s} = C(\leq -s+\frac{n-1}{2}) \to V^1_{s+1} = C(\leq -s-1+\frac{n-1}{2})\\
\cong \fl{C}(\geq s-\frac{n-1}{2})\to \fl{C}(\geq s-\frac{n+1}{2}),
\end{align*}
which matches those in $KtD(\fl{C}).$
The rest of the $\rh{2}$'s similarly match.

\end{proof}

\FloatBarrier
Now that we have proved the differences between $KtD(C)$ and $KtD(C^{flip})$ are fairly ``local'', we proceed to prove that tensoring with $H$ and canceling carefully transform $KtD(C)$ into $KtD(C^{flip})$.

Each generator in $V^0\subset KtD(C)$ corresponds to three generators in $KtD(C)\bts H$ and each one in $V^1\subset KtD(C)$ corresponds to five generators in $KtD(C)\bts H$, see \cref{fig:tensorpiecei01}. An arrow with $\rh{1}$ ($\rh{3}, \rh{2}, \rh{23}, \rh{123}$, respectively) corresponds to arrows in $KtD(C)\bts H$ in \cref{fig:tensorpiecer1} (\cref{fig:tensorpiecer3}, \cref{fig:tensorpiecer2}, \cref{fig:tensorpiecer23}, \cref{fig:tensorpiecer123} respectively.)

\begin{figure}[ht]
\begin{subfigure}{0.49\textwidth}
\centerline{
\begin{tikzpicture}[node distance=2cm]
\path[font = \scriptsize]
(2.61, 2.2) node(x1u) [outer sep=-1pt]{$x_1$}
(4.71, -1.0) node(vd) [outer sep=-1pt]{$v$}
(0.01, 2.2) node(x3u) [outer sep=-1pt]{$x_3$}
(-0.49, -1.0) node(y1d) [outer sep=-1pt]{$y_1$}
(1.31, 2.2) node(x2u) [outer sep=-1pt]{$x_2$}
(0.81, -1.0) node(ud) [outer sep=-1pt]{$u$}
(3.41, -1.0) node(y3d) [outer sep=-1pt]{$y_3$}
(2.11, -1.0) node(y2d) [outer sep=-1pt]{$y_2$}
;
\draw[->, bend right = 0, black, , font=\scriptsize](ud) to node [sloped] {$\rho_{1}$} (y1d);
\draw[->, bend right = 0, black, , font=\scriptsize](ud) to node [sloped] {$\rho_{3}$} (y2d);
\draw[->, bend right = 0, black, , font=\scriptsize](x2u) to node [sloped] {$\rho_{1}$} (x1u);
\draw[->, bend right = 0, black, , font=\scriptsize](x3u) to node [sloped] {$\rho_{2}$} (x2u);
\draw[->, bend right = 0, black, , font=\scriptsize](y2d) to node [sloped] {$\rho_{2}$} (y3d);
\draw[->, bend right = 0, black, , font=\scriptsize](y3d) to node [sloped] {$\rho_{1}$} (vd);
\end{tikzpicture}
}
\caption{Generators in $KtD(C)\bts H$}\label{fig:tensorpiecei01}
\end{subfigure}
\begin{subfigure}{0.49\textwidth}
\centerline{
\begin{tikzpicture}[node distance=2cm]
\path[font = \scriptsize]
(3.41, -1.0) node(y3d) [outer sep=-1pt]{$y_3$}
(2.61, 2.2) node(x1u) [outer sep=-1pt]{$x_1$}
(0.81, -1.0) node(ud) [outer sep=-1pt]{$u$}
(4.71, -1.0) node(vd) [outer sep=-1pt]{$v$}
(-0.49, -1.0) node(y1d) [outer sep=-1pt]{$y_1$}
(2.11, -1.0) node(y2d) [outer sep=-1pt]{$y_2$}
(1.31, 2.2) node(x2u) [outer sep=-1pt]{$x_2$}
(0.01, 2.2) node(x3u) [outer sep=-1pt]{$x_3$}
;
\draw[->, bend right = 0, black, , font=\scriptsize](ud) to node [sloped] {$\rho_{1}$} (y1d);
\draw[->, bend right = 0, black, , font=\scriptsize](ud) to node [sloped] {$\rho_{3}$} (y2d);
\draw[->, bend right = 0, black, , font=\scriptsize](x2u) to node [sloped] {$\rho_{1}$} (x1u);
\draw[->, bend right = 0, blue, , font=\scriptsize](x3u) to node [sloped] {$\iota_1$} (y1d);
\draw[->, bend right = 0, black, , font=\scriptsize](x3u) to node [sloped] {$\rho_{2}$} (x2u);
\draw[->, bend right = 0, black, , font=\scriptsize](y2d) to node [sloped] {$\rho_{2}$} (y3d);
\draw[->, bend right = 0, black, , font=\scriptsize](y3d) to node [sloped] {$\rho_{1}$} (vd);
\end{tikzpicture}
}
\caption{$H$ box tensor product with an arrow with $\rh{1}$.}\label{fig:tensorpiecer1}
\end{subfigure}

\begin{subfigure}{0.49\textwidth}
\centerline{
\begin{tikzpicture}[node distance=2cm]
\path[font = \scriptsize]
(0.01, 2.2) node(x3u) [outer sep=-1pt]{$x_3$}
(3.41, -1.0) node(y3d) [outer sep=-1pt]{$y_3$}
(1.31, 2.2) node(x2u) [outer sep=-1pt]{$x_2$}
(2.11, -1.0) node(y2d) [outer sep=-1pt]{$y_2$}
(-0.49, -1.0) node(y1d) [outer sep=-1pt]{$y_1$}
(4.71, -1.0) node(vd) [outer sep=-1pt]{$v$}
(2.61, 2.2) node(x1u) [outer sep=-1pt]{$x_1$}
(0.81, -1.0) node(ud) [outer sep=-1pt]{$u$}
;
\draw[->, bend right = 0, black, , font=\scriptsize](ud) to node [sloped] {$\rho_{1}$} (y1d);
\draw[->, bend right = 0, black, , font=\scriptsize](ud) to node [sloped] {$\rho_{3}$} (y2d);
\draw[->, bend right = 0, blue, , font=\scriptsize](x1u) to node [sloped] {$\iota_1$} (vd);
\draw[->, bend right = 0, black, , font=\scriptsize](x2u) to node [sloped] {$\rho_{1}$} (x1u);
\draw[->, bend right = 0, blue, , font=\scriptsize](x2u) to node [sloped] {$\iota_0$} (y3d);
\draw[->, bend right = 0, black, , font=\scriptsize](x3u) to node [sloped] {$\rho_{2}$} (x2u);
\draw[->, bend right = 0, blue, , font=\scriptsize](x3u) to node [sloped] {$\iota_1$} (y2d);
\draw[->, bend right = 0, black, , font=\scriptsize](y2d) to node [sloped] {$\rho_{2}$} (y3d);
\draw[->, bend right = 0, black, , font=\scriptsize](y3d) to node [sloped] {$\rho_{1}$} (vd);
\end{tikzpicture}
}
\caption{$H$ box tensor product with an arrow with $\rh{3}$.}\label{fig:tensorpiecer3}
\end{subfigure}
\begin{subfigure}{0.49\textwidth}
\centerline{
\begin{tikzpicture}[node distance=2cm]
\path[font = \scriptsize]
(2.61, 2.2) node(x1u) [outer sep=-1pt]{$x_1$}
(1.31, 2.2) node(x2u) [outer sep=-1pt]{$x_2$}
(-0.49, -1.0) node(y1d) [outer sep=-1pt]{$y_1$}
(3.41, -1.0) node(y3d) [outer sep=-1pt]{$y_3$}
(4.71, -1.0) node(vd) [outer sep=-1pt]{$v$}
(0.01, 2.2) node(x3u) [outer sep=-1pt]{$x_3$}
(2.11, -1.0) node(y2d) [outer sep=-1pt]{$y_2$}
(0.81, -1.0) node(ud) [outer sep=-1pt]{$u$}
;
\draw[->, bend right = 0, black, , font=\scriptsize](ud) to node [sloped] {$\rho_{1}$} (y1d);
\draw[->, bend right = 0, black, , font=\scriptsize](ud) to node [sloped] {$\rho_{3}$} (y2d);
\draw[->, bend right = 0, green, , font=\scriptsize](ud) to node [sloped] {$\iota_0$} (x2u);
\draw[->, bend right = 0, black, , font=\scriptsize](x2u) to node [sloped] {$\rho_{1}$} (x1u);
\draw[->, bend right = 0, black, , font=\scriptsize](x3u) to node [sloped] {$\rho_{2}$} (x2u);
\draw[->, bend right = 20, green, , font=\scriptsize](y1d) to node [sloped] {$\iota_1$} (x1u);
\draw[->, bend right = 0, black, , font=\scriptsize](y2d) to node [sloped] {$\rho_{2}$} (y3d);
\draw[->, bend right = 0, black, , font=\scriptsize](y3d) to node [sloped] {$\rho_{1}$} (vd);
\end{tikzpicture}
}
\caption{$H$ box tensor product with an arrow with $\rh{2}$.}\label{fig:tensorpiecer2}
\end{subfigure}

\begin{subfigure}{0.49\textwidth}
\centerline{
\begin{tikzpicture}[node distance=2cm]
\path[font = \scriptsize]
(2.11, 1.8) node(y2u) [outer sep=-1pt]{$y_2$}
(0.81, -1.0) node(ud) [outer sep=-1pt]{$u$}
(-0.49, 1.8) node(y1u) [outer sep=-1pt]{$y_1$}
(3.41, 1.8) node(y3u) [outer sep=-1pt]{$y_3$}
(-0.49, -1.0) node(y1d) [outer sep=-1pt]{$y_1$}
(4.71, 1.8) node(vu) [outer sep=-1pt]{$v$}
(0.81, 1.8) node(uu) [outer sep=-1pt]{$u$}
(3.41, -1.0) node(y3d) [outer sep=-1pt]{$y_3$}
(2.11, -1.0) node(y2d) [outer sep=-1pt]{$y_2$}
(4.71, -1.0) node(vd) [outer sep=-1pt]{$v$}
;
\draw[->, bend right = 0, black, , font=\scriptsize](ud) to node [sloped] {$\rho_{1}$} (y1d);
\draw[->, bend right = 0, black, , font=\scriptsize](ud) to node [sloped] {$\rho_{3}$} (y2d);
\draw[->, bend right = 0, black, , font=\scriptsize](uu) to node [sloped] {$\rho_{1}$} (y1u);
\draw[->, bend right = 0, black, , font=\scriptsize](uu) to node [sloped] {$\rho_{3}$} (y2u);
\draw[->, bend right = 0, red, , font=\scriptsize](uu) to node [sloped] {$\iota_0$} (y3d);
\draw[->, bend right = 10, red, , font=\scriptsize](y1u) to node [sloped] {$\iota_1$} (vd);
\draw[->, bend right = 0, black, , font=\scriptsize](y2d) to node [sloped] {$\rho_{2}$} (y3d);
\draw[->, bend right = 0, black, , font=\scriptsize](y2u) to node [sloped] {$\rho_{2}$} (y3u);
\draw[->, bend right = 0, black, , font=\scriptsize](y3d) to node [sloped] {$\rho_{1}$} (vd);
\draw[->, bend right = 0, black, , font=\scriptsize](y3u) to node [sloped] {$\rho_{1}$} (vu);
\end{tikzpicture}
}
\caption{$H$ box tensor product with an arrow with $\rh{23}$.}\label{fig:tensorpiecer23}
\end{subfigure}
\begin{subfigure}{0.49\textwidth}
\centerline{
\begin{tikzpicture}[node distance=2cm]
\path[font = \scriptsize]
(2.61, 2.2) node(x1u) [outer sep=-1pt]{$x_1$}
(1.31, 2.2) node(x2u) [outer sep=-1pt]{$x_2$}
(3.41, -1.0) node(y3d) [outer sep=-1pt]{$y_3$}
(2.11, -1.0) node(y2d) [outer sep=-1pt]{$y_2$}
(-0.49, -1.0) node(y1d) [outer sep=-1pt]{$y_1$}
(4.71, -1.0) node(vd) [outer sep=-1pt]{$v$}
(0.81, -1.0) node(ud) [outer sep=-1pt]{$u$}
(0.01, 2.2) node(x3u) [outer sep=-1pt]{$x_3$}
;
\draw[->, bend right = 0, black, , font=\scriptsize](ud) to node [sloped] {$\rho_{1}$} (y1d);
\draw[->, bend right = 0, black, , font=\scriptsize](ud) to node [sloped] {$\rho_{3}$} (y2d);
\draw[->, bend right = 0, black, , font=\scriptsize](x2u) to node [sloped] {$\rho_{1}$} (x1u);
\draw[->, bend right = 0, pink, , font=\scriptsize](x3u) to node [sloped] {$\iota_1$} (vd);
\draw[->, bend right = 0, black, , font=\scriptsize](x3u) to node [sloped] {$\rho_{2}$} (x2u);
\draw[->, bend right = 0, black, , font=\scriptsize](y2d) to node [sloped] {$\rho_{2}$} (y3d);
\draw[->, bend right = 0, black, , font=\scriptsize](y3d) to node [sloped] {$\rho_{1}$} (vd);
\end{tikzpicture}
}
\caption{$H$ box tensor product with an arrow with $\rh{123}$.}\label{fig:tensorpiecer123}
\end{subfigure}

\FloatBarrier
\caption{Tensoring for generators and arrows}
\end{figure}
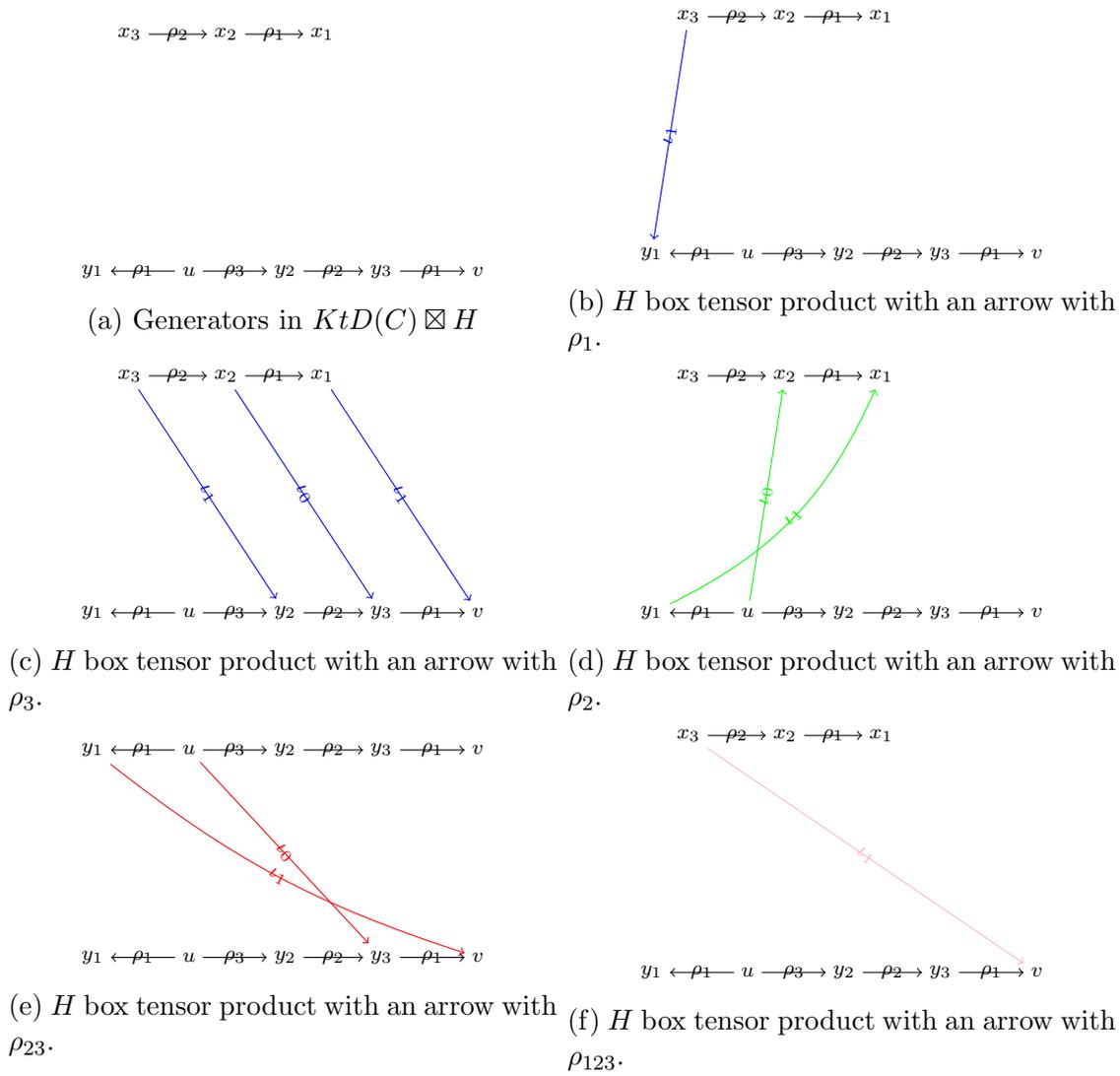

Now we can finally take the tensor $KtD(C)\bts H$ and perform cancellation. We first state that cancellation will be done in the order discussed below. 

\FloatBarrier
\subsubsection{Right-hand side middle part of rows in $V^1$}\label{sec:rightmiddle}

We start from the ``right'' side of $V^1$. For any row of generator $x$ connected by a string of $\rh{23}$'s in $\oplus_{s>t}V^1_{s}$, we first look at the middle part, i.e., excluding the leftmost one in the full copy and rightmost one where there might be $\rh{1}, \rh{123}$ coming in, see \cref{fig:g1a}. Black arrows into and out of $x$'s represent the potential and possibly multiple arrows of $\partial_z$ within this column. 

It looks like \cref{fig:g1b} in the tensor product. Each string of $y_1, u, u_2, y_3, v$ corresponds to a single generator before. Black arrows coming in and out of them represent the tensor result of potential arrows of $\partial_z$.

We first cancel the red arrows from $y_1$ to $v$, arriving at \cref{fig:g1c}. We keep the arrows resulted from cancellation in shape of zigzags to make it easier to see how they were generated.

Then canceling red arrows from $u$ to $y_3$ yields \cref{fig:g1d}. Note the new $\rh{23}$'s in the opposite direction between the $y_2$'s. They and the $\rh{23}$'s between them match the generators in $KtD(\fl{C})$ in the way described by \cref{localchange}. In general, throughout the proof, $y_2$'s in the tensor product are the generators that will survive and match to generators in $KtD(\fl{C}).$ We repeat the process above for all such rows in the ``right'' side. In this process, some undesirable arrows appear as a side effect and we argue that such arrows will not survive at the end. For example, the (potential) arrow (marked by $\#$ in \cref{fig:g1d}) coming from a $v_i$ above in the middle column and going to a $y_1$ below in the left column. When we follow this process in the row containing the $v_i$, we will cancel a red arrow going into $v_i$, meaning any arrow coming out of $v_i$ will be discarded. Similar logic goes to the rest of the undesirable arrows; they either come out $v$'s and $y_3$'s, which are the target of arrows to be cancelled, or enter $y_1$'s and $u$'s, which are the origin of arrows to be cancelled. One exception will be when the $u$'s below are at rightmost positions of their row, where they won't have red arrowing coming out to be cancelled. We remark that, in such cases, only the (possibly multiple) arrow marked by $*$ could survive, while the others will be gone because of their origins being targets of arrows to be cancelled. Note in each row, such a surviving arrow can only appear once for each downward arrow in $\partial_z$. See \cref{fig:g1e} for the final product. We emphasize that we perform cancellation described here across all applicable places first before moving on to other parts of the module. 

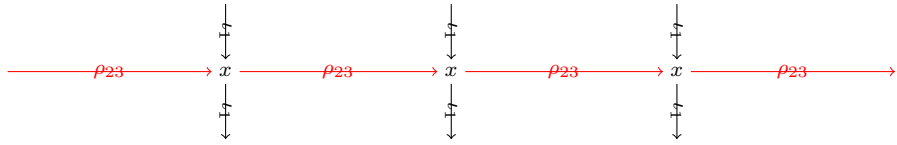
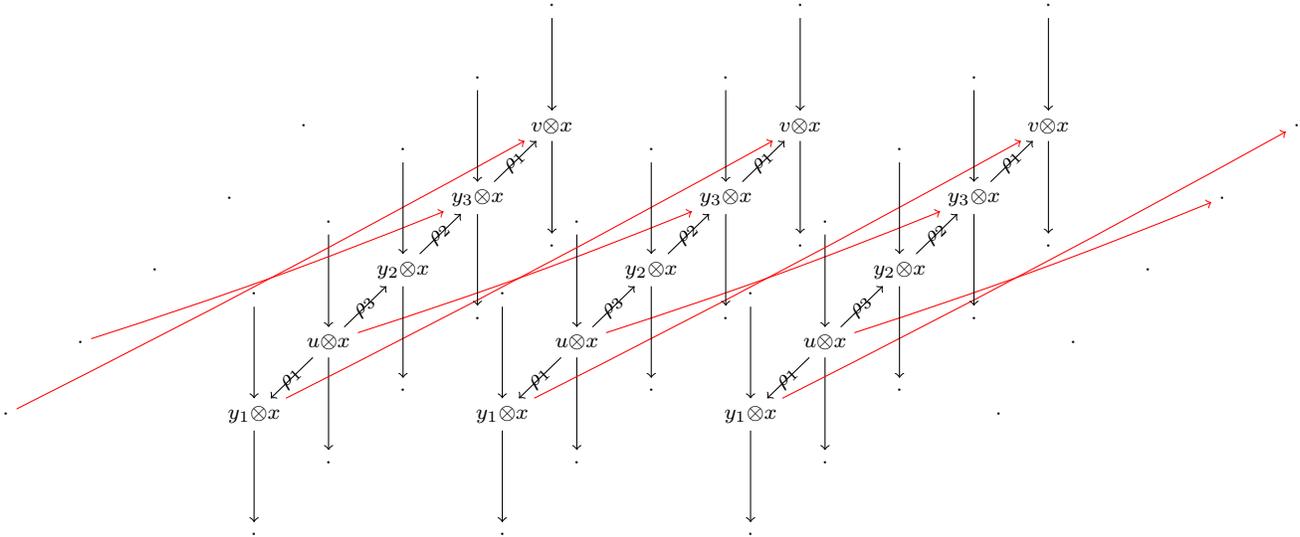
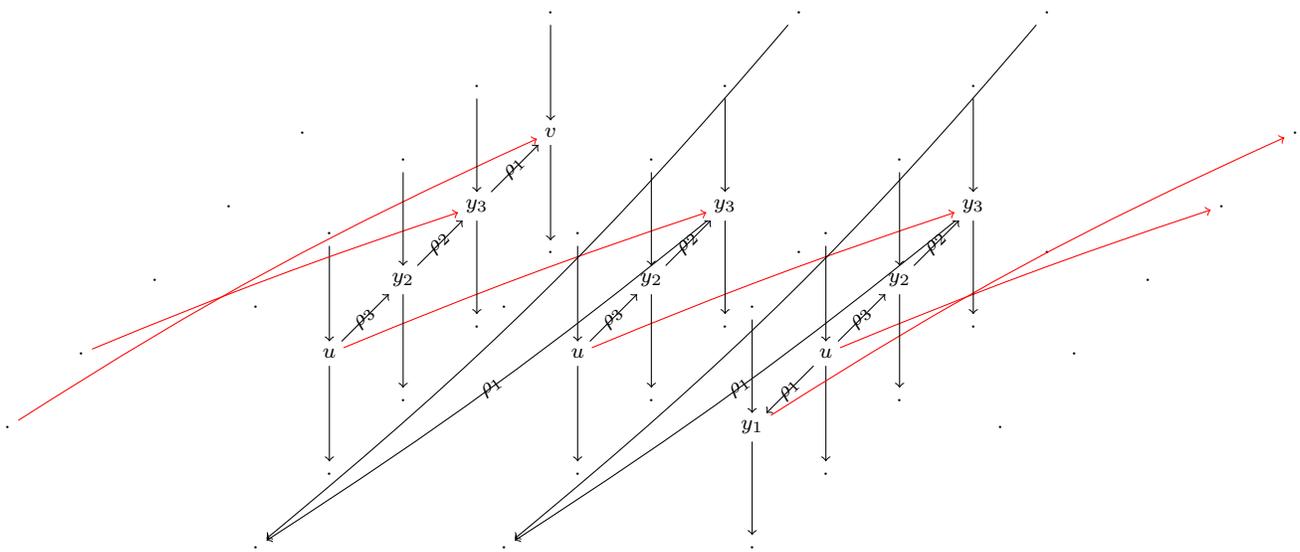
\begin{figure}

\begin{subfigure}{0.99\textwidth}
\centerline{
\begin{tikzpicture}[node distance=2cm]
\path[font = \scriptsize]
(6.0, -1.0) node(rnodown) [outer sep=-1pt]{}
(3.0, 1.0) node(g3up) [outer sep=-1pt]{}
(6.0, 0.0) node(rno) [outer sep=-1pt]{}
(0.0, -1.0) node(g2down) [outer sep=-1pt]{}
(-3.0, 0.0) node(g1) [outer sep=-1pt]{$x$}
(0.0, 0.0) node(g2) [outer sep=-1pt]{$x$}
(-3.0, 1.0) node(g1up) [outer sep=-1pt]{}
(6.0, 1.0) node(rnoup) [outer sep=-1pt]{}
(0.0, 1.0) node(g2up) [outer sep=-1pt]{}
(-6.0, 1.0) node(lnoup) [outer sep=-1pt]{}
(3.0, -1.0) node(g3down) [outer sep=-1pt]{}
(-6.0, 0.0) node(lno) [outer sep=-1pt]{}
(-6.0, -1.0) node(lnodown) [outer sep=-1pt]{}
(-3.0, -1.0) node(g1down) [outer sep=-1pt]{}
(3.0, 0.0) node(g3) [outer sep=-1pt]{$x$}
;
\draw[->, bend right = 0, red, , font=\scriptsize](g1) to node [sloped] {$\rho_{23}$} (g2);
\draw[->, bend right = 0, black, , font=\scriptsize](g1) to node [sloped] {$\iota_1$} (g1down);
\draw[->, bend right = 0, black, , font=\scriptsize](g1up) to node [sloped] {$\iota_1$} (g1);
\draw[->, bend right = 0, red, , font=\scriptsize](g2) to node [sloped] {$\rho_{23}$} (g3);
\draw[->, bend right = 0, black, , font=\scriptsize](g2) to node [sloped] {$\iota_1$} (g2down);
\draw[->, bend right = 0, black, , font=\scriptsize](g2up) to node [sloped] {$\iota_1$} (g2);
\draw[->, bend right = 0, red, , font=\scriptsize](g3) to node [sloped] {$\rho_{23}$} (rno);
\draw[->, bend right = 0, black, , font=\scriptsize](g3) to node [sloped] {$\iota_1$} (g3down);
\draw[->, bend right = 0, black, , font=\scriptsize](g3up) to node [sloped] {$\iota_1$} (g3);
\draw[->, bend right = 0, red, , font=\scriptsize](lno) to node [sloped] {$\rho_{23}$} (g1);
\end{tikzpicture}
}
\caption{Middle part of a row.}\label{fig:g1a}
\end{subfigure}

\begin{subfigure}{0.99\textwidth}
\centerline{
\begin{tikzpicture}[node distance=2cm]
\path[font = \scriptsize]
(0.97, -1.91) node(ug2down) [outer sep=-1pt]{$\cdot$}
(5.26, 2.25) node(y2g3up) [outer sep=-1pt]{$\cdot$}
(7.57, -0.31) node(urno) [outer sep=-1pt]{$\cdot$}
(2.95, 1.61) node(y3g2) [outer sep=-1pt]{$y_3$$\otimes$$x$}
(-3.32, -1.27) node(y1g1) [outer sep=-1pt]{$y_1$$\otimes$$x$}
(3.28, -1.27) node(y1g3) [outer sep=-1pt]{$y_1$$\otimes$$x$}
(-0.35, 1.61) node(y3g1) [outer sep=-1pt]{$y_3$$\otimes$$x$}
(6.58, -1.27) node(y1rno) [outer sep=-1pt]{$\cdot$}
(-3.65, 1.61) node(y3lno) [outer sep=-1pt]{$\cdot$}
(2.95, 0.01) node(y3g2down) [outer sep=-1pt]{$\cdot$}
(-1.34, 2.25) node(y2g1up) [outer sep=-1pt]{$\cdot$}
(-1.34, -0.95) node(y2g1down) [outer sep=-1pt]{$\cdot$}
(0.64, 4.17) node(vg1up) [outer sep=-1pt]{$\cdot$}
(-2.66, 2.57) node(vlno) [outer sep=-1pt]{$\cdot$}
(1.96, -0.95) node(y2g2down) [outer sep=-1pt]{$\cdot$}
(-4.64, 0.65) node(y2lno) [outer sep=-1pt]{$\cdot$}
(-0.02, -2.87) node(y1g2down) [outer sep=-1pt]{$\cdot$}
(3.28, 0.33) node(y1g3up) [outer sep=-1pt]{$\cdot$}
(3.28, -2.87) node(y1g3down) [outer sep=-1pt]{$\cdot$}
(7.24, 0.97) node(vg3down) [outer sep=-1pt]{$\cdot$}
(10.54, 2.57) node(vrno) [outer sep=-1pt]{$\cdot$}
(4.27, -0.31) node(ug3) [outer sep=-1pt]{$u$$\otimes$$x$}
(5.26, 0.65) node(y2g3) [outer sep=-1pt]{$y_2$$\otimes$$x$}
(-0.02, 0.33) node(y1g2up) [outer sep=-1pt]{$\cdot$}
(3.94, 2.57) node(vg2) [outer sep=-1pt]{$v$$\otimes$$x$}
(-3.32, 0.33) node(y1g1up) [outer sep=-1pt]{$\cdot$}
(-5.63, -0.31) node(ulno) [outer sep=-1pt]{$\cdot$}
(4.27, 1.29) node(ug3up) [outer sep=-1pt]{$\cdot$}
(-6.62, -1.27) node(y1lno) [outer sep=-1pt]{$\cdot$}
(2.95, 3.21) node(y3g2up) [outer sep=-1pt]{$\cdot$}
(0.64, 2.57) node(vg1) [outer sep=-1pt]{$v$$\otimes$$x$}
(-0.02, -1.27) node(y1g2) [outer sep=-1pt]{$y_1$$\otimes$$x$}
(7.24, 4.17) node(vg3up) [outer sep=-1pt]{$\cdot$}
(-0.35, 3.21) node(y3g1up) [outer sep=-1pt]{$\cdot$}
(-1.34, 0.65) node(y2g1) [outer sep=-1pt]{$y_2$$\otimes$$x$}
(4.27, -1.91) node(ug3down) [outer sep=-1pt]{$\cdot$}
(-3.32, -2.87) node(y1g1down) [outer sep=-1pt]{$\cdot$}
(6.25, 0.01) node(y3g3down) [outer sep=-1pt]{$\cdot$}
(3.94, 4.17) node(vg2up) [outer sep=-1pt]{$\cdot$}
(9.55, 1.61) node(y3rno) [outer sep=-1pt]{$\cdot$}
(-2.33, -0.31) node(ug1) [outer sep=-1pt]{$u$$\otimes$$x$}
(0.97, -0.31) node(ug2) [outer sep=-1pt]{$u$$\otimes$$x$}
(8.56, 0.65) node(y2rno) [outer sep=-1pt]{$\cdot$}
(0.64, 0.97) node(vg1down) [outer sep=-1pt]{$\cdot$}
(1.96, 0.65) node(y2g2) [outer sep=-1pt]{$y_2$$\otimes$$x$}
(6.25, 1.61) node(y3g3) [outer sep=-1pt]{$y_3$$\otimes$$x$}
(-2.33, 1.29) node(ug1up) [outer sep=-1pt]{$\cdot$}
(3.94, 0.97) node(vg2down) [outer sep=-1pt]{$\cdot$}
(5.26, -0.95) node(y2g3down) [outer sep=-1pt]{$\cdot$}
(-2.33, -1.91) node(ug1down) [outer sep=-1pt]{$\cdot$}
(0.97, 1.29) node(ug2up) [outer sep=-1pt]{$\cdot$}
(-0.35, 0.01) node(y3g1down) [outer sep=-1pt]{$\cdot$}
(1.96, 2.25) node(y2g2up) [outer sep=-1pt]{$\cdot$}
(6.25, 3.21) node(y3g3up) [outer sep=-1pt]{$\cdot$}
(7.24, 2.57) node(vg3) [outer sep=-1pt]{$v$$\otimes$$x$}
;
\draw[->, bend right = 0, black, , font=\scriptsize](ug1) to node [sloped] {$\rho_{1}$} (y1g1);
\draw[->, bend right = 0, black, , font=\scriptsize](ug1) to node [sloped] {$\rho_{3}$} (y2g1);
\draw[->, bend right = 2.68, red, , font=\scriptsize](ug1) to node [sloped] {} (y3g2);
\draw[->, bend right = 0, black, , font=\scriptsize](ug1) to node [sloped] {} (ug1down);
\draw[->, bend right = 0, black, , font=\scriptsize](ug1up) to node [sloped] {} (ug1);
\draw[->, bend right = 0, black, , font=\scriptsize](ug2) to node [sloped] {$\rho_{1}$} (y1g2);
\draw[->, bend right = 0, black, , font=\scriptsize](ug2) to node [sloped] {$\rho_{3}$} (y2g2);
\draw[->, bend right = 2.68, red, , font=\scriptsize](ug2) to node [sloped] {} (y3g3);
\draw[->, bend right = 0, black, , font=\scriptsize](ug2) to node [sloped] {} (ug2down);
\draw[->, bend right = 0, black, , font=\scriptsize](ug2up) to node [sloped] {} (ug2);
\draw[->, bend right = 0, black, , font=\scriptsize](ug3) to node [sloped] {$\rho_{1}$} (y1g3);
\draw[->, bend right = 0, black, , font=\scriptsize](ug3) to node [sloped] {$\rho_{3}$} (y2g3);
\draw[->, bend right = 2.68, red, , font=\scriptsize](ug3) to node [sloped] {} (y3rno);
\draw[->, bend right = 0, black, , font=\scriptsize](ug3) to node [sloped] {} (ug3down);
\draw[->, bend right = 0, black, , font=\scriptsize](ug3up) to node [sloped] {} (ug3);
\draw[->, bend right = 2.68, red, , font=\scriptsize](ulno) to node [sloped] {} (y3g1);
\draw[->, bend right = 0, black, , font=\scriptsize](vg1) to node [sloped] {} (vg1down);
\draw[->, bend right = 0, black, , font=\scriptsize](vg1up) to node [sloped] {} (vg1);
\draw[->, bend right = 0, black, , font=\scriptsize](vg2) to node [sloped] {} (vg2down);
\draw[->, bend right = 0, black, , font=\scriptsize](vg2up) to node [sloped] {} (vg2);
\draw[->, bend right = 0, black, , font=\scriptsize](vg3) to node [sloped] {} (vg3down);
\draw[->, bend right = 0, black, , font=\scriptsize](vg3up) to node [sloped] {} (vg3);
\draw[->, bend right = 1.25, red, , font=\scriptsize](y1g1) to node [sloped] {} (vg2);
\draw[->, bend right = 0, black, , font=\scriptsize](y1g1) to node [sloped] {} (y1g1down);
\draw[->, bend right = 0, black, , font=\scriptsize](y1g1up) to node [sloped] {} (y1g1);
\draw[->, bend right = 1.25, red, , font=\scriptsize](y1g2) to node [sloped] {} (vg3);
\draw[->, bend right = 0, black, , font=\scriptsize](y1g2) to node [sloped] {} (y1g2down);
\draw[->, bend right = 0, black, , font=\scriptsize](y1g2up) to node [sloped] {} (y1g2);
\draw[->, bend right = 1.25, red, , font=\scriptsize](y1g3) to node [sloped] {} (vrno);
\draw[->, bend right = 0, black, , font=\scriptsize](y1g3) to node [sloped] {} (y1g3down);
\draw[->, bend right = 0, black, , font=\scriptsize](y1g3up) to node [sloped] {} (y1g3);
\draw[->, bend right = 1.25, red, , font=\scriptsize](y1lno) to node [sloped] {} (vg1);
\draw[->, bend right = 0, black, , font=\scriptsize](y2g1) to node [sloped] {$\rho_{2}$} (y3g1);
\draw[->, bend right = 0, black, , font=\scriptsize](y2g1) to node [sloped] {} (y2g1down);
\draw[->, bend right = 0, black, , font=\scriptsize](y2g1up) to node [sloped] {} (y2g1);
\draw[->, bend right = 0, black, , font=\scriptsize](y2g2) to node [sloped] {$\rho_{2}$} (y3g2);
\draw[->, bend right = 0, black, , font=\scriptsize](y2g2) to node [sloped] {} (y2g2down);
\draw[->, bend right = 0, black, , font=\scriptsize](y2g2up) to node [sloped] {} (y2g2);
\draw[->, bend right = 0, black, , font=\scriptsize](y2g3) to node [sloped] {$\rho_{2}$} (y3g3);
\draw[->, bend right = 0, black, , font=\scriptsize](y2g3) to node [sloped] {} (y2g3down);
\draw[->, bend right = 0, black, , font=\scriptsize](y2g3up) to node [sloped] {} (y2g3);
\draw[->, bend right = 0, black, , font=\scriptsize](y3g1) to node [sloped] {$\rho_{1}$} (vg1);
\draw[->, bend right = 0, black, , font=\scriptsize](y3g1) to node [sloped] {} (y3g1down);
\draw[->, bend right = 0, black, , font=\scriptsize](y3g1up) to node [sloped] {} (y3g1);
\draw[->, bend right = 0, black, , font=\scriptsize](y3g2) to node [sloped] {$\rho_{1}$} (vg2);
\draw[->, bend right = 0, black, , font=\scriptsize](y3g2) to node [sloped] {} (y3g2down);
\draw[->, bend right = 0, black, , font=\scriptsize](y3g2up) to node [sloped] {} (y3g2);
\draw[->, bend right = 0, black, , font=\scriptsize](y3g3) to node [sloped] {$\rho_{1}$} (vg3);
\draw[->, bend right = 0, black, , font=\scriptsize](y3g3) to node [sloped] {} (y3g3down);
\draw[->, bend right = 0, black, , font=\scriptsize](y3g3up) to node [sloped] {} (y3g3);
\end{tikzpicture}
}
\caption{}\label{fig:g1b}
\end{subfigure}

\begin{subfigure}{0.99\textwidth}
\centerline{
\begin{tikzpicture}[node distance=2cm]
\path[font = \scriptsize]
(-0.38, 0.01) node(y3g1down) [outer sep=-1pt]{$\cdot$}
(6.22, 1.61) node(y3g3) [outer sep=-1pt]{$y_3$}
(-3.32, 0.27) node(y1g1up) [outer sep=-1pt]{$\cdot$}
(0.6, 4.19) node(vg1up) [outer sep=-1pt]{$\cdot$}
(-1.36, 2.23) node(y2g1up) [outer sep=-1pt]{$\cdot$}
(7.56, -0.35) node(urno) [outer sep=-1pt]{$\cdot$}
(4.26, -1.95) node(ug3down) [outer sep=-1pt]{$\cdot$}
(5.24, 0.63) node(y2g3) [outer sep=-1pt]{$y_2$}
(-2.7, 2.59) node(vlno) [outer sep=-1pt]{$\cdot$}
(6.58, -1.33) node(y1rno) [outer sep=-1pt]{$\cdot$}
(-0.38, 1.61) node(y3g1) [outer sep=-1pt]{$y_3$}
(-4.66, 0.63) node(y2lno) [outer sep=-1pt]{$\cdot$}
(-2.34, 1.25) node(ug1up) [outer sep=-1pt]{$\cdot$}
(7.2, 0.99) node(vg3down) [outer sep=-1pt]{$\cdot$}
(10.5, 2.59) node(vrno) [outer sep=-1pt]{$\cdot$}
(-0.02, 0.27) node(y1g2up) [outer sep=-1pt]{$\cdot$}
(-3.32, -2.93) node(y1g1down) [outer sep=-1pt]{$\cdot$}
(-2.34, -1.95) node(ug1down) [outer sep=-1pt]{$\cdot$}
(1.94, -0.97) node(y2g2down) [outer sep=-1pt]{$\cdot$}
(3.28, 0.27) node(y1g3up) [outer sep=-1pt]{$\cdot$}
(2.92, 0.01) node(y3g2down) [outer sep=-1pt]{$\cdot$}
(-2.34, -0.35) node(ug1) [outer sep=-1pt]{$u$}
(1.94, 2.23) node(y2g2up) [outer sep=-1pt]{$\cdot$}
(7.2, 4.19) node(vg3up) [outer sep=-1pt]{$\cdot$}
(5.24, 2.23) node(y2g3up) [outer sep=-1pt]{$\cdot$}
(6.22, 0.01) node(y3g3down) [outer sep=-1pt]{$\cdot$}
(0.96, -0.35) node(ug2) [outer sep=-1pt]{$u$}
(-0.02, -2.93) node(y1g2down) [outer sep=-1pt]{$\cdot$}
(2.92, 3.21) node(y3g2up) [outer sep=-1pt]{$\cdot$}
(9.52, 1.61) node(y3rno) [outer sep=-1pt]{$\cdot$}
(-5.64, -0.35) node(ulno) [outer sep=-1pt]{$\cdot$}
(-3.68, 1.61) node(y3lno) [outer sep=-1pt]{$\cdot$}
(4.26, -0.35) node(ug3) [outer sep=-1pt]{$u$}
(3.28, -2.93) node(y1g3down) [outer sep=-1pt]{$\cdot$}
(0.96, 1.25) node(ug2up) [outer sep=-1pt]{$\cdot$}
(8.54, 0.63) node(y2rno) [outer sep=-1pt]{$\cdot$}
(0.6, 2.59) node(vg1) [outer sep=-1pt]{$v$}
(-6.62, -1.33) node(y1lno) [outer sep=-1pt]{$\cdot$}
(0.96, -1.95) node(ug2down) [outer sep=-1pt]{$\cdot$}
(3.9, 0.99) node(vg2down) [outer sep=-1pt]{$\cdot$}
(1.94, 0.63) node(y2g2) [outer sep=-1pt]{$y_2$}
(-1.36, -0.97) node(y2g1down) [outer sep=-1pt]{$\cdot$}
(0.6, 0.99) node(vg1down) [outer sep=-1pt]{$\cdot$}
(6.22, 3.21) node(y3g3up) [outer sep=-1pt]{$\cdot$}
(5.24, -0.97) node(y2g3down) [outer sep=-1pt]{$\cdot$}
(-0.38, 3.21) node(y3g1up) [outer sep=-1pt]{$\cdot$}
(-1.36, 0.63) node(y2g1) [outer sep=-1pt]{$y_2$}
(2.92, 1.61) node(y3g2) [outer sep=-1pt]{$y_3$}
(3.28, -1.33) node(y1g3) [outer sep=-1pt]{$y_1$}
(3.9, 4.19) node(vg2up) [outer sep=-1pt]{$\cdot$}
(4.26, 1.25) node(ug3up) [outer sep=-1pt]{$\cdot$}
;
\draw[->, bend right = 0, black, , font=\scriptsize](ug1) to node [sloped] {$\rho_{3}$} (y2g1);
\draw[->, bend right = -2.76, red, , font=\scriptsize](ug1) to node [sloped] {} (y3g2);
\draw[->, bend right = 0, black, , font=\scriptsize](ug1) to node [sloped] {} (ug1down);
\draw[->, bend right = 0, black, , font=\scriptsize](ug1up) to node [sloped] {} (ug1);
\draw[->, bend right = 0, black, , font=\scriptsize](ug2) to node [sloped] {$\rho_{3}$} (y2g2);
\draw[->, bend right = -2.76, red, , font=\scriptsize](ug2) to node [sloped] {} (y3g3);
\draw[->, bend right = 0, black, , font=\scriptsize](ug2) to node [sloped] {} (ug2down);
\draw[->, bend right = 0, black, , font=\scriptsize](ug2up) to node [sloped] {} (ug2);
\draw[->, bend right = 0, black, , font=\scriptsize](ug3) to node [sloped] {$\rho_{1}$} (y1g3);
\draw[->, bend right = 0, black, , font=\scriptsize](ug3) to node [sloped] {$\rho_{3}$} (y2g3);
\draw[->, bend right = -2.76, red, , font=\scriptsize](ug3) to node [sloped] {} (y3rno);
\draw[->, bend right = 0, black, , font=\scriptsize](ug3) to node [sloped] {} (ug3down);
\draw[->, bend right = 0, black, , font=\scriptsize](ug3up) to node [sloped] {} (ug3);
\draw[->, bend right = -2.76, red, , font=\scriptsize](ulno) to node [sloped] {} (y3g1);
\draw[->, bend right = 0, black, , font=\scriptsize](vg1) to node [sloped] {} (vg1down);
\draw[->, bend right = 0, black, , font=\scriptsize](vg1up) to node [sloped] {} (vg1);
\draw[->, bend right = -5.93, black, , font=\scriptsize](vg2up) to node [sloped] {} (y1g1down);
\draw[->, bend right = -5.93, black, , font=\scriptsize](vg3up) to node [sloped] {} (y1g2down);
\draw[->, bend right = -4.38, red, , font=\scriptsize](y1g3) to node [sloped] {} (vrno);
\draw[->, bend right = 0, black, , font=\scriptsize](y1g3) to node [sloped] {} (y1g3down);
\draw[->, bend right = 0, black, , font=\scriptsize](y1g3up) to node [sloped] {} (y1g3);
\draw[->, bend right = -4.38, red, , font=\scriptsize](y1lno) to node [sloped] {} (vg1);
\draw[->, bend right = 0, black, , font=\scriptsize](y2g1) to node [sloped] {$\rho_{2}$} (y3g1);
\draw[->, bend right = 0, black, , font=\scriptsize](y2g1) to node [sloped] {} (y2g1down);
\draw[->, bend right = 0, black, , font=\scriptsize](y2g1up) to node [sloped] {} (y2g1);
\draw[->, bend right = 0, black, , font=\scriptsize](y2g2) to node [sloped] {$\rho_{2}$} (y3g2);
\draw[->, bend right = 0, black, , font=\scriptsize](y2g2) to node [sloped] {} (y2g2down);
\draw[->, bend right = 0, black, , font=\scriptsize](y2g2up) to node [sloped] {} (y2g2);
\draw[->, bend right = 0, black, , font=\scriptsize](y2g3) to node [sloped] {$\rho_{2}$} (y3g3);
\draw[->, bend right = 0, black, , font=\scriptsize](y2g3) to node [sloped] {} (y2g3down);
\draw[->, bend right = 0, black, , font=\scriptsize](y2g3up) to node [sloped] {} (y2g3);
\draw[->, bend right = 0, black, , font=\scriptsize](y3g1) to node [sloped] {$\rho_{1}$} (vg1);
\draw[->, bend right = 0, black, , font=\scriptsize](y3g1) to node [sloped] {} (y3g1down);
\draw[->, bend right = 0, black, , font=\scriptsize](y3g1up) to node [sloped] {} (y3g1);
\draw[->, bend right = 0, black, , font=\scriptsize](y3g2) to node [sloped] {} (y3g2down);
\draw[->, bend right = -3.77, black, , font=\scriptsize](y3g2) to node [sloped] {$\rho_{1}$} (y1g1down);
\draw[->, bend right = 0, black, , font=\scriptsize](y3g2up) to node [sloped] {} (y3g2);
\draw[->, bend right = 0, black, , font=\scriptsize](y3g3) to node [sloped] {} (y3g3down);
\draw[->, bend right = -3.77, black, , font=\scriptsize](y3g3) to node [sloped] {$\rho_{1}$} (y1g2down);
\draw[->, bend right = 0, black, , font=\scriptsize](y3g3up) to node [sloped] {} (y3g3);
\end{tikzpicture}
}
\caption{}\label{fig:g1c}
\end{subfigure}

\caption{Middle part of a row.}
\end{figure}

\begin{figure}

\begin{subfigure}{0.99\textwidth}
\centerline{
\begin{tikzpicture}[node distance=2cm]
\path[font = \scriptsize]
(-0.52, 4.48) node(vg1up) [outer sep=-1pt]{$\cdot$}
(-2.82, 0.97) node(ug1up) [outer sep=-1pt]{$\cdot$}
(-2.06, 0.54) node(y2g1) [outer sep=-1pt]{$y_2$}
(-2.06, 2.14) node(y2g1up) [outer sep=-1pt]{$\cdot$}
(2.64, 3.31) node(y3g2up) [outer sep=-1pt]{$\cdot$}
(-2.06, -1.06) node(y2g1down) [outer sep=-1pt]{$\cdot$}
(1.1, -2.23) node(ug2down) [outer sep=-1pt]{$\cdot$}
(5.8, -1.06) node(y2g3down) [outer sep=-1pt]{$\cdot$}
(-1.29, 3.31) node(y3g1up) [outer sep=-1pt]{$\cdot$}
(-5.98, 0.54) node(y2lno) [outer sep=-1pt]{$\cdot$}
(2.64, 0.11) node(y3g2down) [outer sep=-1pt]{$\cdot$}
(1.87, 0.54) node(y2g2) [outer sep=-1pt]{$y_2$}
(0.33, -3.4) node(y1g2down) [outer sep=-1pt]{$\cdot$}
(-7.52, -1.8) node(y1lno) [outer sep=-1pt]{$\cdot$}
(10.49, 1.71) node(y3rno) [outer sep=-1pt]{$\cdot$}
(1.87, -1.06) node(y2g2down) [outer sep=-1pt]{$\cdot$}
(5.8, 0.54) node(y2g3) [outer sep=-1pt]{$y_2$}
(0.33, -0.2) node(y1g2up) [outer sep=-1pt]{$\cdot$}
(8.96, -0.63) node(urno) [outer sep=-1pt]{$\cdot$}
(-3.59, -3.4) node(y1g1down) [outer sep=-1pt]{$\cdot$}
(7.33, 1.28) node(vg3down) [outer sep=-1pt]{$\cdot$}
(9.72, 0.54) node(y2rno) [outer sep=-1pt]{$\cdot$}
(7.33, 4.48) node(vg3up) [outer sep=-1pt]{$\cdot$}
(-1.29, 1.71) node(y3g1) [outer sep=-1pt]{$y_3$}
(11.26, 2.88) node(vrno) [outer sep=-1pt]{$\cdot$}
(5.03, -0.63) node(ug3) [outer sep=-1pt]{$u$}
(-0.52, 1.28) node(vg1down) [outer sep=-1pt]{$\cdot$}
(3.41, 1.28) node(vg2down) [outer sep=-1pt]{$\cdot$}
(6.57, 3.31) node(y3g3up) [outer sep=-1pt]{$\cdot$}
(-5.22, 1.71) node(y3lno) [outer sep=-1pt]{$\cdot$}
(-0.52, 2.88) node(vg1) [outer sep=-1pt]{$v$}
(1.1, 0.97) node(ug2up) [outer sep=-1pt]{$\cdot$}
(-1.29, 0.11) node(y3g1down) [outer sep=-1pt]{$\cdot$}
(-4.45, 2.88) node(vlno) [outer sep=-1pt]{$\cdot$}
(5.8, 2.14) node(y2g3up) [outer sep=-1pt]{$\cdot$}
(8.19, -1.8) node(y1rno) [outer sep=-1pt]{$\cdot$}
(5.03, 0.97) node(ug3up) [outer sep=-1pt]{$\cdot$}
(4.26, -3.4) node(y1g3down) [outer sep=-1pt]{$\cdot$}
(3.41, 4.48) node(vg2up) [outer sep=-1pt]{$\cdot$}
(-6.75, -0.63) node(ulno) [outer sep=-1pt]{$\cdot$}
(4.26, -0.2) node(y1g3up) [outer sep=-1pt]{$\cdot$}
(6.57, 0.11) node(y3g3down) [outer sep=-1pt]{$\cdot$}
(4.26, -1.8) node(y1g3) [outer sep=-1pt]{$y_1$}
(1.87, 2.14) node(y2g2up) [outer sep=-1pt]{$\cdot$}
(5.03, -2.23) node(ug3down) [outer sep=-1pt]{$\cdot$}
(-2.82, -2.23) node(ug1down) [outer sep=-1pt]{$\cdot$}
(-3.59, -0.2) node(y1g1up) [outer sep=-1pt]{$\cdot$}
;
\draw[->, bend right = 0, black, , font=\scriptsize](ug3) to node [sloped] {$\rho_{1}$} (y1g3);
\draw[->, bend right = 0, black, , font=\scriptsize](ug3) to node [sloped] {$\rho_{3}$} (y2g3);
\draw[->, bend right = -8.0, red, , font=\scriptsize](ug3) to node [sloped] {} (y3rno);
\draw[->, bend right = 0, black, , font=\scriptsize](ug3) to node [sloped] {} (ug3down);
\draw[->, bend right = 0, black, , font=\scriptsize](ug3up) to node [sloped] {} (ug3);
\draw[->, bend right = -8.0, red, , font=\scriptsize](ulno) to node [sloped] {} (y3g1);
\draw[->, bend right = 0, black, , font=\scriptsize](vg1) to node [sloped] {} (vg1down);
\draw[->, bend right = 0, black, , font=\scriptsize](vg1up) to node [sloped] {} (vg1);
\draw[->, bend right = -7.46, black, , font=\scriptsize](vg2up) to node [sloped] {$\#$~~~~~~~~~~~~~~~} (y1g1down);
\draw[->, bend right = -7.46, black, , font=\scriptsize](vg3up) to node [sloped] {} (y1g2down);
\draw[->, bend right = -6.72, red, , font=\scriptsize](y1g3) to node [sloped] {} (vrno);
\draw[->, bend right = 0, black, , font=\scriptsize](y1g3) to node [sloped] {} (y1g3down);
\draw[->, bend right = 0, black, , font=\scriptsize](y1g3up) to node [sloped] {} (y1g3);
\draw[->, bend right = -6.72, red, , font=\scriptsize](y1lno) to node [sloped] {} (vg1);
\draw[->, bend right = 0, black, , font=\scriptsize](y2g1) to node [sloped] {$\rho_{2}$} (y3g1);
\draw[->, bend right = 0, black, , font=\scriptsize](y2g1) to node [sloped] {} (y2g1down);
\draw[->, bend right = 0, black, , font=\scriptsize](y2g1up) to node [sloped] {} (y2g1);
\draw[->, bend right = 0, black, , font=\scriptsize](y2g2) to node [sloped] {} (y2g2down);
\draw[->, bend right = 0, red, , font=\scriptsize](y2g2) to node [sloped] {$\rho_{23}$} (y2g1);
\draw[->, bend right = -6.88, black, , font=\scriptsize](y2g2) to node [sloped] {$\rho_{2}$~~~~~*} (ug1down);
\draw[->, bend right = 0, black, , font=\scriptsize](y2g2up) to node [sloped] {} (y2g2);
\draw[->, bend right = 0, black, , font=\scriptsize](y2g3) to node [sloped] {} (y2g3down);
\draw[->, bend right = 0, red, , font=\scriptsize](y2g3) to node [sloped] {$\rho_{23}$} (y2g2);
\draw[->, bend right = -6.88, black, , font=\scriptsize](y2g3) to node [sloped] {$\rho_{2}$} (ug2down);
\draw[->, bend right = 0, black, , font=\scriptsize](y2g3up) to node [sloped] {} (y2g3);
\draw[->, bend right = 0, black, , font=\scriptsize](y3g1) to node [sloped] {$\rho_{1}$} (vg1);
\draw[->, bend right = 0, black, , font=\scriptsize](y3g1) to node [sloped] {} (y3g1down);
\draw[->, bend right = 0, black, , font=\scriptsize](y3g1up) to node [sloped] {} (y3g1);
\draw[->, bend right = -6.88, black, , font=\scriptsize](y3g2up) to node [sloped] {$\rho_{3}$} (y2g1);
\draw[->, bend right = -6.18, black, , font=\scriptsize](y3g2up) to node [sloped] {} (ug1down);
\draw[->, bend right = -6.88, black, , font=\scriptsize](y3g3up) to node [sloped] {$\rho_{3}$} (y2g2);
\draw[->, bend right = -6.18, black, , font=\scriptsize](y3g3up) to node [sloped] {} (ug2down);
\end{tikzpicture}
}
\caption{}\label{fig:g1d}
\end{subfigure}

\begin{subfigure}{0.99\textwidth}
\centerline{
\begin{tikzpicture}[node distance=2cm]
\path[font = \scriptsize]
(0.2, 0.0) node(g2) [outer sep=-1pt]{$y_2$}
(6.2, 0.0) node(rno) [outer sep=-1pt]{}
(3.2, -1.0) node(g3down) [outer sep=-1pt]{}
(0.2, 1.0) node(g2up) [outer sep=-1pt]{}
(-3.4, -1.2) node(g1downu) [outer sep=-1pt]{$u$}
(-5.8, 0.0) node(lno) [outer sep=-1pt]{}
(-2.8, 1.0) node(g1up) [outer sep=-1pt]{}
(-2.8, -1.0) node(g1down) [outer sep=-1pt]{}
(3.2, 1.0) node(g3up) [outer sep=-1pt]{}
(3.2, 0.0) node(g3) [outer sep=-1pt]{$y_2$}
(-2.8, 0.0) node(g1) [outer sep=-1pt]{$y_2$}
(0.2, -1.0) node(g2down) [outer sep=-1pt]{}
;
\draw[->, bend right = 0, red, , font=\scriptsize](g1) to node [sloped] {$\rho_{23}$} (lno);
\draw[->, bend right = 0, black, , font=\scriptsize](g1) to node [sloped] {} (g1down);
\draw[->, bend right = 0, black, , font=\scriptsize](g1up) to node [sloped] {} (g1);
\draw[->, bend right = 0, red, , font=\scriptsize](g2) to node [sloped] {$\rho_{23}$} (g1);
\draw[->, bend right = 0, black, , font=\scriptsize](g2) to node [sloped] {} (g2down);
\draw[->, bend right = 9.0, black, , font=\scriptsize](g2) to node [sloped] {*~~~~~$\rho_{2}$} (g1downu);
\draw[->, bend right = 0, black, , font=\scriptsize](g2up) to node [sloped] {} (g2);
\draw[->, bend right = 0, red, , font=\scriptsize](g3) to node [sloped] {$\rho_{23}$} (g2);
\draw[->, bend right = 0, black, , font=\scriptsize](g3) to node [sloped] {} (g3down);
\draw[->, bend right = 0, black, , font=\scriptsize](g3up) to node [sloped] {} (g3);
\draw[->, bend right = 0, red, , font=\scriptsize](rno) to node [sloped] {$\rho_{23}$} (g3);
\end{tikzpicture}
}
\caption{}\label{fig:g1e}
\end{subfigure}

\caption{Middle part of a row.}
\end{figure}
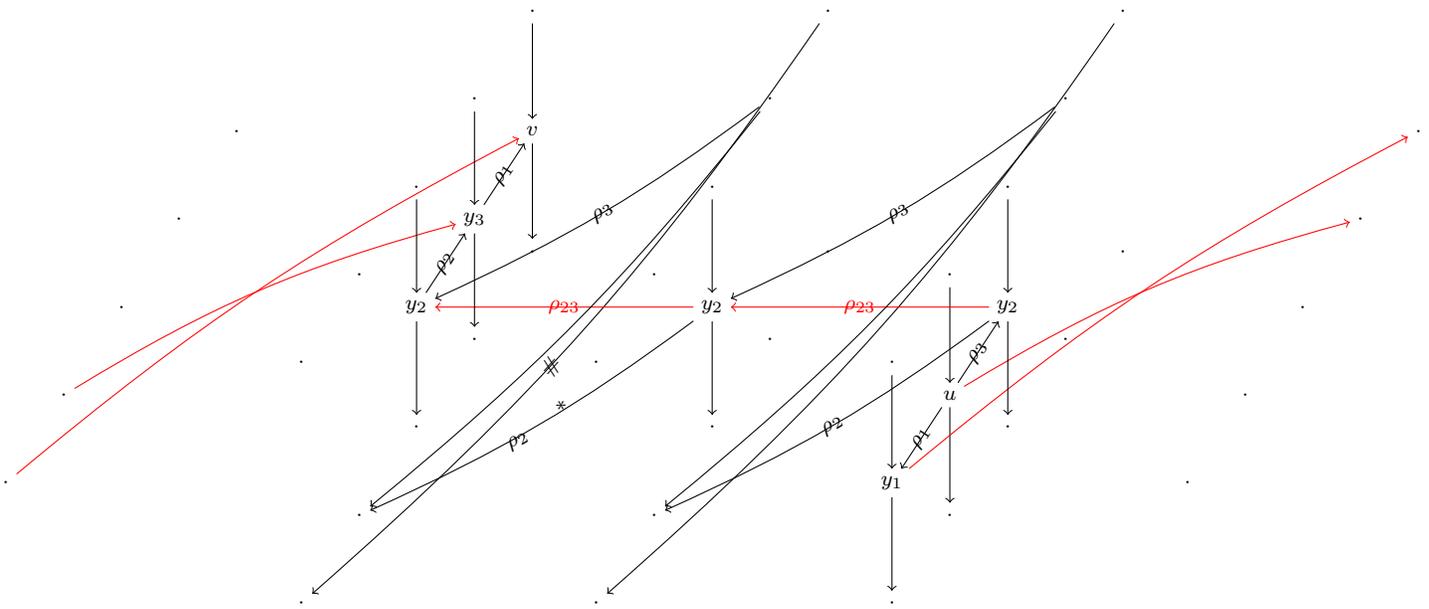
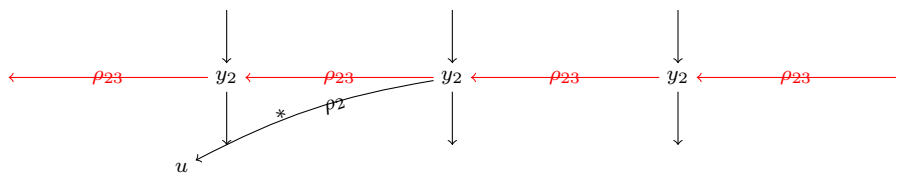

\FloatBarrier
\subsubsection{Right-hand side rightmost end of rows in $V^1$}\label{sec:rightend}

We look at the rightmost positions of rows, see \cref{fig:g2a}. The blue, green, and pink arrows represent $\rh{1}$, $\rh{3}$, and potential and possibly multiple $\rh{2}$ and $\rh{123}$. It corresponds to \cref{fig:g2b} in the tensor product. Again the black arrows are potential arrows from tensoring with $\partial_z$. Note that there is only one connecting to the right string of generators. There are not any going out because this is the right most position and only $y_2$ has black arrow(s) coming in because we have performed the cancellation described in \cref{sec:rightmiddle} on rows above. Regarding the left set of generators, there are only three of them and one incoming black arrow for the same reason. The black arrows marked by $*$ are the potential side effect of cancellation performed to rows above, see \cref{sec:rightmiddle}. 

Next we cancel the two red arrows in the middle successively, see \cref{fig:g2c} and \cref{fig:g2d}. Now $\partial_z$ into $y_2$'s and the $\rh{23}$ among them match those in $KtD(\fl{C}).$ As for the messy additional arrows, we keep them in mind and bring them into shape later. We make the observation that the potential side effect $\rh{2}$ arrows going to $u$'s and the pink arrow going to a $y_1$ in row below (all marked by $**$ in \cref{fig:g2d}) only exist if they enter $u$'s in the rightmost end of their rows. In the case of the $**\dagger$ arrows, they exist only if $x$ has a length one $\partial_z$ arrow. Even though they enter another right-most position in another row, we clearly see that they do not interfere with the procedure in this section applied there. So we are safe to say that we can perform these cancelations at all right-most ends of rows. An exception is when a row has only one generator, which can only happen at the bottom row. We discuss that case in \cref{sec:rightfull}.

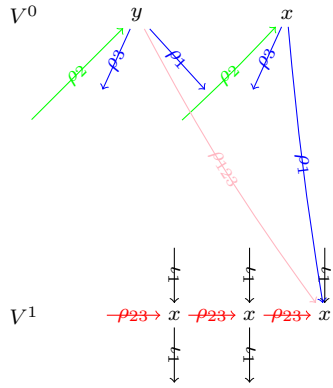
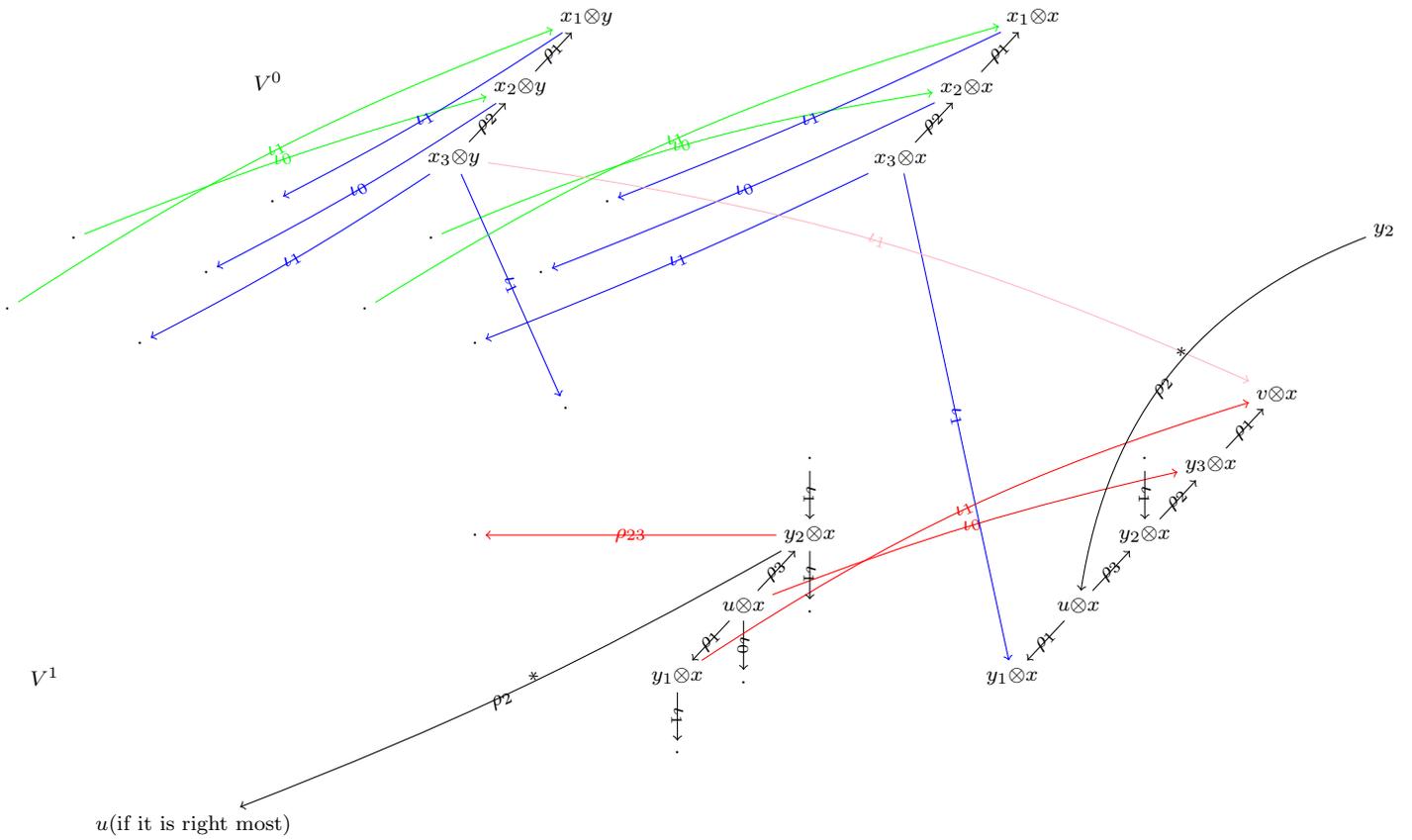
\begin{figure}

\begin{subfigure}{0.99\textwidth}
\centerline{
\begin{tikzpicture}[node distance=2cm]
\path[font = \scriptsize]
(-3.0, 0.0) node(v1) [outer sep=-1pt]{$V^1$}
(0.5, 4.0) node(xup) [outer sep=-1pt]{$x$}
(-3.0, 2.5) node(no1) [outer sep=-1pt]{}
(-2.0, 2.9) node(no2) [outer sep=-1pt]{}
(-2.0, 0.0) node(lno) [outer sep=-1pt]{}
(-0.0, 2.9) node(no5) [outer sep=-1pt]{}
(0.0, 1.0) node(g2up) [outer sep=-1pt]{}
(-1.0, 0.0) node(g1) [outer sep=-1pt]{$x$}
(1.0, 1.0) node(g3up) [outer sep=-1pt]{}
(-0.5, 2.9) node(no4) [outer sep=-1pt]{}
(-1.5, 4.0) node(yup) [outer sep=-1pt]{$y$}
(-3.0, 4.0) node(v0) [outer sep=-1pt]{$V^0$}
(0.0, 0.0) node(g2) [outer sep=-1pt]{$x$}
(0.0, -1.0) node(g2down) [outer sep=-1pt]{}
(1.0, 0.0) node(g3) [outer sep=-1pt]{$x$}
(-1.0, 1.0) node(g1up) [outer sep=-1pt]{}
(-1.0, -1.0) node(g1down) [outer sep=-1pt]{}
(-1.0, 2.5) node(no3) [outer sep=-1pt]{}
;
\draw[->, bend right = 0, red, , font=\scriptsize](g1) to node [sloped] {$\rho_{23}$} (g2);
\draw[->, bend right = 0, black, , font=\scriptsize](g1) to node [sloped] {$\iota_1$} (g1down);
\draw[->, bend right = 0, black, , font=\scriptsize](g1up) to node [sloped] {$\iota_1$} (g1);
\draw[->, bend right = 0, red, , font=\scriptsize](g2) to node [sloped] {$\rho_{23}$} (g3);
\draw[->, bend right = 0, black, , font=\scriptsize](g2) to node [sloped] {$\iota_1$} (g2down);
\draw[->, bend right = 0, black, , font=\scriptsize](g2up) to node [sloped] {$\iota_1$} (g2);
\draw[->, bend right = 0, black, , font=\scriptsize](g3up) to node [sloped] {$\iota_1$} (g3);
\draw[->, bend right = 0, red, , font=\scriptsize](lno) to node [sloped] {$\rho_{23}$} (g1);
\draw[->, bend right = 0, green, , font=\scriptsize](no1) to node [sloped] {$\rho_{2}$} (yup);
\draw[->, bend right = 2.67, green, , font=\scriptsize](no3) to node [sloped] {$\rho_{2}$} (xup);
\draw[->, bend right = 0, blue, , font=\scriptsize](xup) to node [sloped] {$\rho_{3}$} (no5);
\draw[->, bend right = 2.46, blue, , font=\scriptsize](xup) to node [sloped] {$\rho_{1}$} (g3);
\draw[->, bend right = 0, blue, , font=\scriptsize](yup) to node [sloped] {$\rho_{3}$} (no2);
\draw[->, bend right = 0, blue, , font=\scriptsize](yup) to node [sloped] {$\rho_{1}$} (no4);
\draw[->, bend right = 4.49, pink, , font=\scriptsize](yup) to node [sloped] {$\rho_{123}$} (g3);
\end{tikzpicture}
}
\caption{}\label{fig:g2a}
\end{subfigure}

\begin{subfigure}{0.99\textwidth}
\centerline{
\begin{tikzpicture}[node distance=2cm]
\path[font = \scriptsize]
(-8.0, -2.0) node(ubottomleft) [outer sep=-1pt]{$u$(if it is right most)}
(-4.81, 5.89) node(uno3) [outer sep=-1pt]{$\cdot$}
(-9.61, 5.89) node(uno1) [outer sep=-1pt]{$\cdot$}
(-7.0, 8.0) node(v0) [outer sep=-1pt]{$V^0$}
(-3.61, 7.92) node(x2yup) [outer sep=-1pt]{$x_2$$\otimes$$y$}
(-0.61, 0.95) node(ug2) [outer sep=-1pt]{$u$$\otimes$$x$}
(1.5, 6.97) node(x3xup) [outer sep=-1pt]{$x_3$$\otimes$$x$}
(-10.5, 4.94) node(y1no1) [outer sep=-1pt]{$\cdot$}
(-2.44, 6.38) node(vno5) [outer sep=-1pt]{$\cdot$}
(-3.33, 5.43) node(y3no5) [outer sep=-1pt]{$\cdot$}
(-4.5, 6.97) node(x3yup) [outer sep=-1pt]{$x_3$$\otimes$$y$}
(-4.22, 1.9) node(y2lno) [outer sep=-1pt]{$\cdot$}
(0.28, 0.87) node(y2g2down) [outer sep=-1pt]{$\cdot$}
(5.67, 2.85) node(y3g3) [outer sep=-1pt]{$y_3$$\otimes$$x$}
(-2.72, 8.87) node(x1yup) [outer sep=-1pt]{$x_1$$\otimes$$y$}
(3.89, 0.95) node(ug3) [outer sep=-1pt]{$u$$\otimes$$x$}
(0.28, 1.9) node(y2g2) [outer sep=-1pt]{$y_2$$\otimes$$x$}
(6.56, 3.8) node(vg3) [outer sep=-1pt]{$v$$\otimes$$x$}
(2.39, 7.92) node(x2xup) [outer sep=-1pt]{$x_2$$\otimes$$x$}
(-4.22, 4.48) node(y2no5) [outer sep=-1pt]{$\cdot$}
(4.78, 1.9) node(y2g3) [outer sep=-1pt]{$y_2$$\otimes$$x$}
(3.0, 0.0) node(y1g3) [outer sep=-1pt]{$y_1$$\otimes$$x$}
(0.28, 2.93) node(y2g2up) [outer sep=-1pt]{$\cdot$}
(4.78, 2.93) node(y2g3up) [outer sep=-1pt]{$\cdot$}
(-5.7, 4.94) node(y1no3) [outer sep=-1pt]{$\cdot$}
(8.0, 6.0) node(y2topright) [outer sep=-1pt]{$y_2$}
(-0.61, -0.08) node(ug2down) [outer sep=-1pt]{$\cdot$}
(-6.94, 6.38) node(vno2) [outer sep=-1pt]{$\cdot$}
(-1.5, -1.03) node(y1g2down) [outer sep=-1pt]{$\cdot$}
(-8.72, 4.48) node(y2no2) [outer sep=-1pt]{$\cdot$}
(-7.83, 5.43) node(y3no2) [outer sep=-1pt]{$\cdot$}
(-3.0, 3.6) node(y1no4) [outer sep=-1pt]{$\cdot$}
(3.28, 8.87) node(x1xup) [outer sep=-1pt]{$x_1$$\otimes$$x$}
(-1.5, 0.0) node(y1g2) [outer sep=-1pt]{$y_1$$\otimes$$x$}
(-10.0, 0.0) node(v1) [outer sep=-1pt]{$V^1$}
;
\draw[->, bend right = 0, black, , font=\scriptsize](ug2) to node [sloped] {$\rho_{1}$} (y1g2);
\draw[->, bend right = 0, black, , font=\scriptsize](ug2) to node [sloped] {$\rho_{3}$} (y2g2);
\draw[->, bend right = -4.93, red, , font=\scriptsize](ug2) to node [sloped] {$\iota_0$} (y3g3);
\draw[->, bend right = 0, black, , font=\scriptsize](ug2) to node [sloped] {$\iota_0$} (ug2down);
\draw[->, bend right = 0, black, , font=\scriptsize](ug3) to node [sloped] {$\rho_{1}$} (y1g3);
\draw[->, bend right = 0, black, , font=\scriptsize](ug3) to node [sloped] {$\rho_{3}$} (y2g3);
\draw[->, bend right = -3.96, green, , font=\scriptsize](uno1) to node [sloped] {$\iota_0$} (x2yup);
\draw[->, bend right = -7.09, green, , font=\scriptsize](uno3) to node [sloped] {$\iota_0$} (x2xup);
\draw[->, bend right = -2.4, blue, , font=\scriptsize](x1xup) to node [sloped] {$\iota_1$} (vno5);
\draw[->, bend right = -3.05, blue, , font=\scriptsize](x1yup) to node [sloped] {$\iota_1$} (vno2);
\draw[->, bend right = 0, black, , font=\scriptsize](x2xup) to node [sloped] {$\rho_{1}$} (x1xup);
\draw[->, bend right = -2.4, blue, , font=\scriptsize](x2xup) to node [sloped] {$\iota_0$} (y3no5);
\draw[->, bend right = 0, black, , font=\scriptsize](x2yup) to node [sloped] {$\rho_{1}$} (x1yup);
\draw[->, bend right = -3.05, blue, , font=\scriptsize](x2yup) to node [sloped] {$\iota_0$} (y3no2);
\draw[->, bend right = 0, blue, , font=\scriptsize](x3xup) to node [sloped] {$\iota_1$} (y1g3);
\draw[->, bend right = 0, black, , font=\scriptsize](x3xup) to node [sloped] {$\rho_{2}$} (x2xup);
\draw[->, bend right = -2.4, blue, , font=\scriptsize](x3xup) to node [sloped] {$\iota_1$} (y2no5);
\draw[->, bend right = -8.98, pink, , font=\scriptsize](x3yup) to node [sloped] {$\iota_1$} (vg3);
\draw[->, bend right = 0, blue, , font=\scriptsize](x3yup) to node [sloped] {$\iota_1$} (y1no4);
\draw[->, bend right = 0, black, , font=\scriptsize](x3yup) to node [sloped] {$\rho_{2}$} (x2yup);
\draw[->, bend right = -3.05, blue, , font=\scriptsize](x3yup) to node [sloped] {$\iota_1$} (y2no2);
\draw[->, bend right = -8.75, red, , font=\scriptsize](y1g2) to node [sloped] {$\iota_1$} (vg3);
\draw[->, bend right = 0, black, , font=\scriptsize](y1g2) to node [sloped] {$\iota_1$} (y1g2down);
\draw[->, bend right = -6.0, green, , font=\scriptsize](y1no1) to node [sloped] {$\iota_1$} (x1yup);
\draw[->, bend right = -8.96, green, , font=\scriptsize](y1no3) to node [sloped] {$\iota_1$} (x1xup);
\draw[->, bend right = 0, black, , font=\scriptsize](y2g2) to node [sloped] {$\iota_1$} (y2g2down);
\draw[->, bend right = 0, red, , font=\scriptsize](y2g2) to node [sloped] {$\rho_{23}$} (y2lno);
\draw[->, bend right = -4.8, black, , font=\scriptsize](y2g2) to node [sloped] {$\rho_{2}$~~~*} (ubottomleft);
\draw[->, bend right = 0, black, , font=\scriptsize](y2g2up) to node [sloped] {$\iota_1$} (y2g2);
\draw[->, bend right = 0, black, , font=\scriptsize](y2g3) to node [sloped] {$\rho_{2}$} (y3g3);
\draw[->, bend right = 0, black, , font=\scriptsize](y2g3up) to node [sloped] {$\iota_1$} (y2g3);
\draw[->, bend right = 30, black, , font=\scriptsize](y2topright) to node [sloped] {$\rho_{2}$~~~*} (ug3);
\draw[->, bend right = 0, black, , font=\scriptsize](y3g3) to node [sloped] {$\rho_{1}$} (vg3);
\end{tikzpicture}
}
\caption{}\label{fig:g2b}
\end{subfigure}

\caption{Rightmost position of a row}
\end{figure}

\begin{figure}

\begin{subfigure}{0.99\textwidth}
\centerline{
\begin{tikzpicture}[node distance=2cm]
\path[font = \scriptsize]
(-4.22, 1.52) node(y2lno) [outer sep=-1pt]{$\cdot$}
(4.78, 2.34) node(y2g3up) [outer sep=-1pt]{$\cdot$}
(1.5, 5.58) node(x3xup) [outer sep=-1pt]{$x_3$$\otimes$$x$}
(-0.61, -0.06) node(ug2down) [outer sep=-1pt]{$\cdot$}
(3.0, 0.0) node(y1g3) [outer sep=-1pt]{$y_1$$\otimes$$x$}
(-3.33, 4.34) node(y3no5) [outer sep=-1pt]{$\cdot$}
(-4.81, 4.72) node(uno3) [outer sep=-1pt]{$\cdot$}
(0.28, 2.34) node(y2g2up) [outer sep=-1pt]{$\cdot$}
(-4.5, 5.58) node(x3yup) [outer sep=-1pt]{$x_3$$\otimes$$y$}
(-6.94, 5.1) node(vno2) [outer sep=-1pt]{$\cdot$}
(5.67, 2.28) node(y3g3) [outer sep=-1pt]{$y_3$$\otimes$$x$}
(0.28, 0.7) node(y2g2down) [outer sep=-1pt]{$\cdot$}
(-3.61, 6.34) node(x2yup) [outer sep=-1pt]{$x_2$$\otimes$$y$}
(3.89, 0.76) node(ug3) [outer sep=-1pt]{$u$$\otimes$$x$}
(-8.0, -1.6) node(ubottomleft) [outer sep=-1pt]{$u$}
(-0.61, 0.76) node(ug2) [outer sep=-1pt]{$u$$\otimes$$x$}
(8.0, 4.8) node(y2topright) [outer sep=-1pt]{$y_2$}
(-3.0, 2.88) node(y1no4) [outer sep=-1pt]{$\cdot$}
(-2.44, 5.1) node(vno5) [outer sep=-1pt]{$\cdot$}
(-10.5, 3.96) node(y1no1) [outer sep=-1pt]{$\cdot$}
(0.28, 1.52) node(y2g2) [outer sep=-1pt]{$y_2$$\otimes$$x$}
(-4.22, 3.58) node(y2no5) [outer sep=-1pt]{$\cdot$}
(-9.61, 4.72) node(uno1) [outer sep=-1pt]{$\cdot$}
(-5.7, 3.96) node(y1no3) [outer sep=-1pt]{$\cdot$}
(-7.0, 6.4) node(v0) [outer sep=-1pt]{$V^0$}
(3.28, 7.1) node(x1xup) [outer sep=-1pt]{$x_1$$\otimes$$x$}
(4.78, 1.52) node(y2g3) [outer sep=-1pt]{$y_2$$\otimes$$x$}
(-10.0, 0.0) node(v1) [outer sep=-1pt]{$V^1$}
(-1.5, -0.82) node(y1g2down) [outer sep=-1pt]{$y_1$}
(-7.83, 4.34) node(y3no2) [outer sep=-1pt]{$\cdot$}
(-2.72, 7.1) node(x1yup) [outer sep=-1pt]{$x_1$$\otimes$$y$}
(2.39, 6.34) node(x2xup) [outer sep=-1pt]{$x_2$$\otimes$$x$}
(-8.72, 3.58) node(y2no2) [outer sep=-1pt]{$\cdot$}
;
\draw[->, bend right = 0, black, , font=\scriptsize](ug2) to node [sloped] {$\rho_{3}$} (y2g2);
\draw[->, bend right = -5.64, red, , font=\scriptsize](ug2) to node [sloped] {$\iota_0$} (y3g3);
\draw[->, bend right = 0, black, , font=\scriptsize](ug2) to node [sloped] {$\iota_0$} (ug2down);
\draw[->, bend right = 0, black, , font=\scriptsize](ug3) to node [sloped] {$\rho_{1}$} (y1g3);
\draw[->, bend right = 0, black, , font=\scriptsize](ug3) to node [sloped] {$\rho_{3}$} (y2g3);
\draw[->, bend right = -4.82, green, , font=\scriptsize](uno1) to node [sloped] {$\iota_0$} (x2yup);
\draw[->, bend right = -7.43, green, , font=\scriptsize](uno3) to node [sloped] {$\iota_0$} (x2xup);
\draw[->, bend right = -3.4, blue, , font=\scriptsize](x1xup) to node [sloped] {$\iota_1$} (vno5);
\draw[->, bend right = -3.75, blue, , font=\scriptsize](x1yup) to node [sloped] {$\iota_1$} (vno2);
\draw[->, bend right = 0, black, , font=\scriptsize](x2xup) to node [sloped] {$\rho_{1}$} (x1xup);
\draw[->, bend right = -3.4, blue, , font=\scriptsize](x2xup) to node [sloped] {$\iota_0$} (y3no5);
\draw[->, bend right = 0, black, , font=\scriptsize](x2yup) to node [sloped] {$\rho_{1}$} (x1yup);
\draw[->, bend right = -3.75, blue, , font=\scriptsize](x2yup) to node [sloped] {$\iota_0$} (y3no2);
\draw[->, bend right = 0, blue, , font=\scriptsize](x3xup) to node [sloped] {$\iota_1$} (y1g3);
\draw[->, bend right = 0, black, , font=\scriptsize](x3xup) to node [sloped] {$\rho_{2}$} (x2xup);
\draw[->, bend right = -3.4, blue, , font=\scriptsize](x3xup) to node [sloped] {$\iota_1$} (y2no5);
\draw[->, bend right = 0, blue, , font=\scriptsize](x3yup) to node [sloped] {$\iota_1$} (y1no4);
\draw[->, bend right = 0, black, , font=\scriptsize](x3yup) to node [sloped] {$\rho_{2}$} (x2yup);
\draw[->, bend right = -3.75, blue, , font=\scriptsize](x3yup) to node [sloped] {$\iota_1$} (y2no2);
\draw[->, bend right = -20, pink, , font=\scriptsize](x3yup) to node [sloped] {$\iota_1$} (y1g2down);
\draw[->, bend right = -6.41, green, , font=\scriptsize](y1no1) to node [sloped] {$\iota_1$} (x1yup);
\draw[->, bend right = -8.97, green, , font=\scriptsize](y1no3) to node [sloped] {$\iota_1$} (x1xup);
\draw[->, bend right = 0, black, , font=\scriptsize](y2g2) to node [sloped] {$\iota_1$} (y2g2down);
\draw[->, bend right = 0, red, , font=\scriptsize](y2g2) to node [sloped] {$\rho_{23}$} (y2lno);
\draw[->, bend right = -5.41, black, , font=\scriptsize](y2g2) to node [sloped] {$\rho_{2}$} (ubottomleft);
\draw[->, bend right = 0, black, , font=\scriptsize](y2g2up) to node [sloped] {$\iota_1$} (y2g2);
\draw[->, bend right = 0, black, , font=\scriptsize](y2g3) to node [sloped] {$\rho_{2}$} (y3g3);
\draw[->, bend right = 0, black, , font=\scriptsize](y2g3up) to node [sloped] {$\iota_1$} (y2g3);
\draw[->, bend right = 30, black, , font=\scriptsize](y2topright) to node [sloped] {$\rho_{2}$} (ug3);
\draw[->, bend right = 10, black, , font=\scriptsize](y3g3) to node [sloped] {$\rho_{1}$} (y1g2down);
\end{tikzpicture}
}
\caption{}\label{fig:g2c}
\end{subfigure}

\begin{subfigure}{0.99\textwidth}
\centerline{
\begin{tikzpicture}[node distance=2cm]
\path[font = \scriptsize]
(-10.0, 0.0) node(v1) [outer sep=-1pt]{$V^1$}
(1.5, 5.58) node(x3xup) [outer sep=-1pt]{$x_3$$\otimes$$x$}
(3.28, 7.1) node(x1xup) [outer sep=-1pt]{$x_1$$\otimes$$x$}
(-2.44, 5.1) node(vno5) [outer sep=-1pt]{$\cdot$}
(-8.0, -1.6) node(ubottomleft) [outer sep=-1pt]{$u$}
(-7.83, 4.34) node(y3no2) [outer sep=-1pt]{$\cdot$}
(3.89, 0.76) node(ug3) [outer sep=-1pt]{$u$$\otimes$$x$}
(-6.94, 5.1) node(vno2) [outer sep=-1pt]{$\cdot$}
(-4.22, 3.58) node(y2no5) [outer sep=-1pt]{$\cdot$}
(0.28, 2.34) node(y2g2up) [outer sep=-1pt]{$\cdot$}
(-10.5, 3.96) node(y1no1) [outer sep=-1pt]{$\cdot$}
(-4.81, 4.72) node(uno3) [outer sep=-1pt]{$\cdot$}
(0.28, 1.52) node(y2g2) [outer sep=-1pt]{$y_2$$\otimes$$x$}
(-4.5, 5.58) node(x3yup) [outer sep=-1pt]{$x_3$$\otimes$$y$}
(2.39, 6.34) node(x2xup) [outer sep=-1pt]{$x_2$$\otimes$$x$}
(-1.5, -0.82) node(y1g2down) [outer sep=-1pt]{$y_1$}
(4.78, 2.34) node(y2g3up) [outer sep=-1pt]{$\cdot$}
(4.78, 1.52) node(y2g3) [outer sep=-1pt]{$y_2$$\otimes$$x$}
(-3.61, 6.34) node(x2yup) [outer sep=-1pt]{$x_2$$\otimes$$y$}
(-4.22, 1.52) node(y2lno) [outer sep=-1pt]{$\cdot$}
(8.0, 4.8) node(y2topright) [outer sep=-1pt]{$y_2$}
(-0.61, -0.06) node(ug2down) [outer sep=-1pt]{$u$}
(-3.0, 2.88) node(y1no4) [outer sep=-1pt]{$\cdot$}
(-2.72, 7.1) node(x1yup) [outer sep=-1pt]{$x_1$$\otimes$$y$}
(-5.7, 3.96) node(y1no3) [outer sep=-1pt]{$\cdot$}
(-7.0, 6.4) node(v0) [outer sep=-1pt]{$V^0$}
(3.0, 0.0) node(y1g3) [outer sep=-1pt]{$y_1$$\otimes$$x$}
(-9.61, 4.72) node(uno1) [outer sep=-1pt]{$\cdot$}
(-8.72, 3.58) node(y2no2) [outer sep=-1pt]{$\cdot$}
(-3.33, 4.34) node(y3no5) [outer sep=-1pt]{$\cdot$}
;
\draw[->, bend right = 0, black, , font=\scriptsize](ug3) to node [sloped] {$\rho_{1}$} (y1g3);
\draw[->, bend right = 0, black, , font=\scriptsize](ug3) to node [sloped] {$\rho_{3}$} (y2g3);
\draw[->, bend right = -4.82, green, , font=\scriptsize](uno1) to node [sloped] {$\iota_0$} (x2yup);
\draw[->, bend right = -7.43, green, , font=\scriptsize](uno3) to node [sloped] {$\iota_0$} (x2xup);
\draw[->, bend right = -3.4, blue, , font=\scriptsize](x1xup) to node [sloped] {$\iota_1$} (vno5);
\draw[->, bend right = -3.75, blue, , font=\scriptsize](x1yup) to node [sloped] {$\iota_1$} (vno2);
\draw[->, bend right = 0, black, , font=\scriptsize](x2xup) to node [sloped] {$\rho_{1}$} (x1xup);
\draw[->, bend right = -3.4, blue, , font=\scriptsize](x2xup) to node [sloped] {$\iota_0$} (y3no5);
\draw[->, bend right = 0, black, , font=\scriptsize](x2yup) to node [sloped] {$\rho_{1}$} (x1yup);
\draw[->, bend right = -3.75, blue, , font=\scriptsize](x2yup) to node [sloped] {$\iota_0$} (y3no2);
\draw[->, bend right = 0, blue, , font=\scriptsize](x3xup) to node [sloped] {$\iota_1$} (y1g3);
\draw[->, bend right = 0, black, , font=\scriptsize](x3xup) to node [sloped] {$\rho_{2}$} (x2xup);
\draw[->, bend right = -3.4, blue, , font=\scriptsize](x3xup) to node [sloped] {$\iota_1$} (y2no5);
\draw[->, bend right = 0, blue, , font=\scriptsize](x3yup) to node [sloped] {$\iota_1$} (y1no4);
\draw[->, bend right = 0, black, , font=\scriptsize](x3yup) to node [sloped] {$\rho_{2}$} (x2yup);
\draw[->, bend right = -3.75, blue, , font=\scriptsize](x3yup) to node [sloped] {$\iota_1$} (y2no2);
\draw[->, bend right = -20, pink, , font=\scriptsize](x3yup) to node [sloped] {$\iota_1$~~~~**$\dagger$} (y1g2down);
\draw[->, bend right = -6.41, green, , font=\scriptsize](y1no1) to node [sloped] {$\iota_1$} (x1yup);
\draw[->, bend right = -8.97, green, , font=\scriptsize](y1no3) to node [sloped] {$\iota_1$} (x1xup);
\draw[->, bend right = 0, red, , font=\scriptsize](y2g2) to node [sloped] {$\rho_{23}$} (y2lno);
\draw[->, bend right = 0, black, , font=\scriptsize](y2g2) to node [sloped] {$\rho_{2}$~~~~**} (ubottomleft);
\draw[->, bend right = 0, black, , font=\scriptsize](y2g2up) to node [sloped] {$\iota_1$} (y2g2);
\draw[->, bend right = 0, red, , font=\scriptsize](y2g3) to node [sloped] {$\rho_{23}$} (y2g2);
\draw[->, bend right = -2.19, black, , font=\scriptsize](y2g3) to node [sloped] {$\rho_{2}$~~~~**$\dagger$} (ug2down);
\draw[->, bend right = 0, black, , font=\scriptsize](y2g3up) to node [sloped] {$\iota_1$} (y2g3);
\draw[->, bend right = 30, black, , font=\scriptsize](y2topright) to node [sloped] {$\rho_{2}$} (ug3);
\end{tikzpicture}
}
\caption{}\label{fig:g2d}
\end{subfigure}

\caption{Rightmost position of a row}
\end{figure}

\FloatBarrier
\subsubsection{Right full copy}\label{sec:rightfull}

\cref{fig:g3a} shows the part of the full copy in the tensor product. As we have done twice, we cancel the red arrows resulting in \cref{fig:g3b}. Note the arrows marked with $\#$ will be gone after we apply these cancellations in the rows above. Their existence does not interfere with the cancellation above, as they come out of targets of cancelable arrows. Such arrows coming from rows below would not have interfered here either. Disregarding those arrows, we arrive at \cref{fig:g3c}. Again the arrow marked with $**$ only exists if it goes into a $u$ in rightmost position, which in this case can only be the bottom row. \cref{fig:g3d} depicts the bottom row which has only one generator, with the potential $**$ arrow shown. The blue arrow comes from tensoring with $\rh{1}.$ Note there will not be any pink arrows from tensoring with potential $\rh{123}$, because this generator has the lowest Alexander grading.

The only arrows connecting to the full copy are those from the map $i:\mathbb{F}_2\to C$, which induces isomorphism in homology. The map $i$ must take the generator $e$ of $\mathbb{F}_2$ to some $\sum_jx_j\in C.$ The full picture of the full copy looks like \cref{fig:g3e}. The set of three generators on the left corresponds to $e$. Because we can simply treat the middle string of $\rh{23}$'s as we did in the simple case, a $\rh{23}$ coming out of $y_2$ is obviously the result of those cancellations. Only $x_1$ and $x_2$ are shown for simplicity. 

A map inducing isomorphism in homology in the opposite direction of $i$ must take $x_j\to e$ for some $j$ and all other generators of $C$ to 0. So in order to match with $KtD(\fl{C})$, we need one $\rh{23}$ from one of the $x_i$'s to $e$. We first cancel arrow marked by $\#1$ and arrive at \cref{fig:g3f}. Now cancel the arrow marked by $\#2$ and arrive at \cref{fig:g3g}. We then look to rid all the $v$'s in this picture. The set of $v$'s and the differentials among them make a chain complex $C_v$ over $\mathbb{Z}_2$ slightly different from $C$. We observe that if we perform the change of basis $x_1\to \sum_jx_j$ on $C$ and then the resulting complex minus generator $\sum_jx_j$ is isomorphic to $C_v$. Following the fact that $(C, \partial_z)$ is homotopy equivalent to $\mathbb{Z}_2$ and $\sum_ix_i$ is the generator of the homology, $C_v$ is homotopy equivalent to $\emptyset$. 
This means that there is a homotopy equivalence on the current stage of the module that exactly gets rid of all the $v$'s. 
A similarly constructed homotopy equivalence gets rid of all the $y_3$'s. The final result matches the full copy of $KtD(\fl{C})$, see \cref{fig:g3h}.

\begin{figure}
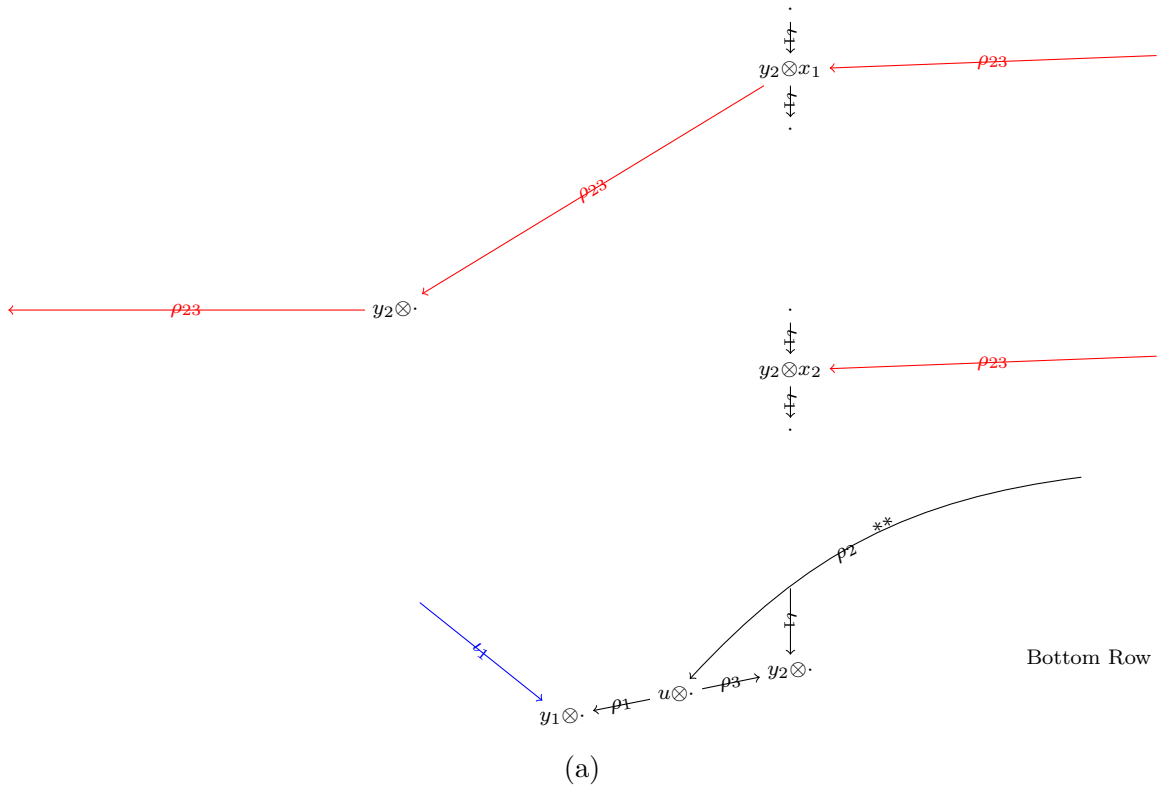


\begin{subfigure}{0.99\textwidth}
\centerline{

}
\caption{}\label{fig:g3h}
\end{subfigure}

\caption{The right full copy}
\end{figure}

\FloatBarrier
\subsubsection{Left-hand side middle part of rows in $V^1$}\label{sec:leftmiddle}

We switch to the left-hand side.
Recall that we assumed that $C$ is horizontally-simplified. This means generators in $(C, \partial_w)$ come in pairs except one generator, which we call $\xi_h$ to be consistent with the simplified case. Two generators in each pair are connected by one arrow. A typical pair $a\to b$ in $KtD(C)$ is displayed in \cref{fig:g4a}. The green arrow with $\rh{2}$ goes to $b\in V^0.$
Per \cref{alg2}, it always appear one column to the left of the left-most $a\to b$ arrow in $(C(\leq s),\partial_w)$ for some s. The dots are only place holders to demonstrate that the length of the arrow is $3.$

In the ``middle'' part of this long string, away from the left-most $a\to b$ arrow and the right-most full copy, the cancellation in the tensor product is just a simplified case of the process described in \cref{sec:rightmiddle}, see \cref{fig:g4b} and \cref{fig:g4c}. 

The unpaired $\xi_h$ constitutes a single string of $\rh{23}$'s in $KtD(C)$, whose middle part can be dealt with in the same manner. The result is a string of $\rh{23}$'s in the opposite direction, matching $KtD(\fl{C})$.

\begin{figure}[h]

\begin{subfigure}{0.99\textwidth}
\centerline{
\begin{tikzpicture}[node distance=2cm]
\path[font = \scriptsize]
(15.0, 1.0) node(g61) [outer sep=-1pt]{$a$}
(8.75, 5.2) node(a31) [outer sep=-1pt]{$b$}
(10.0, 2.0) node(g42) [outer sep=-1pt]{$\cdot$}
(17.5, 3.0) node(g73) [outer sep=-1pt]{$\cdot$}
(12.5, 2.0) node(g52) [outer sep=-1pt]{$\cdot$}
(10.0, 4.0) node(g44) [outer sep=-1pt]{$b$}
(5.0, 1.0) node(g21) [outer sep=-1pt]{$a$}
(12.5, 1.0) node(g51) [outer sep=-1pt]{$a$}
(2.5, 1.0) node(g11) [outer sep=-1pt]{$a$}
(7.5, 2.0) node(g32) [outer sep=-1pt]{$\cdot$}
(15.0, 4.0) node(g64) [outer sep=-1pt]{$b$}
(10.0, 3.0) node(g43) [outer sep=-1pt]{$\cdot$}
(7.5, 3.0) node(g33) [outer sep=-1pt]{$\cdot$}
(11.25, 5.2) node(a44) [outer sep=-1pt]{}
(3.75, 2.3) node(a11) [outer sep=-1pt]{}
(17.5, 4.0) node(g74) [outer sep=-1pt]{$\cdot$}
(12.5, 4.0) node(g54) [outer sep=-1pt]{$b$}
(7.5, 1.0) node(g31) [outer sep=-1pt]{$a$}
(5.0, 2.0) node(g22) [outer sep=-1pt]{$\cdot$}
(15.0, 2.0) node(g62) [outer sep=-1pt]{$\cdot$}
(15.0, 3.0) node(g63) [outer sep=-1pt]{$\cdot$}
(17.5, 1.0) node(g71) [outer sep=-1pt]{$\cdot$}
(17.5, 2.0) node(g72) [outer sep=-1pt]{$\cdot$}
(12.5, 3.0) node(g53) [outer sep=-1pt]{$\cdot$}
(10.0, 1.0) node(g41) [outer sep=-1pt]{$a$}
;
\draw[->, bend right = 0, blue, , font=\scriptsize](a11) to node [sloped] {$\rho_{3}$} (g11);
\draw[->, bend right = 0, blue, , font=\scriptsize](a44) to node [sloped] {$\rho_{3}$} (g44);
\draw[->, bend right = 0, red, , font=\scriptsize](g11) to node [sloped] {$\rho_{23}$} (g21);
\draw[->, bend right = 0, red, , font=\scriptsize](g21) to node [sloped] {$\rho_{23}$} (g31);
\draw[->, bend right = 0, red, , font=\scriptsize](g31) to node [sloped] {$\rho_{23}$} (g41);
\draw[->, bend right = -3.79, green, , font=\scriptsize](g31) to node [sloped] {$\rho_{2}$} (a31);
\draw[->, bend right = 0, red, , font=\scriptsize](g41) to node [sloped] {$\rho_{23}$} (g51);
\draw[->, bend right = -9.0, black, , font=\scriptsize](g41) to node [sloped] {$\iota_1$} (g44);
\draw[->, bend right = 0, red, , font=\scriptsize](g44) to node [sloped] {$\rho_{23}$} (g54);
\draw[->, bend right = 0, red, , font=\scriptsize](g51) to node [sloped] {$\rho_{23}$} (g61);
\draw[->, bend right = -9.0, black, , font=\scriptsize](g51) to node [sloped] {$\iota_1$} (g54);
\draw[->, bend right = 0, red, , font=\scriptsize](g54) to node [sloped] {$\rho_{23}$} (g64);
\draw[->, bend right = 0, red, , font=\scriptsize](g61) to node [sloped] {$\rho_{23}$} (g71);
\draw[->, bend right = -9.0, black, , font=\scriptsize](g61) to node [sloped] {$\iota_1$} (g64);
\draw[->, bend right = 0, red, , font=\scriptsize](g64) to node [sloped] {$\rho_{23}$} (g74);
\end{tikzpicture}
}
\caption{}\label{fig:g4a}
\end{subfigure}

\caption{Lefthand side middle part of rows in $V^1$}
\end{figure}
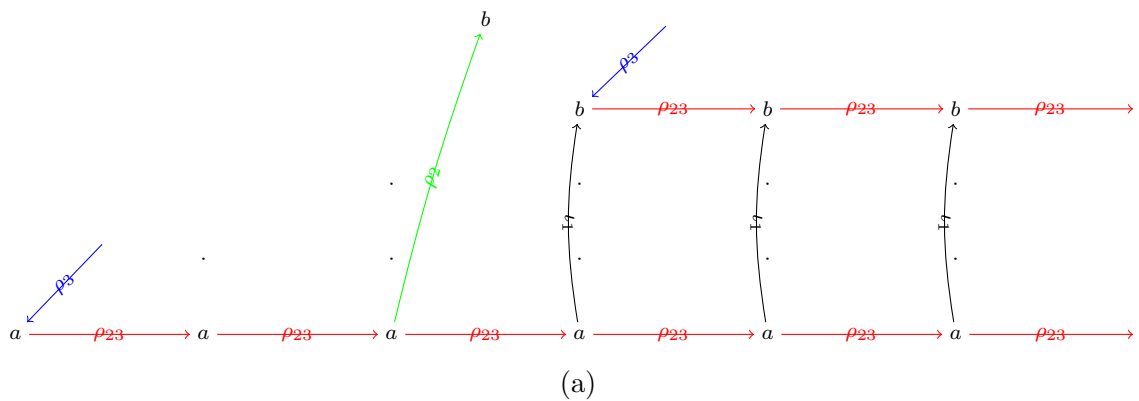

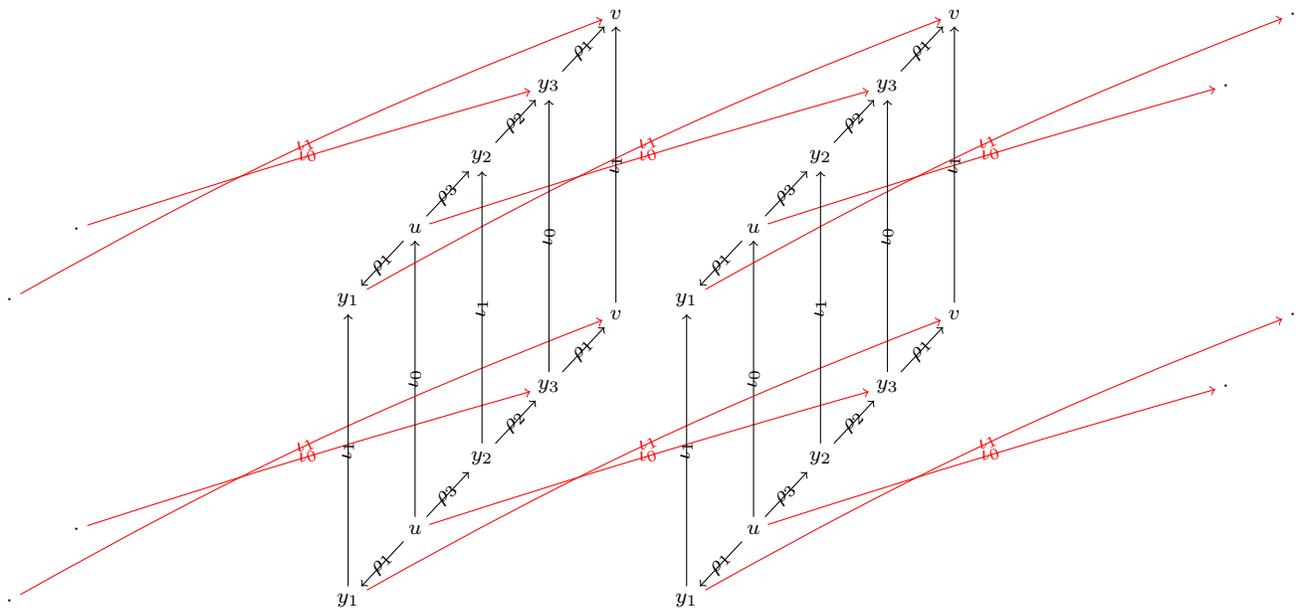
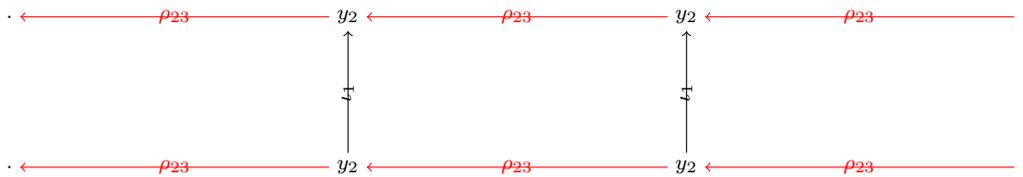
\begin{figure}

\begin{subfigure}{0.99\textwidth}
\centerline{
\begin{tikzpicture}[node distance=2cm]
\path[font = \scriptsize]
(11.67, 6.85) node(y3g21) [outer sep=-1pt]{$y_3$}
(9.89, 4.95) node(ug21) [outer sep=-1pt]{$u$}
(17.06, 7.8) node(vg31) [outer sep=-1pt]{$v$}
(9.89, 8.95) node(ug22) [outer sep=-1pt]{$u$}
(5.39, 4.95) node(ug11) [outer sep=-1pt]{$\cdot$}
(12.56, 11.8) node(vg22) [outer sep=-1pt]{$v$}
(4.5, 4.0) node(y1g11) [outer sep=-1pt]{$\cdot$}
(5.39, 8.95) node(ug12) [outer sep=-1pt]{$\cdot$}
(9.0, 4.0) node(y1g21) [outer sep=-1pt]{$y_1$}
(20.67, 6.85) node(y3g41) [outer sep=-1pt]{$\cdot$}
(12.56, 7.8) node(vg21) [outer sep=-1pt]{$v$}
(21.56, 7.8) node(vg41) [outer sep=-1pt]{$\cdot$}
(14.39, 4.95) node(ug31) [outer sep=-1pt]{$u$}
(17.06, 11.8) node(vg32) [outer sep=-1pt]{$v$}
(15.28, 5.9) node(y2g31) [outer sep=-1pt]{$y_2$}
(13.5, 8.0) node(y1g32) [outer sep=-1pt]{$y_1$}
(15.28, 9.9) node(y2g32) [outer sep=-1pt]{$y_2$}
(4.5, 8.0) node(y1g12) [outer sep=-1pt]{$\cdot$}
(13.5, 4.0) node(y1g31) [outer sep=-1pt]{$y_1$}
(10.78, 5.9) node(y2g21) [outer sep=-1pt]{$y_2$}
(16.17, 6.85) node(y3g31) [outer sep=-1pt]{$y_3$}
(20.67, 10.85) node(y3g42) [outer sep=-1pt]{$\cdot$}
(14.39, 8.95) node(ug32) [outer sep=-1pt]{$u$}
(9.0, 8.0) node(y1g22) [outer sep=-1pt]{$y_1$}
(16.17, 10.85) node(y3g32) [outer sep=-1pt]{$y_3$}
(11.67, 10.85) node(y3g22) [outer sep=-1pt]{$y_3$}
(21.56, 11.8) node(vg42) [outer sep=-1pt]{$\cdot$}
(10.78, 9.9) node(y2g22) [outer sep=-1pt]{$y_2$}
;
\draw[->, bend right = -1.06, red, , font=\scriptsize](ug11) to node [sloped] {$\iota_0$} (y3g21);
\draw[->, bend right = -1.06, red, , font=\scriptsize](ug12) to node [sloped] {$\iota_0$} (y3g22);
\draw[->, bend right = 0, black, , font=\scriptsize](ug21) to node [sloped] {$\rho_{1}$} (y1g21);
\draw[->, bend right = 0, black, , font=\scriptsize](ug21) to node [sloped] {$\rho_{3}$} (y2g21);
\draw[->, bend right = -1.06, red, , font=\scriptsize](ug21) to node [sloped] {$\iota_0$} (y3g31);
\draw[->, bend right = 0, black, , font=\scriptsize](ug21) to node [sloped] {$\iota_0$} (ug22);
\draw[->, bend right = 0, black, , font=\scriptsize](ug22) to node [sloped] {$\rho_{1}$} (y1g22);
\draw[->, bend right = 0, black, , font=\scriptsize](ug22) to node [sloped] {$\rho_{3}$} (y2g22);
\draw[->, bend right = -1.06, red, , font=\scriptsize](ug22) to node [sloped] {$\iota_0$} (y3g32);
\draw[->, bend right = 0, black, , font=\scriptsize](ug31) to node [sloped] {$\rho_{1}$} (y1g31);
\draw[->, bend right = 0, black, , font=\scriptsize](ug31) to node [sloped] {$\rho_{3}$} (y2g31);
\draw[->, bend right = -1.06, red, , font=\scriptsize](ug31) to node [sloped] {$\iota_0$} (y3g41);
\draw[->, bend right = 0, black, , font=\scriptsize](ug31) to node [sloped] {$\iota_0$} (ug32);
\draw[->, bend right = 0, black, , font=\scriptsize](ug32) to node [sloped] {$\rho_{1}$} (y1g32);
\draw[->, bend right = 0, black, , font=\scriptsize](ug32) to node [sloped] {$\rho_{3}$} (y2g32);
\draw[->, bend right = -1.06, red, , font=\scriptsize](ug32) to node [sloped] {$\iota_0$} (y3g42);
\draw[->, bend right = 0, black, , font=\scriptsize](vg21) to node [sloped] {$\iota_1$} (vg22);
\draw[->, bend right = 0, black, , font=\scriptsize](vg31) to node [sloped] {$\iota_1$} (vg32);
\draw[->, bend right = -4.7, red, , font=\scriptsize](y1g11) to node [sloped] {$\iota_1$} (vg21);
\draw[->, bend right = -4.7, red, , font=\scriptsize](y1g12) to node [sloped] {$\iota_1$} (vg22);
\draw[->, bend right = -4.7, red, , font=\scriptsize](y1g21) to node [sloped] {$\iota_1$} (vg31);
\draw[->, bend right = 0, black, , font=\scriptsize](y1g21) to node [sloped] {$\iota_1$} (y1g22);
\draw[->, bend right = -4.7, red, , font=\scriptsize](y1g22) to node [sloped] {$\iota_1$} (vg32);
\draw[->, bend right = -4.7, red, , font=\scriptsize](y1g31) to node [sloped] {$\iota_1$} (vg41);
\draw[->, bend right = 0, black, , font=\scriptsize](y1g31) to node [sloped] {$\iota_1$} (y1g32);
\draw[->, bend right = -4.7, red, , font=\scriptsize](y1g32) to node [sloped] {$\iota_1$} (vg42);
\draw[->, bend right = 0, black, , font=\scriptsize](y2g21) to node [sloped] {$\rho_{2}$} (y3g21);
\draw[->, bend right = 0, black, , font=\scriptsize](y2g21) to node [sloped] {$\iota_1$} (y2g22);
\draw[->, bend right = 0, black, , font=\scriptsize](y2g22) to node [sloped] {$\rho_{2}$} (y3g22);
\draw[->, bend right = 0, black, , font=\scriptsize](y2g31) to node [sloped] {$\rho_{2}$} (y3g31);
\draw[->, bend right = 0, black, , font=\scriptsize](y2g31) to node [sloped] {$\iota_1$} (y2g32);
\draw[->, bend right = 0, black, , font=\scriptsize](y2g32) to node [sloped] {$\rho_{2}$} (y3g32);
\draw[->, bend right = 0, black, , font=\scriptsize](y3g21) to node [sloped] {$\rho_{1}$} (vg21);
\draw[->, bend right = 0, black, , font=\scriptsize](y3g21) to node [sloped] {$\iota_0$} (y3g22);
\draw[->, bend right = 0, black, , font=\scriptsize](y3g22) to node [sloped] {$\rho_{1}$} (vg22);
\draw[->, bend right = 0, black, , font=\scriptsize](y3g31) to node [sloped] {$\rho_{1}$} (vg31);
\draw[->, bend right = 0, black, , font=\scriptsize](y3g31) to node [sloped] {$\iota_0$} (y3g32);
\draw[->, bend right = 0, black, , font=\scriptsize](y3g32) to node [sloped] {$\rho_{1}$} (vg32);
\end{tikzpicture}
}
\caption{}\label{fig:g4b}
\end{subfigure}

\begin{subfigure}{0.99\textwidth}
\centerline{
\begin{tikzpicture}[node distance=2cm]
\path[font = \scriptsize]
(15.28, 4.95) node(y2g32) [outer sep=-1pt]{$y_2$}
(19.78, 2.95) node(y2g41) [outer sep=-1pt]{$\cdot$}
(6.28, 4.95) node(y2g12) [outer sep=-1pt]{$\cdot$}
(19.78, 4.95) node(y2g42) [outer sep=-1pt]{$\cdot$}
(10.78, 2.95) node(y2g21) [outer sep=-1pt]{$y_2$}
(6.28, 2.95) node(y2g11) [outer sep=-1pt]{$\cdot$}
(15.28, 2.95) node(y2g31) [outer sep=-1pt]{$y_2$}
(10.78, 4.95) node(y2g22) [outer sep=-1pt]{$y_2$}
;
\draw[->, bend right = 0, black, , font=\scriptsize](y2g21) to node [sloped] {$\iota_1$} (y2g22);
\draw[->, bend right = 0, red, , font=\scriptsize](y2g21) to node [sloped] {$\rho_{23}$} (y2g11);
\draw[->, bend right = 0, red, , font=\scriptsize](y2g22) to node [sloped] {$\rho_{23}$} (y2g12);
\draw[->, bend right = 0, black, , font=\scriptsize](y2g31) to node [sloped] {$\iota_1$} (y2g32);
\draw[->, bend right = 0, red, , font=\scriptsize](y2g31) to node [sloped] {$\rho_{23}$} (y2g21);
\draw[->, bend right = 0, red, , font=\scriptsize](y2g32) to node [sloped] {$\rho_{23}$} (y2g22);
\draw[->, bend right = 0, red, , font=\scriptsize](y2g41) to node [sloped] {$\rho_{23}$} (y2g31);
\draw[->, bend right = 0, red, , font=\scriptsize](y2g42) to node [sloped] {$\rho_{23}$} (y2g32);
\end{tikzpicture}
}
\caption{}\label{fig:g4c}
\end{subfigure}

\caption{Lefthand side middle part of rows in $V^1$}
\end{figure}

\FloatBarrier
\subsubsection{$V^0$}\label{sec:v0}
In this section we look at tensoring at $V^0$ where outgoing $\rh{1}$ and $\rh{3}$ connect to the left-most positions of the left-hand side and right-most positions of the right-hand side, see \cref{fig:g5a}. We first assume the arrow $a\to b$ has length more than 1. This means that on the left of \cref{fig:g5a}, the position where the green arrow comes out is not the left most position, where the blue $\rh{3}$ comes in. It also means that there won't be any other length one $\partial_w$ arrow connecting to $a$ or $b$, so there is not any pink $\rh{123}$'s to the right side of this diagram. 

Taking the tensor and keeping in mind all the previous cancellation we have done, especially in \cref{sec:rightend} and \cref{fig:g2d}, we arrive at \cref{fig:g5b}. We first cancel the two pair of red arrows in the same fashion we have always done. We also cancel the two blue arrows from $x_1 \to v$, then the two blue arrows from $x_2 \to y_3$, resulting in \cref{fig:g5c}. The cancelation of the eight arrows so far are simple and generated no new arrows. Note the green arrows are now gone. 

Next, we cancel the two pair of red arrows on the left and rearrange the $y_1$'s and $u$'s on the right side to the top, see \cref{fig:g5d}. Next, we cancel the two blue arrows $x_3\to y_1$ on the top, see \cref{fig:g5e}. 

Now the picture looks very much like $KtD(\fl{C})$ described in \cref{localchange}; For each generator, its $y_2$ takes its place. $\rh{23}$'s are reversed. $\partial_z$ and $\partial_w$ are intact. For each generator $x$, its $u$ at the right-most position in the right side takes the place of $x$ in $V^0$. $\rh{1}$ and $\rh{3}$ are switched. Let us take a closer look at the green arrows with $\rh{2}$ and marked by $*$ in this section. Say the $a$ has an incoming $c\to a$ in $\partial_z$. As seen in \cref{sec:rightmiddle}, the green arrow $*$ with $\rh{2}$ comes from $c$'s $y_2$ in one column to the right. In other words, if a downward $\partial_z$ arrows arrives at a generator $a$ at a right-most position from a generator $c$ above, then there is a $\rho_2$ arrow coming out of the $y_2$ to $c$'s right and going into the bottom $a$'s $u$, which takes the place of the old $a\in V^0$ now. This matches exactly to the green arrows of $KtD(\fl{C})$ described in \cref{localchange}. The green arrows marked by $**$ exist for the exact same reasons, except from the origin's point of view. They exist only when they are going into a rightmost position, which must be the rightmost position of one row below, as they themselves are at rightmost positions. We again observe that these green arrows never interacted with the cancellation done in this section, so we are safe to say that we the cancellations generalize to all applicable part. 

The case depicted in the diagrams is that $a\to b$ has length 2, the case that it has length more than 2 is similar. 

\begin{figure}

\centerline{
\begin{tikzpicture}[node distance=2cm]
\path[font = \scriptsize]
(14.4, 3.8) node(b6up) [outer sep=-1pt]{}
(5.4, 3.0) node(b3) [outer sep=-1pt]{$b$}
(7.2, 1.0) node(a4) [outer sep=-1pt]{}
(7.2, 3.0) node(b4) [outer sep=-1pt]{}
(16.2, 3.0) node(b7) [outer sep=-1pt]{$b$}
(12.6, 6.0) node(topb) [outer sep=-1pt]{$b$}
(12.6, 1.0) node(a5) [outer sep=-1pt]{}
(14.4, 1.0) node(a6) [outer sep=-1pt]{$a$}
(18.0, 3.8) node(b8up) [outer sep=-1pt]{}
(3.6, 1.0) node(a2) [outer sep=-1pt]{$a$}
(1.8, 1.0) node(a1) [outer sep=-1pt]{$a$}
(16.2, 3.8) node(b7up) [outer sep=-1pt]{}
(9.0, 6.0) node(topa) [outer sep=-1pt]{$a$}
(14.4, 1.8) node(a6up) [outer sep=-1pt]{}
(16.2, 2.2) node(b7down) [outer sep=-1pt]{}
(18.0, 3.0) node(b8) [outer sep=-1pt]{$b$}
(5.4, 1.0) node(a3) [outer sep=-1pt]{$a$}
(14.4, 2.2) node(b6down) [outer sep=-1pt]{}
(14.4, 3.0) node(b6) [outer sep=-1pt]{$b$}
(12.6, 3.0) node(b5) [outer sep=-1pt]{}
;
\draw[->, bend right = 0, red, , font=\scriptsize](a1) to node [sloped] {$\rho_{23}$} (a2);
\draw[->, bend right = 0, red, , font=\scriptsize](a2) to node [sloped] {$\rho_{23}$} (a3);
\draw[->, bend right = 10, green, , font=\scriptsize](a2) to node [sloped] {$\rho_{2}$} (topb);
\draw[->, bend right = 0, red, , font=\scriptsize](a3) to node [sloped] {$\rho_{23}$} (a4);
\draw[->, bend right = 0, black, , font=\scriptsize](a3) to node [sloped] {$\iota_1$} (b3);
\draw[->, bend right = 0, red, , font=\scriptsize](a5) to node [sloped] {$\rho_{23}$} (a6);
\draw[->, bend right = 0, black, , font=\scriptsize](a6up) to node [sloped] {$\iota_1$} (a6);
\draw[->, bend right = 0, red, , font=\scriptsize](b3) to node [sloped] {$\rho_{23}$} (b4);
\draw[->, bend right = 0, red, , font=\scriptsize](b5) to node [sloped] {$\rho_{23}$} (b6);
\draw[->, bend right = 0, black, , font=\scriptsize](b6) to node [sloped] {$\iota_1$} (b6down);
\draw[->, bend right = 0, red, , font=\scriptsize](b6) to node [sloped] {$\rho_{23}$} (b7);
\draw[->, bend right = 0, black, , font=\scriptsize](b6up) to node [sloped] {$\iota_1$} (b6);
\draw[->, bend right = 0, black, , font=\scriptsize](b7) to node [sloped] {$\iota_1$} (b7down);
\draw[->, bend right = 0, red, , font=\scriptsize](b7) to node [sloped] {$\rho_{23}$} (b8);
\draw[->, bend right = 0, black, , font=\scriptsize](b7up) to node [sloped] {$\iota_1$} (b7);
\draw[->, bend right = 0, black, , font=\scriptsize](b8up) to node [sloped] {$\iota_1$} (b8);
\draw[->, bend right = 10, blue, , font=\scriptsize](topa) to node [sloped] {$\rho_{3}$} (a1);
\draw[->, bend right = -6.34, blue, , font=\scriptsize](topa) to node [sloped] {$\rho_{1}$} (a6);
\draw[->, bend right = 10, blue, , font=\scriptsize](topb) to node [sloped] {$\rho_{3}$} (b3);
\draw[->, bend right = -6.74, blue, , font=\scriptsize](topb) to node [sloped] {$\rho_{1}$} (b8);
\end{tikzpicture}
}

\caption{$V^0$}\label{fig:g5a}
\end{figure}

\clearpage
\newgeometry{margin=2cm}
\begin{landscape}
\begin{figure}
\centering
\begin{tikzpicture}[node distance=2cm]
\path[font = \scriptsize]
(18.2, 1.9) node(y1a6) [outer sep=-1pt]{$y_1$$\otimes$$a$}
(14.76, 9.5) node(vb4) [outer sep=-1pt]{$\cdot$}
(10.18, 3.8) node(y2a3) [outer sep=-1pt]{$y_2$$\otimes$$a$}
(11.07, 4.75) node(y3a3) [outer sep=-1pt]{$y_3$$\otimes$$a$}
(19.09, 2.85) node(ua6) [outer sep=-1pt]{$u$$\otimes$$a$}
(14.89, 15.2) node(x2topa) [outer sep=-1pt]{$x_2$$\otimes$$a$}
(14.98, 3.8) node(a4half) [outer sep=-1pt]{$\cdot$}
(11.07, 8.55) node(y3b3) [outer sep=-1pt]{$y_3$$\otimes$$b$}
(12.98, 3.8) node(y2a4) [outer sep=-1pt]{$\cdot$}
(13.87, 8.55) node(y3b4) [outer sep=-1pt]{$\cdot$}
(21.98, 5.8) node(aextray2) [outer sep=-1pt]{$y_2$}
(6.49, 2.85) node(ua2) [outer sep=-1pt]{$u$$\otimes$$a$}
(9.16, 5.7) node(va2) [outer sep=-1pt]{$v$$\otimes$$a$}
(17.18, 3.8) node(y2a5) [outer sep=-1pt]{$\cdot$}
(12.98, 7.6) node(y2b4) [outer sep=-1pt]{$\cdot$}
(8.4, 1.9) node(y1a3) [outer sep=-1pt]{$y_1$$\otimes$$a$}
(7.38, 3.8) node(y2a2) [outer sep=-1pt]{$y_2$$\otimes$$a$}
(19.6, 14.25) node(x3topb) [outer sep=-1pt]{$x_3$$\otimes$$b$}
(16.98, -0.2) node(aextrau) [outer sep=-1pt]{$u$}
(15.78, 16.15) node(x1topa) [outer sep=-1pt]{$x_1$$\otimes$$a$}
(24.69, 6.65) node(ub8) [outer sep=-1pt]{$u$$\otimes$$b$}
(19.98, 3.8) node(y2a6) [outer sep=-1pt]{$y_2$$\otimes$$a$}
(2.8, 1.9) node(y1a1) [outer sep=-1pt]{$y_1$$\otimes$$a$}
(14.98, 7.6) node(b4half) [outer sep=-1pt]{$\cdot$}
(22.78, 7.6) node(y2b7) [outer sep=-1pt]{$y_2$$\otimes$$b$}
(11.96, 9.5) node(vb3) [outer sep=-1pt]{$v$$\otimes$$b$}
(14.76, 5.7) node(va4) [outer sep=-1pt]{$\cdot$}
(9.29, 6.65) node(ub3) [outer sep=-1pt]{$u$$\otimes$$b$}
(4.58, 3.8) node(y2a1) [outer sep=-1pt]{$y_2$$\otimes$$a$}
(5.6, 1.9) node(y1a2) [outer sep=-1pt]{$y_1$$\otimes$$a$}
(9.29, 2.85) node(ua3) [outer sep=-1pt]{$u$$\otimes$$a$}
(6.36, 5.7) node(va1) [outer sep=-1pt]{$v$$\otimes$$a$}
(27.58, 9.6) node(bextray2) [outer sep=-1pt]{$y_2$}
(23.8, 5.7) node(y1b8) [outer sep=-1pt]{$y_1$$\otimes$$b$}
(5.47, 4.75) node(y3a1) [outer sep=-1pt]{$y_3$$\otimes$$a$}
(25.58, 7.6) node(y2b8) [outer sep=-1pt]{$y_2$$\otimes$$b$}
(14.0, 14.25) node(x3topa) [outer sep=-1pt]{$x_3$$\otimes$$a$}
(10.18, 7.6) node(y2b3) [outer sep=-1pt]{$y_2$$\otimes$$b$}
(19.98, 7.6) node(y2b6) [outer sep=-1pt]{}
(8.27, 4.75) node(y3a2) [outer sep=-1pt]{$y_3$$\otimes$$a$}
(13.87, 4.75) node(y3a4) [outer sep=-1pt]{$\cdot$}
(21.38, 16.15) node(x1topb) [outer sep=-1pt]{$x_1$$\otimes$$b$}
(20.49, 15.2) node(x2topb) [outer sep=-1pt]{$x_2$$\otimes$$b$}
(11.96, 5.7) node(va3) [outer sep=-1pt]{$v$$\otimes$$a$}
(3.69, 2.85) node(ua1) [outer sep=-1pt]{$u$$\otimes$$a$}
(22.78, 9.12) node(y2b7up) [outer sep=-1pt]{$\cdot$}
(25.58, 9.12) node(y2b8up) [outer sep=-1pt]{$\cdot$}
(8.4, 5.7) node(y1b3) [outer sep=-1pt]{$y_1$$\otimes$$b$}
(22.78, 6.08) node(y2b7down) [outer sep=-1pt]{$\cdot$}
(19.98, 5.32) node(y2a6up) [outer sep=-1pt]{$\cdot$}
(22.58, 3.6) node(bextrau) [outer sep=-1pt]{$u$}
;
\draw[->, bend right = 0, red, , font=\scriptsize](a4half) to node [sloped] {$\rho_{23}$} (y2a4);
\draw[->, bend right = 40, green, , font=\scriptsize](aextray2) to node [sloped] {$\rho_{2}$~~~~~~~*} (ua6);
\draw[->, bend right = 0, red, , font=\scriptsize](b4half) to node [sloped] {$\rho_{23}$} (y2b4);
\draw[->, bend right = 40, green, , font=\scriptsize](bextray2) to node [sloped] {$\rho_{2}$~~~~~~~*} (ub8);
\draw[->, bend right = 0, black, , font=\scriptsize](ua1) to node [sloped] {$\rho_{1}$} (y1a1);
\draw[->, bend right = 0, black, , font=\scriptsize](ua1) to node [sloped] {$\rho_{3}$} (y2a1);
\draw[->, bend right = 0, red, , font=\scriptsize](ua1) to node [sloped] {$\iota_0$} (y3a2);
\draw[->, bend right = 0, black, , font=\scriptsize](ua2) to node [sloped] {$\rho_{1}$} (y1a2);
\draw[->, bend right = 0, black, , font=\scriptsize](ua2) to node [sloped] {$\rho_{3}$} (y2a2);
\draw[->, bend right = 0, red, , font=\scriptsize](ua2) to node [sloped] {$\iota_0$} (y3a3);
\draw[->, bend right = -20, green, , font=\scriptsize](ua2) to node [sloped] {$\iota_0$} (x2topb);
\draw[->, bend right = 0, black, , font=\scriptsize](ua3) to node [sloped] {$\rho_{1}$} (y1a3);
\draw[->, bend right = 0, black, , font=\scriptsize](ua3) to node [sloped] {$\rho_{3}$} (y2a3);
\draw[->, bend right = 0, red, , font=\scriptsize](ua3) to node [sloped] {$\iota_0$} (y3a4);
\draw[->, bend right = -6.33, black, , font=\scriptsize](ua3) to node [sloped] {$\iota_0$} (ub3);
\draw[->, bend right = 0, black, , font=\scriptsize](ua6) to node [sloped] {$\rho_{1}$} (y1a6);
\draw[->, bend right = 0, black, , font=\scriptsize](ua6) to node [sloped] {$\rho_{3}$} (y2a6);
\draw[->, bend right = 0, black, , font=\scriptsize](ub3) to node [sloped] {$\rho_{1}$} (y1b3);
\draw[->, bend right = 0, black, , font=\scriptsize](ub3) to node [sloped] {$\rho_{3}$} (y2b3);
\draw[->, bend right = 0, red, , font=\scriptsize](ub3) to node [sloped] {$\iota_0$} (y3b4);
\draw[->, bend right = 0, black, , font=\scriptsize](ub8) to node [sloped] {$\rho_{1}$} (y1b8);
\draw[->, bend right = 0, black, , font=\scriptsize](ub8) to node [sloped] {$\rho_{3}$} (y2b8);
\draw[->, bend right = 0, black, , font=\scriptsize](va3) to node [sloped] {$\iota_1$} (vb3);
\draw[->, bend right = 30, blue, , font=\scriptsize](x1topa) to node [sloped] {$\iota_1$} (va1);
\draw[->, bend right = 30, blue, , font=\scriptsize](x1topb) to node [sloped] {$\iota_1$} (vb3);
\draw[->, bend right = 0, black, , font=\scriptsize](x2topa) to node [sloped] {$\rho_{1}$} (x1topa);
\draw[->, bend right = 30, blue, , font=\scriptsize](x2topa) to node [sloped] {$\iota_0$} (y3a1);
\draw[->, bend right = 0, black, , font=\scriptsize](x2topb) to node [sloped] {$\rho_{1}$} (x1topb);
\draw[->, bend right = 30, blue, , font=\scriptsize](x2topb) to node [sloped] {$\iota_0$} (y3b3);
\draw[->, bend right = -30, blue, , font=\scriptsize](x3topa) to node [sloped] {$\iota_1$} (y1a6);
\draw[->, bend right = 0, black, , font=\scriptsize](x3topa) to node [sloped] {$\rho_{2}$} (x2topa);
\draw[->, bend right = 30, blue, , font=\scriptsize](x3topa) to node [sloped] {$\iota_1$} (y2a1);
\draw[->, bend right = -30, blue, , font=\scriptsize](x3topb) to node [sloped] {$\iota_1$} (y1b8);
\draw[->, bend right = 0, black, , font=\scriptsize](x3topb) to node [sloped] {$\rho_{2}$} (x2topb);
\draw[->, bend right = 30, blue, , font=\scriptsize](x3topb) to node [sloped] {$\iota_1$} (y2b3);
\draw[->, bend right = -5.13, red, , font=\scriptsize](y1a1) to node [sloped] {$\iota_1$} (va2);
\draw[->, bend right = -5.13, red, , font=\scriptsize](y1a2) to node [sloped] {$\iota_1$} (va3);
\draw[->, bend right = -20, green, , font=\scriptsize](y1a2) to node [sloped] {$\iota_1$} (x1topb);
\draw[->, bend right = -5.13, red, , font=\scriptsize](y1a3) to node [sloped] {$\iota_1$} (va4);
\draw[->, bend right = -6.33, black, , font=\scriptsize](y1a3) to node [sloped] {$\iota_1$} (y1b3);
\draw[->, bend right = -5.13, red, , font=\scriptsize](y1b3) to node [sloped] {$\iota_1$} (vb4);
\draw[->, bend right = 0, black, , font=\scriptsize](y2a1) to node [sloped] {$\rho_{2}$} (y3a1);
\draw[->, bend right = 0, black, , font=\scriptsize](y2a2) to node [sloped] {$\rho_{2}$} (y3a2);
\draw[->, bend right = 0, black, , font=\scriptsize](y2a3) to node [sloped] {$\rho_{2}$} (y3a3);
\draw[->, bend right = 0, black, , font=\scriptsize](y2a3) to node [sloped] {$\iota_1$} (y2b3);
\draw[->, bend right = 0, black, , font=\scriptsize](y2a4) to node [sloped] {$\rho_{2}$} (y3a4);
\draw[->, bend right = 0, red, , font=\scriptsize](y2a6) to node [sloped] {$\rho_{23}$} (y2a5);
\draw[->, bend right = 30, green, , font=\scriptsize](y2a6) to node [sloped] {$\rho_{2}$~~~**} (aextrau);
\draw[->, bend right = 0, black, , font=\scriptsize](y2a6up) to node [sloped] {$\iota_1$} (y2a6);
\draw[->, bend right = 0, black, , font=\scriptsize](y2b3) to node [sloped] {$\rho_{2}$} (y3b3);
\draw[->, bend right = 0, black, , font=\scriptsize](y2b4) to node [sloped] {$\rho_{2}$} (y3b4);
\draw[->, bend right = 0, black, , font=\scriptsize](y2b7) to node [sloped] {$\iota_1$} (y2b7down);
\draw[->, bend right = 0, red, , font=\scriptsize](y2b7) to node [sloped] {$\rho_{23}$} (y2b6);
\draw[->, bend right = 0, black, , font=\scriptsize](y2b7up) to node [sloped] {$\iota_1$} (y2b7);
\draw[->, bend right = 0, red, , font=\scriptsize](y2b8) to node [sloped] {$\rho_{23}$} (y2b7);
\draw[->, bend right = 30, green, , font=\scriptsize](y2b8) to node [sloped] {$\rho_{2}$~~~**} (bextrau);
\draw[->, bend right = 0, black, , font=\scriptsize](y2b8up) to node [sloped] {$\iota_1$} (y2b8);
\draw[->, bend right = 0, black, , font=\scriptsize](y3a1) to node [sloped] {$\rho_{1}$} (va1);
\draw[->, bend right = 0, black, , font=\scriptsize](y3a2) to node [sloped] {$\rho_{1}$} (va2);
\draw[->, bend right = 0, black, , font=\scriptsize](y3a3) to node [sloped] {$\rho_{1}$} (va3);
\draw[->, bend right = 0, black, , font=\scriptsize](y3a3) to node [sloped] {$\iota_0$} (y3b3);
\draw[->, bend right = 0, black, , font=\scriptsize](y3a4) to node [sloped] {$\rho_{1}$} (va4);
\draw[->, bend right = 0, black, , font=\scriptsize](y3b3) to node [sloped] {$\rho_{1}$} (vb3);
\draw[->, bend right = 0, black, , font=\scriptsize](y3b4) to node [sloped] {$\rho_{1}$} (vb4);
\end{tikzpicture}
\caption{$V^0$}\label{fig:g5b}
\end{figure}
\end{landscape}
\clearpage
\restoregeometry

\clearpage
\newgeometry{margin=2cm}
\begin{landscape}
\begin{figure}

\centering
\begin{tikzpicture}[node distance=2cm]
\path[font = \scriptsize]
(27.58, 9.6) node(bextray2) [outer sep=-1pt]{$y_2$}
(3.69, 2.85) node(ua1) [outer sep=-1pt]{$u$$\otimes$$a$}
(11.96, 5.7) node(va3) [outer sep=-1pt]{$v$$\otimes$$a$}
(19.98, 5.32) node(y2a6up) [outer sep=-1pt]{$\cdot$}
(10.18, 7.6) node(y2b3) [outer sep=-1pt]{$y_2$$\otimes$$b$}
(22.78, 6.08) node(y2b7down) [outer sep=-1pt]{$\cdot$}
(22.58, 3.6) node(bextrau) [outer sep=-1pt]{$u$}
(22.78, 9.12) node(y2b7up) [outer sep=-1pt]{$\cdot$}
(19.6, 14.25) node(x3topb) [outer sep=-1pt]{$x_3$$\otimes$$b$}
(25.58, 9.12) node(y2b8up) [outer sep=-1pt]{$\cdot$}
(8.27, 4.75) node(y3a2) [outer sep=-1pt]{$y_3$$\otimes$$a$}
(5.6, 1.9) node(y1a2) [outer sep=-1pt]{$y_1$$\otimes$$a$}
(18.2, 1.9) node(y1a6) [outer sep=-1pt]{$y_1$$\otimes$$a$}
(14.98, 3.8) node(a4half) [outer sep=-1pt]{$\cdot$}
(14.0, 14.25) node(x3topa) [outer sep=-1pt]{$x_3$$\otimes$$a$}
(19.09, 2.85) node(ua6) [outer sep=-1pt]{$u$$\otimes$$a$}
(11.07, 4.75) node(y3a3) [outer sep=-1pt]{$y_3$$\otimes$$a$}
(12.98, 3.8) node(y2a4) [outer sep=-1pt]{$\cdot$}
(12.98, 7.6) node(y2b4) [outer sep=-1pt]{$\cdot$}
(2.8, 1.9) node(y1a1) [outer sep=-1pt]{$y_1$$\otimes$$a$}
(25.58, 7.6) node(y2b8) [outer sep=-1pt]{$y_2$$\otimes$$b$}
(19.98, 3.8) node(y2a6) [outer sep=-1pt]{$y_2$$\otimes$$a$}
(9.16, 5.7) node(va2) [outer sep=-1pt]{$v$$\otimes$$a$}
(6.49, 2.85) node(ua2) [outer sep=-1pt]{$u$$\otimes$$a$}
(19.98, 7.6) node(y2b6) [outer sep=-1pt]{}
(21.98, 5.8) node(aextray2) [outer sep=-1pt]{$y_2$}
(10.18, 3.8) node(y2a3) [outer sep=-1pt]{$y_2$$\otimes$$a$}
(14.98, 7.6) node(b4half) [outer sep=-1pt]{$\cdot$}
(22.78, 7.6) node(y2b7) [outer sep=-1pt]{$y_2$$\otimes$$b$}
(7.38, 3.8) node(y2a2) [outer sep=-1pt]{$y_2$$\otimes$$a$}
(4.58, 3.8) node(y2a1) [outer sep=-1pt]{$y_2$$\otimes$$a$}
(16.98, -0.2) node(aextrau) [outer sep=-1pt]{$u$}
(17.18, 3.8) node(y2a5) [outer sep=-1pt]{$\cdot$}
(23.8, 5.7) node(y1b8) [outer sep=-1pt]{$y_1$$\otimes$$b$}
(24.69, 6.65) node(ub8) [outer sep=-1pt]{$u$$\otimes$$b$}
;
\draw[->, bend right = 0, red, , font=\scriptsize](a4half) to node [sloped] {$\rho_{23}$} (y2a4);
\draw[->, bend right = 40, green, , font=\scriptsize](aextray2) to node [sloped] {$\rho_{2}$~~~~~~~*} (ua6);
\draw[->, bend right = 0, red, , font=\scriptsize](b4half) to node [sloped] {$\rho_{23}$} (y2b4);
\draw[->, bend right = 40, green, , font=\scriptsize](bextray2) to node [sloped] {$\rho_{2}$~~~~~~~*} (ub8);
\draw[->, bend right = 0, black, , font=\scriptsize](ua1) to node [sloped] {$\rho_{1}$} (y1a1);
\draw[->, bend right = 0, black, , font=\scriptsize](ua1) to node [sloped] {$\rho_{3}$} (y2a1);
\draw[->, bend right = 0, red, , font=\scriptsize](ua1) to node [sloped] {$\iota_0$} (y3a2);
\draw[->, bend right = 0, black, , font=\scriptsize](ua2) to node [sloped] {$\rho_{1}$} (y1a2);
\draw[->, bend right = 0, black, , font=\scriptsize](ua2) to node [sloped] {$\rho_{3}$} (y2a2);
\draw[->, bend right = 0, red, , font=\scriptsize](ua2) to node [sloped] {$\iota_0$} (y3a3);
\draw[->, bend right = 0, black, , font=\scriptsize](ua6) to node [sloped] {$\rho_{1}$} (y1a6);
\draw[->, bend right = 0, black, , font=\scriptsize](ua6) to node [sloped] {$\rho_{3}$} (y2a6);
\draw[->, bend right = 0, black, , font=\scriptsize](ub8) to node [sloped] {$\rho_{1}$} (y1b8);
\draw[->, bend right = 0, black, , font=\scriptsize](ub8) to node [sloped] {$\rho_{3}$} (y2b8);
\draw[->, bend right = -30, blue, , font=\scriptsize](x3topa) to node [sloped] {$\iota_1$} (y1a6);
\draw[->, bend right = 30, blue, , font=\scriptsize](x3topa) to node [sloped] {$\iota_1$} (y2a1);
\draw[->, bend right = -30, blue, , font=\scriptsize](x3topb) to node [sloped] {$\iota_1$} (y1b8);
\draw[->, bend right = 30, blue, , font=\scriptsize](x3topb) to node [sloped] {$\iota_1$} (y2b3);
\draw[->, bend right = -5.13, red, , font=\scriptsize](y1a1) to node [sloped] {$\iota_1$} (va2);
\draw[->, bend right = -5.13, red, , font=\scriptsize](y1a2) to node [sloped] {$\iota_1$} (va3);
\draw[->, bend right = 0, black, , font=\scriptsize](y2a2) to node [sloped] {$\rho_{2}$} (y3a2);
\draw[->, bend right = 0, black, , font=\scriptsize](y2a3) to node [sloped] {$\rho_{2}$} (y3a3);
\draw[->, bend right = 0, black, , font=\scriptsize](y2a3) to node [sloped] {$\iota_1$} (y2b3);
\draw[->, bend right = 0, red, , font=\scriptsize](y2a4) to node [sloped] {$\rho_{23}$} (y2a3);
\draw[->, bend right = 0, red, , font=\scriptsize](y2a6) to node [sloped] {$\rho_{23}$} (y2a5);
\draw[->, bend right = 30, green, , font=\scriptsize](y2a6) to node [sloped] {$\rho_{2}$~~~**} (aextrau);
\draw[->, bend right = 0, black, , font=\scriptsize](y2a6up) to node [sloped] {$\iota_1$} (y2a6);
\draw[->, bend right = 0, red, , font=\scriptsize](y2b4) to node [sloped] {$\rho_{23}$} (y2b3);
\draw[->, bend right = 0, black, , font=\scriptsize](y2b7) to node [sloped] {$\iota_1$} (y2b7down);
\draw[->, bend right = 0, red, , font=\scriptsize](y2b7) to node [sloped] {$\rho_{23}$} (y2b6);
\draw[->, bend right = 0, black, , font=\scriptsize](y2b7up) to node [sloped] {$\iota_1$} (y2b7);
\draw[->, bend right = 0, red, , font=\scriptsize](y2b8) to node [sloped] {$\rho_{23}$} (y2b7);
\draw[->, bend right = 30, green, , font=\scriptsize](y2b8) to node [sloped] {$\rho_{2}$~~~**} (bextrau);
\draw[->, bend right = 0, black, , font=\scriptsize](y2b8up) to node [sloped] {$\iota_1$} (y2b8);
\draw[->, bend right = 0, black, , font=\scriptsize](y3a2) to node [sloped] {$\rho_{1}$} (va2);
\draw[->, bend right = 0, black, , font=\scriptsize](y3a3) to node [sloped] {$\rho_{1}$} (va3);
\end{tikzpicture}
\caption{$V^0$}\label{fig:g5c}
\end{figure}
\end{landscape}
\clearpage
\restoregeometry

\begin{figure}

\begin{subfigure}{0.99\textwidth}
\centerline{
\begin{tikzpicture}[node distance=2cm]
\path[font = \scriptsize]
(8.15, 1.9) node(y2a3) [outer sep=-1pt]{$y_2$$\otimes$$a$}
(20.47, 4.56) node(y2b8up) [outer sep=-1pt]{$\cdot$}
(13.75, 1.9) node(y2a5) [outer sep=-1pt]{$\cdot$}
(13.59, -0.1) node(aextrau) [outer sep=-1pt]{$u$}
(14.48, 6.63) node(x3topb) [outer sep=-1pt]{$x_3$$\otimes$$b$}
(10.39, 1.9) node(y2a4) [outer sep=-1pt]{$\cdot$}
(20.47, 3.8) node(y2b8) [outer sep=-1pt]{$y_2$$\otimes$$b$}
(18.23, 3.8) node(y2b7) [outer sep=-1pt]{}
(10.39, 3.8) node(y2b4) [outer sep=-1pt]{$\cdot$}
(16.48, 6.63) node(y1b8) [outer sep=-1pt]{$y_1$$\otimes$$b$}
(8.15, 3.8) node(y2b3) [outer sep=-1pt]{$y_2$$\otimes$$b$}
(8.8, 7.13) node(y1a6) [outer sep=-1pt]{$y_1$$\otimes$$a$}
(15.99, 1.9) node(y2a6) [outer sep=-1pt]{$y_2$$\otimes$$a$}
(22.07, 4.8) node(bextray2) [outer sep=-1pt]{$y_2$}
(18.48, 6.63) node(ub8) [outer sep=-1pt]{$u$$\otimes$$b$}
(6.8, 7.13) node(x3topa) [outer sep=-1pt]{$x_3$$\otimes$$a$}
(17.59, 2.9) node(aextray2) [outer sep=-1pt]{$y_2$}
(15.99, 2.66) node(y2a6up) [outer sep=-1pt]{$\cdot$}
(11.99, 1.9) node(a4half) [outer sep=-1pt]{$\cdot$}
(3.67, 1.9) node(y2a1) [outer sep=-1pt]{$y_2$$\otimes$$a$}
(18.07, 1.8) node(bextrau) [outer sep=-1pt]{$u$}
(11.99, 3.8) node(b4half) [outer sep=-1pt]{$\cdot$}
(10.8, 7.13) node(ua6) [outer sep=-1pt]{$u$$\otimes$$a$}
(5.91, 1.9) node(y2a2) [outer sep=-1pt]{$y_2$$\otimes$$a$}
;
\draw[->, bend right = 0, red, , font=\scriptsize](a4half) to node [sloped] {$\rho_{23}$} (y2a4);
\draw[->, bend right = 40, green, , font=\scriptsize](aextray2) to node [sloped] {$\rho_{2}$~~~~~~~*} (ua6);
\draw[->, bend right = 0, red, , font=\scriptsize](b4half) to node [sloped] {$\rho_{23}$} (y2b4);
\draw[->, bend right = 40, green, , font=\scriptsize](bextray2) to node [sloped] {$\rho_{2}$~~~~~~~*} (ub8);
\draw[->, bend right = 0, black, , font=\scriptsize](ua6) to node [sloped] {$\rho_{1}$} (y1a6);
\draw[->, bend right = -3.18, black, , font=\scriptsize](ua6) to node [sloped] {$\rho_{3}$} (y2a6);
\draw[->, bend right = 0, black, , font=\scriptsize](ub8) to node [sloped] {$\rho_{1}$} (y1b8);
\draw[->, bend right = 0, black, , font=\scriptsize](ub8) to node [sloped] {$\rho_{3}$} (y2b8);
\draw[->, bend right = -30, blue, , font=\scriptsize](x3topa) to node [sloped] {$\iota_1$} (y1a6);
\draw[->, bend right = 30, blue, , font=\scriptsize](x3topa) to node [sloped] {$\iota_1$} (y2a1);
\draw[->, bend right = -30, blue, , font=\scriptsize](x3topb) to node [sloped] {$\iota_1$} (y1b8);
\draw[->, bend right = 30, blue, , font=\scriptsize](x3topb) to node [sloped] {$\iota_1$} (y2b3);
\draw[->, bend right = 0, red, , font=\scriptsize](y2a2) to node [sloped] {$\rho_{23}$} (y2a1);
\draw[->, bend right = 0, black, , font=\scriptsize](y2a3) to node [sloped] {$\iota_1$} (y2b3);
\draw[->, bend right = 0, red, , font=\scriptsize](y2a3) to node [sloped] {$\rho_{23}$} (y2a2);
\draw[->, bend right = 0, red, , font=\scriptsize](y2a4) to node [sloped] {$\rho_{23}$} (y2a3);
\draw[->, bend right = 0, red, , font=\scriptsize](y2a6) to node [sloped] {$\rho_{23}$} (y2a5);
\draw[->, bend right = 30, green, , font=\scriptsize](y2a6) to node [sloped] {$\rho_{2}$~~~**} (aextrau);
\draw[->, bend right = 0, black, , font=\scriptsize](y2a6up) to node [sloped] {$\iota_1$} (y2a6);
\draw[->, bend right = 0, red, , font=\scriptsize](y2b4) to node [sloped] {$\rho_{23}$} (y2b3);
\draw[->, bend right = 0, red, , font=\scriptsize](y2b8) to node [sloped] {$\rho_{23}$} (y2b7);
\draw[->, bend right = 30, green, , font=\scriptsize](y2b8) to node [sloped] {$\rho_{2}$~~~**} (bextrau);
\draw[->, bend right = 0, black, , font=\scriptsize](y2b8up) to node [sloped] {$\iota_1$} (y2b8);
\end{tikzpicture}
}
\caption{}\label{fig:g5d}
\end{subfigure}

\begin{subfigure}{0.99\textwidth}
\centerline{
\begin{tikzpicture}[node distance=2cm]
\path[font = \scriptsize]
(11.99, 1.9) node(a4half) [outer sep=-1pt]{$\cdot$}
(3.67, 1.9) node(y2a1) [outer sep=-1pt]{$y_2$$\otimes$$a$}
(10.39, 3.8) node(y2b4) [outer sep=-1pt]{$\cdot$}
(17.59, 2.9) node(aextray2) [outer sep=-1pt]{$y_2$}
(20.47, 3.8) node(y2b8) [outer sep=-1pt]{$y_2$$\otimes$$b$}
(8.15, 1.9) node(y2a3) [outer sep=-1pt]{$y_2$$\otimes$$a$}
(15.99, 2.66) node(y2a6up) [outer sep=-1pt]{$\cdot$}
(18.23, 3.8) node(y2b7) [outer sep=-1pt]{}
(13.75, 1.9) node(y2a5) [outer sep=-1pt]{$\cdot$}
(15.99, 1.9) node(y2a6) [outer sep=-1pt]{$y_2$$\otimes$$a$}
(13.59, -0.1) node(aextrau) [outer sep=-1pt]{$u$}
(5.91, 1.9) node(y2a2) [outer sep=-1pt]{$y_2$$\otimes$$a$}
(22.07, 4.8) node(bextray2) [outer sep=-1pt]{$y_2$}
(18.07, 1.8) node(bextrau) [outer sep=-1pt]{$u$}
(20.47, 4.56) node(y2b8up) [outer sep=-1pt]{$\cdot$}
(8.15, 3.8) node(y2b3) [outer sep=-1pt]{$y_2$$\otimes$$b$}
(10.39, 1.9) node(y2a4) [outer sep=-1pt]{$\cdot$}
(10.8, 7.13) node(ua6) [outer sep=-1pt]{$u$$\otimes$$a$}
(18.48, 6.63) node(ub8) [outer sep=-1pt]{$u$$\otimes$$b$}
(11.99, 3.8) node(b4half) [outer sep=-1pt]{$\cdot$}
;
\draw[->, bend right = 0, red, , font=\scriptsize](a4half) to node [sloped] {$\rho_{23}$} (y2a4);
\draw[->, bend right = 40, green, , font=\scriptsize](aextray2) to node [sloped] {$\rho_{2}$~~~~~~~*} (ua6);
\draw[->, bend right = 0, red, , font=\scriptsize](b4half) to node [sloped] {$\rho_{23}$} (y2b4);
\draw[->, bend right = 40, green, , font=\scriptsize](bextray2) to node [sloped] {$\rho_{2}$~~~~~~~*} (ub8);
\draw[->, bend right = -3.18, black, , font=\scriptsize](ua6) to node [sloped] {$\rho_{3}$} (y2a6);
\draw[->, bend right = 0, black, , font=\scriptsize](ua6) to node [sloped] {$\rho_{1}$} (y2a1);
\draw[->, bend right = 0, black, , font=\scriptsize](ub8) to node [sloped] {$\rho_{3}$} (y2b8);
\draw[->, bend right = -4.59, black, , font=\scriptsize](ub8) to node [sloped] {$\rho_{1}$} (y2b3);
\draw[->, bend right = 0, red, , font=\scriptsize](y2a2) to node [sloped] {$\rho_{23}$} (y2a1);
\draw[->, bend right = 0, black, , font=\scriptsize](y2a3) to node [sloped] {$\iota_1$} (y2b3);
\draw[->, bend right = 0, red, , font=\scriptsize](y2a3) to node [sloped] {$\rho_{23}$} (y2a2);
\draw[->, bend right = 0, red, , font=\scriptsize](y2a4) to node [sloped] {$\rho_{23}$} (y2a3);
\draw[->, bend right = 0, red, , font=\scriptsize](y2a6) to node [sloped] {$\rho_{23}$} (y2a5);
\draw[->, bend right = 30, green, , font=\scriptsize](y2a6) to node [sloped] {$\rho_{2}$~~~**} (aextrau);
\draw[->, bend right = 0, black, , font=\scriptsize](y2a6up) to node [sloped] {$\iota_1$} (y2a6);
\draw[->, bend right = 0, red, , font=\scriptsize](y2b4) to node [sloped] {$\rho_{23}$} (y2b3);
\draw[->, bend right = 0, red, , font=\scriptsize](y2b8) to node [sloped] {$\rho_{23}$} (y2b7);
\draw[->, bend right = 30, green, , font=\scriptsize](y2b8) to node [sloped] {$\rho_{2}$~~~**} (bextrau);
\draw[->, bend right = 0, black, , font=\scriptsize](y2b8up) to node [sloped] {$\iota_1$} (y2b8);
\end{tikzpicture}
}
\caption{}\label{fig:g5e}
\end{subfigure}

\caption{$V^0$}\label{fig:g5}
\end{figure}

\FloatBarrier
Now we tackle the most complicated case: a $a\to b$ arrow in $\partial_w$ with length 1, see \cref{fig:g6a}. In this case, $a$ at the left-most position has both the incoming $\rh{3}$ and the outgoing $\rh{2}$. $\rh{123}$ also appears as the length of the arrow is 1. We first assume $b$ has no outgoing length 1 $\partial_z$ arrows. So no downward going black arrows from the second $b$ from the right.

Now we take the tensor, with \cref{fig:g2d} in \cref{sec:rightend} in mind. All potential green arrows marked with $*$ appear again, except $b$'s $y_2$ doesn't have one going out as $b$ has no length one $\partial_z$ arrow. The pink $\rh{123}$ doesn't lead to any new arrows, as argued in \cref{sec:rightend}, see \cref{fig:g6b}. Then, cancel the middle two pairs of reds, yielding only the two desired $\rh{23}$'s. Then, cancel the two $x_1\to v$'s ``for free''. Then, cancel the two $x_2\to y_3$'s ``for free'', see \cref{fig:g6c}. Next, cancel the pair of reds on the left side, again put $u$'s on top. Then cancel the two $x_3\to y_1$'s. Now see \cref{fig:g6d}. It matches $KtD(\fl{C})$ in a similar fashion as \cref{fig:g5}.

\begin{figure}[h]
\centerline{
\begin{tikzpicture}[node distance=2cm]
\path[font = \scriptsize]
(1.8, 1.0) node(a1) [outer sep=-1pt]{$a$}
(12.6, 3.0) node(b5) [outer sep=-1pt]{$b$}
(14.4, 3.0) node(b6) [outer sep=-1pt]{$b$}
(3.6, 1.0) node(a2) [outer sep=-1pt]{$a$}
(5.4, 6.0) node(topa) [outer sep=-1pt]{$a$}
(9.0, 6.0) node(topb) [outer sep=-1pt]{$b$}
(5.4, 3.0) node(b3) [outer sep=-1pt]{}
(5.4, 1.0) node(a3) [outer sep=-1pt]{}
(12.6, 1.0) node(a5) [outer sep=-1pt]{$a$}
(12.6, 3.8) node(b5up) [outer sep=-1pt]{}
(10.8, 3.0) node(b4) [outer sep=-1pt]{}
(10.8, 1.0) node(a4) [outer sep=-1pt]{}
(12.6, 1.8) node(a5up) [outer sep=-1pt]{}
(14.4, 3.8) node(b6up) [outer sep=-1pt]{}
(3.6, 3.0) node(b2) [outer sep=-1pt]{$b$}
;
\draw[->, bend right = 0, red, , font=\scriptsize](a1) to node [sloped] {$\rho_{23}$} (a2);
\draw[->, bend right = 10, green, , font=\scriptsize](a1) to node [sloped] {$\rho_{2}$} (topb);
\draw[->, bend right = 0, red, , font=\scriptsize](a2) to node [sloped] {$\rho_{23}$} (a3);
\draw[->, bend right = 0, black, , font=\scriptsize](a2) to node [sloped] {$\iota_1$} (b2);
\draw[->, bend right = 0, red, , font=\scriptsize](a4) to node [sloped] {$\rho_{23}$} (a5);
\draw[->, bend right = 0, black, , font=\scriptsize](a5up) to node [sloped] {$\iota_1$} (a5);
\draw[->, bend right = 0, red, , font=\scriptsize](b2) to node [sloped] {$\rho_{23}$} (b3);
\draw[->, bend right = 0, red, , font=\scriptsize](b4) to node [sloped] {$\rho_{23}$} (b5);
\draw[->, bend right = 0, red, , font=\scriptsize](b5) to node [sloped] {$\rho_{23}$} (b6);
\draw[->, bend right = 0, black, , font=\scriptsize](b5up) to node [sloped] {$\iota_1$} (b5);
\draw[->, bend right = 0, black, , font=\scriptsize](b6up) to node [sloped] {$\iota_1$} (b6);
\draw[->, bend right = 10, blue, , font=\scriptsize](topa) to node [sloped] {$\rho_{3}$} (a1);
\draw[->, bend right = -3.38, blue, , font=\scriptsize](topa) to node [sloped] {$\rho_{1}$} (a5);
\draw[->, bend right = -7.4, pink, , font=\scriptsize](topa) to node [sloped] {$\rho_{123}$} (b6);
\draw[->, bend right = 10, blue, , font=\scriptsize](topb) to node [sloped] {$\rho_{3}$} (b2);
\draw[->, bend right = -6.74, blue, , font=\scriptsize](topb) to node [sloped] {$\rho_{1}$} (b6);
\end{tikzpicture}
}
\caption{$V^0$}\label{fig:g6a}
\end{figure}

\clearpage
\newgeometry{margin=2cm}
\begin{landscape}
\begin{figure}
\centering
\begin{tikzpicture}[node distance=2cm]
\path[font = \scriptsize]
(15.23, 7.54) node(y2b4) [outer sep=-1pt]{$\cdot$}
(21.53, 7.54) node(y2b6) [outer sep=-1pt]{$y_2$$\otimes$$b$}
(11.04, 8.39) node(y3b3) [outer sep=-1pt]{$\cdot$}
(20.38, 4.64) node(aextray2) [outer sep=-1pt]{$y_2$}
(6.83, 2.8) node(ua2) [outer sep=-1pt]{$u$$\otimes$$a$}
(10.51, 15.93) node(x1topa) [outer sep=-1pt]{$x_1$$\otimes$$a$}
(3.15, 1.95) node(y1a1) [outer sep=-1pt]{$y_1$$\otimes$$a$}
(12.51, 7.54) node(b3half) [outer sep=-1pt]{$\cdot$}
(15.38, -0.36) node(aextrau) [outer sep=-1pt]{$u$}
(10.51, 7.54) node(y2b3) [outer sep=-1pt]{$\cdot$}
(8.42, 9.24) node(vb2) [outer sep=-1pt]{$v$$\otimes$$b$}
(12.51, 3.64) node(a3half) [outer sep=-1pt]{$\cdot$}
(17.85, 2.8) node(ua5) [outer sep=-1pt]{$u$$\otimes$$a$}
(15.75, 14.24) node(x3topb) [outer sep=-1pt]{$x_3$$\otimes$$b$}
(6.3, 5.85) node(y1b2) [outer sep=-1pt]{$y_1$$\otimes$$b$}
(18.38, 4.81) node(y2a5up) [outer sep=-1pt]{$\cdot$}
(10.51, 3.64) node(y2a3) [outer sep=-1pt]{$\cdot$}
(18.38, 7.54) node(y2b5) [outer sep=-1pt]{$y_2$$\otimes$$b$}
(23.53, 8.54) node(bextray2) [outer sep=-1pt]{$y_2$}
(11.57, 9.24) node(vb3) [outer sep=-1pt]{$\cdot$}
(21.0, 6.7) node(ub6) [outer sep=-1pt]{$u$$\otimes$$b$}
(4.21, 3.64) node(y2a1) [outer sep=-1pt]{$y_2$$\otimes$$a$}
(11.04, 4.49) node(y3a3) [outer sep=-1pt]{$\cdot$}
(11.57, 5.34) node(va3) [outer sep=-1pt]{$\cdot$}
(7.36, 3.64) node(y2a2) [outer sep=-1pt]{$y_2$$\otimes$$a$}
(7.89, 8.39) node(y3b2) [outer sep=-1pt]{$y_3$$\otimes$$b$}
(20.48, 5.85) node(y1b6) [outer sep=-1pt]{$y_1$$\otimes$$b$}
(18.38, 8.71) node(y2b5up) [outer sep=-1pt]{$\cdot$}
(18.38, 3.64) node(y2a5) [outer sep=-1pt]{$y_2$$\otimes$$a$}
(7.89, 4.49) node(y3a2) [outer sep=-1pt]{$y_3$$\otimes$$a$}
(9.98, 15.09) node(x2topa) [outer sep=-1pt]{$x_2$$\otimes$$a$}
(21.53, 8.71) node(y2b6up) [outer sep=-1pt]{$\cdot$}
(7.36, 7.54) node(y2b2) [outer sep=-1pt]{$y_2$$\otimes$$b$}
(6.83, 6.7) node(ub2) [outer sep=-1pt]{$u$$\otimes$$b$}
(15.23, 3.64) node(y2a4) [outer sep=-1pt]{$\cdot$}
(8.42, 5.34) node(va2) [outer sep=-1pt]{$v$$\otimes$$a$}
(4.74, 4.49) node(y3a1) [outer sep=-1pt]{$y_3$$\otimes$$a$}
(5.27, 5.34) node(va1) [outer sep=-1pt]{$v$$\otimes$$a$}
(9.45, 14.24) node(x3topa) [outer sep=-1pt]{$x_3$$\otimes$$a$}
(6.3, 1.95) node(y1a2) [outer sep=-1pt]{$y_1$$\otimes$$a$}
(16.28, 15.09) node(x2topb) [outer sep=-1pt]{$x_2$$\otimes$$b$}
(16.81, 15.93) node(x1topb) [outer sep=-1pt]{$x_1$$\otimes$$b$}
(3.68, 2.8) node(ua1) [outer sep=-1pt]{$u$$\otimes$$a$}
(17.33, 1.95) node(y1a5) [outer sep=-1pt]{$y_1$$\otimes$$a$}
;
\draw[->, bend right = 0, red, , font=\scriptsize](a3half) to node [sloped] {$\rho_{23}$} (y2a3);
\draw[->, bend right = 40, green, , font=\scriptsize](aextray2) to node [sloped] {$\rho_{2}$~~~*} (ua5);
\draw[->, bend right = 0, red, , font=\scriptsize](b3half) to node [sloped] {$\rho_{23}$} (y2b3);
\draw[->, bend right = 40, green, , font=\scriptsize](bextray2) to node [sloped] {$\rho_{2}$~~~*} (ub6);
\draw[->, bend right = 0, black, , font=\scriptsize](ua1) to node [sloped] {$\rho_{1}$} (y1a1);
\draw[->, bend right = 0, black, , font=\scriptsize](ua1) to node [sloped] {$\rho_{3}$} (y2a1);
\draw[->, bend right = 0, red, , font=\scriptsize](ua1) to node [sloped] {$\iota_0$} (y3a2);
\draw[->, bend right = -40, green, , font=\scriptsize](ua1) to node [sloped] {$\iota_0$} (x2topb);
\draw[->, bend right = 0, black, , font=\scriptsize](ua2) to node [sloped] {$\rho_{1}$} (y1a2);
\draw[->, bend right = 0, black, , font=\scriptsize](ua2) to node [sloped] {$\rho_{3}$} (y2a2);
\draw[->, bend right = 0, red, , font=\scriptsize](ua2) to node [sloped] {$\iota_0$} (y3a3);
\draw[->, bend right = 0, black, , font=\scriptsize](ua2) to node [sloped] {$\iota_0$} (ub2);
\draw[->, bend right = 0, black, , font=\scriptsize](ua5) to node [sloped] {$\rho_{1}$} (y1a5);
\draw[->, bend right = 0, black, , font=\scriptsize](ua5) to node [sloped] {$\rho_{3}$} (y2a5);
\draw[->, bend right = 0, black, , font=\scriptsize](ub2) to node [sloped] {$\rho_{1}$} (y1b2);
\draw[->, bend right = 0, black, , font=\scriptsize](ub2) to node [sloped] {$\rho_{3}$} (y2b2);
\draw[->, bend right = 0, red, , font=\scriptsize](ub2) to node [sloped] {$\iota_0$} (y3b3);
\draw[->, bend right = 0, black, , font=\scriptsize](ub6) to node [sloped] {$\rho_{1}$} (y1b6);
\draw[->, bend right = 0, black, , font=\scriptsize](ub6) to node [sloped] {$\rho_{3}$} (y2b6);
\draw[->, bend right = 0, black, , font=\scriptsize](va2) to node [sloped] {$\iota_1$} (vb2);
\draw[->, bend right = 0, black, , font=\scriptsize](va3) to node [sloped] {$\iota_1$} (vb3);
\draw[->, bend right = 40, blue, , font=\scriptsize](x1topa) to node [sloped] {$\iota_1$} (va1);
\draw[->, bend right = 40, blue, , font=\scriptsize](x1topb) to node [sloped] {$\iota_1$} (vb2);
\draw[->, bend right = 0, black, , font=\scriptsize](x2topa) to node [sloped] {$\rho_{1}$} (x1topa);
\draw[->, bend right = 40, blue, , font=\scriptsize](x2topa) to node [sloped] {$\iota_0$} (y3a1);
\draw[->, bend right = 0, black, , font=\scriptsize](x2topb) to node [sloped] {$\rho_{1}$} (x1topb);
\draw[->, bend right = 40, blue, , font=\scriptsize](x2topb) to node [sloped] {$\iota_0$} (y3b2);
\draw[->, bend right = -40, blue, , font=\scriptsize](x3topa) to node [sloped] {$\iota_1$} (y1a5);
\draw[->, bend right = 0, black, , font=\scriptsize](x3topa) to node [sloped] {$\rho_{2}$} (x2topa);
\draw[->, bend right = 40, blue, , font=\scriptsize](x3topa) to node [sloped] {$\iota_1$} (y2a1);
\draw[->, bend right = -40, blue, , font=\scriptsize](x3topb) to node [sloped] {$\iota_1$} (y1b6);
\draw[->, bend right = 0, black, , font=\scriptsize](x3topb) to node [sloped] {$\rho_{2}$} (x2topb);
\draw[->, bend right = 40, blue, , font=\scriptsize](x3topb) to node [sloped] {$\iota_1$} (y2b2);
\draw[->, bend right = -3.56, red, , font=\scriptsize](y1a1) to node [sloped] {$\iota_1$} (va2);
\draw[->, bend right = -40, green, , font=\scriptsize](y1a1) to node [sloped] {$\iota_1$} (x1topb);
\draw[->, bend right = -3.56, red, , font=\scriptsize](y1a2) to node [sloped] {$\iota_1$} (va3);
\draw[->, bend right = 0, black, , font=\scriptsize](y1a2) to node [sloped] {$\iota_1$} (y1b2);
\draw[->, bend right = -3.56, red, , font=\scriptsize](y1b2) to node [sloped] {$\iota_1$} (vb3);
\draw[->, bend right = 0, black, , font=\scriptsize](y2a1) to node [sloped] {$\rho_{2}$} (y3a1);
\draw[->, bend right = 0, black, , font=\scriptsize](y2a2) to node [sloped] {$\rho_{2}$} (y3a2);
\draw[->, bend right = 0, black, , font=\scriptsize](y2a2) to node [sloped] {$\iota_1$} (y2b2);
\draw[->, bend right = 0, black, , font=\scriptsize](y2a3) to node [sloped] {$\rho_{2}$} (y3a3);
\draw[->, bend right = 0, black, , font=\scriptsize](y2a3) to node [sloped] {$\iota_1$} (y2b3);
\draw[->, bend right = 0, red, , font=\scriptsize](y2a5) to node [sloped] {$\rho_{23}$} (y2a4);
\draw[->, bend right = 30, green, , font=\scriptsize](y2a5) to node [sloped] {$\rho_{2}$~~~*} (aextrau);
\draw[->, bend right = 0, black, , font=\scriptsize](y2a5up) to node [sloped] {$\iota_1$} (y2a5);
\draw[->, bend right = 0, black, , font=\scriptsize](y2b2) to node [sloped] {$\rho_{2}$} (y3b2);
\draw[->, bend right = 0, black, , font=\scriptsize](y2b3) to node [sloped] {$\rho_{2}$} (y3b3);
\draw[->, bend right = 0, red, , font=\scriptsize](y2b5) to node [sloped] {$\rho_{23}$} (y2b4);
\draw[->, bend right = 0, black, , font=\scriptsize](y2b5up) to node [sloped] {$\iota_1$} (y2b5);
\draw[->, bend right = 0, red, , font=\scriptsize](y2b6) to node [sloped] {$\rho_{23}$} (y2b5);
\draw[->, bend right = 0, black, , font=\scriptsize](y2b6up) to node [sloped] {$\iota_1$} (y2b6);
\draw[->, bend right = 0, black, , font=\scriptsize](y3a1) to node [sloped] {$\rho_{1}$} (va1);
\draw[->, bend right = 0, black, , font=\scriptsize](y3a2) to node [sloped] {$\rho_{1}$} (va2);
\draw[->, bend right = 0, black, , font=\scriptsize](y3a2) to node [sloped] {$\iota_0$} (y3b2);
\draw[->, bend right = 0, black, , font=\scriptsize](y3a3) to node [sloped] {$\rho_{1}$} (va3);
\draw[->, bend right = 0, black, , font=\scriptsize](y3a3) to node [sloped] {$\iota_0$} (y3b3);
\draw[->, bend right = 0, black, , font=\scriptsize](y3b2) to node [sloped] {$\rho_{1}$} (vb2);
\draw[->, bend right = 0, black, , font=\scriptsize](y3b3) to node [sloped] {$\rho_{1}$} (vb3);
\end{tikzpicture}
\caption{$V^0$}\label{fig:g6b}
\end{figure}
\end{landscape}
\clearpage
\restoregeometry

\clearpage
\newgeometry{margin=2cm}
\begin{landscape}
\begin{figure}

\centering
\begin{tikzpicture}[node distance=2cm]
\path[font = \scriptsize]
(9.45, 14.24) node(x3topa) [outer sep=-1pt]{$x_3$$\otimes$$a$}
(17.85, 2.8) node(ua5) [outer sep=-1pt]{$u$$\otimes$$a$}
(21.53, 8.71) node(y2b6up) [outer sep=-1pt]{$\cdot$}
(7.89, 4.49) node(y3a2) [outer sep=-1pt]{$y_3$$\otimes$$a$}
(12.51, 7.54) node(b3half) [outer sep=-1pt]{$\cdot$}
(23.53, 8.54) node(bextray2) [outer sep=-1pt]{$y_2$}
(7.36, 7.54) node(y2b2) [outer sep=-1pt]{$y_2$$\otimes$$b$}
(21.0, 6.7) node(ub6) [outer sep=-1pt]{$u$$\otimes$$b$}
(15.23, 3.64) node(y2a4) [outer sep=-1pt]{$\cdot$}
(15.38, -0.36) node(aextrau) [outer sep=-1pt]{$u$}
(3.15, 1.95) node(y1a1) [outer sep=-1pt]{$y_1$$\otimes$$a$}
(12.51, 3.64) node(a3half) [outer sep=-1pt]{$\cdot$}
(10.51, 3.64) node(y2a3) [outer sep=-1pt]{$\cdot$}
(3.68, 2.8) node(ua1) [outer sep=-1pt]{$u$$\otimes$$a$}
(15.23, 7.54) node(y2b4) [outer sep=-1pt]{$\cdot$}
(18.38, 7.54) node(y2b5) [outer sep=-1pt]{$y_2$$\otimes$$b$}
(20.48, 5.85) node(y1b6) [outer sep=-1pt]{$y_1$$\otimes$$b$}
(21.53, 7.54) node(y2b6) [outer sep=-1pt]{$y_2$$\otimes$$b$}
(15.75, 14.24) node(x3topb) [outer sep=-1pt]{$x_3$$\otimes$$b$}
(20.38, 4.64) node(aextray2) [outer sep=-1pt]{$y_2$}
(7.36, 3.64) node(y2a2) [outer sep=-1pt]{$y_2$$\otimes$$a$}
(4.21, 3.64) node(y2a1) [outer sep=-1pt]{$y_2$$\otimes$$a$}
(10.51, 7.54) node(y2b3) [outer sep=-1pt]{$\cdot$}
(18.38, 4.81) node(y2a5up) [outer sep=-1pt]{$\cdot$}
(17.33, 1.95) node(y1a5) [outer sep=-1pt]{$y_1$$\otimes$$a$}
(18.38, 8.71) node(y2b5up) [outer sep=-1pt]{$\cdot$}
(18.38, 3.64) node(y2a5) [outer sep=-1pt]{$y_2$$\otimes$$a$}
(8.42, 5.34) node(va2) [outer sep=-1pt]{$v$$\otimes$$a$}
;
\draw[->, bend right = 0, red, , font=\scriptsize](a3half) to node [sloped] {$\rho_{23}$} (y2a3);
\draw[->, bend right = 40, green, , font=\scriptsize](aextray2) to node [sloped] {$\rho_{2}$~~~*} (ua5);
\draw[->, bend right = 0, red, , font=\scriptsize](b3half) to node [sloped] {$\rho_{23}$} (y2b3);
\draw[->, bend right = 40, green, , font=\scriptsize](bextray2) to node [sloped] {$\rho_{2}$~~~*} (ub6);
\draw[->, bend right = 0, black, , font=\scriptsize](ua1) to node [sloped] {$\rho_{1}$} (y1a1);
\draw[->, bend right = 0, black, , font=\scriptsize](ua1) to node [sloped] {$\rho_{3}$} (y2a1);
\draw[->, bend right = 0, red, , font=\scriptsize](ua1) to node [sloped] {$\iota_0$} (y3a2);
\draw[->, bend right = 0, black, , font=\scriptsize](ua5) to node [sloped] {$\rho_{1}$} (y1a5);
\draw[->, bend right = 0, black, , font=\scriptsize](ua5) to node [sloped] {$\rho_{3}$} (y2a5);
\draw[->, bend right = 0, black, , font=\scriptsize](ub6) to node [sloped] {$\rho_{1}$} (y1b6);
\draw[->, bend right = 0, black, , font=\scriptsize](ub6) to node [sloped] {$\rho_{3}$} (y2b6);
\draw[->, bend right = -40, blue, , font=\scriptsize](x3topa) to node [sloped] {$\iota_1$} (y1a5);
\draw[->, bend right = 40, blue, , font=\scriptsize](x3topa) to node [sloped] {$\iota_1$} (y2a1);
\draw[->, bend right = -40, blue, , font=\scriptsize](x3topb) to node [sloped] {$\iota_1$} (y1b6);
\draw[->, bend right = 40, blue, , font=\scriptsize](x3topb) to node [sloped] {$\iota_1$} (y2b2);
\draw[->, bend right = -3.56, red, , font=\scriptsize](y1a1) to node [sloped] {$\iota_1$} (va2);
\draw[->, bend right = 0, black, , font=\scriptsize](y2a2) to node [sloped] {$\rho_{2}$} (y3a2);
\draw[->, bend right = 0, black, , font=\scriptsize](y2a2) to node [sloped] {$\iota_1$} (y2b2);
\draw[->, bend right = 0, black, , font=\scriptsize](y2a3) to node [sloped] {$\iota_1$} (y2b3);
\draw[->, bend right = 0, red, , font=\scriptsize](y2a3) to node [sloped] {$\rho_{23}$} (y2a2);
\draw[->, bend right = 0, red, , font=\scriptsize](y2a5) to node [sloped] {$\rho_{23}$} (y2a4);
\draw[->, bend right = 30, green, , font=\scriptsize](y2a5) to node [sloped] {$\rho_{2}$~~~*} (aextrau);
\draw[->, bend right = 0, black, , font=\scriptsize](y2a5up) to node [sloped] {$\iota_1$} (y2a5);
\draw[->, bend right = 0, red, , font=\scriptsize](y2b3) to node [sloped] {$\rho_{23}$} (y2b2);
\draw[->, bend right = 0, red, , font=\scriptsize](y2b5) to node [sloped] {$\rho_{23}$} (y2b4);
\draw[->, bend right = 0, black, , font=\scriptsize](y2b5up) to node [sloped] {$\iota_1$} (y2b5);
\draw[->, bend right = 0, red, , font=\scriptsize](y2b6) to node [sloped] {$\rho_{23}$} (y2b5);
\draw[->, bend right = 0, black, , font=\scriptsize](y2b6up) to node [sloped] {$\iota_1$} (y2b6);
\draw[->, bend right = 0, black, , font=\scriptsize](y3a2) to node [sloped] {$\rho_{1}$} (va2);
\end{tikzpicture}
\caption{$V^0$}\label{fig:g6c}

\end{figure}
\end{landscape}
\clearpage
\restoregeometry

\begin{figure}[ht]
\centerline{
\begin{tikzpicture}[node distance=2cm]
\path[font = \scriptsize]
(5.89, 1.82) node(y2a2) [outer sep=-1pt]{$y_2$$\otimes$$a$}
(18.83, 4.27) node(bextray2) [outer sep=-1pt]{$y_2$}
(10.01, 1.82) node(a3half) [outer sep=-1pt]{$\cdot$}
(15.4, 6.62) node(ub6) [outer sep=-1pt]{$u$$\otimes$$b$}
(14.71, 3.77) node(y2b5) [outer sep=-1pt]{$y_2$$\otimes$$b$}
(16.31, 2.32) node(aextray2) [outer sep=-1pt]{$y_2$}
(10.01, 3.77) node(b3half) [outer sep=-1pt]{$\cdot$}
(7.96, 7.12) node(ua5) [outer sep=-1pt]{$u$$\otimes$$a$}
(12.19, 3.77) node(y2b4) [outer sep=-1pt]{$\cdot$}
(8.41, 1.82) node(y2a3) [outer sep=-1pt]{$\cdot$}
(17.23, 4.36) node(y2b6up) [outer sep=-1pt]{$\cdot$}
(14.71, 2.41) node(y2a5up) [outer sep=-1pt]{$\cdot$}
(8.41, 3.77) node(y2b3) [outer sep=-1pt]{$\cdot$}
(5.89, 3.77) node(y2b2) [outer sep=-1pt]{$y_2$$\otimes$$b$}
(17.23, 3.77) node(y2b6) [outer sep=-1pt]{$y_2$$\otimes$$b$}
(14.71, 1.82) node(y2a5) [outer sep=-1pt]{$y_2$$\otimes$$a$}
(3.37, 1.82) node(y2a1) [outer sep=-1pt]{$y_2$$\otimes$$a$}
(12.31, -0.18) node(aextrau) [outer sep=-1pt]{$u$}
(14.71, 4.36) node(y2b5up) [outer sep=-1pt]{$\cdot$}
(12.19, 1.82) node(y2a4) [outer sep=-1pt]{$\cdot$}
;
\draw[->, bend right = 0, red, , font=\scriptsize](a3half) to node [sloped] {$\rho_{23}$} (y2a3);
\draw[->, bend right = 40, green, , font=\scriptsize](aextray2) to node [sloped] {$\rho_{2}$~~~*} (ua5);
\draw[->, bend right = 0, red, , font=\scriptsize](b3half) to node [sloped] {$\rho_{23}$} (y2b3);
\draw[->, bend right = 40, green, , font=\scriptsize](bextray2) to node [sloped] {$\rho_{2}$~~~*} (ub6);
\draw[->, bend right = -8.79, black, , font=\scriptsize](ua5) to node [sloped] {$\rho_{3}$} (y2a5);
\draw[->, bend right = -1.85, black, , font=\scriptsize](ua5) to node [sloped] {$\rho_{1}$} (y2a1);
\draw[->, bend right = -1.53, black, , font=\scriptsize](ub6) to node [sloped] {$\rho_{3}$} (y2b6);
\draw[->, bend right = -3.18, black, , font=\scriptsize](ub6) to node [sloped] {$\rho_{1}$} (y2b2);
\draw[->, bend right = 0, black, , font=\scriptsize](y2a2) to node [sloped] {$\iota_1$} (y2b2);
\draw[->, bend right = 0, red, , font=\scriptsize](y2a2) to node [sloped] {$\rho_{23}$} (y2a1);
\draw[->, bend right = 0, black, , font=\scriptsize](y2a3) to node [sloped] {$\iota_1$} (y2b3);
\draw[->, bend right = 0, red, , font=\scriptsize](y2a3) to node [sloped] {$\rho_{23}$} (y2a2);
\draw[->, bend right = 0, red, , font=\scriptsize](y2a5) to node [sloped] {$\rho_{23}$} (y2a4);
\draw[->, bend right = 30, green, , font=\scriptsize](y2a5) to node [sloped] {$\rho_{2}$~~~*} (aextrau);
\draw[->, bend right = 0, black, , font=\scriptsize](y2a5up) to node [sloped] {$\iota_1$} (y2a5);
\draw[->, bend right = 0, red, , font=\scriptsize](y2b3) to node [sloped] {$\rho_{23}$} (y2b2);
\draw[->, bend right = 0, red, , font=\scriptsize](y2b5) to node [sloped] {$\rho_{23}$} (y2b4);
\draw[->, bend right = 0, black, , font=\scriptsize](y2b5up) to node [sloped] {$\iota_1$} (y2b5);
\draw[->, bend right = 0, red, , font=\scriptsize](y2b6) to node [sloped] {$\rho_{23}$} (y2b5);
\draw[->, bend right = 0, black, , font=\scriptsize](y2b6up) to node [sloped] {$\iota_1$} (y2b6);
\end{tikzpicture}
}
\caption{$V^0$}\label{fig:g6d}
\end{figure}

Now we consider the possibility of an outgoing length 1 arrows $b\to c$ in $\partial_z$. ($c$ can not be the same as $a$, because that $d^2=0$ and horizontal simplifiedness would force $C$ to be infinitely generated.) 
To ensure $d^2=0$, there must be a generator $d$ in one Alexander filtration level below $a$ to form this 1 by 1 box in $C$, as in \cref{fig:g7a}. We do not exclude the possibility of other incoming or outgoing vertical arrows at the generators. \cref{fig:g7b} shows the corresponding part in $KtD(C)$. \cref{fig:g7c} shows the tensor product before any cancellation are applied. The open-ended purple arrows are potential arrows. Now we apply the cancellation described in \cref{sec:rightend} on the right side. Then make some ``free'' cancelations: cancel all blue $x_1\to v$'s and then all $x_2\to y_3$'s, which generate no new arrows. Next, we cancel all the pairs of reds on the left-hand side. All those cancelations have been previously described in details, so we just show the final result in \cref{fig:g7d}. 

A few things to point out here: Arrows marked with $*$ (eight green and one pink) are those dangling arrows in \cref{fig:g2d}. They exist because of the potential $\partial_z$ differentials connecting to $a,b,c,d$ (purple ones in \cref{fig:g7a}). The two green arrows marked by $**$ are the same kind of dangling arrow whose two ends we finally see in one diagram, as they come from the arrows $b\to c$ and $a\to d$ in $\partial_z$. The pink arrow marked with $**$ is also a manifestation of of the pink dangling arrow in \cref{fig:g2d}.

Next, we move $y_1$'s and $u$'s to the top, and
cancel $x_3 \to y_1$ at $b$ in $V^0$, then that arrow at $a$ in $V^0$, then that arrow at $c$ in $V^0$, then that arrow at $d$ in $V^0$, see \cref{fig:g7f}. Now everything matches to $KtD(\fl{C})$ as described above, except we are missing two $\rh{123}$'s. For convenience, we label the $u$'s in $V^0$ with the generator it corresponds: $u^a, u^b, u^c, u^d$ and label the $y_2$'s on the left hand side with the row it is in and the its distance to the leftmost position: $y_2^{d,0}, y_2^{c,1}$, etc. $y_2$'s on the right hand side are similarly labeled as $y_2^{d,0,r}, y_2^{c,1,r}$, etc. Per \cref{localchange}, there must be two $\rh{123}$'s from $u^a$ to $y_2^{d,0}$ at the left-most position of $d$'s row on the left, and from $u^b$ to $y_2^{c,0}$. We also have an extra arrow from $u^a$ to $y_2^{c,0}$ (partially pink in the diagram). 

To fix this, we do a change of basis: $<u^a, u^b, y_2^{d,1}, y_2^{c,1}>\to <u^a+\rh{1}y_2^{d,1}, u^b+\rh{1}y_2^{c,1}, y_2^{d,1}, y_2^{c,1}>$, where $d$ and $c$ are circled and boxed respectively on the diagram. We examine the effect of this change of basis. $u^a$: $\rh{2}u^a =\rh{2}(u^a+\rh{1}y_2^{d,1})$, so its incoming dangling $\rh{2}$ arrow is unaffected. $\partial u^a=\rh{1}y_2^{a,0}+\rh{1}y_2^{c,0}+\rh{3}y_2^{a,0,r}$ and $\partial (u^a+\rh{1}y_2^{d,1})=\rh{1}y_2^{a,0}+\rh{1}y_2^{c,0}+\rh{3}y_2^{a,0,r}+\rh{1}(\rh{23}y_2^{d,0}+y_2^{c,0})=\rh{1}y_2^{a,0}+\rh{3}y_2^{a,0,r}+\rh{123}y_2^{d,0}$, which is exactly what we need. For $u^b$, its incoming dangling $\rh{2}$ arrow is also unaffected. $\partial u^b=\rh{1}y_2^{b,0}+\rh{3}y_2^{b,0,r}$ and $\partial (u^b+\rh{1}y_2^{c,1})= \rh{1}y_2^{b,0}+\rh{3}y_2^{b,0,r}+\rh{1}\rh{23}y_2^{c,0}=\rh{1}y_2^{b,0}+\rh{3}y_2^{b,0,r}+\rh{123}y_2^{c,0}$, which is also what we need. See \cref{fig:g7g} for the final product. Note the dangling partially pink arrow will function as the extra arrow $**$ in \cref{fig:g7f} for some other 1 by 1 box starting at $d$, should there exist one.

\begin{figure}[ht]
\centerline{

\caption{$V^0$}\label{fig:g7d}
\end{figure}
\end{landscape}
\clearpage
\restoregeometry

\clearpage
\newgeometry{margin=2cm}
\begin{landscape}
\begin{figure}
\centering
\begin{tikzpicture}[node distance=2cm]
\path[font = \scriptsize]
(19.0, 7.82) node(extra_y1) [outer sep=-2pt]{$``y_1"$}
(8.08, 9.02) node(y2a2) [outer sep=-2pt]{$y_2^{0}$$\otimes$$a$}
(26.8, 12.32) node(y2b8) [outer sep=-2pt]{$y_2^{0,r}\otimes b$}
(11.68, 6.82) node(y2c3) [outer sep=-2pt]{$y_2^{1}$$\otimes$$c$}
(11.68, 3.52) node(y2d3) [outer sep=-2pt]{$y_2^{2}$$\otimes$$d$}
(19.6, 3.52) node(y2d6) [outer sep=-2pt]{$y_2$$\otimes$$d$}
(10.8, 17.38) node(ud6) [outer sep=-2pt]{$u$$\otimes$$d$}
(22.6, 15.88) node(u_extra_b) [outer sep=-2pt]{$\cdot$}
(23.2, 7.48) node(y2a7up) [outer sep=-2pt]{$\cdot$}
(19.4, 2.22) node(y2_extra_d) [outer sep=-2pt]{$\cdot$}
(19.6, 12.98) node(y2b6up) [outer sep=-2pt]{$\cdot$}
(19.6, 8.36) node(y2c6down) [outer sep=-2pt]{$\cdot$}
(19.6, 4.18) node(y2d6up) [outer sep=-2pt]{$\cdot$}
(19.6, 6.16) node(y2a6down) [outer sep=-2pt]{$\cdot$}
(14.2, 12.32) node(y2b4) [outer sep=-2pt]{$\cdot$}
(23.2, 9.68) node(y2c7up) [outer sep=-2pt]{$\cdot$}
(19.6, 9.02) node(y2c6) [outer sep=-2pt]{$y_2$$\otimes$$c$}
(23.2, 9.02) node(y2c7) [outer sep=-2pt]{$y_2$$\otimes$$c$}
(11.68, 9.02) node(y2a3) [outer sep=-2pt]{$y_2^{1}$$\otimes$$a$}
(19.6, 9.68) node(y2c6up) [outer sep=-2pt]{$\cdot$}
(4.48, 3.52) node(y2d1) [outer sep=-2pt]{$y_2^{0}$$\otimes$$d$}
(17.2, 16.98) node(u_extra_a) [outer sep=-2pt]{$\cdot$}
(8.08, 6.82) node(y2c2) [outer sep=-2pt]{$y_2^{0}$$\otimes$$c$}
(23.2, 12.32) node(y2b7) [outer sep=-2pt]{$y_2$$\otimes$$b$}
(8.08, 3.52) node(y2d2) [outer sep=-2pt]{$y_2^{1}$$\otimes$$d$}
(21.6, 17.38) node(ub8) [outer sep=-2pt]{$u$$\otimes$$b$}
(16.2, 16.28) node(uc7) [outer sep=-2pt]{$u$$\otimes$$c$}
(19.6, 11.66) node(y2b6down) [outer sep=-2pt]{$\cdot$}
(23.2, 12.98) node(y2b7up) [outer sep=-2pt]{$\cdot$}
(17.08, 3.52) node(y2d5) [outer sep=-2pt]{$\cdot$}
(23.0, 5.52) node(y2_extra_a) [outer sep=-2pt]{$\cdot$}
(17.08, 6.82) node(y2a5) [outer sep=-2pt]{$\cdot$}
(16.2, 18.48) node(ua7) [outer sep=-2pt]{$u$$\otimes$$a$}
(11.8, 15.88) node(u_extra_d) [outer sep=-2pt]{$\cdot$}
(26.6, 11.02) node(y2_extra_b) [outer sep=-2pt]{$\cdot$}
(14.2, 6.82) node(y2c4) [outer sep=-2pt]{$\cdot$}
(11.68, 12.32) node(y2b3) [outer sep=-2pt]{$y_2^{0}$$\otimes$$b$}
(19.6, 6.82) node(y2a6) [outer sep=-2pt]{$y_2$$\otimes$$a$}
(14.2, 9.02) node(y2a4) [outer sep=-2pt]{$\cdot$}
(19.6, 12.32) node(y2b6) [outer sep=-2pt]{$y_2$$\otimes$$b$}
(23.2, 11.66) node(y2b7down) [outer sep=-2pt]{$\cdot$}
(17.2, 14.78) node(u_extra_c) [outer sep=-2pt]{$\cdot$}
(26.8, 12.98) node(y2b8up) [outer sep=-2pt]{$\cdot$}
(23.0, 7.72) node(y2_extra_c) [outer sep=-2pt]{$\cdot$}
(23.2, 6.82) node(y2a7) [outer sep=-2pt]{$y_2^{0,r}\otimes a$}
(17.08, 9.02) node(y2c5) [outer sep=-2pt]{$\cdot$}
(14.2, 3.52) node(y2d4) [outer sep=-2pt]{$\cdot$}
(19.6, 7.48) node(y2a6up) [outer sep=-2pt]{$\cdot$}
(17.08, 12.32) node(y2b5) [outer sep=-2pt]{$\cdot$}
;
\draw[->, bend right = 0, green, , font=\scriptsize](u_extra_a) to node [sloped] {$\rho_{2}$*} (ua7);
\draw[->, bend right = 0, green, , font=\scriptsize](u_extra_b) to node [sloped] {$\rho_{2}$*} (ub8);
\draw[->, bend right = 0, green, , font=\scriptsize](u_extra_c) to node [sloped] {$\rho_{2}$*} (uc7);
\draw[->, bend right = 0, green, , font=\scriptsize](u_extra_d) to node [sloped] {$\rho_{2}$*} (ud6);
\draw[->, bend right = -20, black, , font=\scriptsize](ua7) to node [sloped] {$\rho_{3}$} (y2a7);
\draw[->, bend right = 20, blue, , font=\scriptsize](ua7) to node [sloped] {$\rho_{1}$} (y2a2);
\draw[->, bend right = 20, blue, , font=\scriptsize](ua7) to node [sloped] {$\rho_{1}$~~**} (y2c2);
\draw[->, bend right = -20, black, , font=\scriptsize](ub8) to node [sloped] {$\rho_{3}$} (y2b8);
\draw[->, bend right = 20, blue, , font=\scriptsize](ub8) to node [sloped] {$\rho_{1}$} (y2b3);
\draw[->, bend right = -20, black, , font=\scriptsize](uc7) to node [sloped] {$\rho_{3}$} (y2c7);
\draw[->, bend right = 20, blue, , font=\scriptsize](uc7) to node [sloped] {$\rho_{1}$} (y2c2);
\draw[->, bend right = -20, black, , font=\scriptsize](ud6) to node [sloped] {$\rho_{3}$} (y2d6);
\draw[->, bend right = 20, blue, , font=\scriptsize](ud6) to node [sloped] {$\rho_{1}$} (y2d1);
\draw[->, bend right = -20, pink, , font=\scriptsize](ud6) to node [sloped] {$\rho_{1}$} (extra_y1);
\draw[->, bend right = 0, black, , font=\scriptsize](y2a3) to node [sloped] {$\iota_1$} (y2b3);
\draw[->, bend right = 0, red, , font=\scriptsize](y2a3) to node [sloped] {$\rho_{23}$} (y2a2);
\draw[->, bend right = 0, red, , font=\scriptsize](y2a4) to node [sloped] {$\rho_{23}$} (y2a3);
\draw[->, bend right = 0, purple, , font=\scriptsize](y2a6) to node [sloped] {$\iota_1$} (y2a6down);
\draw[->, bend right = 9.0, black, , font=\scriptsize](y2a6) to node [sloped] {$\iota_1$} (y2d6);
\draw[->, bend right = 0, red, , font=\scriptsize](y2a6) to node [sloped] {$\rho_{23}$} (y2a5);
\draw[->, bend right = 0, purple, , font=\scriptsize](y2a6up) to node [sloped] {$\iota_1$} (y2a6);
\draw[->, bend right = 0, red, , font=\scriptsize](y2a7) to node [sloped] {$\rho_{23}$} (y2a6);
\draw[->, bend right = 30, green, , font=\scriptsize](y2a7) to node [sloped] {$\rho_{2}$~**} (ud6);
\draw[->, bend right = -10, green, , font=\scriptsize](y2a7) to node [sloped] {$\rho_{2}$~*} (y2_extra_a);
\draw[->, bend right = 0, purple, , font=\scriptsize](y2a7up) to node [sloped] {$\iota_1$} (y2a7);
\draw[->, bend right = 0, red, , font=\scriptsize](y2b4) to node [sloped] {$\rho_{23}$} (y2b3);
\draw[->, bend right = 0, purple, , font=\scriptsize](y2b6) to node [sloped] {$\iota_1$} (y2b6down);
\draw[->, bend right = 9.0, black, , font=\scriptsize](y2b6) to node [sloped] {$\iota_1$} (y2c6);
\draw[->, bend right = 0, red, , font=\scriptsize](y2b6) to node [sloped] {$\rho_{23}$} (y2b5);
\draw[->, bend right = 0, purple, , font=\scriptsize](y2b6up) to node [sloped] {$\iota_1$} (y2b6);
\draw[->, bend right = 0, purple, , font=\scriptsize](y2b7) to node [sloped] {$\iota_1$} (y2b7down);
\draw[->, bend right = 9.0, black, , font=\scriptsize](y2b7) to node [sloped] {$\iota_1$} (y2c7);
\draw[->, bend right = 0, red, , font=\scriptsize](y2b7) to node [sloped] {$\rho_{23}$} (y2b6);
\draw[->, bend right = 0, purple, , font=\scriptsize](y2b7up) to node [sloped] {$\iota_1$} (y2b7);
\draw[->, bend right = 0, red, , font=\scriptsize](y2b8) to node [sloped] {$\rho_{23}$} (y2b7);
\draw[->, bend right = 30, green, , font=\scriptsize](y2b8) to node [sloped] {$\rho_{2}$~**} (uc7);
\draw[->, bend right = -10, green, , font=\scriptsize](y2b8) to node [sloped] {$\rho_{2}$~*} (y2_extra_b);
\draw[->, bend right = 0, purple, , font=\scriptsize](y2b8up) to node [sloped] {$\iota_1$} (y2b8);
\draw[->, bend right = 0, red, , font=\scriptsize](y2c3) to node [sloped] {$\rho_{23}$} (y2c2);
\draw[->, bend right = 0, red, , font=\scriptsize](y2c4) to node [sloped] {$\rho_{23}$} (y2c3);
\draw[->, bend right = 0, purple, , font=\scriptsize](y2c6) to node [sloped] {$\iota_1$} (y2c6down);
\draw[->, bend right = 0, red, , font=\scriptsize](y2c6) to node [sloped] {$\rho_{23}$} (y2c5);
\draw[->, bend right = 0, purple, , font=\scriptsize](y2c6up) to node [sloped] {$\iota_1$} (y2c6);
\draw[->, bend right = 0, red, , font=\scriptsize](y2c7) to node [sloped] {$\rho_{23}$} (y2c6);
\draw[->, bend right = -10, green, , font=\scriptsize](y2c7) to node [sloped] {$\rho_{2}$~*} (y2_extra_c);
\draw[->, bend right = 0, purple, , font=\scriptsize](y2c7up) to node [sloped] {$\iota_1$} (y2c7);
\draw[->, bend right = 0, black, , font=\scriptsize](y2d2) to node [sloped] {$\iota_1$} (y2c2);
\draw[->, bend right = 0, red, , font=\scriptsize](y2d2) to node [sloped] {$\rho_{23}$} (y2d1);
\draw[->, bend right = 0, black, , font=\scriptsize](y2d3) to node [sloped] {$\iota_1$} (y2c3);
\draw[->, bend right = 0, red, , font=\scriptsize](y2d3) to node [sloped] {$\rho_{23}$} (y2d2);
\draw[->, bend right = 0, red, , font=\scriptsize](y2d4) to node [sloped] {$\rho_{23}$} (y2d3);
\draw[->, bend right = 0, red, , font=\scriptsize](y2d6) to node [sloped] {$\rho_{23}$} (y2d5);
\draw[->, bend right = -10, green, , font=\scriptsize](y2d6) to node [sloped] {$\rho_{2}$~*} (y2_extra_d);
\draw[->, bend right = 0, purple, , font=\scriptsize](y2d6up) to node [sloped] {$\iota_1$} (y2d6);
\draw (8.082,3.5171200000000002) circle (0.5);
\draw (11.182,7.31712) rectangle (12.182,6.31712);
\end{tikzpicture}
\caption{$V^0$}\label{fig:g7f}
\end{figure}
\end{landscape}
\clearpage
\restoregeometry

\clearpage
\newgeometry{margin=2cm}
\begin{landscape}
\begin{figure}
\centering
\begin{tikzpicture}[node distance=2cm]
\path[font = \scriptsize]
(8.08, 6.82) node(y2c2) [outer sep=-2pt]{$y_2^{0}$$\otimes$$c$}
(16.2, 18.48) node(ua7) [outer sep=-2pt]{$u$$\otimes$$a$$+\rho_1$$y_2^{1}$$\otimes$$d$}
(19.6, 8.36) node(y2c6down) [outer sep=-2pt]{$\cdot$}
(17.08, 12.32) node(y2b5) [outer sep=-2pt]{$\cdot$}
(17.2, 14.78) node(u_extra_c) [outer sep=-2pt]{$\cdot$}
(19.6, 3.52) node(y2d6) [outer sep=-2pt]{$y_2$$\otimes$$d$}
(26.6, 11.02) node(y2_extra_b) [outer sep=-2pt]{$\cdot$}
(14.2, 6.82) node(y2c4) [outer sep=-2pt]{$\cdot$}
(11.68, 12.32) node(y2b3) [outer sep=-2pt]{$y_2^{0}$$\otimes$$b$}
(23.2, 11.66) node(y2b7down) [outer sep=-2pt]{$\cdot$}
(19.6, 9.02) node(y2c6) [outer sep=-2pt]{$y_2$$\otimes$$c$}
(19.6, 12.98) node(y2b6up) [outer sep=-2pt]{$\cdot$}
(11.68, 3.52) node(y2d3) [outer sep=-2pt]{$y_2^{2}$$\otimes$$d$}
(11.68, 9.02) node(y2a3) [outer sep=-2pt]{$y_2^{1}$$\otimes$$a$}
(19.6, 12.32) node(y2b6) [outer sep=-2pt]{$y_2$$\otimes$$b$}
(11.68, 6.82) node(y2c3) [outer sep=-2pt]{$y_2^{1}$$\otimes$$c$}
(8.08, 9.02) node(y2a2) [outer sep=-2pt]{$y_2^{0}$$\otimes$$a$}
(16.2, 16.28) node(uc7) [outer sep=-2pt]{$u$$\otimes$$c$}
(19.6, 6.16) node(y2a6down) [outer sep=-2pt]{$\cdot$}
(19.4, 2.22) node(y2_extra_d) [outer sep=-2pt]{$\cdot$}
(23.2, 6.82) node(y2a7) [outer sep=-2pt]{$y_2^{0,r}\otimes a$}
(23.2, 12.98) node(y2b7up) [outer sep=-2pt]{$\cdot$}
(19.6, 7.48) node(y2a6up) [outer sep=-2pt]{$\cdot$}
(19.0, 7.82) node(extra_y1) [outer sep=-2pt]{$``y_1"$}
(19.6, 6.82) node(y2a6) [outer sep=-2pt]{$y_2$$\otimes$$a$}
(17.08, 6.82) node(y2a5) [outer sep=-2pt]{$\cdot$}
(17.08, 9.02) node(y2c5) [outer sep=-2pt]{$\cdot$}
(26.8, 12.98) node(y2b8up) [outer sep=-2pt]{$\cdot$}
(17.08, 3.52) node(y2d5) [outer sep=-2pt]{$\cdot$}
(19.6, 11.66) node(y2b6down) [outer sep=-2pt]{$\cdot$}
(23.0, 7.72) node(y2_extra_c) [outer sep=-2pt]{$\cdot$}
(11.8, 15.88) node(u_extra_d) [outer sep=-2pt]{$\cdot$}
(22.6, 15.88) node(u_extra_b) [outer sep=-2pt]{$\cdot$}
(26.8, 12.32) node(y2b8) [outer sep=-2pt]{$y_2^{0,r}\otimes b$}
(23.2, 9.02) node(y2c7) [outer sep=-2pt]{$y_2$$\otimes$$c$}
(23.0, 5.52) node(y2_extra_a) [outer sep=-2pt]{$\cdot$}
(19.6, 4.18) node(y2d6up) [outer sep=-2pt]{$\cdot$}
(14.2, 3.52) node(y2d4) [outer sep=-2pt]{$\cdot$}
(21.6, 17.38) node(ub8) [outer sep=-2pt]{$u$$\otimes$$b$$+\rho_1$$y_2^{1}$$\otimes$$c$}
(14.2, 9.02) node(y2a4) [outer sep=-2pt]{$\cdot$}
(17.2, 16.98) node(u_extra_a) [outer sep=-2pt]{$\cdot$}
(23.2, 7.48) node(y2a7up) [outer sep=-2pt]{$\cdot$}
(19.6, 9.68) node(y2c6up) [outer sep=-2pt]{$\cdot$}
(23.2, 12.32) node(y2b7) [outer sep=-2pt]{$y_2$$\otimes$$b$}
(14.2, 12.32) node(y2b4) [outer sep=-2pt]{$\cdot$}
(8.08, 3.52) node(y2d2) [outer sep=-2pt]{$y_2^{1}$$\otimes$$d$}
(4.48, 3.52) node(y2d1) [outer sep=-2pt]{$y_2^{0}$$\otimes$$d$}
(10.8, 17.38) node(ud6) [outer sep=-2pt]{$u$$\otimes$$d$}
(23.2, 9.68) node(y2c7up) [outer sep=-2pt]{$\cdot$}
;
\draw[->, bend right = 0, green, , font=\scriptsize](u_extra_a) to node [sloped] {$\rho_{2}$*} (ua7);
\draw[->, bend right = 0, green, , font=\scriptsize](u_extra_b) to node [sloped] {$\rho_{2}$*} (ub8);
\draw[->, bend right = 0, green, , font=\scriptsize](u_extra_c) to node [sloped] {$\rho_{2}$*} (uc7);
\draw[->, bend right = 0, green, , font=\scriptsize](u_extra_d) to node [sloped] {$\rho_{2}$*} (ud6);
\draw[->, bend right = -20, black, , font=\scriptsize](ua7) to node [sloped] {$\rho_{3}$} (y2a7);
\draw[->, bend right = 20, blue, , font=\scriptsize](ua7) to node [sloped] {$\rho_{1}$} (y2a2);
\draw[->, bend right = 30, pink, , font=\scriptsize](ua7) to node [sloped] {$\rho_{123}$} (y2d1);
\draw[->, bend right = -20, black, , font=\scriptsize](ub8) to node [sloped] {$\rho_{3}$} (y2b8);
\draw[->, bend right = 20, blue, , font=\scriptsize](ub8) to node [sloped] {$\rho_{1}$} (y2b3);
\draw[->, bend right = 10, pink, , font=\scriptsize](ub8) to node [sloped] {$\rho_{123}$} (y2c2);
\draw[->, bend right = -20, black, , font=\scriptsize](uc7) to node [sloped] {$\rho_{3}$} (y2c7);
\draw[->, bend right = 20, blue, , font=\scriptsize](uc7) to node [sloped] {$\rho_{1}$} (y2c2);
\draw[->, bend right = -20, black, , font=\scriptsize](ud6) to node [sloped] {$\rho_{3}$} (y2d6);
\draw[->, bend right = 20, blue, , font=\scriptsize](ud6) to node [sloped] {$\rho_{1}$} (y2d1);
\draw[->, bend right = -20, pink, , font=\scriptsize](ud6) to node [sloped] {$\rho_{1}$} (extra_y1);
\draw[->, bend right = 0, black, , font=\scriptsize](y2a3) to node [sloped] {$\iota_1$} (y2b3);
\draw[->, bend right = 0, red, , font=\scriptsize](y2a3) to node [sloped] {$\rho_{23}$} (y2a2);
\draw[->, bend right = 0, red, , font=\scriptsize](y2a4) to node [sloped] {$\rho_{23}$} (y2a3);
\draw[->, bend right = 0, purple, , font=\scriptsize](y2a6) to node [sloped] {$\iota_1$} (y2a6down);
\draw[->, bend right = 9.0, black, , font=\scriptsize](y2a6) to node [sloped] {$\iota_1$} (y2d6);
\draw[->, bend right = 0, red, , font=\scriptsize](y2a6) to node [sloped] {$\rho_{23}$} (y2a5);
\draw[->, bend right = 0, purple, , font=\scriptsize](y2a6up) to node [sloped] {$\iota_1$} (y2a6);
\draw[->, bend right = 0, red, , font=\scriptsize](y2a7) to node [sloped] {$\rho_{23}$} (y2a6);
\draw[->, bend right = 30, green, , font=\scriptsize](y2a7) to node [sloped] {$\rho_{2}$~**} (ud6);
\draw[->, bend right = -10, green, , font=\scriptsize](y2a7) to node [sloped] {$\rho_{2}$~*} (y2_extra_a);
\draw[->, bend right = 0, purple, , font=\scriptsize](y2a7up) to node [sloped] {$\iota_1$} (y2a7);
\draw[->, bend right = 0, red, , font=\scriptsize](y2b4) to node [sloped] {$\rho_{23}$} (y2b3);
\draw[->, bend right = 0, purple, , font=\scriptsize](y2b6) to node [sloped] {$\iota_1$} (y2b6down);
\draw[->, bend right = 9.0, black, , font=\scriptsize](y2b6) to node [sloped] {$\iota_1$} (y2c6);
\draw[->, bend right = 0, red, , font=\scriptsize](y2b6) to node [sloped] {$\rho_{23}$} (y2b5);
\draw[->, bend right = 0, purple, , font=\scriptsize](y2b6up) to node [sloped] {$\iota_1$} (y2b6);
\draw[->, bend right = 0, purple, , font=\scriptsize](y2b7) to node [sloped] {$\iota_1$} (y2b7down);
\draw[->, bend right = 9.0, black, , font=\scriptsize](y2b7) to node [sloped] {$\iota_1$} (y2c7);
\draw[->, bend right = 0, red, , font=\scriptsize](y2b7) to node [sloped] {$\rho_{23}$} (y2b6);
\draw[->, bend right = 0, purple, , font=\scriptsize](y2b7up) to node [sloped] {$\iota_1$} (y2b7);
\draw[->, bend right = 0, red, , font=\scriptsize](y2b8) to node [sloped] {$\rho_{23}$} (y2b7);
\draw[->, bend right = 30, green, , font=\scriptsize](y2b8) to node [sloped] {$\rho_{2}$~**} (uc7);
\draw[->, bend right = -10, green, , font=\scriptsize](y2b8) to node [sloped] {$\rho_{2}$~*} (y2_extra_b);
\draw[->, bend right = 0, purple, , font=\scriptsize](y2b8up) to node [sloped] {$\iota_1$} (y2b8);
\draw[->, bend right = 0, red, , font=\scriptsize](y2c3) to node [sloped] {$\rho_{23}$} (y2c2);
\draw[->, bend right = 0, red, , font=\scriptsize](y2c4) to node [sloped] {$\rho_{23}$} (y2c3);
\draw[->, bend right = 0, purple, , font=\scriptsize](y2c6) to node [sloped] {$\iota_1$} (y2c6down);
\draw[->, bend right = 0, red, , font=\scriptsize](y2c6) to node [sloped] {$\rho_{23}$} (y2c5);
\draw[->, bend right = 0, purple, , font=\scriptsize](y2c6up) to node [sloped] {$\iota_1$} (y2c6);
\draw[->, bend right = 0, red, , font=\scriptsize](y2c7) to node [sloped] {$\rho_{23}$} (y2c6);
\draw[->, bend right = -10, green, , font=\scriptsize](y2c7) to node [sloped] {$\rho_{2}$~*} (y2_extra_c);
\draw[->, bend right = 0, purple, , font=\scriptsize](y2c7up) to node [sloped] {$\iota_1$} (y2c7);
\draw[->, bend right = 0, black, , font=\scriptsize](y2d2) to node [sloped] {$\iota_1$} (y2c2);
\draw[->, bend right = 0, red, , font=\scriptsize](y2d2) to node [sloped] {$\rho_{23}$} (y2d1);
\draw[->, bend right = 0, black, , font=\scriptsize](y2d3) to node [sloped] {$\iota_1$} (y2c3);
\draw[->, bend right = 0, red, , font=\scriptsize](y2d3) to node [sloped] {$\rho_{23}$} (y2d2);
\draw[->, bend right = 0, red, , font=\scriptsize](y2d4) to node [sloped] {$\rho_{23}$} (y2d3);
\draw[->, bend right = 0, red, , font=\scriptsize](y2d6) to node [sloped] {$\rho_{23}$} (y2d5);
\draw[->, bend right = -10, green, , font=\scriptsize](y2d6) to node [sloped] {$\rho_{2}$~*} (y2_extra_d);
\draw[->, bend right = 0, purple, , font=\scriptsize](y2d6up) to node [sloped] {$\iota_1$} (y2d6);
\end{tikzpicture}
\caption{$V^0$}\label{fig:g7g}
\end{figure}
\end{landscape}
\clearpage
\restoregeometry

\FloatBarrier
\subsubsection{Left hand side full copy}

The situation at left hand side full copy is simpler than the right hand side, as $C$ is horizontally simplified. Generators come in pairs except $\xi_h$, which is generator of the homology, see \cref{fig:g8a} for a pair $x\to y$ and $\xi_h$. The isomorphism-inducing map between the full copy and $\mathbb{F}_2$ must be a single arrow from $\xi_h$ to the generator of $\mathbb{F}_2$.

At a pair $x\to y$, we cancel the pairs of red arrows, which we have done so many times, see \cref{fig:g8b}. Then we cancel the vertical black arrows between $y_1$'s and $u$'s, see \cref{fig:g8c}. At the row of $\xi_h$, it is even simpler. It is now a single string of $\rh{23}$'s. The usual cancellation bring them into shape.

One last corner case is of the top row, where the full copy meets the left-most position of a row. We need to examine that the cancellation needs to be done in both places are compatible. It is clear that the top row generator has only incoming $\rh{3}$, but no outgoing $\rh{2}$. We further observe that the cancellation in \cref{sec:v0} only concern $y_3,v$ at the top row generator, whereas cancellation in this section remove $y_1, u$ there. This means the two sets of cancellation never interact, hence must be compatible.

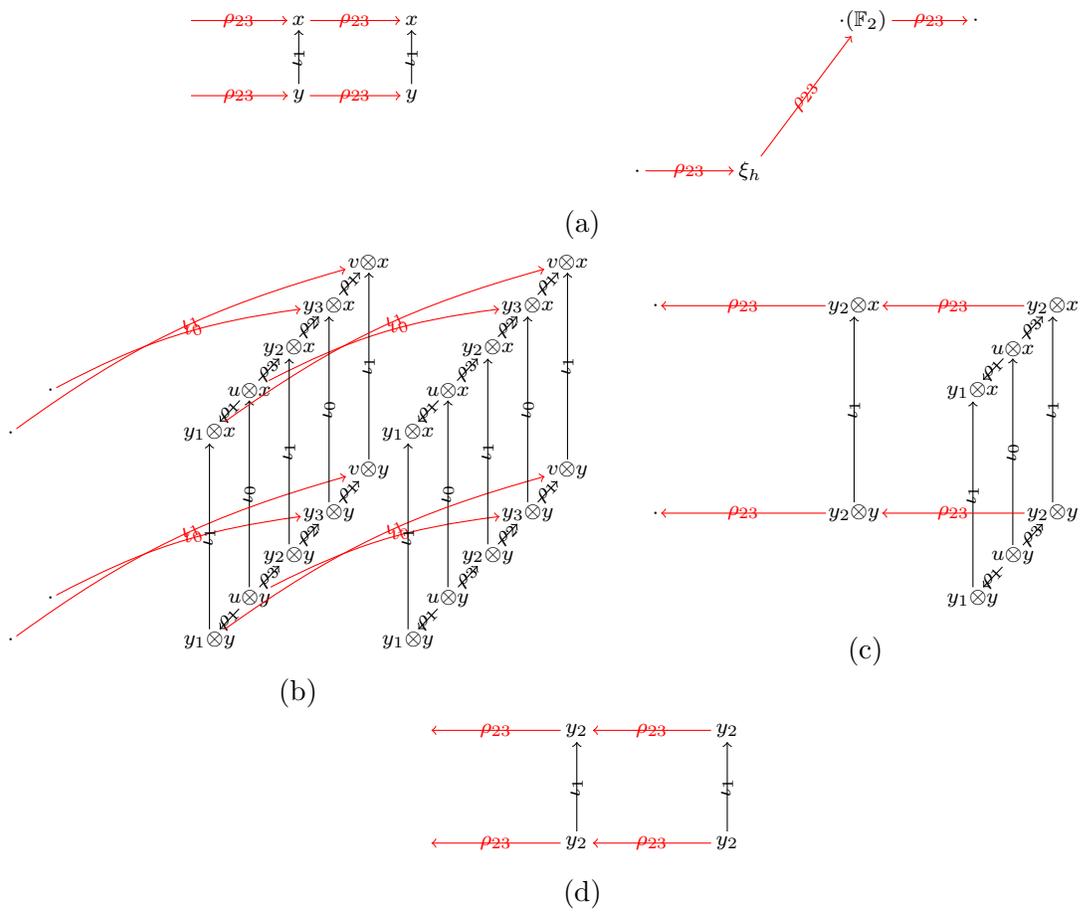
\begin{figure}

\begin{subfigure}{0.99\textwidth}
\centerline{
\begin{tikzpicture}[node distance=2cm]
\path[font = \scriptsize]
(12.0, 1.0) node(right) [outer sep=-2pt]{$\cdot$}
(3.0, 0.0) node(y2) [outer sep=-2pt]{$y$}
(4.5, 0.0) node(y3) [outer sep=-2pt]{$y$}
(1.5, 1.0) node(x1) [outer sep=-2pt]{}
(4.5, 1.0) node(x3) [outer sep=-2pt]{$x$}
(7.5, -1.0) node(left) [outer sep=-2pt]{$\cdot$}
(9.0, -1.0) node(xih) [outer sep=-2pt]{$\xi_h$}
(1.5, 0.0) node(y1) [outer sep=-2pt]{}
(10.5, 1.0) node(dot) [outer sep=-2pt]{$\cdot(\mathbb{F}_2)$}
(3.0, 1.0) node(x2) [outer sep=-2pt]{$x$}
;
\draw[->, bend right = 0, red, , font=\scriptsize](dot) to node [sloped] {$\rho_{23}$} (right);
\draw[->, bend right = 0, red, , font=\scriptsize](left) to node [sloped] {$\rho_{23}$} (xih);
\draw[->, bend right = 0, red, , font=\scriptsize](x1) to node [sloped] {$\rho_{23}$} (x2);
\draw[->, bend right = 0, red, , font=\scriptsize](x2) to node [sloped] {$\rho_{23}$} (x3);
\draw[->, bend right = 0, red, , font=\scriptsize](xih) to node [sloped] {$\rho_{23}$} (dot);
\draw[->, bend right = 0, red, , font=\scriptsize](y1) to node [sloped] {$\rho_{23}$} (y2);
\draw[->, bend right = 0, black, , font=\scriptsize](y2) to node [sloped] {$\iota_1$} (x2);
\draw[->, bend right = 0, red, , font=\scriptsize](y2) to node [sloped] {$\rho_{23}$} (y3);
\draw[->, bend right = 0, black, , font=\scriptsize](y3) to node [sloped] {$\iota_1$} (x3);
\end{tikzpicture}
}
\caption{}\label{fig:g8a}
\end{subfigure}

\begin{subfigure}{0.49\textwidth}
\centerline{
\begin{tikzpicture}[node distance=2cm]
\path[font = \scriptsize]
(8.45, 3.32) node(ux3) [outer sep=-3pt]{$u$$\otimes$$x$}
(7.92, 2.76) node(y1x3) [outer sep=-3pt]{$y_1$$\otimes$$x$}
(7.4, 2.26) node(vy2) [outer sep=-3pt]{$v$$\otimes$$y$}
(8.45, 0.56) node(uy3) [outer sep=-3pt]{$u$$\otimes$$y$}
(2.64, 0.0) node(y1y1) [outer sep=-3pt]{$\cdot$}
(5.81, 0.56) node(uy2) [outer sep=-3pt]{$u$$\otimes$$y$}
(3.17, 3.32) node(ux1) [outer sep=-3pt]{$\cdot$}
(8.98, 1.13) node(y2y3) [outer sep=-3pt]{$y_2$$\otimes$$y$}
(6.87, 1.69) node(y3y2) [outer sep=-3pt]{$y_3$$\otimes$$y$}
(6.87, 4.45) node(y3x2) [outer sep=-3pt]{$y_3$$\otimes$$x$}
(9.51, 4.45) node(y3x3) [outer sep=-3pt]{$y_3$$\otimes$$x$}
(7.4, 5.02) node(vx2) [outer sep=-3pt]{$v$$\otimes$$x$}
(6.34, 1.13) node(y2y2) [outer sep=-3pt]{$y_2$$\otimes$$y$}
(3.17, 0.56) node(uy1) [outer sep=-3pt]{$\cdot$}
(7.92, 0.0) node(y1y3) [outer sep=-3pt]{$y_1$$\otimes$$y$}
(10.04, 5.02) node(vx3) [outer sep=-3pt]{$v$$\otimes$$x$}
(10.04, 2.26) node(vy3) [outer sep=-3pt]{$v$$\otimes$$y$}
(5.28, 0.0) node(y1y2) [outer sep=-3pt]{$y_1$$\otimes$$y$}
(6.34, 3.89) node(y2x2) [outer sep=-3pt]{$y_2$$\otimes$$x$}
(9.51, 1.69) node(y3y3) [outer sep=-3pt]{$y_3$$\otimes$$y$}
(2.64, 2.76) node(y1x1) [outer sep=-3pt]{$\cdot$}
(8.98, 3.89) node(y2x3) [outer sep=-3pt]{$y_2$$\otimes$$x$}
(5.81, 3.32) node(ux2) [outer sep=-3pt]{$u$$\otimes$$x$}
(5.28, 2.76) node(y1x2) [outer sep=-3pt]{$y_1$$\otimes$$x$}
;
\draw[->, bend right = -10, red, , font=\scriptsize](ux1) to node [right, sloped] {$\iota_0$} (y3x2);
\draw[->, bend right = 0, black, , font=\scriptsize](ux2) to node [sloped] {$\rho_{1}$} (y1x2);
\draw[->, bend right = 0, black, , font=\scriptsize](ux2) to node [sloped] {$\rho_{3}$} (y2x2);
\draw[->, bend right = -10, red, , font=\scriptsize](ux2) to node [right, sloped] {$\iota_0$} (y3x3);
\draw[->, bend right = 0, black, , font=\scriptsize](ux3) to node [sloped] {$\rho_{1}$} (y1x3);
\draw[->, bend right = 0, black, , font=\scriptsize](ux3) to node [sloped] {$\rho_{3}$} (y2x3);
\draw[->, bend right = -10, red, , font=\scriptsize](uy1) to node [right, sloped] {$\iota_0$} (y3y2);
\draw[->, bend right = 0, black, , font=\scriptsize](uy2) to node [sloped] {$\rho_{1}$} (y1y2);
\draw[->, bend right = 0, black, , font=\scriptsize](uy2) to node [sloped] {$\rho_{3}$} (y2y2);
\draw[->, bend right = -10, red, , font=\scriptsize](uy2) to node [right, sloped] {$\iota_0$} (y3y3);
\draw[->, bend right = 0, black, , font=\scriptsize](uy2) to node [sloped] {$\iota_0$} (ux2);
\draw[->, bend right = 0, black, , font=\scriptsize](uy3) to node [sloped] {$\rho_{1}$} (y1y3);
\draw[->, bend right = 0, black, , font=\scriptsize](uy3) to node [sloped] {$\rho_{3}$} (y2y3);
\draw[->, bend right = 0, black, , font=\scriptsize](uy3) to node [sloped] {$\iota_0$} (ux3);
\draw[->, bend right = 0, black, , font=\scriptsize](vy2) to node [sloped] {$\iota_1$} (vx2);
\draw[->, bend right = 0, black, , font=\scriptsize](vy3) to node [sloped] {$\iota_1$} (vx3);
\draw[->, bend right = -10, red, , font=\scriptsize](y1x1) to node [right, sloped] {$\iota_1$} (vx2);
\draw[->, bend right = -10, red, , font=\scriptsize](y1x2) to node [right, sloped] {$\iota_1$} (vx3);
\draw[->, bend right = -10, red, , font=\scriptsize](y1y1) to node [right, sloped] {$\iota_1$} (vy2);
\draw[->, bend right = -10, red, , font=\scriptsize](y1y2) to node [right, sloped] {$\iota_1$} (vy3);
\draw[->, bend right = 0, black, , font=\scriptsize](y1y2) to node [sloped] {$\iota_1$} (y1x2);
\draw[->, bend right = 0, black, , font=\scriptsize](y1y3) to node [sloped] {$\iota_1$} (y1x3);
\draw[->, bend right = 0, black, , font=\scriptsize](y2x2) to node [sloped] {$\rho_{2}$} (y3x2);
\draw[->, bend right = 0, black, , font=\scriptsize](y2x3) to node [sloped] {$\rho_{2}$} (y3x3);
\draw[->, bend right = 0, black, , font=\scriptsize](y2y2) to node [sloped] {$\rho_{2}$} (y3y2);
\draw[->, bend right = 0, black, , font=\scriptsize](y2y2) to node [sloped] {$\iota_1$} (y2x2);
\draw[->, bend right = 0, black, , font=\scriptsize](y2y3) to node [sloped] {$\rho_{2}$} (y3y3);
\draw[->, bend right = 0, black, , font=\scriptsize](y2y3) to node [sloped] {$\iota_1$} (y2x3);
\draw[->, bend right = 0, black, , font=\scriptsize](y3x2) to node [sloped] {$\rho_{1}$} (vx2);
\draw[->, bend right = 0, black, , font=\scriptsize](y3x3) to node [sloped] {$\rho_{1}$} (vx3);
\draw[->, bend right = 0, black, , font=\scriptsize](y3y2) to node [sloped] {$\rho_{1}$} (vy2);
\draw[->, bend right = 0, black, , font=\scriptsize](y3y2) to node [sloped] {$\iota_0$} (y3x2);
\draw[->, bend right = 0, black, , font=\scriptsize](y3y3) to node [sloped] {$\rho_{1}$} (vy3);
\draw[->, bend right = 0, black, , font=\scriptsize](y3y3) to node [sloped] {$\iota_0$} (y3x3);
\end{tikzpicture}
}
\caption{}\label{fig:g8b}
\end{subfigure}
\begin{subfigure}{0.49\textwidth}
\centerline{
\begin{tikzpicture}[node distance=2cm]
\path[font = \scriptsize]
(6.34, 3.89) node(y2x2) [outer sep=-3pt]{$y_2$$\otimes$$x$}
(3.7, 1.13) node(y2y1) [outer sep=-3pt]{$\cdot$}
(8.98, 3.89) node(y2x3) [outer sep=-3pt]{$y_2$$\otimes$$x$}
(7.92, 0.0) node(y1y3) [outer sep=-3pt]{$y_1$$\otimes$$y$}
(3.7, 3.89) node(y2x1) [outer sep=-3pt]{$\cdot$}
(8.45, 3.32) node(ux3) [outer sep=-3pt]{$u$$\otimes$$x$}
(6.34, 1.13) node(y2y2) [outer sep=-3pt]{$y_2$$\otimes$$y$}
(7.92, 2.76) node(y1x3) [outer sep=-3pt]{$y_1$$\otimes$$x$}
(8.98, 1.13) node(y2y3) [outer sep=-3pt]{$y_2$$\otimes$$y$}
(8.45, 0.56) node(uy3) [outer sep=-3pt]{$u$$\otimes$$y$}
;
\draw[->, bend right = 0, black, , font=\scriptsize](ux3) to node [sloped] {$\rho_{1}$} (y1x3);
\draw[->, bend right = 0, black, , font=\scriptsize](ux3) to node [sloped] {$\rho_{3}$} (y2x3);
\draw[->, bend right = 0, black, , font=\scriptsize](uy3) to node [sloped] {$\rho_{1}$} (y1y3);
\draw[->, bend right = 0, black, , font=\scriptsize](uy3) to node [sloped] {$\rho_{3}$} (y2y3);
\draw[->, bend right = 0, black, , font=\scriptsize](uy3) to node [sloped] {$\iota_0$} (ux3);
\draw[->, bend right = 0, black, , font=\scriptsize](y1y3) to node [sloped] {$\iota_1$} (y1x3);
\draw[->, bend right = 0, red, , font=\scriptsize](y2x2) to node [sloped] {$\rho_{23}$} (y2x1);
\draw[->, bend right = 0, red, , font=\scriptsize](y2x3) to node [sloped] {$\rho_{23}$} (y2x2);
\draw[->, bend right = 0, black, , font=\scriptsize](y2y2) to node [sloped] {$\iota_1$} (y2x2);
\draw[->, bend right = 0, red, , font=\scriptsize](y2y2) to node [sloped] {$\rho_{23}$} (y2y1);
\draw[->, bend right = 0, black, , font=\scriptsize](y2y3) to node [sloped] {$\iota_1$} (y2x3);
\draw[->, bend right = 0, red, , font=\scriptsize](y2y3) to node [sloped] {$\rho_{23}$} (y2y2);
\end{tikzpicture}
}
\caption{}\label{fig:g8bb}
\end{subfigure}

\begin{subfigure}{0.99\textwidth}
\centerline{
\begin{tikzpicture}[node distance=2cm]
\path[font = \scriptsize]
(2.0, 0.0) node(y1) [outer sep=-2pt]{}
(4.0, 1.5) node(x2) [outer sep=-2pt]{$y_2$}
(6.0, 0.0) node(y3) [outer sep=-2pt]{$y_2$}
(4.0, 0.0) node(y2) [outer sep=-2pt]{$y_2$}
(2.0, 1.5) node(x1) [outer sep=-2pt]{}
(6.0, 1.5) node(x3) [outer sep=-2pt]{$y_2$}
;
\draw[->, bend right = 0, red, , font=\scriptsize](x2) to node [sloped] {$\rho_{23}$} (x1);
\draw[->, bend right = 0, red, , font=\scriptsize](x3) to node [sloped] {$\rho_{23}$} (x2);
\draw[->, bend right = 0, red, , font=\scriptsize](y2) to node [sloped] {$\rho_{23}$} (y1);
\draw[->, bend right = 0, black, , font=\scriptsize](y2) to node [sloped] {$\iota_1$} (x2);
\draw[->, bend right = 0, red, , font=\scriptsize](y3) to node [sloped] {$\rho_{23}$} (y2);
\draw[->, bend right = 0, black, , font=\scriptsize](y3) to node [sloped] {$\iota_1$} (x3);
\end{tikzpicture}
}
\caption{}\label{fig:g8c}
\end{subfigure}

\caption{Left hand side full copy}
\end{figure}

\subsubsection{Conclusion of proof}
Therefore, we have examined all parts of the modules and can conclude that tensoring with $H$ indeed transforms $KtD(C)$ to $KtD(\fl{C}).$ Hence \cref{flip} is proved.

\FloatBarrier

\end{document}